\theoremstyle{plain}
\newtheorem{theorem}{Theorem}[section]
\newtheorem{corollary}[theorem]{Corollary}
\newtheorem{lemma}[theorem]{Lemma}
\newtheorem{proposition}[theorem]{Proposition}
\newtheorem{notation}[theorem]{Notation}
\newtheorem{defn}[theorem]{Definition}
\newtheorem{thm}[theorem]{Theorem}
\theoremstyle{definition}
\newtheorem{definition}[theorem]{Definition}
\newtheorem{remark}[theorem]{Remark}
\let\oldtocsection=\tocsection
\let\oldtocsubsection=\tocsubsection
\let\oldtocsubsubsection=\tocsubsubsection
\renewcommand{\tocsection}[2]{\hspace{0em}\oldtocsection{#1}{#2}}
\renewcommand{\tocsubsection}[2]{\hspace{1em}\oldtocsubsection{#1}{#2}}
\renewcommand{\tocsubsubsection}[2]{\hspace{2em}\oldtocsubsubsection{#1}{#2}}
\title{Sarnak's conjecture in quantum computing,
 cyclotomic unitary group coranks, and Shimura curves}
\author{Colin Ingalls, Bruce W. Jordan, Allan Keeton, \\ Adam Logan,  and Yevgeny Zaytman}
\address{School of Mathematics and Statistics, Carleton University, Ottawa, ON K1S 5B6, Canada}
\email{cingalls@math.carleton.ca}
\address{Department of Mathematics, Box B-630, Baruch College,
The City University of New York, One Bernard Baruch Way, New York
NY 10010, USA}
\email{bruce.jordan@baruch.cuny.edu}
\address{Center for Communications Research, 805 Bunn Drive,
Princeton, NJ 08540, USA}
\email{agk@idaccr.org}
\address{ICERM, 121 South Main St., 11th Floor, Providence, RI 02903, USA}
\address{The Tutte Institute for Mathematics and Computation, P.O. Box 9703,
Terminal, Ottawa, ON K1G 3Z4, Canada}
\email{adam.m.logan@gmail.com}
\address{Newton, MA 02465, USA}
\email{ykzaytman@alum.mit.edu}
\subjclass[2020]{Primary 20G30; Secondary 11R18, 81P65}
\keywords{unitary groups, cyclotomic rings, quotient graphs, 
trees, Clifford-cyclotomic group, quantum computing}
\DeclareMathAlphabet{\mathcalligra}{T1}{calligra}{m}{n}
\newcommand{\bruce}[1]{{\color{purple}\sf [Bruce: #1]}}
\newcommand{\adam}[1]{{\color{green}\sf [Adam: #1]}}
\DeclareMathOperator{\gr}{\mathit{gr}}
\DeclareMathOperator{\ogr}{\overline{\gr}}
\DeclareMathOperator{\nrd}{nrd}
\newcommand{\Z}{{\mathbb{Z}}}
\newcommand{\Fr}{{\rm Fr}}
\newcommand{\N}{{\mathbb N}}
\newcommand{\A}{{\mathbb A}}
\newcommand{\BC}{{\mathbf C}}
\newcommand{\BR}{{\mathbf R}}
\newcommand{\F}{{\mathbf F}}
\renewcommand{\H}{{\mathbf H}}
\newcommand{\B}{{\mathbf B}}
\newcommand{\M}{{\mathcal M}}
\newcommand{\sA}{\mathscr{A}}
\newcommand{\sJ}{\mathscr{J}}
\newcommand{\sM}{{\mathscr M}}
\newcommand{\Gg}{{\mathcal G}}
\newcommand{\SGg}{{\mathrm S}{\mathcal G}}
\newcommand{\Pic}{{\rm Pic}\, }
\newcommand{\Q}{{\mathbf Q}}
\newcommand{\OO}{\mathcal{O}}
\newcommand{\uOO}{\,\protect\underline{\! \OO \!}\,}
\newcommand{\uOOn}{\uOO_n}
\newcommand{\Rn}{R_{\hspace*{.01in}n}}
\newcommand{\Uu}{{\mathcal{U}}}
\newcommand{\T}{{\mathcal T}}
\newcommand{\p}{{\mathfrak p}}
\newcommand{\fa}{{\mathfrak a}}
\newcommand{\ZKt}{\Z_{K_n}^{(2)}}
\newcommand{\fq}{{\mathfrak q}}
\newcommand{\fp}{{\mathfrak p}}
\newcommand{\fP}{{\mathfrak P}}
\newcommand{\fo}{{\mathscr O}}
\newcommand{\sP}{{\mathscr P}}
\newcommand{\sfO}{\,\underline{\! \OO\!}\,}
\newcommand{\ff}{{\mathfrak f}}
\newcommand{\fF}{{\mathfrak F}}
\newcommand{\be}{{\mathbf e}}
\newcommand{\bv}{{\mathbf v}}
\newcommand{\bw}{{\mathbf w}}
\newcommand{\q}{{\mathfrak q}}
\newcommand{\ZFp}{{\Z_F^{(\wp)}}}
\newcommand{\GG}{{\mathbb{G}}}
\newcommand{\Gfp}{G_{\rm{f}}^\wp}
\newcommand{\GGi}{{\GG^{\rm int}}}
\newcommand{\Gif}{{G^{\rm{int}}_{\rm{f}}}}
\newcommand{\Bi}{{B^{\rm int}}}
\newcommand{\Bit}{{B^{\rm{int},\times}}}
\newcommand{\fh}{\mathfrak{h}}
\newcommand{\fhpm}{{\fh^\pm}}
\newcommand{\YiU}{{Y^{\rm{int}}_U}}
\newcommand{\YiUt}{{\tilde{Y}^{\rm{int}}_U}}
\newcommand{\Yi}{{Y^{\rm{int}}}}
\newcommand{\Btp}{{\tilde{B}_\wp}}
\newcommand{\Af}{{\A_{\rm{f}}}}
\newcommand{\Afp}{{\A_{\rm{f}}^\wp}}
\newcommand{\I}{\mathbb{I}}
\newcommand{\If}{\I_{\rm{f}}}
\newcommand{\Ifp}{\I^\wp_{\rm{f}}}
\newcommand{\fO}{\mathfrak{O}}
\DeclareMathOperator{\Spf}{Spf}
\DeclareMathOperator{\Hi}{Hilb}
\DeclareMathOperator{\VM}{VM}
\DeclareMathOperator{\EM}{EM}
\DeclareMathOperator{\Jac}{Jac}
\DeclareMathOperator{\UU}{U}
\DeclareMathOperator{\UT}{U_{2}}
\DeclareMathOperator{\UTz}{U_{2}^{\zeta}}
\DeclareMathOperator{\PUT}{PU_{2}}
\DeclareMathOperator{\SOT}{SO_{3}}
\DeclareMathOperator{\PUTz}{PU_{2}^{\zeta}}
\DeclareMathOperator\Matt{Mat}
\newcommand{\PGL}{{\rm PGL}}
\DeclareMathOperator{\PGLT}{PGL_{2}}
\DeclareMathOperator{\PSUT}{PSU_{2}}
\DeclareMathOperator{\SUT}{SU_{2}}
\DeclareMathOperator{\SLT}{SL_{2}}
\DeclareMathOperator{\PSLT}{PSL_{2}}
\DeclareMathOperator{\GLT}{GL_{2}}
\newcommand{\Th}{3\cdot 2^s}
\newcommand{\w}{\mathrm{w}}
\newcommand{\tts}{{3\cdot 2^s}}
\newcommand{\mO}{{\mathcal O}}
\newcommand{\mN}{{\mathcal N}}
\newcommand{\f}{{\mathfrak f}}
\newcommand{\bH}{\mathbf{H}}
\DeclareMathOperator{\Nrm}{{N}}
\DeclareMathOperator\corank{corank}
\DeclareMathOperator\rank{rank}
\DeclareMathOperator\SL{SL}
\DeclareMathOperator{\Ty}{Type}
\DeclareMathOperator{\RO}{ROrd}
\DeclareMathOperator\PSU{PSU}
\DeclareMathOperator\PU{PU}
\DeclareMathOperator\HH{H}
\DeclareMathOperator\Ann{Ann}
\DeclareMathOperator\cond{cond}
\DeclareMathOperator\ATr{ATr}
\theoremstyle{theorem}
\newtheorem{prop}[theorem]{Proposition}
\theoremstyle{definition}
\newtheorem{definition1}[theorem]{Definition}
\theoremstyle{remark}
\theoremstyle{remark1}
\newtheorem{remark1}[theorem]{Remark}
\theoremstyle{theorem}
\DeclareMathOperator\Gal{Gal}
\DeclareMathOperator\Hom{Hom}
\DeclareMathOperator\Pro{P}
\DeclareMathOperator\Aut{Aut}
\DeclareMathOperator\Nm{N}
\DeclareMathOperator\Stab{Stab}
\DeclareMathOperator\Disc{Disc}
\DeclareMathOperator\Norm{Norm}
\DeclareMathOperator\Ver{Ver}
\DeclareMathOperator\Tr{Tr}
\DeclareMathOperator\Ed{Ed}
\DeclareMathOperator\dist{dist}
\DeclareMathOperator\Star{Star}
\DeclareMathOperator\Cl{Cl}
\DeclareMathOperator\val{val}
\DeclareMathOperator\lcm{lcm}
\DeclareMathOperator\Prin{Prin}
\DeclareMathOperator\core{core}
\patchcmd{\@part}{\null\vfil}{}{}{}
\patchcmd{\@part}{\par}{.\,\,}{}{}
\newcommand{\Gn}{\mathcal{G}_n}
\newtheorem{mtheorem}[theorem]{Main Theorem}
\newtheorem{reduction}[theorem]{Reduction}
\theoremstyle{mtheorem}
\theoremstyle{remark}
\DeclareMathOperator\Pp{P}
\newcommand{\PGn}{\Pp\!\Gn}
\let\widebar=\overline
\def\O{{\mathcal O}}
\begin{document}

\maketitle
\vspace*{1in}
\begin{abstract}
Sarnak's conjecture in quantum computing concerns when
the groups $\PUT$ and $\PSUT$ over cyclotomic rings
$\Z[\zeta_{n}, 1/2]$ with $\zeta_n=e^{2\pi i/n}$, $4|n$,
are generated by the Clifford-cyclotomic gate set.  
We previously settled this using Euler-Poincar\'{e}
characteristics.
A generalization of Sarnak's conjecture
is to ask when these groups are generated by torsion elements.
An obstruction to this is provided by the corank: a group $G$
has $\corank G>0$ only if $G$ is not generated by torsion elements.
In this paper we study the corank of these cyclotomic unitary groups
in the families $n=2^s$ and $n=\tts$, $n\geq 8$, by letting them
act on Bruhat-Tits trees. The quotients by this
  action are finite graphs whose first Betti number is the corank of the
  group.  Our main result is that for the families $n=2^s$ and $n=3\cdot 2^s$
  the corank grows doubly exponentially in $s$ as $s\rightarrow\infty$; it
  is $0$ precisely when $n= 8,12, 16, 24$, and indeed the
cyclotomic unitary groups are generated by torsion elements (in fact
by Clifford-cyclotomic gates) for these $n$.  We give explicit lower
bounds for the corank in two different ways.  The first is to bound
the isotropy subgroups in the action on the tree by
explicit cyclotomy.  The second is to relate our graphs to Shimura
curves over $F_n=\Q(\zeta_n)^+$ 
via interchanging local invariants
and applying a result of Selberg and Zograf.
We show that the cyclotomy arguments give the stronger
bounds.
 In a final section we
execute a program of Sarnak to show that our results for the
$n=2^s$ and $n=\tts$ families are sufficient to give a second
proof of Sarnak's conjecture.

  \end{abstract}

\clearpage

\clearpage

\section{Introduction}
\label{sec:intro}

We begin by setting the notation to be  used throughout the
paper.
Let $n\geq 8$ with $4|n$ and put $\zeta_n=e^{2\pi i/n}$.  A local
or global 
field $L$ has ring of integers $\Z_L$.  A number field $L$ has discriminant
$\Disc(L)$ and class number $h(L)$.    If $\mathfrak{a}$ is an
ideal of a number field $L$, 
$\Z_L^{\mathfrak{a}}$ is the ring of $\mathfrak{a}$-integers
of $L$, i.e.,
\[
\Z_L^{\mathfrak{a}}\colonequals \{x\in L\mid \text{$x$
is integral for all finite primes $v\!\not | \mathfrak{a}$}\}.
\]
\begin{notation}
{\rm
\label{chicken}
 Set
\begin{align*}
K_n&\colonequals \Q(\zeta_n) \text{ with }\Z_{K_n}=\Z[\zeta_n], \\
F_n & \colonequals K_n^+=\Q(\zeta_n+\overline{\zeta}_n) \text{ with }
\Z_{F_n}=\Z[\zeta_n+\overline{\zeta}_n];\text{  we have}\\
\Z_{K_n}^{(2)}&=\Z_{K_n}[1/2]\text{  and }\Z_{F_n}^{(2)}=\Z_{F_n}[1/2].\\
\H &\colonequals \text{ the Hamilton quaternions over $\Q$, i.e.,}\\
 & \qquad\qquad \text{the rational quaternion
  algebra ramified precisely at $2$ and $\infty$}\\ 
\H_n&\colonequals \H\otimes_{\Q} F_n; \text{  the ramified primes of $\H_n$ are
precisely the infinite primes of $F_n$}\\
 & \qquad\text{since $F_n$ is totally 
real and the ramification
index of primes above $2$ is even}\\
m(\H_n) &\colonequals \text{ the \textsf{Eichler mass} of $\H_n$ with
\textsf{relative mass}} \\
M_n&\colonequals m(\H_n)/h(F_n) \text{ in case $n=2^s$ or $n=\tts$, $n\geq 8$
(see Sections \ref{maxorders} and \ref{puppy1}) .}\\
\{1,i,j,k\}&\colonequals \text{ a fixed
$\Q$-basis of
$\H$ satisfying}\\
&\qquad\qquad\text{$i^2=j^2=k^2=-1$, $ij=-ji=k$, $ki=-ik=j$, $jk=-kj=i$}\\
\widetilde{\M}_n&\colonequals \Z_{F_n}^{(2)}\langle 1, i, j, k\rangle,\text{ the
{\sf standard}  maximal $\Z_{F_n}^{(2)}$-order of $\H_n$}.
\end{align*}
For any subgroup $H\subseteq \H_n^\times$, we put $H_1:=\{h\in H\mid
\Nm_{\H_n/F_n}(h)=1\}$.
}
\end{notation}

Define the Hadamard matrix $H$ and the matrix $T_n$ by 
\begin{equation*}
H:= \frac{1}{2}\left[\begin{array}{rr}
    1+i  & 1+i\\
    1+i & -1-i\end{array}\right]
    \quad\mbox{and}\quad
T_n:=\left[\begin{array}{rr}
    1 & 0\\
    0 & \zeta_n\end{array}\right];
  \end{equation*}
we have $H,T_n\in \UT\big(\Z_{K_n}^{(2)}\big)$.
With $4|n$, the 
{\sf Clifford-cyclotomic group} \cite[Sect.~2.2]{FGKM} (resp.,
{\sf special} Clifford-cyclotomic group) is
\begin{equation*}
\Gg_n=\langle H, T_n\rangle\qquad\mbox{(resp., $\SGg_n=\Gg_n\cap 
\SUT\big(\Z_{K_n}^{(2)}\big)$)}.
\end{equation*}
It is named for being generated by the {\sf Clifford group}, the normalizer
of the group of Pauli matrices in the unitary group $\UT(\BC)$, and the
cyclotomic element $T_n$. 
Put
\begin{equation*}
\UTz\big(\Z_{K_n}^{(2)}\big)=\{\gamma \in\UT\big(\Z_{K_n}^{(2)}\big)
\mid \det\gamma\in\langle\zeta_n\rangle\};
\end{equation*}
we then have $\Gg_n\subseteq \UTz\big(\Z_{K_n}^{(2)}\big)\subseteq 
\UT\big(\Z_{K_n}^{(2)}\big)$.
In general, $\UTz\big(\Z_{K_n}^{(2)}\big)\subsetneqq \UT\big(\Z_{K_n}^{(2)}\big)$.

Various subgroups and quotient groups of
$\UT\big(\Z_{K_n}^{(2)}\big)$ and $\SUT\big(\Z_{K_n}^{(2)}\big)$ 
occur throughout this
paper.  We use the usual notation $H\leq G$ to mean that
$H$ is a subgroup of $G$ and further write $H\ll G$
in case $H\leq G$ and $[G:H]=\infty$. 
For a group $H\leq \GLT(\BC)$, we write $\Pp\!H$ for
$H$ modulo scalars.
The {\sf corank} of a group $G$, denoted $\corank G$,
is the largest integer $k$
such that $G$ surjects onto a free group of rank $k$, or $\infty$ if $G$
surjects onto free groups of arbitrarily large rank.  By analogy with
classical logic gates such as AND, NOT, etc., we often refer to
elements of $\UT\big(\Z_{K_n}^{(2)}\big)$ as {\sf gates}.

As much of this paper concerns asymptotic
growth rates, we make extensive use of standard notation:
\begin{notation}
\label{not}
{\rm
The function 
$g(n)$ is assumed positive for sufficiently large $n$.
\begin{enumerate}[$\bullet$]
\item                      
$f(n)=O(g(n))$ means that $|f|$  is bounded above by $g$
(up to a constant factor) asymptotically: 
$
\limsup_{n\rightarrow \infty}\frac{|f(n)|}{g(n)}<+\infty.
$
\item          
$f(n)=\Omega(g(n))$ means that $f$ is bounded below by g
(up to a constant factor)
asymptotically: $\liminf_{n\rightarrow \infty} \frac{|f(n)|}{g(n)}>0$.
\item          
$f(n)=\Theta(g(n))$ means that $f$ is bounded both above
and below by $g$ 
(up to a constant factor)
asymptotically: $f(n)=O(g(n))$ and $f(n)=\Omega(g(n))$.
\end{enumerate}
}
\end{notation}

Finally we turn to the contents of this paper and the
context for our results.  Single qubit quantum computation concerns
$\UT\big(\Z_{K_n}^{(2)}\big)$ and
$\SUT\big(\Z_{K_n}^{(2)}\big)$, together with their associated groups
$\PUT\big(\Z_{K_n}^{(2)}\big)$, $\PUTz\big(\Z_{K_n}^{(2)}\big)$,  and
$\PSUT\big(\Z_{K_n}^{(2)}\big)$. 
Sarnak conjectured in 2015 \cite[p.~$15^{\rm IV}$]{S} when exact synthesis of
single qubit unitaries over Clifford-cyclotomic gate sets was 
possible --this is the question of when the Clifford-cyclotomic
matrices generate the appropriate cyclotomic unitary group.
We proved Sarnak's conjecture in \cite{IJKLZ} by computing the 
the Euler-Poincar\'{e}
characteristic $\chi(\SUT(\Z_{K_n}^{(2)}))$
using a theorem of Harder and \cite{S3} and subsequently
following \cite{S1}.
The resulting theorem was then:
\begin{theorem}
  \label{rant}
\textup{(}\cite[Thm.~1.3]{IJKLZ}\textup{)}\,\,
  Suppose $4|n$, $n\geq 8$.
  \begin{enumerate}[\upshape (a)]
  \item
    \label{rant1}
We have $\Gg_n=\UTz\big(\ZKt\big)$ if and only if $n=8, 12, 16, 24$.
If $\Gg_n\neq\UTz\big(\ZKt\big)$, then we have $\Gg_n\ll\UTz\big(\ZKt\big)$.
\item
  \label{rant2}
We have $\SGg_n=\SUT\big(\ZKt\big)$ if and only if $n=8, 12, 16,24$.
If $\SGg_n\neq\SUT\big(\ZKt\big)$, then $\SGg_n\ll\SUT\big(\ZKt\big)$.
  \end{enumerate}
\end{theorem}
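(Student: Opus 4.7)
The strategy is to translate everything into Bass-Serre theory on the Bruhat-Tits tree $\T$ of $\SL_2(F_{n,\p})$ at the unique prime $\p\mid 2$ of $F_n$. Since $\H_n$ is unramified at $\p$ (its ramified places being purely archimedean), the groups $\UT(\Rn)$, $\UTz(\Rn)$, $\SUT(\Rn)$ and their projective versions embed into $\GLT(F_{n,\p})$ and act on $\T$ with finite quotient graphs of finite groups; by Bass-Serre theory the corank of each group equals the first Betti number of its quotient graph. The proof will combine the positivity of these Betti numbers for $n\notin\{8,12,16,24\}$ (which is the main theorem of the paper, and which can also be extracted from the Euler-characteristic computation of \cite{IJKLZ}) with the normal-form structure of $\Gg_n$.

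First I would establish $\corank \Gg_n = 0$ using the exact-synthesis theorem (Kliuchnikov-Maslov-Mosca for $n=8$ and its extensions for larger $n$): every element of $\Gg_n$ admits a unique expression $T_n^{a_0} H T_n^{a_1} H \cdots H T_n^{a_k}$ modulo scalars from $\langle\zeta_n\rangle$, with prescribed residue constraints on the $a_i$. Recast in Bass-Serre language, this says $\Gg_n$ acts on $\T$ with fundamental domain a single edge, so $\Gg_n$ decomposes as an amalgamated free product of two finite subgroups over a finite edge stabilizer. Consequently $\Gg_n^{\rm ab}$ is finite, so $\corank \Gg_n = 0$; the same argument applies to $\SGg_n$.

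Next I would suppose $[\UTz(\Rn):\Gg_n]<\infty$ for some $n\notin\{8,12,16,24\}$ and seek a contradiction. The computation $\chi(\SUT(\Rn))<0$ from \cite{IJKLZ}, together with the fact that the Euler characteristic of a graph of finite groups is an alternating sum of reciprocal stabilizer orders, forces the quotient graph $\T/\PUTz(\Rn)$ to have first Betti number $\geq 1$; hence $\UTz(\Rn)\twoheadrightarrow F_k$ for some $k\geq 1$ by Bass-Serre. The image of $\Gg_n$ in $F_k$ is then of finite index, hence itself a nontrivial free group by Nielsen-Schreier, yielding a surjection $\Gg_n\twoheadrightarrow\Z$ upon abelianization---contradicting the finiteness of $\Gg_n^{\rm ab}$ established above. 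For the four exceptional $n$, one appeals directly to the classical exact-synthesis theorems, or equivalently verifies that the Bass-Serre fundamental domain of $\UTz(\Rn)$ is also a single edge with vertex and edge stabilizers matching those of $\Gg_n$. Part (b) then follows from (a) by passing to the finite-index subgroups $\SGg_n\subseteq\Gg_n$ and $\SUT(\Rn)\subseteq\UTz(\Rn)$, which have compatible indices dividing $|\langle\zeta_n\rangle|$.

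The main obstacle is the amalgam identification in the second paragraph: rigorously extracting from the synthesis normal form a clean amalgam decomposition with identifiable vertex and edge stabilizers, and checking that this decomposition really does agree with the Bass-Serre fundamental data of $\UTz(\Rn)$ in the four exceptional cases. Once that geometric picture is in place, both the ``if'' and ``only if'' directions fall out by formal manipulation.
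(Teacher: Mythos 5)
Your overall strategy --- showing $\corank\Gg_n = 0$ via the amalgam structure while $\corank$ of the ambient group is positive for $n>24$ --- agrees with the paper's, and the Nielsen--Schreier contradiction and the reduction of (b) to (a) are sound. But there are two substantial gaps. First, your opening sentence already assumes there is a \emph{unique} prime $\p\mid 2$ of $F_n$; this fails for general $n$ divisible by $4$ (for instance $n=28$), in which case $\PUTz(\Rn)$ sits as a cocompact lattice in a \emph{product} $\prod_i \PGL_2(F_{\p_i})$ of rank $r>1$ and there is no single Bruhat--Tits tree to run Bass--Serre theory on. The paper handles this via Reduction~\ref{first}, invoking the Kazhdan--Margulis Normal Subgroup Theorem: since $\PGn\cong S_4\ast_{D_4}D_n$ acts on its Bass--Serre tree, it has a finite-index torsion-free (hence free) subgroup with infinite abelianization, so it cannot be an irreducible higher-rank lattice, forcing $r=1$. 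A further reduction (Reduction~\ref{secondd}) then rules out a unique prime with residue degree $f(\p/2)>1$, using Theorem~\ref{fixed} to show the amalgam $S_4\ast_{D_4}D_n$ must fix a vertex of valence $3$, which is incompatible with a $(2^f+1)$-regular tree when $f>1$. These two reductions are precisely what confine $n$ to the families $2^s$ and $3\cdot 2^s$ covered by the Main Theorem; without them your argument does not apply to general $n$ with $4\mid n$.

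Second, the parenthetical claim that positivity of the Betti numbers ``can also be extracted from the Euler-characteristic computation'' is false, and the paper explicitly warns against exactly this slip: the introduction observes that the free product of $n$ copies of $\Z/2\Z$ has $\chi\to-\infty$ while $\corank$ stays $0$, so $\chi<0$ alone does not force a cycle in the quotient graph of groups (a star graph of finite vertex groups already has $\chi<0$ but is a tree). Your intermediate step deducing ``first Betti number $\geq 1$'' from $\chi(\SUT(\Rn))<0$ is therefore not valid; you genuinely need the corank lower bounds established in Sections~\ref{two} and~\ref{three}, which is precisely the content of Main Theorem~\ref{main} and is a strictly stronger statement than the Euler-characteristic result of the earlier paper. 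A minor further caveat: the amalgam decomposition of $\PGn$ gives a one-edge fundamental domain on the Bass--Serre tree \emph{of that amalgam}, not automatically on the Bruhat--Tits tree $\T$; the identification of the two is what would need to be checked in the exceptional cases, and what the valence obstruction of Theorem~\ref{fixed} exploits.
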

\noindent In \cite{S} Sarnak pointed out that the most important families
in studying the Clifford-cyclotomic group, $\PSUT\big(\ZKt\big)$, 
and $\PUT\big(\ZKt\big)$, $4|n$, are
$n=2^s$, $n=\Th$, $n\geq 8$---he even suggested a program to reduce his
conjecture (now Theorem \ref{rant}) to these two families.
We in fact execute Sarnak's program in the final Section \ref{bird}.

In this paper we consider the  question of when
$\PSUT\big(\ZKt\big)$ and $\PUT\big(\ZKt\big)$ are generated by torsion elements
for the families $n=2^s$ and $n=\tts$.  A special
case of this is when the Clifford-cyclotomic group
generates, which is the subject of Sarnak's conjecture. For
this more general problem, the techniques of Theorem \ref{rant} are
insufficient---the proof of Theorem \ref{rant} in \cite{IJKLZ} 
is specific to the
Clifford-cyclotomic group. Hence here we consider a more difficult
invariant than the Euler-Poincar\'{e} characteristic, namely,
the corank.
This is an obstruction to being generated by torsion elements:
a group $G$ is not generated by torsion elements if 
$\corank G>0$.

Assume that $n=2^s$ or $n=\tts$, $n\geq 8$, for the 
remainder of this introduction.
To study $\corank \PSUT\big(\ZKt\big)$ and $\corank \PUT\big(\ZKt\big)$,
we want our groups to act on a contractible space with an
analyzable quotient. We achieve this  by embedding them
as explicit subgroups $\Gamma_{1,n}$
and $\Gamma_{0,n}$, respectively, of $\Pp\!\H_n^\times$.
The embedding $\PSUT\big(\ZKt\big)\hookrightarrow 
\Gamma_{1,n}\subseteq\Pp\!\H_n^\times$
is straightforward and general
(see \eqref{iso}) with $\Gamma_{1,n}=\Pp\!\widetilde{\M}_{n,1}^\times$. 
The embedding $\PUT\big(\ZKt\big)\hookrightarrow \Gamma_{0,n}\subseteq 
\Pp\!\H_n^\times$ is 
delicate, but is always
possible if $n=2^s$ or $n=3\cdot 2^s$, $n\geq 8$,
and $\Gamma_{0,n}=\Pp\!\widetilde{\M}_n^\times$ (Theorem~\ref{U})
in this case. By our assumptions on $n$, 
there is one prime $\fp:=\fp_n$ of $F_n$ above $2$ and $\H_n$ is
unramified at $\fp$.  The groups $\Gamma_{1,n}=\Pp\!\widetilde{\M}_{n,1}^\times$
and $\Gamma_{0,n}=\Pp\!\widetilde{\M}_n^\times$
act on the Bruhat-Tits tree
$\Delta=\Delta_{\fp}$ of $\SLT(F_{\fp})$ with finite stabilizers
and finite quotient graphs $\gr_n$ and $\overline{\gr}_n$, respectively.
The graph $\overline{\gr}_n$ may have half-edges
 and hence is an {\sf h-graph} in the 
terminology of Section~\ref{groupgraph}.
For an h-graph $\gr$, the first Betti number is $b_1 (\gr)=\rank H_1(\gr, \Z)$.
It follows from the Bass--Serre theory of groups acting on trees that
\begin{align*}
\rank H_1\big(\PSUT\big(\ZKt\big),\Z\big) &= \corank \PSUT\big(\ZKt\big)
=b_1(\gr_n)\\
\rank H_1\big(\PUT\big(\ZKt),\Z\big )&=\corank \PUT\big(\ZKt\big)=b_1(\ogr_n).
\end{align*}

Our aim is to bound the first Betti numbers $b_1(\gr_n)$
and $b_1(\ogr_n)$ from below. We do this by relating these to the relative mass
$M_n$ of the quaternion algebra $\H_n$ with 
 correction terms $E_{1,n}$, $E_{0,n}$:
\begin{equation}
\label{deer}
b_1(\gr_n)= 1 + M_n- E_{1,n}\quad\text{and}\quad
b_1(\ogr_n)=1+M_n/2-E_{0,n}.
\end{equation}
Equivalently, then, we aim to bound $E_{1,n}$, $E_{0,n}$
from above.
The action of $\Gamma_{i,n}$ on $\Delta$ is free only if
$E_{i,n}=0$, $i=0, 1$; $E_{i,n}$ is a measure of how much the torsion
in $\Gamma_{i,n}$ affects the homology of $\Gamma_{i,n}\backslash \Delta$.
A more conceptual treatment of the $E_{i,n}$ is given in Sections
\ref{fern} and \ref{harried}.

For $n=2^s$ and $n=\tts$, $\Disc(F_n)$ grows doubly exponentially
in $s$, see \eqref{salty}.  As $\Disc(F_n)$ is the dominant term in the formula 
\eqref{stilts}, \eqref{pros}
for $M_n$, this shows that $M_n$ grows doubly exponentially in $s$:
we in fact show in Theorem \ref{soup}
that the leading term in a lower bound for $M_n$ is
\begin{equation*}
\label{quickly}
2^{3s2^{s-3}}\text{ for }n=2^s\quad\text{and}\quad
2^{6s2^{s-3}}\text{ for }n=\tts .
\end{equation*}
So to show that $b_1(\gr_n)$ and $b_1(\ogr_n)$ grow doubly 
exponentially in $s$, it suffices to show that asymptotically
the leading term of an upper bound  
\begin{equation}
\label{burst}
\text{for }E_{i,2^s} \text{ is less than }
2^{3s2^{s-3}}\quad\text{and}\quad \,\, \text{for }E_{i,\tts}\text{ is less than }
2^{6s2^{s-3}}.
\end{equation}

The hard work of bounding $E_{i,n}$ from above is done
in Sections \ref{sec: optimal} -- \ref{three}. The torsion
elements in $\Gamma_{1,n}$ and $\Gamma_{0,n}$ arise from
embedding certain quadratic $\Z_{F_n}$-orders into $\H_n$---we
determine which embeddings occur in Sections
\ref{sec: optimal} -- \ref{harried} and count these
embeddings, which gives rise to class numbers.
We then need to bound these specific class numbers from above,
which we do in Section \ref{two} for $n=2^s$ and in Section
\ref{three} for $n=\tts$. In the end we prove an upper
bound 
\begin{equation}
\label{fin}
\text{for }E_{i,2^s}\text{ with leading term } 2^{s2^{s-3}}\quad\text{and}\quad
\,\,\text{for }
E_{i,\tts}\text{ with leading term }2^{2s2^{s-3}}.
\end{equation}
These upper bounds are better than those required in 
\eqref{burst}. We are thus able to prove that
$b_1(\gr_n)=\corank\PSUT\big(\ZKt\big)$ and $b_1(\ogr_n)=
\corank\PUT\big(\ZKt\big)$ grow in the families $n=2^s$
and $n=\tts$ as the relative mass of $\H_n$ does, which
is doubly exponential in $s$ as $s\rightarrow \infty$.

\begin{mtheorem}
\label{main}
Suppose $n=2^s$ or $n=3\cdot 2^s$, $n\geq 8$.
\begin{enumerate}[\upshape (a)]
\item
  \label{main1}
We have
\begin{align*}
\corank\PSUT\big(\ZKt\big)=&2^{\Theta(2^s \cdot s)}=2^{\Theta(n\log n)}\\
\corank\PUT\big(\ZKt\big)=&2^{\Theta(2^s \cdot s)}=2^{\Theta(n\log n)}.
\end{align*}
\item
  \label{main2}
The corank of $\PUT\big(\ZKt\big)$
or $\PSUT\big(\ZKt\big)$ is equal to
$0$ if and only if $n=8, 12, 16, 24$. In particular, 
$\PUT\big(\ZKt\big)$ and  $\PSUT\big(\ZKt\big)$ are {\bf not} generated by
torsion elements if $n>24$.
\end{enumerate}
\end{mtheorem}
\noindent When combined with the results of Section \ref{bird},
Main Theorem \ref{main}\eqref{main2} gives a second
proof of Sarnak's Conjecture (Theorem \ref{rant}).

In Section \ref{sec:bounding-below} we give a second
approach to bounding $b_1(\gr_n)$, $b_1(\ogr_n)$ from below,
or, equivalently, to bounding $E_{i,n}$ from above.
Whereas our method to prove Main Theorem \ref{main} in
Sections \ref{two}--\ref{three} involves the
nonarchimedean uniformization of 
$\gr_n$ and $\ogr_n$ at the finite prime $\p$ of $F_n$,
the alternative 
concerns archimedean uniformization 
of Shimura curves
at an infinite prime
$\infty_1$ of $F_n$. This analytic approach
uses
a result of Zograf \cite{zograf} which in turn relies on the famous
inequality $\lambda_1\geq 3/16$ of Selberg \cite{Selberg} as well as work of Yang and Yau
\cite{yy}. Deducing a lower bound on $b_1(\gr_n)$ from analytic results
on Riemann surfaces is possible because of {\em ``\,interchanging local
invariants''} for Shimura curves as in 
\v{C}erednik \cite{cer} and Varshavsky \cite{var}: we use this 
theory with $\fp$
 and $\infty_1$ being the primes interchanged.  Interchanging
local invariants is shown to imply that the Betti number  $b_1(\gr_n)$ or
$b_1(\ogr_n)$ is  the genus $g$ of a specific Shimura curve over
the totally real field $F_n$. For this Shimura curve, a result
of Shimizu \cite{shi} computes the area $A$ of the fundamental domain
as either $M_n$ or $2M_n$.  Selberg-Zograf is the inequality
\begin{equation*}
\label{minty}
A<\frac{64}{3}(g+1),
\end{equation*}
and so applied to our Shimura curve allows us to bound the genus
$g$ which is 
$b_1(\gr_n)$ or $b_1(\ogr_n)$ from below in terms of $M_n$.
We content ourselves with working out the simplest application
of Selberg-Zograf to our problem in Section \ref{sec:bounding-below},
which is sufficient to prove Main Theorem \ref{main} 
for $\PSUT\big(\Z_{K_{2^s}}^{(2)}\big)$ and $\PUT\big(\Z_{K_{\tts}}^{(2)}\big)$.

Not surprisingly, our $\infty_1$-bounds coming from
the  general Selberg-Zograf result are weaker than
our $\fp$-bounds coming from 
specific cyclotomic arguments.  
The dominant term in the upper bound \eqref{dinner} at $\infty_1$
 for $E_{1,2^s}$ is
$2^{3s2^{s-3}}$ and the dominant term in the upper bound \eqref{fin} at $\fp$ is
 $2^{s2^{s-3}}$, which is much smaller.  We illustrate
this in Figure 5, where already for $s=8$
the Selberg-Zograf upper bound for $E_{1,2^8}$ is $2.75\cdot 10^{114}$
and the cyclotomic upper bound
 is $7.83\cdot 10^{47}$. However, the cyclotomic $\fp$-bounds 
are specific to the $n=2^s$ and $n=\tts$ families
and to the quaternion algebra $\H_n$,  whereas the
Selberg-Zograf $\infty_1$-bounds would apply to Shimura curves much more
generally.

\thanks{It is a pleasure to thank Peter Sarnak,
  both for his interest and for helpful discussions.
We are also indebted to David Sherman and Shou-Wu Zhang.
  Lastly, we heartily thank the referee for alerting us to the
  possible relevance of Selberg-Zograf 
to our paper, which led us to
  write Section \ref{sec:bounding-below}.}


\section{Graphs and groups acting on trees}
\label{groupgraph}

The standard reference for graphs constructed as  quotients of trees
by group actions is Serre \cite{S2}, which we use freely. However, group actions
arising from totally definite quaternion algebras can invert
an oriented edge, necessitating the modified definitions
of Kurihara \cite[Defn.~3-1]{K}. Following \cite{S2}, a {\sf graph} $\gr$
has vertices $\Ver(\gr)$ and oriented edges $\be$ with initial vertex
$o(\be)$ and terminal vertex $t(\be)$ along with their opposite edges
$\bar\be$ with $\be\neq \bar\be$.
We say {\sf h-graph} when we allow {\sf half-edges}, edges $\be$
with $\be=\bar\be$, as in \cite{K}.  (A graph is an h-graph with no
half-edges.)  We then call edges $\be$ with
$\overline{\be} \ne \be$ {\sf regular edges}. Write $\Ed_r(\gr)$ and
$\Ed_h(\gr)$ for the set of regular and half-edges of $\gr$,
respectively, and $\Ed(\gr) := \Ed_r(\gr) \amalg \Ed_h(\gr)$ for the
set of all edges.  
In this section we extend results of \cite{S2} to 
h-graphs.

\begin{remark1}\label{notloops}
  {\rm
Half-edges $\be$ originate and terminate at the same vertex $o(\be)=t(\be)$.  
They are contractible and do not contribute to the homology of the graph.
They are not self-loops, which are edges $\be$ with $o(\be) = t(\be)$ 
but $\overline \be \ne \be$.
}
\end{remark1}

\begin{definition1}
  \label{gr}
  {\rm
    Let $\gr$ be a finite connected h-graph with $v=v(\gr)=\#\Ver(\gr)$
    vertices. Set
\begin{align*}
e_r=e_r(\gr)  = & \frac{\#\Ed_r(\gr)}{2},\quad
e_h =e_h(\gr)= \frac{\# \Ed_h(\gr)}{2},\quad\text{and}\\
\nonumber &\quad e=e(\gr)=e_r(\gr)+e_h(\gr).
\end{align*}
  }
  \end{definition1}
\noindent The fundamental
group
$\pi_1(\gr)$ is free and its rank is equal to that of $\HH_1(\gr, \Z)$.
The first Betti number 
$b_1(\gr)$ is  $b_1(\gr)=\rank\HH_1(\gr, \Z)$.
By Euler's formula
  $b_1(\gr)=1+e_r-v$.   

An {\sf inversion} of a graph $\gr$ is a pair 
$(g,\be) \in \Aut(\gr) \times \Ed(\gr)$ such that $g\be = \overline \be$.
If $\Gamma\leq \Aut(\gr)$ acts without inversions, then
the natural quotient
$\Gamma\backslash \gr$ with $\Ver(\Gamma\backslash \gr)=\Gamma\backslash\!\!\Ver(\gr)$
and $\Ed(\Gamma\backslash \gr)=\Gamma\backslash\!\Ed(\gr)$ is again a graph.
However, if $\Gamma$ acts on $\gr$ with inversions, 
the natural quotient $\Gamma\backslash \gr$ is only an
h-graph.

For a group $G$, let $G_f$ be the subgroup of $G$ generated by elements of 
finite order and recall that $\corank G$ was defined in the introduction.
Note that $G_f$ is a characteristic, and therefore normal, subgroup of $G$.
Every set of generators of $G$ must include at least $\corank G$
elements of infinite order.  If $\corank G > 0 $, then $G_f \ll G$.

The results below follow from Bass--Serre theory \cite{S2}.
\begin{theorem}
  \label{reddish}
 Suppose that $\Gamma$ is a group acting with finite stabilizers and 
 without inversions on a tree $\Delta$.
 Then there is a short exact sequence
 $$0 \rightarrow \Gamma_{\! f} \rightarrow \Gamma\rightarrow
 \pi_{1}(\Gamma\backslash \Delta) \rightarrow 0 . $$ 
\end{theorem}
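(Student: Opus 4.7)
The plan is to invoke the structure theorem of Bass--Serre theory (\cite{S2}) identifying $\Gamma$ with the fundamental group of a graph of groups on $Y=\Gamma\backslash\Delta$, and then to identify $\Gamma_{\!f}$ with the kernel of the natural map from this fundamental group onto the topological fundamental group $\pi_1(Y)$.

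First I would fix a lift of a spanning tree of $Y$ into $\Delta$ and invoke the fundamental theorem of Bass--Serre theory to present $\Gamma$ as $\pi_1(\mathcal{G},Y)$, where $\mathcal{G}$ is the graph of groups on $Y$ whose vertex and edge groups are the stabilizers $\Gamma_v$ and $\Gamma_e$; by hypothesis these are all finite, and there are no inversions so $Y$ is an honest graph. The standard presentation then gives a canonical surjection
\[
\varphi\colon \Gamma=\pi_1(\mathcal{G},Y)\twoheadrightarrow \pi_1(Y)
\]
obtained by killing every vertex group (equivalently, collapsing each vertex group to the trivial group and keeping only the edge generators, whose images in $\pi_1(Y)$ are the homotopy classes of the corresponding loops in $Y$).

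Next I would show $\ker\varphi=\Gamma_{\!f}$ by two containments. For $\ker\varphi\subseteq\Gamma_{\!f}$: by construction $\ker\varphi$ is the normal closure in $\Gamma$ of the union of the vertex stabilizers $\Gamma_v$; since each $\Gamma_v$ is finite, every generator of this normal closure is a torsion element, and a conjugate of a torsion element is torsion, so $\ker\varphi$ is generated by elements of finite order and hence lies in $\Gamma_{\!f}$. For $\Gamma_{\!f}\subseteq\ker\varphi$: it suffices to show that every torsion element $g\in\Gamma$ lies in some vertex stabilizer. This is the standard fact that an isometry of a tree is either hyperbolic (translation along an axis) or elliptic (fixing either a vertex or the midpoint of an edge it inverts); a hyperbolic isometry has infinite order, and since we have assumed no inversions, a torsion $g$ must fix some vertex $v$ of $\Delta$, so $g\in\Gamma_v\subseteq\ker\varphi$.

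Combining the two containments yields $\ker\varphi=\Gamma_{\!f}$, which is in particular normal (as it must be, being the kernel of a homomorphism), and the exact sequence
\[
0\rightarrow \Gamma_{\!f}\rightarrow \Gamma \xrightarrow{\varphi} \pi_1(Y)\rightarrow 0
\]
follows. The main obstacle is really just bookkeeping: invoking the correct form of the Bass--Serre presentation and the classification of isometries of a tree. The genuinely substantive ingredient is the elliptic/hyperbolic dichotomy that forces a torsion element acting without inversion to fix a vertex; once this is in hand, both containments are formal.
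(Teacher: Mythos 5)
Your proof is correct, and the first containment ($\ker\varphi\subseteq\Gamma_{\!f}$) is essentially identical to the paper's. For the reverse containment the paper takes a shorter route: it simply observes that $\pi_1(\Gamma\backslash\Delta)$ is a free group, hence torsion-free, so every torsion element of $\Gamma$ must map to the identity, and therefore $\Gamma_{\!f}\subseteq\ker\varphi$. You instead invoke the elliptic/hyperbolic dichotomy for tree isometries to show each torsion element actually fixes a vertex of $\Delta$ and thus lies in a vertex stabilizer. Both are valid; your argument proves a stronger fact (every torsion element is conjugate into a vertex group, not merely in the kernel), but for the purpose of this exact sequence the paper's observation that free groups are torsion-free is more economical and avoids reliance on the classification of tree isometries. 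It is worth noticing that the hypothesis of no inversions is genuinely needed in your route (to rule out the midpoint-of-an-edge fixed point), whereas in the paper's route that hypothesis is only needed to make $\Gamma\backslash\Delta$ an honest graph so that the Bass--Serre machinery applies directly.
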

\begin{proof}
By \cite[Cor.~1 to Thm.~13]{S2}, there is a surjection
$\Gamma \twoheadrightarrow \pi_1(\Gamma \backslash \Delta)$ with kernel
$H$ generated by the vertex stabilizer groups $\{ \Gamma_{\!\bv}\mid \bv \in
\Ver(\Delta)\}$.
By assumption $H \subset \Gamma_{\! f}$. Clearly, the image of every torsion element of
$\Gamma$ is trivial in the free group $\pi_1(\Gamma \backslash \Delta)$.
\end{proof}
\begin{corollary}\label{cor:surj-fg}
 Suppose that $\Gamma$ is a group acting with finite stabilizers and 
 without inversions on a tree $\Delta$. Then $\corank \Gamma =
 b_1(\Gamma \backslash \Delta)$.
\end{corollary}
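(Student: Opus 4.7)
The plan is to deduce the corollary directly from Theorem~\ref{reddish} together with two standard facts: the fundamental group of a finite connected (Kurihara) graph is free of rank equal to its first Betti number, and a free group of rank $r$ cannot surject onto a free group of rank strictly greater than $r$.

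First I would unpack what Theorem~\ref{reddish} gives us. The short exact sequence
\[
0 \to \Gamma_{\!f} \to \Gamma \to \pi_1(\Gamma\backslash\Delta) \to 0
\]
exhibits $\Gamma / \Gamma_{\!f}$ as isomorphic to $\pi_1(\Gamma\backslash\Delta)$. Since $\Gamma\backslash\Delta$ is a (Kurihara) graph, its fundamental group is free, and by Definition~\ref{gr} its rank equals $\rank H_1(\Gamma\backslash\Delta,\Z) = \genus(\Gamma\backslash\Delta)$. So $\Gamma$ surjects onto a free group of rank $\genus(\Gamma\backslash\Delta)$, giving the lower bound $\corank \Gamma \geq \genus(\Gamma\backslash\Delta)$.

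For the upper bound, suppose $\Gamma \twoheadrightarrow F_k$ is any surjection onto a free group of rank $k$. Since $F_k$ is torsion-free, every element of finite order in $\Gamma$ lies in the kernel, hence $\Gamma_{\!f}$ lies in the kernel. The surjection therefore factors as
\[
\Gamma \twoheadrightarrow \Gamma/\Gamma_{\!f} \cong \pi_1(\Gamma\backslash\Delta) \twoheadrightarrow F_k.
\]
Thus a free group of rank $\genus(\Gamma\backslash\Delta)$ surjects onto $F_k$, which by abelianization (a surjection $\Z^{\genus(\Gamma\backslash\Delta)} \twoheadrightarrow \Z^k$) forces $k \leq \genus(\Gamma\backslash\Delta)$. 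Taking the supremum gives $\corank \Gamma \leq \genus(\Gamma\backslash\Delta)$, and combined with the lower bound this yields equality.

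There is really no obstacle here: once Theorem~\ref{reddish} is in hand, the corollary is a formal consequence of the universal property of the abelianization and the torsion-freeness of free groups. The only mild point to be careful about is that $\Gamma\backslash\Delta$ may be a Kurihara graph rather than an ordinary graph, but since half-edges contribute neither to $\pi_1$ nor to $H_1$ (Remark~\ref{notloops}), the identification $\rank \pi_1(\Gamma\backslash\Delta) = \genus(\Gamma\backslash\Delta)$ still holds.
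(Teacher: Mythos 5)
Your proof is correct and is exactly the deduction the paper leaves implicit (the corollary appears with no written proof, immediately after Theorem~\ref{reddish}): the short exact sequence gives $\Gamma/\Gamma_{\!f}\cong\pi_1(\Gamma\backslash\Delta)$, free of rank $\genus(\Gamma\backslash\Delta)$, and torsion-freeness of free groups forces any surjection $\Gamma\twoheadrightarrow F_k$ to factor through this quotient, whence $k\le\genus(\Gamma\backslash\Delta)$ by abelianization. One very small simplification available to you: since the action is assumed to be without inversions, $\Gamma\backslash\Delta$ is an ordinary graph with no half-edges, so the parenthetical remark about half-edges is unnecessary here (though harmless).
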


Let $\Gamma$ be a group acting faithfully on a tree $\Delta$, possibly
with inversions.  Define $\Delta^\prime$ to be the barycentric subdivision
of $\Delta$. Then $\Gamma$ acts faithfully on $\Delta^\prime$ without inversions.
Moreover the h-graph $\Gamma\backslash \Delta$ is homotopic to
the graph $\Gamma\backslash \Delta^\prime$.
Hence
$$
b_1(\Gamma\backslash \Delta^\prime) =
e(\Gamma\backslash \Delta^\prime)-v(\Gamma\backslash \Delta^\prime) +1=
e_r(\Gamma\backslash \Delta) - v(\Gamma\backslash \Delta) + 1=b_1(\Gamma\backslash \Delta)\, .
$$
\begin{corollary}
  \label{reddish1}
   Suppose that $\Gamma$ is a group acting on a tree $\Delta$ with finite
   stabilizers, but possibly with
  inversions.  Then $$\corank \Gamma = b_1(\Gamma\backslash \Delta)
  = 1 + e_r(\Gamma\backslash\Delta) -v(\Gamma\backslash \Delta)\,
  .$$
\end{corollary}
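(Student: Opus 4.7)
The plan is to reduce the general inversion-allowed case to the inversion-free case of Corollary~\ref{cor:surj-fg} by passing to the barycentric subdivision $\Delta^\prime$ of $\Delta$, which is already introduced in the paragraph preceding the statement. That paragraph has done most of the work: it observes that $\Gamma$ acts on $\Delta^\prime$ faithfully and without inversions, and derives the chain
\[
\genus(\Gamma\backslash\Delta^\prime)=e(\Gamma\backslash\Delta^\prime)-v(\Gamma\backslash\Delta^\prime)+1=e_r(\Gamma\backslash\Delta)-v(\Gamma\backslash\Delta)+1=\genus(\Gamma\backslash\Delta).
\]
So what remains is simply to check that the action of $\Gamma$ on $\Delta^\prime$ has finite stabilizers, and then to invoke Corollary~\ref{cor:surj-fg}.

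For the stabilizer finiteness, I would observe that a vertex of $\Delta^\prime$ is either a vertex of $\Delta$ or the midpoint of an edge $\be$ of $\Delta$. In the first case the $\Gamma$-stabilizer is unchanged from $\Delta$ and is finite by hypothesis. In the second case the stabilizer is the setwise stabilizer of the pair $\{\be,\overline{\be}\}$; this contains the (finite) edge stabilizer of $\be$ in $\Delta$ with index at most two, and so is finite. Edge stabilizers in $\Delta^\prime$ inject into edge stabilizers in $\Delta$ and are finite for the same reason. Applying Corollary~\ref{cor:surj-fg} to the action of $\Gamma$ on $\Delta^\prime$ now gives $\corank\Gamma=\genus(\Gamma\backslash\Delta^\prime)$, and combining with the identity above yields both equalities claimed. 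There is no real obstacle here: once the finite-stabilizer check on $\Delta^\prime$ is in hand, the statement follows formally from Corollary~\ref{cor:surj-fg} together with Euler's formula for Kurihara graphs recorded in Definition~\ref{gr}.
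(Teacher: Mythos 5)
Your proof is correct and takes essentially the same route as the paper: it reduces the inversion case to Corollary~\ref{cor:surj-fg} by passing to the barycentric subdivision $\Delta'$ and using the genus identity $\genus(\Gamma\backslash\Delta')=\genus(\Gamma\backslash\Delta)$ already recorded in the preceding paragraph. Your explicit verification that the stabilizers on $\Delta'$ remain finite (midpoint stabilizers contain the corresponding edge stabilizer of $\Delta$ with index at most two) is a small detail the paper leaves implicit, but it is exactly the right check and involves no new ideas.
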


\subsection{Graph mass}\label{sec:wm}
\begin{definition1}\label{defn: mass}
Let the group $\Gamma$ act on a set $S$.  For $v \in S$, let $\Gamma_v$ be the stabilizer
of $v$ in $\Gamma$.  For each $s \in \Gamma\backslash S$, choose a
representative $\tilde s \in S$.  The {\sf mass} $m(s)$ of
$s$ is $1/\#\Gamma_{\tilde s}$ if $\Gamma_{\tilde s}$ is finite,
or $0$ if it is infinite (clearly this does not depend
on the choice of $\tilde s$).  The {\sf  mass} 
$m(\Gamma\backslash S)$ of $\Gamma\backslash S$ 
is $\sum_{s \in \Gamma\backslash S} m(s)$.
\end{definition1}

For a subgroup $H$ of a group $G$, write $H\unlhd G$ if $H$ is a
{\em normal} subgroup of $G$.
Mass is multiplicative in coverings in the following sense:

\begin{theorem}\label{thm: mass_mult_gen}
Let $\Gamma_0$ act on a set $S$ and let $\Gamma \le \Gamma_0$ be a subgroup of
finite index $d$.  Then $m(\Gamma\backslash S) = dm(\Gamma_0\backslash S)$.
\end{theorem}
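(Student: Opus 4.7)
The plan is to reduce to a single $\Gamma_0$-orbit and then use the double coset decomposition of $\Gamma_0$ relative to $\Gamma$ and the stabilizer $H := (\Gamma_0)_v$.

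First I would observe that every $\Gamma$-orbit in $S$ lies in a unique $\Gamma_0$-orbit, so $\Orb$ partitions according to the fibers of the natural map $\Orb \to \Orb_0$. It therefore suffices to prove that for each $s_0 \in \Orb_0$ with representative $v = \tilde{s_0}$, the sum of the masses $1/\#\Gamma_{\tilde s}$ taken over those $s \in \Orb$ contained in $\Gamma_0\cdot v$ equals $d\cdot m(s_0) = d/\#H$; summing over $s_0 \in \Orb_0$ then yields $m(\Orb) = d\cdot m(\Orb_0)$.

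Second I would invoke the double coset decomposition $\Gamma_0 = \bigsqcup_{i=1}^{k} \Gamma g_i H$. Because $[\Gamma_0:\Gamma] = d < \infty$, each double coset is a finite union of right $\Gamma$-cosets, so $k$ is finite and the standard identity
\[
d = [\Gamma_0 : \Gamma] = \sum_{i=1}^{k} [H : g_i^{-1}\Gamma g_i \cap H]
\]
holds. The $\Gamma$-orbits inside $\Gamma_0\cdot v$ are exactly $\Gamma\cdot (g_i v)$ for $i = 1, \ldots, k$, and the point stabilizer is $\Gamma_{g_i v} = \Gamma \cap g_i H g_i^{-1}$, which is conjugate to $g_i^{-1}\Gamma g_i \cap H \le H$.

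Third I would combine these identities. If $\#H < \infty$, then each $\Gamma_{g_i v}$ is finite and $[H : g_i^{-1}\Gamma g_i \cap H] = \#H / \#\Gamma_{g_i v}$; dividing the displayed identity by $\#H$ gives exactly $d/\#H = \sum_i 1/\#\Gamma_{g_i v}$, as required. If $\#H = \infty$, then $m(s_0) = 0$ by convention, and since each $\Gamma_{g_i v}$ is conjugate to a subgroup of finite index in the infinite group $H$, it too is infinite, so $\sum_i 1/\#\Gamma_{g_i v} = 0$ as well. In both cases the per-orbit identity is established, completing the proof. The only subtlety here is verifying the infinite-stabilizer case, where one must be careful that the convention $1/\infty = 0$ is compatible with the decomposition; the finite-stabilizer case is pure bookkeeping on top of the standard double coset counting formula.
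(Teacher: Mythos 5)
Your proof is correct, and it takes a genuinely different route from the paper's. The paper first reduces to the case where $\Gamma$ is \emph{normal} in $\Gamma_0$ by passing to the normal core $\core_{\Gamma_0}(\Gamma)$ (applying the theorem twice and dividing), and then exploits normality to let the finite \emph{quotient group} $\Gamma_0/\Gamma$ act transitively on the fiber $\pi^{-1}(s)$; the per-fiber identity then drops out of orbit--stabilizer applied to that quotient-group action. You skip the normality reduction entirely and instead use the double coset decomposition $\Gamma_0 = \bigsqcup_i \Gamma g_i H$ with $H = (\Gamma_0)_v$: the double cosets index the $\Gamma$-orbits in $\Gamma_0 \cdot v$, the standard coset-counting identity $d = \sum_i [H : g_i^{-1}\Gamma g_i \cap H]$ gives the sum, and the stabilizer $\Gamma_{g_i v} = \Gamma \cap g_i H g_i^{-1}$ is conjugate to $g_i^{-1}\Gamma g_i \cap H$, so the two sides match. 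What your approach buys is that you avoid the normal-core detour and the ``apply the result twice'' bookkeeping, at the price of a slightly more explicit conjugation argument; what the paper's approach buys is that once you accept the reduction, the fiber analysis is a pure group-action statement with no double cosets in sight. Your handling of the infinite-stabilizer case is also correct: since $[H : g_i^{-1}\Gamma g_i \cap H] \le d < \infty$, each $\Gamma_{g_i v}$ has finite index in $H$ (up to conjugation), so $\#H = \infty$ forces $\#\Gamma_{g_i v} = \infty$, matching the paper's observation that both sides contribute zero.
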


\begin{proof}
 Replacing $\Gamma$ by its normal core, 
$\core_{\Gamma_{0}}(\Gamma):=\cap_{\gamma\in\Gamma_{0}}\gamma \Gamma\gamma^{-1}$,
  in
  $\Gamma_0$ \cite[p. 16]{Rob} and applying
  the result to
  $\core_{\Gamma_{0}}(\Gamma)\unlhd \Gamma_{0}$
and $\core_{\Gamma_{0}}(\Gamma)\unlhd\Gamma$,
  we can assume $\Gamma$ is normal in $\Gamma_0$.
  Note that $[\Gamma_{0}:\Gamma]=d$ implies that 
 $d|[\Gamma_{0}:\core_{\Gamma_{0}}(\Gamma)]|d!$ by \cite[1.6.9]{Rob}.

  Now, let $\pi: \Gamma\backslash S \to\Gamma_0\backslash S$ 
be the induced map.
Suppose  $s \in \Gamma_0\backslash S$: 
then $s$ is a union of orbits $w_1, \ldots,
w_r$ of $\Gamma$.  If ${\Gamma_0}_{\tilde s}$ is infinite, then the
same is true of the $\Gamma$-stabilizers of all elements of $s$, so
the contribution to the mass on both sides is $0$ and $s$ may be
ignored.

Now $\Gamma_0/\Gamma$ acts  transitively on $\pi^{-1}(s)=\{ w_1, \ldots,
w_r\}$. Summing up the stabilizers gives
$$ d = \#\left(\Gamma_0/\Gamma\right) = \sum_{i= 1}^r
\#\left(\Gamma_0/\Gamma\right)_{w_i} = \sum_{i= 1}^r
\#\left({\Gamma_0}_{\widetilde{w}_i}\right)/\#\left(\Gamma_{\widetilde{w}_i}\right)\,
. $$
Since
$\#{\Gamma_0}_{\widetilde{w}_i} = \#
{\Gamma_0}_{\tilde s}$, $1\leq i\leq r$,
$$ d =\#{\Gamma_0}_{\tilde s} \sum_{i=1}^r\frac{1}{\#
  \Gamma_{w_i}} =\frac{1}{m(s)}\sum_{w\in\pi^{-1}(s)} m(w) ,$$
so that
$\sum_{\bw\in\pi^{-1}(s)}m(w)= d m(s)$.

Summing over $s\in\Gamma_0\backslash S$ gives the desired result.
\end{proof}

We now apply these results to quotient graphs.  
\begin{definition1}\label{defn: star}
 The {\sf star} of a vertex $\bv\in \Ver(\gr)$
  in an h-graph $\gr$ is 
$$\Star(\bv) := \{ \be \in \Ed(\gr) \mid o(\be) = \bv\}\, .$$
\end{definition1}

\begin{definition1}
\label{onion}
Let $\Gamma$ act on a tree $\Delta$ with $\gr := \Gamma
\backslash \Delta$. Then $\Ver(\gr)$ is identified with
$\Gamma\backslash\Ver(\Delta)$ and the mass $m(\bv)$ is
in the sense of the group $\Gamma$ acting on the set $\Ver(\Delta)$.
  The
 {\sf vertex mass} $\VM(\gr)$ of $\gr$ is
$$\VM(\gr) \colonequals \sum_{\bv\in\Ver(\gr)}m(\bv).$$
 Likewise $\Ed(gr)$ is identified with $\Gamma\backslash\Ed(\Delta)$
with $m(\be)$ in the sense of the group $\Gamma$ acting on $\Ed(\Delta)$.
 The
 {\sf edge mass} $\EM(\gr)$ of $\gr$  is
 \begin{equation*}
\EM(\gr) \colonequals \frac{1}{2}\sum_{\be\in\Ed(\gr)}m(\be)=
\frac{1}{2}\sum_{\bv \in
  \Ver(\gr)}\sum_{\be \in \Star(\bv)}m(\be)\, .
\end{equation*}
 \end{definition1}

\begin{definition1}\label{defn: breakup}
  Let $\gr=\Gamma\backslash\Delta$ be an h-graph. Define
  \begin{align*}
    \Ver_1(\gr) &:= \{\bv\in \Ver(\gr)\mid  m(\bv) = 1\},\\
    \Ver_{<1}(\gr) &:= \{\bv \in  \Ver(\gr)\mid  m(\bv) <1 \}.
    \end{align*}
Set $v_1(\gr) = \#\Ver_1(\gr)$ and $v_{<1}(\gr) = \#\Ver_{<1}(\gr)$ so
that
\[
v(\gr) = v_1(\gr) + v_{<1}(\gr).
\]
\end{definition1}
\begin{lemma}\label{lemma: lower bound e}
  For an h-graph $\gr=\Gamma\backslash\Delta$ we have
  $$e_r(\gr) \ge \EM(\gr) -
  e_h(\gr)\, .$$
\end{lemma}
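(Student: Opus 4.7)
The plan is short: the inequality is equivalent to $\EM(\gr) \leq e_r(\gr) + e_h(\gr)$, and this follows from the trivial bound $m(\be) \leq 1$ for every edge $\be$ of $\gr$.

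First I would observe that for each orbit representative $\tilde\be$ the stabilizer $\Gamma_{\tilde\be}$ is nonempty (it contains the identity), so $\#\Gamma_{\tilde\be} \geq 1$ and hence $m(\be) = 1/\#\Gamma_{\tilde\be} \leq 1$, with the convention that $m(\be) = 0$ when the stabilizer is infinite (which only strengthens the bound).

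Next, using the decomposition $\Ed(\gr) = \Ed_r(\gr) \amalg \Ed_h(\gr)$, I would split the sum defining the edge mass and bound each piece:
\[
\EM(\gr) \;=\; \frac{1}{2}\sum_{\be \in \Ed_r(\gr)} m(\be) \;+\; \frac{1}{2}\sum_{\be \in \Ed_h(\gr)} m(\be) \;\leq\; \frac{\#\Ed_r(\gr)}{2} + \frac{\#\Ed_h(\gr)}{2} \;=\; e_r(\gr) + e_h(\gr).
\]
Rearranging yields $e_r(\gr) \geq \EM(\gr) - e_h(\gr)$.

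There is essentially no obstacle here: the lemma is a bookkeeping consequence of the definition of $\EM(\gr)$ once one notes that each individual edge mass is at most $1$. The only point to be slightly careful about is the uniform treatment of regular and half-edges; but since $e_r$ and $e_h$ are both defined as half the size of the respective edge sets, the factor $\tfrac12$ in the definition of $\EM(\gr)$ distributes symmetrically and the accounting closes up cleanly.
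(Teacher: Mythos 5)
Your proof is correct and takes essentially the same route as the paper: split the edge-mass sum over $\Ed_r(\gr) \amalg \Ed_h(\gr)$, bound each $m(\be)$ by $1$, and use $e_r = \#\Ed_r/2$, $e_h = \#\Ed_h/2$ to conclude $\EM(\gr) \le e_r(\gr) + e_h(\gr)$.
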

\begin{proof}
Since $e_r(\gr) = \#\Ed_r(\gr)/2$ and $e_h(\gr) = \#\Ed_h(\gr)/2$, 
$$
\EM(\gr) =\frac{1}{2}\bigg( \sum_{\be \in \Ed_r(\gr)} m(\be) +
\sum_{\be\in \Ed_h(\gr)} m(\be)\bigg)\le e_r(\gr) + e_h(\gr)\, . 
$$\qedhere
\end{proof}
\begin{theorem}\label{theorem: regmass}
Let $\Gamma$ act on a $k$-regular tree $\Delta$ with finite
vertex and edge stabilizer groups; set $\gr := \Gamma\backslash \Delta$.
If $\bv\in\Ver(\gr)$, then
\begin{equation*}
  k = \sum_{\be \in \Star(\bv)} \frac{m(\be)}{m(\bv)} = \frac{1}{m(\bv)}
  \sum_{\be\in\Star(\bv)}m(\be).
  \end{equation*}
\end{theorem}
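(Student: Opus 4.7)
The plan is to prove the identity by lifting the star of $\bv$ back to the tree $\Delta$ and applying the orbit-stabilizer theorem for the action of the vertex stabilizer $\Gamma_{\tbv}$ on the star of a chosen lift.

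First, I would choose a representative $\tbv \in \Ver(\Delta)$ of $\bv$, so that $m(\bv) = 1/\#\Gamma_{\tbv}$. Since $\Delta$ is $k$-regular, $\#\Star(\tbv) = k$. Because any element of $\Gamma$ fixing $\tbv$ permutes the edges originating at $\tbv$, the group $\Gamma_{\tbv}$ acts on the finite set $\Star(\tbv)$.

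The next step is to identify the orbits of this action with $\Star(\bv)$. Let $\pi\colon \Delta \to \gr$ be the quotient map. The map $\Star(\tbv) \to \Star(\bv)$, $\te \mapsto \pi(\te)$, is well defined. It is surjective: if $\be \in \Star(\bv)$ has any lift $\te' \in \Ed(\Delta)$, then $\pi(o(\te'))=\bv$, so there exists $\gamma \in \Gamma$ with $\gamma\, o(\te')=\tbv$; then $\gamma\te' \in \Star(\tbv)$ lifts $\be$. It is $\Gamma_{\tbv}$-equivariant to the orbit projection: if $\te_1, \te_2 \in \Star(\tbv)$ have the same image in $\gr$, there exists $\gamma\in\Gamma$ with $\gamma\te_1=\te_2$; then $\gamma\tbv = \gamma\, o(\te_1)= o(\te_2)= \tbv$, so $\gamma \in \Gamma_{\tbv}$. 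Thus the orbits of $\Gamma_{\tbv}$ on $\Star(\tbv)$ are in natural bijection with $\Star(\bv)$. For each orbit choose a lift $\tbe$ of $\be \in \Star(\bv)$ lying in $\Star(\tbv)$; since the (oriented) edge stabilizer $\Gamma_{\tbe}$ fixes $o(\tbe)=\tbv$, we have $\Gamma_{\tbe}\subseteq \Gamma_{\tbv}$, and $m(\be)=1/\#\Gamma_{\tbe}$.

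Applying the orbit-stabilizer theorem to each orbit then gives
\[
k \;=\; \#\Star(\tbv) \;=\; \sum_{\be \in \Star(\bv)} [\Gamma_{\tbv} : \Gamma_{\tbe}]
\;=\; \sum_{\be \in \Star(\bv)} \frac{\#\Gamma_{\tbv}}{\#\Gamma_{\tbe}}
\;=\; \sum_{\be \in \Star(\bv)} \frac{m(\be)}{m(\bv)},
\]
and factoring $1/m(\bv)$ out of the sum yields the second equality. No step is really hard; the only subtlety to watch is that inversions do not obstruct the argument (an inversion sends $\tbe$ to $\overline{\tbe}\neq \tbe$, so it does not lie in $\Gamma_{\tbe}$), and that the edge stabilizers are automatically contained in the origin's vertex stabilizer, which is why the ratios $m(\be)/m(\bv)$ are integers equal to $[\Gamma_{\tbv}:\Gamma_{\tbe}]$.
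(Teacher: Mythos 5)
Your proof is correct and follows essentially the same route as the paper: lift to a representative $\tbv$, identify $\Star(\bv)$ with the $\Gamma_{\tbv}$-orbits on $\Star(\tbv)$, and apply orbit-stabilizer to each orbit. You simply spell out more carefully the bijection between orbits and $\Star(\bv)$ and the inclusion $\Gamma_{\tbe}\subseteq\Gamma_{\tbv}$, which the paper leaves implicit.
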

\begin{proof}
The stabilizer group $\Gamma_{\tilde{\bv}}$ acts on $\Star(\tilde{\bv})$ with
orbits in bijection with $\Star(\bv)$. Explicitly, if $\be \in
\Star{\bv}$ then all the edges in $\Gamma_{\tilde{\bv}} \cdot
\tilde{\be}$ map to $\be$ in $\gr$. This orbit has size 
\[
\#
\Gamma_{\!\tilde{\bv}}\cdot \tilde{\be} = \#
\Gamma_{\!\tilde{\bv}}/\Gamma_{\!\tilde{\be}} = \#\Gamma_{\!\bv}/\Gamma_{\!\be}
= m(\be)/m(\bv).
\]
 Summing over all the orbits gives
the desired result.
\end{proof}

\begin{theorem}\label{thm:three_to_two}
Let $\Gamma$ act on a $k$-regular tree $\Delta$ 
and let $\gr=\Gamma\backslash\Delta$. Then
$ k\VM(\gr) = 2\EM(\gr)$. 
\end{theorem}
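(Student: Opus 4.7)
The plan is to obtain this as an immediate consequence of the previous theorem together with the explicit formula for edge mass in Definition~\ref{defn: mass}. First I would apply Theorem~\ref{theorem: regmass} pointwise at each vertex $\bv \in \Ver(\gr)$: that theorem gives the local balance
\[
k \cdot m(\bv) \;=\; \sum_{\be \in \Star(\bv)} m(\be).
\]
Summing this identity over all $\bv \in \Ver(\gr)$, the left-hand side becomes $k \cdot \VM(\gr)$ by the definition of vertex mass, and the right-hand side becomes $\sum_{\bv \in \Ver(\gr)} \sum_{\be \in \Star(\bv)} m(\be)$.

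Next I would identify this double sum with $2\EM(\gr)$. The definition
\[
\EM(\gr) \;=\; \tfrac{1}{2} \sum_{\bv \in \Ver(\gr)} \sum_{\be \in \Star(\bv)} m(\be)
\]
is built to handle exactly this bookkeeping: a regular edge $\be$ contributes both $m(\be)$ to $\Star(o(\be))$ and $m(\bar{\be}) = m(\be)$ to $\Star(t(\be))$, while the convention for half-edges (recorded in $\Ed_h(\gr)$ together with the normalization $e_h = \#\Ed_h(\gr)/2$) produces the matching factor of $2$. Dividing through by $2$ then yields $k\VM(\gr) = 2\EM(\gr)$ at once.

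I expect no serious obstacle; the only point requiring a moment of care is the half-edge bookkeeping, where one should confirm that the convention adopted in Definition~\ref{defn: mass} for $\EM(\gr)$ matches the ``sum over stars'' arising from Theorem~\ref{theorem: regmass}. Since both expressions for $\EM(\gr)$ are given as equal in the definition itself, this compatibility is already in hand, so the proof is essentially a one-line summation.
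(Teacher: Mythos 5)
Your proposal follows exactly the paper's argument: apply Theorem~\ref{theorem: regmass} at each vertex, sum, and recognize $\VM$ and $\EM$ on the two sides. The one detail the paper addresses that you do not is that Theorem~\ref{theorem: regmass} carries a finite-stabilizer hypothesis that is absent from the statement of Theorem~\ref{thm:three_to_two}; the paper therefore disposes of vertices with infinite stabilizer separately, observing that they (and the edges in their star) contribute $0$ to both $\VM(\gr)$ and $\EM(\gr)$, before applying Theorem~\ref{theorem: regmass} to the remaining vertices. This is a small but genuine point you should add; otherwise your proof is correct and identical in method.
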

\begin{proof}
Fix $\bv \in \Ver(gr)$.  If $\Gamma_{\bv}$ is infinite, then its contribution
to $\VM(\gr)$ is $0$; since the degree of $v$ is finite, the same holds for
the edges of $\Star(\bv)$.  If $\Gamma_{\bv}$ is finite, then we may apply
Theorem~\ref{theorem: regmass}:
$$k\VM(\gr) =\sum_{\bv \in \Ver(\gr)}k m(\bv) = \sum_{\bv \in
  \Ver(\gr)}\sum_{\be \in \Star(\bv)}m(\be)= 2\EM
(\gr).$$
Summing over $\bv$ gives the desired result.
\end{proof}

Graph mass is multiplicative in coverings:
\begin{theorem}\label{thm: mass_mult}
Let $\Gamma_0$ act on a tree $\Delta$ with finite stabilizer groups. Suppose
$\Gamma \leq\Gamma_0$ with $[\Gamma_0:\Gamma]=d$. Set
$\gr_0 := \Gamma_0\backslash \Delta$ and $\gr := \Gamma \backslash \Delta$. 
Then
$$\VM(\gr) = d\VM(\gr_0)\quad\text{ and }\quad
\EM(\gr) = d\EM(\gr_0)\, .$$
\end{theorem}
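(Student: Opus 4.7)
The plan is to derive both identities as direct applications of the general multiplicativity result Theorem~\ref{thm: mass_mult_gen}, run once on the vertex set of $\Delta$ and once on the edge set of $\Delta$. Since the stabilizer groups are assumed finite, every mass contribution involved is a positive rational, and no term needs to be discarded.

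For the vertex statement, I would apply Theorem~\ref{thm: mass_mult_gen} with $S = \Ver(\Delta)$. Under the identifications $\Gamma \backslash \Ver(\Delta) = \Ver(\gr)$ and $\Gamma_0 \backslash \Ver(\Delta) = \Ver(\gr_0)$, the set masses $m(\Gamma, \Ver(\Delta))$ and $m(\Gamma_0, \Ver(\Delta))$ are precisely $\VM(\gr)$ and $\VM(\gr_0)$ by Definition~\ref{defn: mass} and the definition of vertex mass. Theorem~\ref{thm: mass_mult_gen} then gives $\VM(\gr) = d\,\VM(\gr_0)$.

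For the edge statement, I would apply Theorem~\ref{thm: mass_mult_gen} with $S = \Ed(\Delta)$, using the identifications $\Gamma \backslash \Ed(\Delta) = \Ed(\gr)$ and $\Gamma_0 \backslash \Ed(\Delta) = \Ed(\gr_0)$. The theorem yields
\[
\sum_{\be \in \Ed(\gr)} m(\be) \;=\; d \sum_{\be \in \Ed(\gr_0)} m(\be).
\]
Dividing both sides by $2$ and invoking the definition $\EM(\gr) = \tfrac{1}{2}\sum_{\be\in \Ed(\gr)} m(\be)$ (and similarly for $\gr_0$) gives $\EM(\gr) = d\,\EM(\gr_0)$. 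The factor of $\tfrac{1}{2}$, which arises from pairing each regular edge with its opposite, is preserved under pullback because the $\Gamma_0$-action commutes with the involution $\be \mapsto \bar\be$, so the bijections above are involution-equivariant and both halvings are legitimate simultaneously.

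There is no genuine obstacle here; this theorem is essentially a bookkeeping corollary of Theorem~\ref{thm: mass_mult_gen}. The only point worth flagging is that Theorem~\ref{thm: mass_mult_gen} is stated for arbitrary actions (allowing infinite stabilizers, which contribute zero on both sides), while here the finite-stabilizer hypothesis makes every term strictly positive and ensures that the orbit-mass identity is term-by-term a statement about finite indices, exactly as in the proof of Theorem~\ref{thm: mass_mult_gen}.
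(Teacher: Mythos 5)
Your proposal is correct and follows exactly the paper's route: the paper's proof is simply ``This follows immediately from Theorem~\ref{thm: mass_mult_gen}'', and you have spelled out the same two applications of that theorem (to $\Ver(\Delta)$ and to $\Ed(\Delta)$) that the paper leaves implicit. The remark about the involution-equivariance of the bijections is a harmless extra observation, but the factor of $\tfrac{1}{2}$ is just a uniform normalization on both sides and needs no special justification.
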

\begin{proof}
This follows immediately from Theorem~\ref{thm: mass_mult_gen}.
\end{proof}

To understand what happens with inversions we will need the following
definitions and lemmas.

\begin{definition1}Let $\Gamma $ act on a tree $\Delta$ with quotient
  h-graph $\gr :=
  \Gamma \backslash \Delta$. For each $\be \in \Ed(\gr)$ fix
an edge $\tilde{\be} \in \Delta$ lifting it and define
$\widetilde{\Gamma}_{\be}$ to be the subgroup of $\Gamma$
preserving the set $\{\widetilde{\be}, \widebar{\widetilde{\be}}\}$.
Clearly, $\Gamma_{\be} = \Gamma_{\widebar{\be}} \le \widetilde{\Gamma}_{\be}$.
\end{definition1}

\begin{lemma}\label{lem: invert group}
Let $\Gamma $ act on a tree $\Delta$ with quotient graph $\gr :=
\Gamma \backslash \Delta$. If $\be \in \Ed_r(\Gamma
\backslash \Delta)$, then $\widetilde{\Gamma}_{\be}=\Gamma_{\!\be}$. If $\be \in \Ed_h(\gr)$ then 
$\widetilde{\Gamma}_{\be}/\Gamma_{\!\be} \simeq \Z/2\Z\, .$
\end{lemma}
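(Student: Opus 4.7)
The plan is to encode everything in a single short exact sequence built from the permutation action of $\widetilde{\Gamma}_{\be}$ on the two-element set $\{\tilde{\be},\tilde{\widebar{\be}}\}$. Specifically, I would define
\[
\varphi\colon \widetilde{\Gamma}_{\be}\longrightarrow \operatorname{Sym}\{\tilde{\be},\tilde{\widebar{\be}}\}\cong \Z/2\Z
\]
by sending $g$ to the permutation it induces on this two-element set. By the definition of $\widetilde{\Gamma}_{\be}$, the map $\varphi$ is well-defined, and it is manifestly a group homomorphism. Both parts of the lemma will then fall out of computing $\ker\varphi$ and deciding when $\varphi$ is surjective.

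Next I would identify $\ker\varphi$ with $\Gamma_{\!\be}$. If $g\tilde{\be}=\tilde{\be}$, then, using that the opposite-edge involution on $\Delta$ is $\Gamma$-equivariant (this is built into the definition of a $\Gamma$-action on a graph), $g\tilde{\widebar{\be}}=\overline{g\tilde{\be}}=\tilde{\widebar{\be}}$, so $g\in \Gamma_{\!\widetilde{\be}}=\Gamma_{\!\be}$. Conversely, the equality $\Gamma_{\!\be}=\Gamma_{\!\widebar{\be}}$ noted in the definition shows that every element of $\Gamma_{\!\be}$ fixes both $\tilde{\be}$ and $\tilde{\widebar{\be}}$, hence lies in $\ker\varphi$. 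Thus there is a short exact sequence
\[
1\longrightarrow \Gamma_{\!\be}\longrightarrow \widetilde{\Gamma}_{\be}\xrightarrow{\;\varphi\;} \Z/2\Z,
\]
and it remains only to decide exactness on the right.

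Finally, I split into the two cases. If $\be\in\Ed_r(\gr)$, then $\be\neq\widebar{\be}$ in $\gr=\Gamma\backslash\Delta$, which means the $\Gamma$-orbits of $\tilde{\be}$ and $\tilde{\widebar{\be}}$ in $\Delta$ are distinct; consequently no element of $\Gamma$ (and in particular none of $\widetilde{\Gamma}_{\be}$) interchanges them, so $\varphi$ is trivial and $\widetilde{\Gamma}_{\be}=\Gamma_{\!\be}$. If instead $\be\in\Ed_h(\gr)$, then $\be=\widebar{\be}$ in $\gr$, so $\tilde{\be}$ and $\tilde{\widebar{\be}}$ lie in a common $\Gamma$-orbit; choose $g\in\Gamma$ with $g\tilde{\be}=\tilde{\widebar{\be}}$. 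Such a $g$ visibly preserves the set $\{\tilde{\be},\tilde{\widebar{\be}}\}$ and so lies in $\widetilde{\Gamma}_{\be}$, but it does not fix $\tilde{\be}$ and so is not in $\Gamma_{\!\be}$. Hence $\varphi$ is surjective and $\widetilde{\Gamma}_{\be}/\Gamma_{\!\be}\cong \Z/2\Z$.

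There is essentially no obstacle here; the argument is purely formal once one observes that $\widetilde{\Gamma}_{\be}$ acts on a two-element set. The only point to watch is the well-definedness of $\varphi$, which uses that the edge-opposite involution is equivariant for the $\Gamma$-action on $\Delta$.
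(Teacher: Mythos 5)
Your proof is correct and is essentially the same argument as the paper's: you package it as a homomorphism $\widetilde\Gamma_\be \to \operatorname{Sym}\{\tilde\be, \overline{\tilde\be}\}$ and compute its kernel and image, while the paper directly exhibits an inverting element and shows any two such differ by an element of $\Gamma_{\!\be}$, but the content is identical.
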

\begin{proof}
 Suppose $\tilde{\be}\in \Ed(\Delta)$ covers
 $\be \in \Ed(\Gamma\backslash \Delta)$. If $\be \ne \widebar{\be}$
 then $\widetilde{\be}$ is not inverted by $\Gamma$ so
 $\widetilde{\Gamma}_\be = \Gamma_\be$.
 
If $\be = \overline{\be}$, then the elements in $\Gamma$ inverting
$\widetilde{\be}$ are those of  $\widetilde{\Gamma}_e$ that are not
in $\Gamma_{\be}$. Consider any two such:  $\gamma_1, \gamma_2$. Then
$\gamma_i(\tilde{\be}) = \overline{\tilde{\be}}$ and
$\gamma_i(\overline{\tilde{\be}}) = \tilde{\be}$ for $i=1,2$ and
$\gamma_1\gamma_2 \in \Gamma_{\!\tilde{\be}} = \Gamma_{\!\be}$. Hence
$\widetilde{\Gamma}_{\be}/\Gamma_{\!\be} \simeq \Z/2\Z .$
\end{proof}

\begin{definition1}
  \label{dashing}
Let $\Gamma $ act on a tree $\Delta$. Let $\Delta_\Gamma$ be the tree
obtained from $\Delta$ by barycentric subdivision of precisely those
edges which are inverted by $\Gamma$. 
\end{definition1}

\begin{lemma}\label{lem: subdivide}
Let $\Gamma $ act on a tree $\Delta$. Let $\bv \in
\Ver(\Gamma\backslash \Delta_\Gamma)$ not appearing in the 
h-graph $\gr := \Gamma \backslash \Delta$. Choose a vertex $\tilde{\bv} \in
\Delta_{\Gamma}$ covering $\bv$ and let $\widetilde{\be}$ be the edge in
$\Delta$ whose subdivision produced the vertex $\widetilde{\bv}$. If
$\be$ is the half-edge in $\gr$ lying under $\widetilde{\be}$, then
$\Gamma_\bv \simeq \widetilde{\Gamma}_\be$ and $m(\bv) = m(\be)/2$. 

\end{lemma}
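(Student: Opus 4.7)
The plan is to identify the $\Gamma$-stabilizer of the lifted midpoint $\tilde{\bv}$ (as a vertex of $\Delta_\Gamma$) with the group $\widetilde{\Gamma}_{\be}$ defined just before Lemma~\ref{lem: invert group}, and then to read off the mass relation from that lemma.

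First I would describe the stabilizer explicitly. Since $\tilde{\bv}$ is by construction the midpoint of the edge $\tilde{\be} \in \Ed(\Delta)$ produced by the barycentric subdivision, an element $\gamma \in \Gamma$ fixes $\tilde{\bv}$ in $\Delta_\Gamma$ if and only if $\gamma$ sends the two halves of $\tilde{\be}$ to themselves as a set. This happens in exactly two ways: either $\gamma$ fixes $\tilde{\be}$ as an oriented edge (so $\gamma \in \Gamma_{\!\tilde{\be}}$), or $\gamma$ swaps the two endpoints and so $\gamma(\tilde{\be}) = \overline{\tilde{\be}}$. In both cases $\gamma$ preserves the set $\{\tilde{\be}, \overline{\tilde{\be}}\}$, and conversely any such $\gamma$ fixes the midpoint. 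Thus
\[
\Gamma_{\!\tilde{\bv}} \;=\; \bigl\{\gamma \in \Gamma : \gamma\bigl(\{\tilde{\be},\overline{\tilde{\be}}\}\bigr) = \{\tilde{\be},\overline{\tilde{\be}}\}\bigr\},
\]
which is precisely $\widetilde{\Gamma}_{\be}$ (computed with respect to the lift $\tilde{\be}$, as in the definition preceding Lemma~\ref{lem: invert group}). Passing to an orbit representative, this gives $\Gamma_{\!\bv} \isisom \widetilde{\Gamma}_{\be}$; the choice of lift only affects $\Gamma_{\!\tilde{\bv}}$ and $\widetilde{\Gamma}_{\be}$ up to conjugation in $\Gamma$, so the isomorphism class and the order are unambiguous.

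Next I would derive the mass relation. By definition $m(\bv) = 1/\#\Gamma_{\!\tilde{\bv}} = 1/\#\widetilde{\Gamma}_{\be}$. Since $\be$ is a half-edge of $\gr$ by hypothesis, Lemma~\ref{lem: invert group} applies and gives $[\widetilde{\Gamma}_{\be} : \Gamma_{\!\be}] = 2$, hence $\#\widetilde{\Gamma}_{\be} = 2\,\#\Gamma_{\!\be}$. Therefore
\[
m(\bv) \;=\; \frac{1}{\#\widetilde{\Gamma}_{\be}} \;=\; \frac{1}{2\,\#\Gamma_{\!\be}} \;=\; \frac{m(\be)}{2},
\]
as claimed.

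There is no real obstacle here; the statement is essentially a bookkeeping check that the barycentric-subdivision construction of Definition~\ref{dashing} interacts correctly with the inversion subgroup $\widetilde{\Gamma}_{\be}$. The only mild subtlety worth mentioning is the compatibility of lifts: the lift $\tilde{\be}$ used to define $\widetilde{\Gamma}_{\be}$ need not be the one whose midpoint is $\tilde{\bv}$, but any two such lifts differ by an element of $\Gamma$, so the resulting groups are conjugate and the argument above is unaffected.
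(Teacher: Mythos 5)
Your proof is correct and takes essentially the same route as the paper's (terse) argument: the paper asserts $\Gamma_{\bv} \simeq \Gamma_{\widetilde{\bv}} = \widetilde{\Gamma}_{\widetilde{\be}} \simeq \widetilde{\Gamma}_\be$ as obvious and then cites Lemma~\ref{lem: invert group}, which is exactly what you do, with the intermediate steps spelled out. Your remark about the well-definedness up to conjugation of the lift choice is a reasonable addition that the paper leaves implicit.
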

\begin{proof}
That $\Gamma_{\bv} \simeq  \Gamma_{\widetilde{\bv}} =
\widetilde{\Gamma}_{\widetilde{\be}}\simeq \widetilde{\Gamma}_\be$ is
obvious. The statement on masses follows from Lemma~\ref{lem: invert group}.
  \end{proof}

\begin{definition1}
  \label{costume}
  {\rm 
  Suppose the group $\Gamma$ acts without inversions
  on the tree  $\Delta$ with finite quotient graph.  Suppose
  further that the
 vertex and edge stabilizers are finite. Then the 
  {\sf equivariant Euler characteristic} of \cite[Sect. IX.7]{Br}
  is
  \[
  \chi_{\Gamma}(\Delta)=\VM(\gr)-\EM(\gr).
  \]
  }
\end{definition1}

Finally we can relate vertex and edge mass to the Euler-Poincar\'{e}
characteristic $\chi$ of \cite{S3}:

\begin{theorem}
  \label{EP}
  Suppose $\Gamma$ is a group containing a torsion-free subgroup
  of finite index \textup{(}so ``virtually torsion-free''\textup{)} acting
  on a tree $\Delta$ \textup{(}possibly with
  inversions\textup{)} 
  with finite vertex and edge stabilizer groups and finite quotient
  h-graph $\gr:=\Gamma\backslash \Delta$.  Then
  the Euler-Poincar\'{e} characteristic $\chi(\Gamma)$ is defined and 
  \[
  \chi(\Gamma):=\VM(\gr)-\EM(\gr).
  \]
\end{theorem}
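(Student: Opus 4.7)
The plan is to reduce to the torsion-free case by passing to a torsion-free finite-index subgroup $\Gamma' \leq \Gamma$, where $\chi(\Gamma')$ coincides with the topological Euler characteristic of the quotient graph, and then to descend to $\Gamma$ using the mass multiplicativity established in Theorem~\ref{thm: mass_mult}.

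To begin I would invoke the hypothesis to obtain a torsion-free subgroup $\Gamma' \leq \Gamma$ of finite index $d$; replacing $\Gamma'$ by its normal core in $\Gamma$ (as in the proof of Theorem~\ref{thm: mass_mult_gen}) I may assume $\Gamma' \unlhd \Gamma$. The next step is to check that $\Gamma'$ acts freely and without inversions on $\Delta$. Freeness is immediate: each $\Gamma'$-stabilizer is finite (as a subgroup of the corresponding finite $\Gamma$-stabilizer) and torsion-free, hence trivial. The absence of inversions follows because any inverting $\gamma \in \Gamma'$ would satisfy $\gamma^{2} \in \Gamma'_{\!\be}=\{1\}$, forcing $\gamma$ to have order dividing $2$, which contradicts torsion-freeness. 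Consequently $\gr' := \Gamma'\backslash\Delta$ is an ordinary finite graph, the quotient $\Delta \twoheadrightarrow \gr'$ is a universal cover, and $\gr'$ is a $K(\Gamma',1)$. Since all $\Gamma'$-stabilizers are trivial, every vertex and regular edge of $\gr'$ has mass $1$, so
\[
\chi(\Gamma') \;=\; v(\gr') - e_r(\gr') \;=\; \VM(\gr') - \EM(\gr').
\]

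To finish I would apply Theorem~\ref{thm: mass_mult} to the inclusion $\Gamma' \leq \Gamma$, yielding $\VM(\gr') = d\,\VM(\gr)$ and $\EM(\gr') = d\,\EM(\gr)$. Combined with Serre's defining relation $\chi(\Gamma) = \chi(\Gamma')/d$ for virtually torsion-free groups \cite[Sect.~IX.7]{Br}, this gives
\[
\chi(\Gamma) \;=\; \frac{\chi(\Gamma')}{d} \;=\; \frac{d\,\VM(\gr) - d\,\EM(\gr)}{d} \;=\; \VM(\gr) - \EM(\gr).
\]

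The main point to verify carefully is that Theorem~\ref{thm: mass_mult} does indeed apply when $\Gamma$ (but not $\Gamma'$) acts with inversions on $\Delta$. This is not really an obstacle but a formality: its proof reduces via Theorem~\ref{thm: mass_mult_gen} to a purely orbit-theoretic identity on the underlying sets $\Ver(\Delta)$ and $\Ed(\Delta)$, which is insensitive to whether oriented edges get reversed by the group action. Consequently the half-edges produced in the Kurihara quotient $\gr$ contribute no correction term, and the identity $\chi(\Gamma)=\VM(\gr)-\EM(\gr)$ holds verbatim.
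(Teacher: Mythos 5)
Your proof is correct and takes a genuinely different route from the paper. The paper first disposes of the no-inversion case by citing Brown's Proposition 7.3(e$'$) directly (which identifies $\chi(\Gamma)$ with the equivariant Euler characteristic), and then handles inversions by passing to the subdivided tree $\Delta_\Gamma$ and verifying by an explicit bookkeeping argument (using Lemma~\ref{lem: subdivide}) that the subdivision changes neither $\VM - \EM$ nor $\chi$. You instead pass immediately to a torsion-free finite-index normal subgroup $\Gamma'$, observe that torsion-freeness \emph{automatically} kills both the vertex/edge stabilizers and the inversions, so that $\gr'=\Gamma'\backslash\Delta$ is an honest finite graph and a $K(\Gamma',1)$ with $\chi(\Gamma')=v(\gr')-e_r(\gr')=\VM(\gr')-\EM(\gr')$, and then descend to $\Gamma$ via the multiplicativity of both $\chi$ (Serre/Brown) and the masses (Theorem~\ref{thm: mass_mult}), whose proof is indeed insensitive to inversions since it is purely an orbit-counting statement on the sets $\Ver(\Delta)$ and $\Ed(\Delta)$. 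Your route avoids the paper's case distinction entirely, essentially reproving the relevant special case of Brown's Proposition 7.3(e$'$) from scratch using the mass machinery already on hand; the paper's route is shorter given that it cites Brown as a black box, but requires the extra subdivision argument to cope with half-edges. Both buy the same theorem; yours is arguably more self-contained within the paper's own framework.
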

\begin{proof}
  Suppose first that the action of $\Gamma$ on $\Delta$ is without
  inversions.
  Then by \cite[Prop.~7.3(e$'$)]{Br} we have
  \[
  \chi(\Gamma)=\chi_{\Gamma}(\Delta):=\VM(\gr)-\EM(\gr).
  \]

  Now suppose $\Gamma$ acts with inversions on $\Delta$. We have 
$\gr = \Gamma\backslash \Delta$; set  $\gr_\Gamma := \Gamma \backslash
  \Delta_\Gamma$ with $\Delta_\Gamma$ as in  Definition~\ref{dashing}.
  Since $\Gamma$ acts without inversions on
  $\Delta_\Gamma$, by the discussion above,  we have $\chi(\Gamma) = \VM(\gr_\Gamma) - \EM(\gr_\Gamma)$.
  For \mbox{$\be \in \Ed_h(\gr)$} set $\bv_\be$ to be the vertex in
  $\gr_\Gamma$ arising from the barycentric subdivison of an edge
  above $\be$ in $\Delta$. We have $\Gamma_{\bv_{\be}}= 
\Gamma_{\be}$.
Moreover, $\Ver(\gr_\Gamma)$ is the disjoint
  union of $\Ver(\gr)$ and $\{\bv_\be\mid \be \in \Ed_h(\gr)\}$. Also
  for $\be \in \Ed_h(\gr)$ let $\be^\prime$ be its opposite edge in
  $\gr_\Gamma$ so that $\Ed(\gr_\Gamma)$ is the disjoint union of
  $\Ed_r(\gr), \Ed_h(\gr)$, and $\{\be^\prime \mid \be \in
  \Ed_h(\gr)\}$. We also have $\Gamma_{\be^\prime} = \Gamma_{\be}$. By Lemma
\ref{lem: subdivide},
  $$ \VM(\gr_\Gamma) = \VM(\gr) + \sum_{\be \in
    \Ed_h(\gr)}m(\bv_{\be}) = \VM(\gr) + 1/2\sum_{\be \in \Ed_h(\gr)}m(\be), \,$$
   while
   $$ \EM(\gr_\Gamma) = \EM(\gr) + 1/2\sum_{\be
     \in\Ed_h(\gr)}m(\be^\prime) = \EM(\gr) + 1/2\sum_{\be \in
     \Ed_h(\gr)}m(\be)\, .$$
   Thus,
   $$\VM(\gr) - \EM(\gr) = \VM(\gr_\Gamma) - \EM(\gr_\Gamma) =\chi(\Gamma)\, .$$
 
\end{proof}

\section{Unitary groups over
   \texorpdfstring{$\Z_{K_n}^{(2)}=\Z[\zeta_n, 1/2]$}{Z[\unichar{"03B6}n,1/2]} for
  \texorpdfstring{$n=2^s$}{n=2\unichar{"5E}s} and \texorpdfstring{$n=3\cdot2^s$}{n=3\unichar{"B7}2\unichar{"5E}s}}
\label{sec': background}

This section contains the foundational material from number theory
needed for our Main Theorem~\ref{main}.

\subsection{Classifying orders in  quadratic extensions of
Dedekind domains}
\label{quad}

In this subsection we give a simple and useful classification for orders
in relative quadratic extensions of Dedekind domains;
we use this repeatedly in Section~\ref{puppy}.
This subsection is self-contained, with notation independent of
the rest of the paper, and may be of independent interest.

\begin{definition1}
\label{sad}
{\rm
Let $\OO_M$ be a Dedekind domain with field of fractions $M$.
An {\sf extension of Dedekind domains} is the inclusion of $\OO_M$
into its integral closure in a finite extension $K/M$; this integral
closure is a Dedekind domain.  If
$[K:M]=2$, we say that the extension is {\sf quadratic}.
}
\end{definition1}

Let $\OO_L/\OO_M$ be a quadratic extension of Dedekind domains with
$\OO_L$ having field of fractions $L$.
We will classify $L$-orders $\OO\subseteq\OO_{L}$ containing $\OO_M$.
\begin{definition1}
  \label{cond}
  {\rm 
  \begin{enumerate}[\upshape (a)]
  \item
    \label{cond1}
         The {\sf conductor} $\cond(\OO)=:\fF$ of $\OO$ is 
\[
\fF:= \Ann_{\OO_{L}}(\OO_{L}/\OO)= \{x \in \OO_{L} \mid x\OO_L \subset \OO\}.
\]
Then $\fF$ is an $\OO$-ideal and an $\OO_{L}$-ideal.
\item
  \label{cond2}
         We define the  {\sf $\OO_{M}$-conductor}
          $\cond_{\OO_{M}}(\OO)=:\ff$ of $\OO$ to be
the $\OO_{M}$-ideal
$$\ff:= \Ann_{\OO_{M}}(\OO_{L}/\OO)=\fF\cap\OO_{M}. $$
\end{enumerate}
                 }
        \end{definition1}

\begin{lemma}\label{lem:conductor}
  The conductor of $\OO$ 
  is the largest $\OO_{L}$-ideal contained in $\OO$.
\end{lemma}

\begin{proof}
  First, the conductor is contained in $\OO$, for if $x \in \fF$ then
  $x \cdot 1 \in \OO$ by definition.  To see that it is the largest such
  ideal, let $I$ be an ideal of $\OO_L$ properly containing it: say we
  have $x \in I \setminus \fF$.  By definition there exists
  $y \in \OO_L$ such that $xy \notin \OO$; then $xy \in I$,
  so $I \not \subset \OO$.
\end{proof}

\begin{definition1}
  Let $I$ be a nonzero ideal of $\OO_M$.  Define the order 
  $\OO_I \subseteq \OO_L$ to be $\OO_M + I\OO_L$.
\end{definition1}

\begin{remark}
  It is clear that $\OO_I$ is closed under addition and has finite index in 
  $\OO_L$, so proving that it is closed under multiplication shows that it is
  an order.  To do so, let 
  $m + \sum_{j=1}^n i_j \ell_j$ and $m' + \sum_{k=1}^{n'} i'_k \ell'_k$ belong to
  $\OO_I$.  Their product is 
  $$mm' + \sum_j (m'i_j)\ell_j + \sum_k (mi'_k)\ell'_k + \sum_{j,k} (i_ji'_k) \ell_j \ell'_k \ ,$$ 
  and the first term is in $\OO_M$ while all the remaining terms belong to
  $I\OO_L$.
\end{remark}

\begin{proposition}\label{prop:distinct}
  Every $L$-order containing $\OO_M$ is one of the $\OO_I$.  Further,
  $\cond_{\OO_M}(\OO_I)=I$, and $\OO_J \subseteq \OO_I$ if and only
  if $J \subseteq I$.  In particular $\OO_I = \OO_J$ if and only if $I = J$.
\end{proposition}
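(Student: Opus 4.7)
The plan is to localize at each prime $\p$ of $\OO_M$ and exploit the fact that $\OO_{L,\p}$, as a torsion-free finitely generated module over the DVR $\OO_{M,\p}$, is free of rank $2$ and contains $\OO_{M,\p}$ as a direct summand. Let $\pi=\pi_\p$ denote a uniformizer. Then $\OO_{L,\p}/\OO_{M,\p}$ is a free $\OO_{M,\p}$-module of rank $1$, whose only sub-$\OO_{M,\p}$-modules are $\pi^k(\OO_{L,\p}/\OO_{M,\p})$ for $k\geq 0$. Pulling back, the only sub-$\OO_{M,\p}$-modules of $\OO_{L,\p}$ containing $\OO_{M,\p}$ are $\OO_{M,\p}+\pi^k\OO_{L,\p}$, and each is an order by the preceding remark applied locally.

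Given any $L$-order $\OO$ with $\OO_M\subseteq\OO\subseteq\OO_L$, we therefore get at every $\p$ a unique integer $k_\p\geq 0$ with $\OO_\p=\OO_{M,\p}+\pi^{k_\p}\OO_{L,\p}$. Since $\OO$ and $\OO_L$ agree away from a finite set of primes, $k_\p=0$ almost everywhere, and $I:=\prod_\p\p^{k_\p}$ is a genuine ideal of $\OO_M$. The orders $\OO$ and $\OO_I$ now have the same localization at every $\p$ and hence coincide, proving the first assertion.

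To compute $\cond_{\OO_M}(\OO_I)$, work locally. Pick an $\OO_{M,\p}$-basis $1,\alpha$ of $\OO_{L,\p}$, so that $\alpha^2=c+d\alpha$ for some $c,d\in\OO_{M,\p}$. Writing $x=a+b\alpha$, the requirement $x\OO_{L,\p}\subseteq\OO_{M,\p}+\pi^k\OO_{L,\p}$ applied to the basis element $1$ gives $b\in\pi^k\OO_{M,\p}$, after which applying it to $\alpha$ and using the quadratic relation forces $a\in\pi^k\OO_{M,\p}$. Hence $\fF_\p=\pi^k\OO_{L,\p}$ and $\ff_\p=\fF_\p\cap\OO_{M,\p}=\pi^k\OO_{M,\p}=I_\p$, giving $\cond_{\OO_M}(\OO_I)=I$ globally.

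For the containment statement, $J\subseteq I$ gives $J\OO_L\subseteq I\OO_L$ and hence $\OO_J\subseteq\OO_I$. Conversely, $\OO_J\subseteq\OO_I$ implies $\cond_{\OO_M}(\OO_J)\subseteq\cond_{\OO_M}(\OO_I)$ (the conductor is inclusion-preserving in orders), which combined with the previous paragraph gives $J\subseteq I$. Equality of orders thus forces equality of ideals. The main obstacle is the local module-theoretic classification in the first paragraph, combined with the verification of closure under multiplication; once the local picture is in place, the remaining assertions are essentially formal.
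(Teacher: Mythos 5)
Your proof is correct, but it follows a genuinely different route from the paper's. The paper argues globally: it observes that $\OO_L/\OO_M$ is a torsion-free, hence projective, $\OO_M$-module of rank $1$, and invokes the structure theory of such modules over a Dedekind domain to conclude that its submodules are exactly the $I\OO_L/\OO_M$; the conductor identity $\cond_{\OO_M}(\OO_I)=I$ is then obtained by tensoring the exact sequence $0 \to I(\OO_L/\OO_M) \to \OO_L/\OO_M \to \OO_L/\OO_I \to 0$ with $(\OO_L/\OO_M)^{-1}$. You instead localize at each prime $\p$ of $\OO_M$, where $\OO_{L,\p}/\OO_{M,\p}$ becomes \emph{free} of rank $1$ over the DVR $\OO_{M,\p}$, classify submodules there as $\pi^k$-multiples, and patch the local exponents $k_\p$ into a global ideal $I$. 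Your local conductor computation is also concrete: you choose a basis $1,\alpha$ of $\OO_{L,\p}$, write out the quadratic relation $\alpha^2=c+d\alpha$, and chase coefficients (the chase is correct: testing $x\cdot 1$ forces $b\in\pi^k\OO_{M,\p}$, then testing $x\cdot\alpha$ forces $a+bd\in\pi^k\OO_{M,\p}$ and hence $a\in\pi^k\OO_{M,\p}$). The trade-off is that your approach is more elementary and explicit (no invocation of projectivity or inverses of fractional modules) at the cost of a localize-and-patch step, whereas the paper's argument is slicker and stays global throughout. Both rest on the same essential fact that $\OO_L/\OO_M$ is torsion-free of rank $1$, which is where the hypothesis $[L:M]=2$ enters; and your closing observation that inclusion-preservation of the conductor gives $\OO_J\subseteq\OO_I\Rightarrow J\subseteq I$ is a clean way to finish.
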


\begin{proof}
In this proof $I$ denotes a nonzero ideal of $\OO_M$.
  Every $L$-order containing $\OO_M$ is an $\OO_M$-submodule of $\OO_L$,
  and submodules of $\OO_L$ containing $\OO_M$ are naturally in bijection with
  submodules of $\OO_L/\OO_M$.  The $\OO_M$-module $\OO_L/\OO_M$  is a torsion-free $\OO_M$-module (if 
  $x \ne 0 \in \OO_M$ and $xy \in \OO_M$ then $y \in M \cap \OO_L = \OO_M$),
  so it is projective, and clearly the rank is $1$.  Thus by standard theory
  of Dedekind domains its nonzero submodules are exactly those of the form
  $I\OO_L/\OO_M$, which correspond to the submodules $\OO_I \subseteq \OO_L$.
  Further, the correspondence between submodules preserves inclusion, which
  shows that $\OO_J \subseteq \OO_I$ if and only if $J \subseteq I$.

  It certainly holds that $I\OO_L \subset \OO_I$, but no larger 
  $\OO_L$-ideal is contained in $\OO_I$, for if $J$ is an ideal
  contained in $\OO_I$ then $\OO_J \subseteq \OO_I$.
  Finally, as an $\OO_M$-module $\OO_L/\OO_I$ is isomorphic to 
  $(\OO_L/\OO_M)/I(\OO_L/\OO_M)$.  To see that this is isomorphic to 
  $\OO_M/I\OO_M$, tensor the exact sequence
  \[0 \to I(\OO_L/\OO_M) \to \OO_L/\OO_M \to \OO_L/\OO_I \to 0\]
  with $(\OO_L/\OO_M)^{-1}$ to obtain $0 \to I \to \OO_M \to \OO_L/\OO_I \to 0$.
  It follows that the annihilator of $\OO_L/\OO_I$ is exactly $I$, i.e.,  
  that $\cond_{\OO_M}(\OO_I) = I$.
\end{proof}

\begin{remark1}
{\rm As this proof suggests, the conductor
cannot characterize $\OO_M$-orders of $\OO_L$ when $[L:M] > 2$.  For example,
the $\Q(\root 3 \of m)$-orders spanned by 
$1, m \root 3 \of m, m \root 3 \of {m^2}$ and 
$1, m \root 3 \of m, \root 3 \of {m^2}$
have the same conductor.
}
\end{remark1}

\begin{corollary}\label{cor:F-f}
  We have $\fF(\OO)=\ff\OO_{L}(\OO)$.
\end{corollary}

\begin{proof}
  By Proposition~\ref{prop:distinct} we may assume that $\OO = \OO_I$ for
  some $I$.  It is now clear that $\ff(\OO) = I$ and $\fF(\OO) = \OO_L I$.
\end{proof}

We restate our classification as a theorem.

\begin{theorem}\label{thm:cond}
  There is an order-preserving bijection between $L$-orders containing
  $\OO_M$ and nonzero ideals of $\OO_M$, given by $\OO \to \cond_{\OO_M}(\OO)$, whose
  inverse is $I \to \OO_I$.
\end{theorem}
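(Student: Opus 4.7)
The plan is to assemble this theorem directly from the results already established in Proposition~\ref{prop:distinct}, since Theorem~\ref{thm:cond} is essentially a restatement packaging the three conclusions of that proposition as a single order-preserving bijection. No new ingredients are required.

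First I would verify that both maps are well-defined. For $\OO \mapsto \cond_{\OO_M}(\OO)$, note that $\OO$ has finite index in $\OO_L$ (as an $\OO_M$-order), so $\Ann_{\OO_M}(\OO_L/\OO)$ is a nonzero ideal of $\OO_M$; hence $\cond_{\OO_M}(\OO)$ is a nonzero ideal. For $I \mapsto \OO_I$, the remark preceding Proposition~\ref{prop:distinct} shows $\OO_I$ is an order in $L$ containing $\OO_M$.

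Next I would check that the two maps are mutually inverse. Proposition~\ref{prop:distinct} gives $\cond_{\OO_M}(\OO_I) = I$ for every nonzero ideal $I$, so the composition $I \mapsto \OO_I \mapsto \cond_{\OO_M}(\OO_I)$ is the identity. For the other composition, let $\OO$ be an $L$-order containing $\OO_M$. By the first assertion of Proposition~\ref{prop:distinct}, $\OO = \OO_J$ for some nonzero ideal $J$ of $\OO_M$. Then $\cond_{\OO_M}(\OO) = \cond_{\OO_M}(\OO_J) = J$, so $\OO_{\cond_{\OO_M}(\OO)} = \OO_J = \OO$, which is what we need.

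Finally, the order-preserving property is precisely the statement $\OO_J \subseteq \OO_I \iff J \subseteq I$ proved in Proposition~\ref{prop:distinct}; equivalently, under the inverse map, $\OO \subseteq \OO' \iff \cond_{\OO_M}(\OO') \subseteq \cond_{\OO_M}(\OO)$ after writing $\OO = \OO_J$ and $\OO' = \OO_I$. There is no real obstacle here, since the substantive content, particularly the classification of $\OO_M$-submodules of the rank-one torsion-free $\OO_M$-module $\OO_L/\OO_M$ via the structure theory for Dedekind domains, has already been carried out in the proof of Proposition~\ref{prop:distinct}. The only task is to record the reformulation cleanly.
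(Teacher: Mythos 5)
Your proposal is correct and matches the paper's intent exactly: the paper gives no separate proof of Theorem~\ref{thm:cond}, introducing it simply with ``We restate our classification as a theorem,'' so the result is indeed a direct repackaging of Proposition~\ref{prop:distinct}. Your careful check that the two maps are mutually inverse and order-preserving is precisely the bookkeeping the paper leaves implicit.
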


We will now interpret these results slightly differently.
Let $\Gal(L/M)=\langle \sigma\rangle \cong \Z/2\Z$.
\begin{definition1}
  \label{anti}
\begin{enumerate}[\upshape (a)]
\item
  The {\sf trace map} $\Tr: \OO_L \to \OO_M$ is defined by 
    $\Tr(x) = \Tr_{L/M}(x) = x+x^\sigma$.  Define the {\sf antitrace}
    $\ATr:\OO_L \to \OO_L$ by $\ATr(x) = x-x^\sigma$.  Set $\sA := \ATr(\OO_L)$.
\item
  Let $\OO_{L,0} = \{x \in \OO_L \mid \Tr(x) = 0\}$.  Note that
    $\sA \subseteq \OO_{L,0}$.
  \end{enumerate}
\end{definition1}

\begin{proposition}\label{prop:atr-order}
  For every nonzero ideal $I \subseteq \OO_M$ we have
  \[
  \OO_I = \{x \in \OO_L \mid \ATr(x) \in I\sA\}.
  \]
  Also $\sA$ is a projective
  $\OO_M$-module of rank $1$.
\end{proposition}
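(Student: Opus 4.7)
The plan is to build both claims on the short exact sequence of $\OO_M$-modules
\[
0 \longrightarrow \OO_M \longrightarrow \OO_L \xrightarrow{\ATr} \sA \longrightarrow 0,
\]
which follows from three observations: $\ATr$ is $\OO_M$-linear because $\sigma$ fixes $M$; its image is $\sA$ by definition; and its kernel is $L^{\sigma}\cap\OO_L = M\cap\OO_L = \OO_M$.

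First I would verify the two inclusions defining the set. For $\OO_I \subseteq \{x : \ATr(x)\in I\sA\}$, any element $x = m + \sum_j i_j\ell_j$ with $m\in\OO_M$, $i_j\in I$, $\ell_j\in\OO_L$ has $\ATr(x) = \sum_j i_j\ATr(\ell_j) \in I\sA$, using that $\ATr$ is $\OO_M$-linear and kills $\OO_M$. For the reverse inclusion, if $\ATr(x) = \sum_j i_j\ATr(\ell_j)$ with $i_j\in I$ and $\ell_j\in\OO_L$, then by $\OO_M$-linearity this equals $\ATr\bigl(\sum_j i_j\ell_j\bigr)$, so $x - \sum_j i_j\ell_j$ lies in $\ker(\ATr) = \OO_M$, giving $x \in \OO_M + I\OO_L = \OO_I$. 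Here $I\sA$ is interpreted as the $\OO_M$-submodule of $\OO_L$ generated by products $i\cdot a$ with $i\in I$ and $a\in\sA$; this is consistent because $\sA$ is an $\OO_M$-module (not in general an $\OO_L$-module, as $\sA\subseteq\OO_{L,0}$).

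For the projectivity claim, the exact sequence identifies $\sA \cong \OO_L/\OO_M$ as $\OO_M$-modules. The proof of Proposition~\ref{prop:distinct} already establishes that $\OO_L/\OO_M$ is a torsion-free $\OO_M$-module; since $\OO_L$ has rank $2$ as a finitely generated $\OO_M$-module (the extension being quadratic), the quotient has rank $1$. The structure theorem for finitely generated torsion-free modules over a Dedekind domain then gives that $\sA$ is projective of rank $1$. I expect no serious obstacle; the only mildly subtle point is confirming that $I\sA$ captures exactly $\ATr(I\OO_L)$, which is immediate from the $\OO_M$-linearity and surjectivity of $\ATr:\OO_L\twoheadrightarrow\sA$.
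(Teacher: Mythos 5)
Your proposal is correct and follows essentially the same route as the paper: the same computation showing $\ATr(m + \sum_j i_j\ell_j) = \sum_j i_j\ATr(\ell_j)$ for one inclusion, and the same argument that $x - \sum_j i_j\ell_j \in \ker\ATr = \OO_M$ for the other. For projectivity, you pass through the isomorphism $\sA \cong \OO_L/\OO_M$ and cite the structure theorem for finitely generated torsion-free modules over a Dedekind domain, whereas the paper views $\sA$ directly as a submodule of the projective module $\OO_L$ and invokes Kaplansky's theorem on hereditary rings; these are equivalent standard facts, so the difference is purely one of citation, not of substance.
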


\begin{proof}
  If $x = m + \sum_j i_j \ell_j \in \OO_I$, then 
  $$\ATr(x) = m - m^\sigma + \sum_j (i_j \ell_j - i_j \ell_j^\sigma) =
  \sum_j i_j(\ell_j - \ell_j^\sigma) \in I\sA .$$  
  Conversely, if $\ATr(x) = \sum_j i_j \ATr(\ell_j)$ with all $i_j \in I$ then 
  $x = k + \sum_j i_j \ell_j$, where $k \in \ker \ATr = \OO_M$.

  For the statement about $\sA$, consider $\ATr$ as a map of
  \mbox{$\OO_M$-modules} $\OO_L \to \OO_L$.  Its kernel is $\OO_M$, a
  saturated submodule of rank $1$, so its image is torsion-free and of
  rank $1$ less than that of $\OO_L$.
  We know that $\OO_L$ is a projective $\OO_M$-module since it is a
  finitely generated torsion-free module over the Dedekind domain
  $\OO_M$. But a submodule of a projective module over a Dedekind domain
  is projective, by
  Kaplansky's theorem on
  hereditary rings \cite[Theorem 2.24]{Lam},
and hence $\sA$ is a
  projective $\OO_M$-module. 
\end{proof}

\begin{corollary}\label{cor:distinct-atr}
  If $\OO, \OO'$ are $\OO_M$-orders of $\OO_L$ with $\ATr(\OO) = \ATr(\OO')$,
  then $\OO = \OO'$.
\end{corollary}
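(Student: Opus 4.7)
The plan is to exploit the classification of Theorem~\ref{thm:cond} together with the explicit description of the antitrace image from Proposition~\ref{prop:atr-order}. By the theorem, every $\OO_M$-order of $\OO_L$ has the form $\OO_I$ for a unique nonzero ideal $I$ of $\OO_M$, so I would begin by writing $\OO = \OO_I$ and $\OO' = \OO_J$; it then suffices to show $I = J$.

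The first step is to upgrade the inclusion $\ATr(\OO_I) \subseteq I\sA$ implicit in the proof of Proposition~\ref{prop:atr-order} to the equality $\ATr(\OO_I) = I\sA$. The forward inclusion is immediate, since any $m + \sum_j i_j \ell_j \in \OO_M + I\OO_L = \OO_I$ has antitrace $\sum_j i_j \ATr(\ell_j) \in I\sA$. For the reverse inclusion, a general element of $I\sA$ may be written $\sum_j i_j \ATr(\ell_j) = \ATr\bigl(\sum_j i_j \ell_j\bigr)$ with $\sum_j i_j \ell_j \in I\OO_L \subseteq \OO_I$, which is visibly in $\ATr(\OO_I)$.

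Applying this to both orders, the hypothesis $\ATr(\OO) = \ATr(\OO')$ becomes the equality $I\sA = J\sA$ of $\OO_M$-submodules of $\OO_L$. The final step is cancellation of $\sA$. Since Proposition~\ref{prop:atr-order} tells us $\sA$ is a rank-$1$ projective $\OO_M$-module, it is an invertible fractional $\OO_M$-module in $L$, and multiplication by $\sA^{-1}$ produces $I = J$. Equivalently, one can localize at each prime $\fp$ of $\OO_M$: then $\sA_\fp$ is free of rank $1$ over the discrete valuation ring $\OO_{M,\fp}$, say $\sA_\fp = \OO_{M,\fp}\, a_\fp$ with $a_\fp$ a nonzero element of the torsion-free module $\OO_{L,\fp}$, so $I_\fp a_\fp = J_\fp a_\fp$ forces $I_\fp = J_\fp$; since this holds at every prime, $I = J$ and hence $\OO = \OO_I = \OO_J = \OO'$. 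There is no real obstacle: the entire content is encoded in the identity $\ATr(\OO_I) = I\sA$ and the invertibility of $\sA$, both of which follow from results already in hand.
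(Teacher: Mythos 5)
Your proof is correct, and it uses the ingredients the paper clearly intends: the classification $\OO = \OO_I$ from Theorem~\ref{thm:cond}, the identity $\ATr(\OO_I)=I\sA$ extracted from the proof of Proposition~\ref{prop:atr-order}, and the rank-one projectivity of $\sA$. The paper leaves the corollary's proof implicit, and there is a slightly more direct route that avoids cancelling $\sA$: once you have $\ATr(\OO_I)=I\sA$, the characterization $\OO_I=\{x\in\OO_L : \ATr(x)\in I\sA\}$ from Proposition~\ref{prop:atr-order} reads as $\OO_I=\{x\in\OO_L : \ATr(x)\in\ATr(\OO_I)\}$, so $\OO_I$ is recovered directly from its antitrace image and $\ATr(\OO)=\ATr(\OO')$ forces $\OO=\OO'$. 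Your version, which recovers the ideal $I$ itself by the invertibility of $\sA$ (equivalently, by localizing at each prime), proves a bit more than is strictly needed but is entirely sound and arguably makes the underlying structure more transparent.
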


\subsection{Cyclotomic rings}
\label{cyclo}

Suppose $4|n$, $n\geq 8$, with
$\zeta_n=e^{2\pi i /n}$, $K_n=\Q(\zeta_n)$ and $F_n=K_n^+$.
Let $\fp:=\fp_n\subseteq\Z_{F_n}$ be
a prime
above $2$ in $F_n$ with associated valuation
$\val_{\fp}$.  Let  $\fP:=\fP_n\subseteq
\Z_{K_n}$ be a prime above $\fp$.
In case $n=2^s$ or $n=3\cdot 2^s$ we have that
$\fp$ and $\fP$ are unique;
$e(\fP/\fp)=2$ if $n=2^s$ and
$f(\fP/\fp)=2$ if $n=3\cdot 2^s$.
For an order $\OO$ in a number field $F$, we denote the class group
of $\OO$ by $\Cl(\OO)$ and its class number by $h(\OO)=\#\Cl(\OO)$.
If $F$ is totally real, the narrow class group is denoted
$\Cl^+(\OO)$ with its narrow class number
$h^+(\OO)$.

We begin with  an elementary lemma:
\begin{lemma}
  \label{norms}
 Let $m\geq 3$ be an integer with $\zeta_m$ a primitive $m^{\rm th}$ root of $1$
  and $\Nm:=\Norm_{\Q(\zeta_m)/\Q}$. Then
\begin{align*}
\Nm(1+\zeta_m) &=\begin{cases} p\quad\mbox{if $m=2p^t$ for a prime $p$}\\
1\quad\mbox{otherwise},
\end{cases}\\
\Nm(1-\zeta_m) &= \begin{cases} p\quad\mbox{if $m=p^t$ for a prime $p$}\\
1\quad\mbox{otherwise}.
\end{cases}
\end{align*}
\end{lemma}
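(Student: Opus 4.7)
The plan is to reduce both identities to the classical evaluation of the $m$-th cyclotomic polynomial at $1$. First, from the factorization $\Phi_m(x)=\prod_{\sigma\in\Gal(\Q(\zeta_m)/\Q)}(x-\sigma(\zeta_m))$ I read off $\Nm(1-\zeta_m)=\Phi_m(1)$. To compute $\Phi_m(1)$ I would specialize the identity $1+x+\cdots+x^{m-1}=\prod_{d\mid m,\, d>1}\Phi_d(x)$ at $x=1$ to get $m=\prod_{d\mid m,\, d>1}\Phi_d(1)$, and then induct on the number of distinct prime divisors of $m$: the base case $m=p^t$ gives $\Phi_{p^t}(1)=p$ (since $\Phi_p(1)=p$ and $\Phi_{p^t}(x)=\Phi_p(x^{p^{t-1}})$), and the inductive step forces $\Phi_m(1)=1$ whenever $m$ has at least two distinct prime factors. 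This settles the formula for $\Nm(1-\zeta_m)$.

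For the companion formula, my trick is $1+\zeta_m=1-(-\zeta_m)$, reducing to the first case once I identify the exact order $m'$ of the root of unity $-\zeta_m$. A direct check on when $(-1)^k\zeta_m^k=1$ gives
\[
m'=\begin{cases}2m&\text{if $m$ is odd,}\\ m/2&\text{if $m\equiv 2\pmod 4$,}\\ m&\text{if $4\mid m$.}\end{cases}
\]
In each case $\Q(\zeta_m)=\Q(-\zeta_m)=\Q(\zeta_{m'})$ (a quick degree count using $\varphi(2k)=\varphi(k)$ for $k$ odd confirms the coincidence when $m$ is odd or $m\equiv 2\pmod 4$), so applying the first formula to the primitive $m'$-th root of unity $-\zeta_m$ yields $\Nm_{\Q(\zeta_m)/\Q}(1+\zeta_m)=\Phi_{m'}(1)$.

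What remains is to translate the condition ``$m'$ is a prime power'' into a condition on $m$, which I expect to be the only mildly fiddly step. If $m$ is odd and $\geq 3$, then $m'=2m$ can never be a prime power, so $\Phi_{m'}(1)=1$. If $m\equiv 2\pmod 4$, then $m'=m/2$ is an odd prime power precisely when $m=2p^t$ with $p$ odd, yielding $\Phi_{m'}(1)=p$. If $4\mid m$, then $m'=m$ is a prime power iff $m=2^t$ with $t\geq 2$, corresponding to $m=2\cdot 2^{t-1}$ and giving $\Phi_{m'}(1)=2$; otherwise $m$ has an odd prime factor together with a power of $2$, is not of the form $2p^t$, and $\Phi_{m'}(1)=1$. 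Assembling the three cases gives the stated value of $\Nm(1+\zeta_m)$.
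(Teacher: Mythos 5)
Your argument is correct. For $\Nm(1-\zeta_m)$ you take exactly the paper's route: identify the norm with $\Phi_m(1)$, specialize $\prod_{d\mid m,\,d>1}\Phi_d(x)=(x^m-1)/(x-1)$ at $x=1$, and induct on the number of prime factors. For $\Nm(1+\zeta_m)$, however, you diverge from the paper in an interesting way. The paper also identifies this norm with $\Phi_m(-1)$ and evaluates it directly via the companion identities $\prod_{1\ne d\mid m}\Phi_d(-1)=1$ (for $m$ odd) and $\prod_{1,2\ne d\mid m}\Phi_d(-1)=m/2$ (for $m$ even), running the same style of induction again at $-1$. You instead avoid $\Phi_m(-1)$ entirely: you write $1+\zeta_m=1-(-\zeta_m)$, compute the exact order $m'$ of $-\zeta_m$ (namely $2m$, $m/2$, or $m$ according as $m$ is odd, $\equiv 2\pmod 4$, or divisible by $4$), observe $\Q(\zeta_m)=\Q(\zeta_{m'})$ via the Euler-$\varphi$ coincidences, and then reapply the $\Phi_{m'}(1)$ evaluation you already proved. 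Both are valid. The paper's version is more symmetric---the same product identity, evaluated twice---while yours needs only the single evaluation at $1$ at the cost of some primitivity and field-equality bookkeeping for $-\zeta_m$; your final case analysis translating ``$m'$ is a prime power'' back into a condition on $m$ is carried out correctly.
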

\begin{proof}
  For an integer $d$, let $\Phi_d(x)\in\Z[x]$ be the
  $d^{\rm th}$ cyclotomic polynomial, so that $x^m-1 = \prod_{d|m} \Phi_d(x)$.
  The proof  is a simple induction using 
  \begin{enumerate}[\upshape (a)]
    \item
      $1=\prod_{1\ne d|m} \Phi_d(-1)$ if $m$ is odd,
    \item
      $m/2=\prod_{1,2\ne d|m} \Phi_d(-1)$ if $m$ is even, and
      \item
        $m=\prod_{1\ne d|m} \Phi_d(1)$.\qedhere \end{enumerate}\end{proof}

Computing norms will show that the ideal $\fp$ in our case is principal.
\begin{definition1}
  \label{gen}
        {\rm
          If $n=2^s\geq 8$, set $p_n=2+\zeta_n+\zeta_n^{-1}=
          \Nm_{K_n/F_n}(\zeta_n+1)$.  If $n=3\cdot 2^s\geq 12$, set
          $p'_n=1+\zeta_n+\zeta_n^{-1}$.
        }
        \end{definition1}
\begin{proposition}
  \label{prin}
  \begin{enumerate}[\upshape (a)]
    \item
\label{prin1}
      Assume $n=2^s$. Then $\Nm_{F_n/\Q}(p_n)=2$. The element
      $p_n$ is a totally positive generator of $\fp_{n}$.
    \item
      \label{prin2}
  Assume $n=3\cdot 2^s$. Then $\Nm_{F_n/\Q}(p_n')=
  \Nm_{F_n/\Q}(p_n'-2)=-2$ and $\fp_n=(p_n')$. All units
  of $F_n$ have norm $1$ and there is no element
  of $\Z_{F_n}$ of norm $2$.
  \end{enumerate}
  \end{proposition}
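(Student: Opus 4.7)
Part (a) should be a direct application of Lemma~\ref{norms} and norm transitivity. Since $p_n = \Norm_{K_n/F_n}(1+\zeta_n)$ by the Definition, $\Nm_{F_n/\Q}(p_n) = \Nm_{K_n/\Q}(1+\zeta_n)$, and Lemma~\ref{norms} applied to $m = n = 2\cdot 2^{s-1}$ (of the form $2p^t$ with $p=2$) gives $\Nm_{K_n/\Q}(1+\zeta_n) = 2$. Total positivity is immediate from $\sigma_a(p_n) = (1+\zeta_n^a)(1+\zeta_n^{-a}) = |1+\zeta_n^a|^2 > 0$, since $\gcd(a,n)=1$ and $n \geq 8$ preclude $\zeta_n^a = -1$. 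Finally, $F_n/\Q$ is totally ramified at $2$ with $f(\fp_n/2) = 1$, so $\Nm(\fp_n) = 2$; writing $(p_n) = \fp_n^k$ and comparing norms forces $k = 1$, i.e., $(p_n) = \fp_n$.

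For part (b), I would first compute $|\Nm_{F_n/\Q}(p_n')|$ using the identity $\zeta_n \cdot p_n' = \zeta_n^2 + \zeta_n + 1 = (\zeta_n^3 - 1)/(\zeta_n - 1)$. Since $\zeta_n^3$ is a primitive $2^s$-th root of unity and $[K_n : \Q(\zeta_{2^s})] = 2$, Lemma~\ref{norms} gives $\Nm_{K_n/\Q}(\zeta_n^3 - 1) = 4$, while $\Nm_{K_n/\Q}(\zeta_n) = \Nm_{K_n/\Q}(\zeta_n - 1) = 1$ because $n$ is not a prime power. Hence $\Nm_{K_n/\Q}(p_n') = 4$ and $\Nm_{F_n/\Q}(p_n') = \pm 2$. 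To pin down the sign I would count the real embeddings at which $\sigma_a(p_n') = 1 + 2\cos(2\pi a/n) < 0$; these are parameterized by $a \in (0, n/2)$ coprime to $n$ with $a \in (n/3, n/2) = (2^s, 3\cdot 2^{s-1})$. The involution $a \mapsto n/2 - a$ (which preserves coprimality to $n$ since $n/2 = 3\cdot 2^{s-1} \equiv 0 \pmod 6$) identifies this count with $\#\{b \in (0, 2^{s-1}): \gcd(b,6)=1\} = 2\lfloor 2^{s-1}/6\rfloor + 1$ via a mod-$6$ residue computation; the count is always odd, so $\Nm(p_n') = -2$. For the companion value, the Galois automorphism $\sigma : \zeta_n \mapsto -\zeta_n$ (well-defined since $\gcd(1+n/2, n) = 1$) restricts to $F_n$ and satisfies $\sigma(p_n') = 1 - \zeta_n - \zeta_n^{-1} = -(p_n'-2)$, so $\Nm(p_n' - 2) = (-1)^{[F_n:\Q]}\Nm(p_n') = \Nm(p_n') = -2$, using that $[F_n:\Q] = 2^{s-1}$ is even for $s \geq 2$. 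Principality $\fp_n = (p_n')$ then follows from $|\Nm(p_n')| = 2 = \Nm(\fp_n)$ and uniqueness of $\fp_n$.

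For the final two assertions, the key observation is that $\Q(\sqrt 3) \subseteq F_n$ for all $s \geq 2$, because $\sqrt 3 = \zeta_{12} + \zeta_{12}^{-1}$ and $\zeta_{12} \in K_n$. Since $\OO_{\Q(\sqrt 3)}^\times = \{\pm(2+\sqrt 3)^k\}$ and both generators have norm $+1$ to $\Q$, transitivity yields
\[
\Nm_{F_n/\Q}(u) = \Nm_{\Q(\sqrt 3)/\Q}\bigl(\Nm_{F_n/\Q(\sqrt 3)}(u)\bigr) = +1
\]
for every $u \in \OO_{F_n}^\times$, since $\Nm_{F_n/\Q(\sqrt 3)}(u)$ is itself a unit of $\Q(\sqrt 3)$. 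A putative $\alpha \in \sfO_n$ with $\Nm(\alpha) = 2$ would then give $(\alpha)$ an ideal of norm $2$, forcing $(\alpha) = \fp_n = (p_n')$ and $\alpha = u p_n'$ for a unit $u$; but then $\Nm(\alpha) = \Nm(u)\cdot(-2) = -2$, a contradiction.

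The most delicate step is the parity count governing the sign of $\Nm(p_n')$: verifying that $\#\{b \in (0, 2^{s-1}) : \gcd(b,6) = 1\}$ is odd uniformly in $s$ requires tracking the residues $2^{s-1} \bmod 6 \in \{2, 4\}$ to ensure that in either case exactly one integer beyond complete periods of length $6$ is coprime to $6$. All other ingredients—norm transitivity, Lemma~\ref{norms}, the $\Q(\sqrt 3)$ observation, and the Galois symmetry between $p_n'$ and $p_n' - 2$—are structural and essentially mechanical once set up.
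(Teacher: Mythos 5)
Your proof is correct, and for part (b) it takes a genuinely different route from the paper. The paper proves $\Nm_{F_n/\Q}(p_n')=\Nm_{F_n/\Q}(p_n'-2)=-2$ by induction on $s$, using the clean identity $\Nm_{F_{k+1}/F_k}(p'_{k+1})=-p'_k$ (and $\Nm_{F_{k+1}/F_k}(p'_{k+1}-2)=p'_k$) together with the fact that $[F_k:\Q]=2^{k-1}$ is even; the base case $s=2$ or $3$ is a direct computation. You instead first pin down $|\Nm(p_n')|$ via $\zeta_n p_n'=(\zeta_n^3-1)/(\zeta_n-1)$, Lemma~\ref{norms}, and the degree-$2$ subextension $\Q(\zeta_{2^s})\subset K_n$, then determine the sign by counting negative real embeddings of $p_n'$ via the involution $a\mapsto n/2-a$, and finally reduce $\Nm(p_n'-2)$ to $\Nm(p_n')$ using $\sigma:\zeta_n\mapsto-\zeta_n$ and the parity of $[F_n:\Q]$. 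Both are sound; the paper's induction handles $p_n'$ and $p_n'-2$ simultaneously and avoids the delicate parity count, while your approach is a one-shot computation and makes the separation of magnitude and sign conceptually explicit. Your final paragraphs (units of norm $1$ via $\Q(\sqrt3)\subseteq F_n$, and nonexistence of norm-$2$ elements by factoring out $p_n'$) agree with the paper's argument essentially verbatim.

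One small stylistic point: in part (a) you write $(p_n)=\fp_n^k$ ``and compare norms''; since $\Nm_{F_n/\Q}(p_n)=2$ already forces $(p_n)$ to be an integral ideal of absolute norm $2$, and $\fp_n$ is the unique such ideal (being the unique prime over $2$ with residue degree $1$), you can conclude $(p_n)=\fp_n$ directly without introducing $k$. This does not affect correctness.
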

\begin{proof}
  \eqref{prin1} for $n=2^s$ is standard,
  following from
  \[
  \Nm_{K_n/\Q}(\zeta_n+1)=2 ,
\]
  cf. Lemma~\ref{norms}.
  We prove \eqref{prin2} for $n=3\cdot 2^s$ by
  induction on $s$.  It holds in the case $s = 3$ (or indeed
  $s = 2$) by direct computation.  Inductively suppose that it holds for
  $s = k$.  Now $F_{3\cdot 2^k}$ is the fixed field of the automorphism of
  $F_{3\cdot 2^{k+1}}$ that takes $\zeta_{3 \cdot 2^{k+1}}+\zeta_{3 \cdot 2^{k+1}}^{-1}$
  to its negative.  Hence
  $$\begin{aligned}
    &\Nm_{F_{3\cdot 2^{k+1}}/F_{3\cdot 2^k}} (p'_{3\cdot2^{k+1}}) = (1 + \zeta_{3 \cdot 2^{k+1}} +
    \zeta_{3 \cdot 2^{k+1}}^{-1})(1 - \zeta_{3 \cdot 2^{k+1}} -
    \zeta_{3 \cdot 2^{k+1}}^{-1}) \\
    &\quad= 1 - (\zeta_{3 \cdot 2^{k+1}} + \zeta_{3 \cdot 2^{k+1}}^{-1})^2
    = -1 - \zeta_{3 \cdot 2^k} -\zeta_{3 \cdot 2^k}^{-1} = -p'_{3\cdot 2^k},\\
    \end{aligned}$$
  and $F_{3\cdot 2^k}$ has even degree so
  $\Nm_{F_{3\cdot 2^k}/\Q} (-p'_{3\cdot 2^k}) = \Nm_{F_{3\cdot 2^k}/\Q} (p'_{3\cdot2^k}) = -2$ 
by inductive
  hypothesis.  So $\Nm_{F_{3\cdot 2^{k+1}}/\Q} (p'_{3\cdot 2^{k+1}}) = -2$ by transitivity of norm.
  Similarly,
\[
\Nm_{F_{3\cdot 2^{k+1}}/F_{3\cdot 2^k}} (p'_{3\cdot 2^{k+1}}-2) = p'_{3\cdot 2^k}
\]
and again the result follows.

All units of $\Z_{\Q(\sqrt{3})}$ have norm $1$, so the same is true for units of
$\Z_{F_n}$.  If $q_n \in \Z_{F_n}$ were an element of norm $2$, then 
$q_n/p_n'$ would be a unit of $\Z_{F_n}$ of norm $-1$ (since there is a unique
prime ideal above $2$ in $\Z_{F_n}$), so no such element exists.
\end{proof}
Put $\Z_{K_n,1}^{(2),\times}=\{x\in \Z_{K_n}^{(2),\times}\mid \Nm_{K_n/F_n}(x)=1\}$.
\begin{lemma}
  \label{notsquares}
  Suppose $n=2^s$ or $n=3\cdot 2^s$, $n\geq 8$.
  \begin{enumerate}[\upshape (a)]
  \item
    \label{notsquares1}
    $\Z_{K_n,1}^{(2),\times}=\mu_n$.
  \item
    \label{notsquares2}
  
Put $\zeta=\zeta_n$.
  Then
\[
  \Z_{F_n}^{(2),\times}\ni \Nm_{K_n/F_n}(1+\zeta)=(1+\zeta)(1+\overline{\zeta})
\notin \big(\Z_{F_n}^{(2),\times}\big)^2.
\]
\end{enumerate}
\end{lemma}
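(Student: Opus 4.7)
My plan is to prove part (a) first and then leverage it for part (b) via a short Galois-theoretic contradiction. Part (a) reduces to a rank comparison via Dirichlet's $S$-unit theorem, while part (b) hinges on the observation that any element of relative norm one must be a root of unity, which severely constrains the form of any putative square root of $(1+\zeta)(1+\overline{\zeta})$.

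For (a), the inclusion $\mu_n \subseteq R_{n,1}^\times$ is immediate from $\zeta_n^k \cdot \zeta_n^{-k} = 1$. For the reverse direction I would compute ranks via Dirichlet. The hypothesis that $n = 2^s$ or $n = 3 \cdot 2^s$ with $n \geq 8$ guarantees there is a unique prime above $2$ in both $K_n$ and $F_n$, so $\rank R_n^\times = \rank \uRn^\times = \varphi(n)/2$. The relative norm $\Nm_{K_n/F_n} : R_n^\times \to \uRn^\times$ is well-defined, and its restriction to $\uRn^\times$ is the squaring map, whose image $(\uRn^\times)^2$ has finite index. Consequently $\Nm_{K_n/F_n}(R_n^\times)$ has finite index in $\uRn^\times$, so the kernel $R_{n,1}^\times$ has rank zero and is therefore a finite torsion group. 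Any torsion element of $R_n^\times$ is a root of unity; since roots of unity are algebraic integers they lie in $\OO_n$, and $\mu(K_n) = \langle \zeta_n \rangle$ forces $R_{n,1}^\times \subseteq \mu_n$.

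For (b), I first note that $\alpha := (1+\zeta)(1+\overline{\zeta}) \in \uRn^\times$: Lemma~\ref{norms} gives $\Nm_{K_n/\Q}(1+\zeta) \in \{1, 2\}$, so $1+\zeta$ is a unit in $R_n$ after inverting $2$, and $\alpha$ is its relative norm. Suppose for contradiction that $\alpha = u^2$ with $u \in \uRn^\times$. Then $w := u/(1+\zeta) \in R_n^\times$ satisfies $\Nm_{K_n/F_n}(w) = u^2/\alpha = 1$, so by part (a) we have $w = \zeta_n^k$ for some integer $k$, yielding $u = \zeta_n^k(1+\zeta_n)$. Since $u \in F_n$ is fixed by complex conjugation and $1 + \zeta_n^{-1} = \zeta_n^{-1}(1+\zeta_n)$, equating $u$ to its conjugate and cancelling $1+\zeta_n \neq 0$ yields $\zeta_n^{2k+1} = 1$, i.e., $n \mid 2k+1$. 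This contradicts $n$ being even, since $2k+1$ is odd.

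The only genuine subtlety is in (a): one must use that there is a unique prime above $2$ in both $K_n$ and $F_n$ (a feature of the specific families $n = 2^s$ and $n = 3 \cdot 2^s$) to align the $S$-unit ranks exactly. After that, (b) is short and self-contained, with the finite-order constraint from (a) playing the crucial role.
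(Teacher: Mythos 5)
Your proof is correct, and it takes a genuinely different route from the paper's in both parts. For part (a), the paper's argument is more elementary: it observes that since $\fP$ is the unique prime of $K_n$ above $2$ it is fixed by complex conjugation, so if $\alpha \in R_n$ with $\alpha\overline{\alpha}=1$ then $2v_{\fP}(\alpha)=v_{\fP}(\alpha)+v_{\fP}(\overline\alpha)=0$, forcing $\alpha \in \OO_n$; Kronecker's theorem then gives $\alpha \in \mu_n$. Your approach instead compares Dirichlet $S$-unit ranks of $R_n^\times$ and $\uRn^\times$ (both equal to $\varphi(n)/2$) to conclude the kernel of the relative norm is torsion. Both arguments crucially exploit the unique prime above $2$ (the paper to get $\overline\fP = \fP$, you to equate the $S$-unit ranks), and both are valid; the paper's is shorter but yours makes the underlying rank bookkeeping more explicit. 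For part (b), your argument is arguably cleaner than the paper's: the paper splits into two cases, handling $n=2^s$ via the observation that $\Nm_{F_n/\Q}(p_n)=2$ is not a square of anything in $\Z[1/2]^\times$, and $n=3\cdot 2^s$ via an argument comparing complex arguments of $1+\zeta$ and $w\zeta^k$. You instead use part (a) and Galois-invariance uniformly: writing $u=\zeta^k(1+\zeta)$ and equating $u$ with $\overline u = \zeta^{-k-1}(1+\zeta)$ gives $\zeta^{2k+1}=1$, contradicting $n$ even. This is a nicer, case-free argument that covers both families at once.
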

\begin{proof}
For \eqref{notsquares1} there is one prime $\fP$ in $K_n$ above
  $2$, so if $\alpha\in \Z_{K_n}^{(2)}$ with $\alpha\overline{\alpha}=1$, then
  $\alpha\in\Z_{K_n}$, and so $\alpha\in\mu_n$ since $\alpha\overline{\alpha}=1$.

For \eqref{notsquares2}, first let $n=2^s$.  Then
$
(1+\zeta_n)(1+\overline{\zeta}_n)=p_n\notin 
  (\Z_{F_n}^{(2),\times})^2
$
  since $\Nm_{F_n/\Q}(p_n)=2$ by Proposition~\ref{prin}\eqref{prin1}.

  Now let $n=3\cdot 2^s$ and $\zeta=\zeta_n$.  We have
  $\Nm_{K_n/\Q}(1+\zeta)=1$ by Lemma~\ref{norms}, so 
 $\Nm_{K_n/F_n}(1+\zeta)\in \Z_{F_n}^\times\subseteq \Z_{F_n}^{(2),\times}$.  Suppose
  $\Nm_{K_n/F_n}(1+\zeta)=(1+\zeta)(1+\overline{\zeta})=w^2$ for
  $w\in \Z_{F_n}^{(2),\times}$.
Then $\Nm_{K_n/F_n}((1+\zeta)/w)=1$ and $(1+\zeta)/w\in \Z_{K_n}^{(2),\times}$.
By \eqref{notsquares1} we have $(1+\zeta)/w\in\mu_n$, say $(1+\zeta)/w=\zeta^k$
for an integer $k$,
so $1+\zeta=w\zeta^k$.  Look at arguments:
\[
\pi/n=\arg(1+\zeta)=\arg(w\zeta^k)=2\pi k/n,
\]
so $k=1/2$, a contradiction.  Hence for $n=3\cdot 2^s$,
$\Nm_{K_n/F_n}(1+\zeta)\notin \big(\Z_{F_n}^{(2),\times}\big)^2$.\qedhere
\end{proof}

Let $\Z_{F_n,+}^\times$, $\Z_{F_n,+}^{(2),\times}$
denote the totally positive elements of $\Z_{F_n}^\times$, 
$\Z_{F_n}^{(2),\times}$, respectively.

\begin{theorem}
  \label{fields}
  \begin{enumerate}[\upshape (a)]
  \item
    \label{fields1}
  Assume $n=2^s$. Then $\Z_{F_n,+}^\times= \big(\Z_{F_n}^\times\big)^2$
  and $h^+(F_n)=h^+(\Z_{F_n})$, $h(F_n)=h(\Z_{F_n})$ are odd.  Further,
  $\Z_{F_n,+}^{(2), \times}/\big(\Z_{F_n}^{(2),\times}\big)^2 \simeq \Z/2\Z$.
\item
  \label{fields2}
  Assume $n=3\cdot 2^s$.  Then  $h(F_n)=h(\Z_{F_n})=h^+(\Z_{F_n}^{(2)})$
  is odd.  We have $h^+(F_n)=h^+(\Z_{F_n})=2h(\Z_{F_n})=2h(F_n)$ and hence
  $h^+(\Z_{F_n})=h^+(F_n)\equiv 2\pmod{4}$.  Again,
  $\Z_{F_n,+}^{(2),\times}/\big(\Z_{F_n}^{(2),\times}\big)^2 \simeq \Z/2\Z$.
    \end{enumerate}
\end{theorem}
We begin by proving a lemma.
\begin{lemma}
  \label{generalized-weber}
  Let $F/\Q$ be a totally real Galois extension of degree $2^d$.
  Suppose that there is $\alpha \in F$ such that $N(\alpha) < 0$
  and the ideal $(\alpha)$ is
  fixed by $\Gal(F/\Q)$.  Then for every sequence $S = (\pm 1, \pm 1, \dots, \pm 1)$
  of $2^d$ signs whose product is $1$, there is a unit of $\Z_F$ whose signs
  in the real embeddings of $F$ are given by $S$.
\end{lemma}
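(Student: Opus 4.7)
\emph{Plan of proof.} Let $G = \Gal(F/\Q)$, a $2$-group of order $2^d$. The plan is to translate the problem into linear algebra over the group algebra $\F_2[G]$, exploiting the classical fact that the modular group algebra of a finite $p$-group over $\F_p$ is a local ring whose unique maximal ideal is the augmentation ideal $I$.

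Fix a real embedding $\iota_0 : F \hookrightarrow \R$ and parameterize the $2^d$ real embeddings of $F$ as $\iota_\sigma := \iota_0 \circ \sigma^{-1}$, $\sigma \in G$. To each $x \in F^\times$ attach the ``sign element''
$$\epsilon_x := \sum_{\sigma \in G} s_\sigma(x)\,\sigma \ \in\ \F_2[G],$$
where $s_\sigma(x) \in \F_2$ is $0$ or $1$ according as $\iota_\sigma(x) > 0$ or $< 0$. Then $x \mapsto \epsilon_x$ is a homomorphism from $F^\times$ to the additive group $\F_2[G]$; sign patterns with product $+1$ correspond exactly to the elements of $I$; and a short computation (tracking $\iota_\sigma \circ \tau = \iota_{\tau^{-1}\sigma}$) gives the equivariance relation $\epsilon_{\tau(x)} = \tau \cdot \epsilon_x$ for every $\tau \in G$.

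For each $\tau \in G$, Galois-stability of $(\alpha)$ makes $u_\tau := \tau(\alpha)/\alpha$ a unit of $\OO_F$, with sign element
$$\epsilon_{u_\tau} \ =\ \epsilon_{\tau(\alpha)} + \epsilon_{\alpha} \ =\ (1 + \tau)\cdot\epsilon_\alpha.$$
The set $\{1 + \tau : \tau \in G \setminus \{e\}\}$ is an $\F_2$-basis of $I$, so the $\F_2$-span of $\{\epsilon_{u_\tau} : \tau \in G\}$ equals $I\cdot\epsilon_\alpha$. Now the hypothesis $N(\alpha) < 0$ says precisely that an odd number of real embeddings send $\alpha$ to a negative value, i.e.\ the augmentation of $\epsilon_\alpha$ equals $1$; so $\epsilon_\alpha \notin I$, and in the local ring $\F_2[G]$ this forces $\epsilon_\alpha$ to be a \emph{unit}. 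Because $I$ is a two-sided ideal, multiplication by a unit is a bijection $I \to I$, hence $I\cdot\epsilon_\alpha = I$. Thus the sign elements of the units $u_\tau$ $\F_2$-span all of $I$, i.e.\ every sign pattern with product $+1$. Since $\epsilon$ is multiplicative-to-additive, every $\F_2$-linear combination of $\epsilon_{u_\tau}$'s is realized as the sign element of the corresponding product of $u_\tau$'s, giving the unit demanded by the lemma.

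The main point to get right is the bookkeeping in the dictionary $\epsilon_{\tau(x)} = \tau\epsilon_x$ between the Galois action on $F^\times$ and left multiplication in $\F_2[G]$; once that is in place, the lemma reduces to the one-line observation that an element of a local ring lying outside the maximal ideal is invertible, together with the fact that $I\cdot(\text{unit}) = I$.
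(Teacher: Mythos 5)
Your proof is correct and rests on exactly the same insight as the paper's: identify sign vectors with $\F_2[G]$ under the Galois action, observe that $\F_2[G]$ is local with maximal ideal the augmentation ideal $I$ because $G$ is a $2$-group, and note that the hypothesis $N(\alpha)<0$ makes the sign element of $\alpha$ lie outside $I$ and hence be a unit. The only difference is presentation: the paper solves $\gamma\,s_\alpha = S$ in $\F_2[G]$ and then lifts $\gamma$ to an integral element $\Gamma\in\Z[G]$ of augmentation zero to produce the unit $\alpha^\Gamma$, whereas you name the explicit units $u_\tau=\tau(\alpha)/\alpha$, observe $\epsilon_{u_\tau}=(1+\tau)\epsilon_\alpha$, and use that the $(1+\tau)$'s span $I$ — which sidesteps the integral lift and makes it slightly more visibly constructive, but is the same argument.
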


\begin{proof}
  Let $G = \Gal(F/\Q)$, let $M$ be the product of $\langle \pm 1 \rangle$ indexed
  by the real places of $F$, and let $G$ act on $M$ by permuting the places.
  This induces an action of the group ring $\F_2[G]$ on $M$.  Since $G$
  acts transitively on the real places, an element with exactly one $-1$
  generates $M$.  But $\#\F_2[G] = \#M$, so $M$ is a free module of rank $1$.
  
  Because $G$ is a 2-group, the nilradical of $\F_2[G]$ is the
  augmentation ideal \mbox{$\langle g-1\mid g \in G\rangle$} by 
\cite[Thm.~1.2]{J}.  Since all maximal ideals contain the nilradical and
  the augmentation ideal is clearly maximal, it is the unique maximal
  ideal.  Thus any element of $M$ whose product is $-1$ is the image
  of a generator by a unit of $\F_2[G]$, and hence also generates $M$.

  To conclude, consider $s_\alpha \in M$, the image of $\alpha$ by the
  sign map.  Let $\gamma \in \F_2[G]$ be such that $\gamma(s_\alpha) = S$
  and lift $\gamma$ to a $\Gamma \in \Z[G]$ whose image under the
  augmentation map is $0$.
  Then $\alpha^\Gamma$ has the signs $S$; furthermore, since the ideal
  $(\alpha)$ is fixed by Galois, $\alpha^\Gamma$ generates the ideal
  $(\alpha)^0$, so it is the desired unit.
\end{proof}
\begin{proof}[Proof of Theorem \textup{\ref{fields}}]
  To prove \eqref{fields1}, let $n=2^s$. Then the class number
  $h(F_n)$ is odd \cite[Thm.~10.4(b)]{W} and all totally
  positive units in $\Z_{F_n}^\times$ are squares by Weber's Theorem \cite{We}.
  Hence the narrow class number $h^+(F_n)$ is odd and
  $\Z_{F_n,+}^{(2),\times}/\big(\Z_{F_n}^{(2),\times}\big)^2
    \simeq \Z/2\Z$.

  We now prove \eqref{fields2}.  Let $n=3\cdot 2^s$.  The class number 
$h(F_n)$
  is odd, as we see by applying \cite[Thm.~10.4]{W} to $F_n/\Q(\sqrt{3})$.  
  Since all units of $\Z_{F_n}$ have norm $1$ by Proposition~\ref{prin}\eqref{prin2}, the
  narrow class number $h^+(F_n)$ is even.
  Apply Lemma~\ref{generalized-weber} with $F=F_n$ and $\alpha=p_n'$
  to conclude that 
  $h^+(F_n)= 2h(F_n)$.
  Finally, the unique prime above $2$ in $F_n$ is principal but
  not generated by a totally positive element, so once $2$ is inverted
  there are units of all signatures and so $h^+(\Z_{F_n}^{(2)})=h(F_n)$.
  Let $d = [F_n:\Q]$.
  Then $\Z_{F_n}^{(2),\times}/\big(\Z_{F_n}^{(2),\times}\big)^2\simeq (\Z/2\Z)^{d+1}$ 
with generators
  the classes of the fundamental units, $-1$, and $p_n'$. Since 
  $\Z_{F_n}^{(2)}$ has units of all signatures,
  there is an exact sequence with $R=\Z_{F_n}^{(2)}$:
  \[
  1\longrightarrow \frac{R_+^\times}{(R^\times)^2}\longrightarrow
  \frac{R^\times}{
    (R^\times)^2}\stackrel{\rm sig}{\longrightarrow} \F_2^d\longrightarrow 1 .
  \]
  It follows that $R_+^\times/\big(R^\times\big)^2\simeq \Z/2\Z$.
\end{proof}

\subsection{The Hamilton quaternions}
\label{quat}

Suppose $4|n$, $n\geq 8$.

\begin{definition1}
  \label{def:max-oon}
  {\rm
The {\sf standard} maximal $\Z_{F_n}^{(2)}$-order of $\H_n$ is
\[
\widetilde{\M}_n:=\Z_{F_n}^{(2)} \langle 1,i,j,k\rangle =\Z_{F_n}^{(2)}\langle 1, i, j, (1+i+j+k)/2\rangle .
\]
There is an ideal of $\Z_{F_n}$ whose square is the ideal $(2)$.
In case this ideal is principal with $(2)=(\alpha)^2$, we define a 
maximal $\Z_{F_n}$-order $\M_n$ with
\[
\{1,i,j,(1+i+j+k)/2\}\subseteq \M_n\subseteq \widetilde{\M}_n .
\]
by
\begin{equation}
\label{explicit}
\M\colonequals \M_n=\Z_{F_n}\left\langle 1, \frac{1+i}{\alpha}, \frac{1+j}{\alpha},
\frac{1+i+j+k}{2}\right\rangle,
\end{equation}
where we take $\alpha=\sqrt{2}$ if $8|n$ and $\alpha=1+\sqrt{3}$ if $n=12$.
In particular this explicitly defines $\M_n$ if $n=2^s$ or $n=3\cdot 2^s$,
$n\geq 8$.
}

\end{definition1}

We now introduce the Bruhat-Tits tree $\Delta=\Delta_{\fp}$ for $\SLT(F_{\fp})$
where $F\colonequals F_n$.
We suppose that $n=2^s$ or $n=3\cdot 2^s$ with $n\geq 8$.
 Recall that $\fp:=\fp_n\subseteq \Z_{F}\colonequals \Z_{F_n}$ 
is the unique prime ideal
 above $2$ in $F$; $\fp$ is totally ramified.
 The quaternion algebra $\H\colonequals \H_n$ is unramified at $\fp$ and hence
 at all finite primes of $F_n$.
  The vertices $\Ver(\Delta)$ are given by
  (cf. \cite[Sect.~4]{K}):
  \begin{align}
    \label{ver}
    \Ver(\Delta) &=\PGLT(F_{\fp})/\PGLT(\Z_{F_\fp})\\
 \nonumber   &= \H_\fp^\times/F_{\fp}^\times\M_{\fp}^\times\\
 \nonumber  &=\{\mbox{maximal orders $\M' \subset\H$};
 \mbox{ $\M_{\fq}'=\M_{\fq}$ for all
  primes $\fq\neq \fp$}\}.
  \end{align}
  In the last identification, an element
  $x\in \H_\fp^\times/F_{\fp}^\times\M_{\fp}^\times$
  corresponds to a maximal order $\M'\subseteq \H$ such that
  $\M_{\fp}'=x\M_{\fp}x^{-1}$ and $\M_{\fq}'=\M_{\fq}$ for all
  primes $\fq\neq\fp$. In particular, $x\in \widetilde{\M}_n^\times
  /\Z_{F}^{(2),\times}\M_n^\times$ corresponds to 
the maximal order $\M^x:=x\M x^{-1}$.

  Let $v', v''\in\Ver(\Delta)$ with corresponding
  maximal orders $\M', \M''$ as above. Then $v'$ and $v''$ are
  connected by an edge in $\Delta$ if and only if $\M'\cap\M''$ is an
  Eichler order of level $\fp$, i.e., $\M'\cap \M''$ is
  $H_\fp^\times$-conjugate
  to the order $\left[\begin{smallmatrix} \Z_{F_\fp} &
      \Z_{F_\fp} \\ \fp\Z_{F_\fp} & \Z_{F_\fp}\end{smallmatrix}\right]$
  by the identification $\H_{\fp}=\Matt_{2\times 2}(F_{\fp})$.
  The group $\PGLT(F_{\fp})$ acts on $\Delta$.

Following
Kurihara \cite{K} (who in turn
follows Ihara \cite{I}), define discrete subgroups
$\Gamma_{0}, \Gamma_{1}$ of $\PGLT(F_{\fp})$ arising from
the definite quaternion algebra $\H$.  Let
\[
\H^{\times}_{n,1}=\{\gamma \in \H_n^\times \mid \Norm_{\H_n/F_n}(\gamma)=1\}
\quad\text{and}\quad
 \M_{n,1}^\times=\M_n^\times\cap \H_{n,1}^\times
\]
 with a similar notation
locally at a prime $v$.  Put
\begin{align}
\label{gamma}
  \Gamma_{0,n}=\Gamma_0 & =\Gamma_{0}(\widetilde{\M}_n)\
  =\widetilde{\M}_n^\times/\Z_{F}^{(2),\times}=:\Pp\!\widetilde{\M}_n^\times\\[.02in]
\nonumber \Gamma_{1,n}=\Gamma_1& =\Gamma_{1}(\widetilde{\M}_n)
  =\widetilde{\M}_{n,1}^\times/\pm 1 =:\Pp\!\widetilde{\M}_{n,1}^\times .
\end{align}
The groups  $\Gamma_{0}$ and $\Gamma_{1}$ are discrete
cocompact subgroups of $\PGLT(F_\fp)$ and hence act on the
Bruhat-Tits tree $\Delta :=\Delta_{\fp}$ of $\SLT(F_{\fp})$.
A basic result \cite[Corollary, p. 75]{S2} on this action is:
\begin{lemma}
  \label{action}
  Suppose $\tilde{v}\in\Ver(\Delta)$ and $\gamma\in\GLT(F_{\fp})$.
    Then \[\dist(\tilde{v},\gamma\tilde{v})\equiv \val_{\fp}(\det\gamma)\pmod
    {2}.
    \]
    \end{lemma}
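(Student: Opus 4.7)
The plan is to reduce the statement to the standard bipartite structure of the Bruhat-Tits tree of $\SLT(F_\fp)$, which is the content of the Serre reference.

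First I would set up the lattice model of $\Delta$. Identify $\Ver(\Delta)$ with homothety classes $[L]$ of $\uOO_\fp$-lattices $L \subset F_\fp^2$, and recall that two classes $[L_1], [L_2]$ are adjacent iff after rescaling we can arrange $\pi L_1 \subsetneq L_2 \subsetneq L_1$, where $\pi$ generates $\fp\uOO_\fp$. Under this identification the action of $\GLT(F_\fp)$ on $\Ver(\Delta)$ is just $\gamma \cdot [L] = [\gamma L]$.

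Next I would introduce the type map $\Ty \colon \Ver(\Delta) \to \Z/2\Z$ defined by $\Ty([L]) = \val_\fp(\det L) \bmod 2$, where $\det L$ denotes the $\uOO_\fp$-fractional ideal spanned by $v\wedge w$ for any $\uOO_\fp$-basis $v,w$ of $L$. Replacing $L$ by $cL$ changes $\val_\fp(\det L)$ by $2\val_\fp(c)$, so $\Ty$ is well defined. Two key properties then need to be checked: (i) the tree $\Delta$ is bipartite with respect to $\Ty$, i.e. adjacent vertices have opposite types, since in the adjacency relation $\pi L_1 \subsetneq L_2 \subsetneq L_1$ the containment forces $\val_\fp(\det L_2) = \val_\fp(\det L_1)+1$; and (ii) for $\gamma \in \GLT(F_\fp)$, $\Ty(\gamma \cdot [L]) = \Ty([L]) + \val_\fp(\det \gamma)\pmod 2$, since $\det(\gamma L) = \det(\gamma) \cdot \det L$ as fractional ideals.

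From (i) one concludes by induction along any geodesic in $\Delta$ that
\[
\dist(\tv_1, \tv_2) \equiv \Ty(\tv_1) - \Ty(\tv_2) \pmod 2
\]
for any $\tv_1, \tv_2 \in \Ver(\Delta)$. Applying this with $\tv_1 = \tv$ and $\tv_2 = \gamma\tv$ and using (ii) yields
\[
\dist(\tv, \gamma\tv) \equiv \Ty(\tv) - \Ty(\gamma\tv) \equiv \val_\fp(\det\gamma) \pmod 2,
\]
which is exactly the claim. The only real content is verifying that $\Ty$ is well defined and that adjacency flips it; both are immediate from the definitions, so there is no serious obstacle, and the result is a standard consequence of the bipartite structure recorded in Serre.
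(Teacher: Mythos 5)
Your argument is correct and is exactly the standard proof recorded at the cited location in Serre's \emph{Trees}: the type map on homothety classes of lattices makes $\Delta$ bipartite, $\GLT(F_\fp)$ shifts type by $\val_\fp(\det\gamma)$, and parity of distance is read off from the bipartition. The paper itself gives no proof but simply cites Serre, so there is nothing to compare beyond noting that you have reproduced the reference's reasoning.
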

\begin{remark}
\label{dale}
This lemma implies that the group $\Gamma_{1}$ acts on $\Delta$ without
inversions.  On the other hand,
$\Gamma_{0}$ generally does invert edges of $\Delta$.
\end{remark}
\begin{definition1}
  \label{grr}
        {\rm
Set $\gr_n:=
\Gamma_{1}\backslash\Delta$ and 
$\overline{\gr}_n=\Gamma_0\backslash\Delta$.
Then $\gr_n$ is a finite graph and $\overline{\gr}_n$ is a 
finite h-graph.}
\end{definition1}

\begin{remark}
  Since $\Gamma_1\vartriangleleft \Gamma_0$, the covering
\[
\pi:=\pi_n:\gr_n\rightarrow \overline{\gr}_n
\]
is Galois with
covering group $\Gamma_{0}/\Gamma_{1}$.
\end{remark}

\begin{theorem}
  \label{index}
 Suppose $n=2^s$ or $n=3\cdot 2^s$. Put $R=\Z_{F_n}^{(2)}$.
   \begin{enumerate}[\upshape (a)]
  \item
    \label{index1}
There are isomorphisms
  \[\Gamma_{0}/\Gamma_{1}\simeq \frac{R_+^\times}{(R^\times)^2}
  \simeq \Z/2\Z.
  \]
\item
  \label{index2}
 The covering $\gr_n\rightarrow \overline{\gr}_n$ is
 \'{e}tale if $n=2^s$.
 \end{enumerate}
\end{theorem}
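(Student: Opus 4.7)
The plan is to construct the isomorphism in (a) via the reduced norm and to deduce (b) by combining (a) with the distance-parity statement of Lemma~\ref{action}.

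For part (a): since $\H_n$ is totally definite and scaling a quaternion by $c\in\uRn^\times$ multiplies its reduced norm by $c^2$, the reduced norm descends to a homomorphism $\overline{\Norm}\colon \Gamma_0\to \uRnp^\times/(\uRn^\times)^2$ whose kernel is precisely $\Gamma_1$. The codomain is $\Z/2\Z$ by Theorem~\ref{fields}. To establish surjectivity I use that $\H_n$ is ramified only at the archimedean places and $K_n=F_n(\zeta_n)$ is a CM extension, so the local embedding conditions for $K_n\hookrightarrow\H_n$ are trivially satisfied at every place; an application of the Eichler--Brandt theory of optimal embeddings then produces an embedding $\uOOn[\zeta_n]\hookrightarrow\widetilde{\M}_n$. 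Under this embedding, $1+\zeta_n\in\widetilde{\M}_n^\times$ has reduced norm $\Norm_{K_n/F_n}(1+\zeta_n)=(1+\zeta_n)(1+\overline{\zeta}_n)$, which lies outside $(\uRn^\times)^2$ by Lemma~\ref{notsquares}\eqref{notsquares2}. Thus $\overline{\Norm}$ is surjective and $\Gamma_0/\Gamma_1\simeq\Z/2\Z$.

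For part (b): assume $n=2^s$. Weber's theorem (Theorem~\ref{fields}\eqref{fields1}) gives $\uOOnp^\times=(\uOOn^\times)^2$, and $p_n$ is totally positive with $\val_\fp(p_n)=1$, so $\uRnp^\times=(\uOOn^\times)^2\cdot\langle p_n\rangle$ and the quotient $\uRnp^\times/(\uRn^\times)^2$ is generated by the class of $p_n$. Combined with (a), this says that $\gamma\in\Gamma_0$ lies in $\Gamma_1$ if and only if $\val_\fp(\Norm(\gamma))$ is even. Now Lemma~\ref{action} implies that any $\gamma\in\Gamma_0\setminus\Gamma_1$ moves every vertex of $\Delta$ by an odd distance and so fixes no vertex or oriented edge of $\Delta$. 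Therefore the $\Gamma_0$-stabilizer of each vertex and oriented edge of $\Delta$ coincides with its $\Gamma_1$-stabilizer, which is precisely the assertion that the Galois cover $\gr_n\to\overline{\gr}_n$ is \'{e}tale.

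The main obstacle is producing the embedding $\uOOn[\zeta_n]\hookrightarrow\widetilde{\M}_n$ required in (a): the existence of an embedding of $\uOOn[\zeta_n]$ into some maximal order of $\H_n$ is immediate, but checking the local optimal-embedding count at each finite prime and ensuring the image can be conjugated into the specific order $\widetilde{\M}_n$ is the technical heart of the argument. A concrete alternative that sidesteps the general theory is to write $p_n$ directly as a sum of three squares in $\uRn$ and form the pure quaternion $\gamma$ with $\gamma^2=-p_n$; for instance, when $n=8$ the identity $p_8=((1+\sqrt 2)/2)^2+((1+\sqrt 2)/2)^2+(1/\sqrt 2)^2$ yields $\gamma=((1+\sqrt 2)/2)(i+j)+(1/\sqrt 2)\,k\in\widetilde{\M}_8^\times$ with $\Norm(\gamma)=p_8\notin(\uR_8^\times)^2$, witnessing the nontrivial class directly.
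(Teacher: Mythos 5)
Part~(b) of your proposal is correct and follows essentially the same route as the paper: both deduce from Lemma~\ref{action} that any $\gamma\in\Gamma_0\setminus\Gamma_1$, having reduced norm of odd $\fp$-valuation, moves every vertex of $\Delta$. (Fixing no oriented edge is then automatic, since fixing an oriented edge would fix its origin.) The extra detail you supply --- that $\uRnp^\times/(\uRn^\times)^2$ is generated by the class of $p_n$ via Weber's theorem --- is consistent with, but not actually needed by, the paper's shorter argument, which simply uses that $\Gamma_0/\Gamma_1$ is generated by the specific $\gamma$ with $\Nm(\gamma)=p_n$ that was produced in the course of proving~(a).

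Part~(a), however, contains a genuine gap, and it is precisely the step you flag as the ``technical heart.'' The paper proves surjectivity of $\overline{\Nm}$ via a strong-approximation result, Lemma~\ref{ideal}: for a fractional $\uRn$-ideal $I$ of $\widetilde{\M}_n$ and any totally positive $x\in\Nm_{\H_n/F_n}(I)$, there is $\alpha\in I$ with $\Nm_{\H_n/F_n}(\alpha)=x$; applying this to $I=\widetilde{\M}_n$ and a nonsquare totally positive unit $x$ finishes the argument. You instead propose to exhibit a concrete witness by embedding $\uOOn[\zeta_n]$ into $\widetilde{\M}_n$ and taking $1+\zeta_n$. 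The strategy is sound and would, if completed, be more elementary than the paper's. But the Eichler--Brandt optimal-embedding machinery you invoke gives an embedding count summed over a full set of right orders of a class set, not an embedding into the specific order $\widetilde{\M}_n$, and your explicit sum-of-three-squares alternative is verified only for $n=8$. The gap can, in fact, be closed directly without any embedding theory: the standard copy of $K_n=F_n(i)$ inside $\H_n$ already contains $\zeta_n$, because
\[
\zeta_n=\frac{\zeta_n+\zeta_n^{-1}}{2}+\left(\frac{\zeta_n-\zeta_n^{-1}}{2i}\right)\cdot i ,
\]
and both coefficients are fixed by complex conjugation, hence lie in $F_n$, and lie in $\uRn$ because $2\in\uRn^\times$. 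Thus $\zeta_n\in\uRn\cdot 1+\uRn\cdot i\subset\widetilde{\M}_n$, so $1+\zeta_n\in\widetilde{\M}_n^\times$ (it is a unit of $R_n$), and $\Nm_{\H_n/F_n}(1+\zeta_n)=(1+\zeta_n)(1+\overline{\zeta}_n)\notin(\uRn^\times)^2$ by Lemma~\ref{notsquares}\eqref{notsquares2}. You should replace the vague appeal to embedding theory with this direct observation.
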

We require a preliminary result.
\begin{lemma}
  \label{ideal}
  Let $F\colonequals F_n$, $\H\colonequals \H_n$, and let
 $I$ be a fractional $\Z_{F}^{(2)}$-ideal of $\widetilde{\M}_n$ with
  $x\in \Nm_{\H/F}(I)$ totally positive.  Then there exists $\alpha\in I$ with
  $\Nm_{\H/F}(\alpha)=x$.
\end{lemma}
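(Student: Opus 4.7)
The lemma is a local-global representation statement for the reduced norm form on the fractional $\widetilde{\M}_n$-ideal $I$: every totally positive element of the fractional ideal $\Nm_{\H_n/F_n}(I)\subseteq\uRn$ is actually the reduced norm of some element of $I$. My plan is to establish local representability at every place and then globalize.

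For the local part, I would use that $\H_n$ is unramified at every finite prime $\fq$ of $\uRn$, since the ramification locus of $\H_n$ is exactly the infinite places of $F_n$. Thus $\widetilde{\M}_{n,\fq}\cong\Mat_{2\times 2}(\uRn_\fq)$ with reduced norm equal to the determinant, and the localization $I_\fq$ is principal, say $I_\fq=\beta_\fq\widetilde{\M}_{n,\fq}$. Then $\Nm(I)_\fq=\Nm(\beta_\fq)\uRn_\fq$, so $x/\Nm(\beta_\fq)\in \uRn_\fq^\times$, and the surjectivity of $\det\colon\GL_2(\uRn_\fq)\to\uRn_\fq^\times$ supplies $\gamma_\fq\in\widetilde{\M}_{n,\fq}^\times$ with $\Nm(\beta_\fq\gamma_\fq)=x$. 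At each infinite place $v$ of $F_n$, $\H_{n,v}$ is the Hamilton quaternions with positive-definite reduced norm, and total positivity of $x$ yields a local solution there as well.

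For the globalization, the key difficulty is that $\H_n$ is totally definite, so strong approximation for $\SL_1(\H_n)$ fails and local solutions cannot be patched in the usual way. My route is to reduce to the principal case: choose any $\beta\in I$ with $\Nm(\beta)=y\in\Nm(I)$ (which exists since $I\ne 0$) and pass to the right $\widetilde{\M}_n$-ideal $J=\bar\beta I$, using the conjugate-ideal formalism for locally principal ideals in a maximal order of a quaternion algebra unramified at finite primes to realize $J$ as a principal ideal containing $1$. The claim for $I$ then translates to representing $x/y$, a totally positive element of $\uRn$ compatible with the local data, as the reduced norm of an element of $\widetilde{\M}_n$. This last step I would deduce from the Hasse norm principle for the central simple algebra $\H_n$ (every totally positive element of $F_n$ is a reduced norm from $\H_n^\times$, since $\H_n$ is ramified only at infinity) combined with the local integrality data extracted above.

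I expect the main technical obstacle to be precisely this globalization: ensuring that integrality is preserved when passing from a rational reduced norm in $\H_n^\times$ to one represented by an element of $\widetilde{\M}_n$. This is exactly where the hypothesis $x\in\Nm(I)$ (an ideal-theoretic condition at every finite prime) together with total positivity is indispensable, since it forces the adelic correction by a norm-one element to already be integral everywhere.
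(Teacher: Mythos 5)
There is a genuine gap in your globalization step, and it stems from a misconception. The claim that "strong approximation for $\SL_1(\H_n)$ fails" because $\H_n$ is totally definite is wrong in a way that matters here: strong approximation for the simply connected group $\SL_1(\H_n)$ holds with respect to any finite set $S$ of places that includes a place where the group is noncompact. Since $\H_n$ is split at the finite prime $\fp$, taking $S$ to consist of the archimedean places together with $\fp$ gives dense approximation at all finite places away from $\fp$. The structural fact you are not exploiting is that $\uRn=\uOOn[1/2]$ already inverts $\fp$, so $I_\fp$ is the whole of $\H_n\otimes F_\fp$ and imposes no integrality constraint at $\fp$; approximating away from $\fp$ is therefore exactly enough. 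The paper's proof is the direct version of this: it first invokes \cite[Theorem III.4.1]{V} (Hasse--Schilling--Maass, which you also cite) to produce $\alpha\in\H_n^\times$ of reduced norm $x$, and then applies strong approximation to the norm-one group $\H_{n,1}^\times$ to find a global norm-one $\beta$ lying in $\alpha^{-1}I_v$ at every finite $v\neq\fp$, whence $\alpha\beta\in I$ and $\Nm_{\H_n/F_n}(\alpha\beta)=x$.

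Your substitute --- passing to $J=\bar\beta I$ and ``realizing $J$ as a principal ideal containing $1$'' via a conjugate-ideal formalism --- cannot work in general. Left or right translation by a single element of $\H_n^\times$ does not change the ideal class of $I$, and the (relative) class number of $\widetilde{\M}_n$ need not be one; the lemma must in fact cover precisely those cases where it is not, since the whole point of the paper is the growth of this class set. No amount of formalism about locally principal ideals makes a globally non-principal ideal principal. Once that false reduction is removed, the proposal has no mechanism for passing from the (correct) local solvability data and the (correct) existence of a rational element of norm $x$ to an actual element of $I$ of norm $x$; the missing mechanism is exactly the strong approximation you discarded.
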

\noindent This is a generalization of
\cite[Cor.~III.5.9]{V}; we give a proof here.
\begin{proof}
  By \cite[Thm.~III.4.1]{V} there exists an $\alpha\in \H$ with
  norm $x$.  For each prime $v\neq \fp$ let
  \[
  U_v=\{ \beta_v\in \H_{v,1}^\times \mid \beta_v\in\alpha^{-1}I
  \otimes_{\Z_{F}^{(2)}}\Z_{F_v}\}.
  \]
  Now $U_v$ is clearly open in $\H_{v,1}^\times$ and it is nonempty since the
  norm map is locally surjective on ideals.  So by Strong Approximation there
  exists $\beta\in(\prod_{v\neq \fp}U_v)\cap \H_1^\times$.  Then $\alpha\beta\in I $
  and $\Nm_{\H/F}(\alpha\beta)=x$.
\end{proof}
\begin{proof}[Proof of Theorem \textup{\ref{index}}]
  The norm map $\Nm=\Nm_{\H_n/F_n}$ induces an  injective map
  \begin{equation*}
  \overline{\Nm}:\Gamma_0/\Gamma_1=\Pp\!\widetilde{\M}_n^\times
  /\Pp\!\widetilde{\M}_{n,1}^\times\longrightarrow
  \frac{R_+^\times}{(R^\times)^2} .
 \end{equation*}
  It is surjective by Lemma~\ref{ideal} taking $I=\widetilde{M}_n$ 
which is an ideal of norm $\Nm_{\H/F}(I)=\Z_F^{(2)}$.  We have
  $R_+^\times/(R^\times)^2\simeq \Z/2\Z$ by
  Theorem~\ref{fields}, proving \eqref{index1}.

  Now note that if $n = 2^s$ we have shown that there exists
  $\gamma\in\widetilde{\M}^\times$ with $\Nm_{\H/F}(\gamma)=p_n$
  as in Definition~\ref{gen}. Then 
  $\Gamma_0/\Gamma_{1}=\langle [\gamma]\rangle \simeq \Z/2\Z$,
  and Lemma~\ref{action} gives that $\gamma v\neq v$
  for every $v\in \Ver(\gr_n)$.  This proves \eqref{index2},
  since $\Gamma_0/\Gamma_1$
  is generated by $[\gamma]$.
\end{proof}

\subsection{Unitary groups}
\label{uni}

We now consider unitary groups over the cyclotomic rings $\Z_{K_n}^{(2)}$.
\begin{proposition} 
  \label{rink}
  Suppose that $n=2^s$ or $n=3\cdot 2^s$, $n\geq 8$.
 Then 
\[
\PUT\big(\Z_{K_n}^{(2)}\big)/\PSUT\big(\Z_{K_n}^{(2)}\big)
\cong \mu_n/\mu_n^2\cong \Z/2\Z .
\]
\end{proposition}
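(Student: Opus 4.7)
The plan is to derive the isomorphism from the standard determinant exact sequence for unitary groups and then pass to projectivizations. First I would invoke the determinant map
\[
\det\colon \UT(R_n) \twoheadrightarrow R_{n,1}^\times := \{x \in R_n^\times \mid \Nm_{K_n/F_n}(x) = 1\},
\]
whose kernel is $\SUT(R_n)$ and whose surjectivity is witnessed by the diagonal matrices $\pmat{u & 0 \\ 0 & 1}$ for $u \in R_{n,1}^\times$. Lemma~\ref{notsquares}\eqref{notsquares1} identifies $R_{n,1}^\times = \mu_n$, giving
\[
1 \longrightarrow \SUT(R_n) \longrightarrow \UT(R_n) \stackrel{\det}{\longrightarrow} \mu_n \longrightarrow 1.
\]

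Next I would identify the scalar subgroups used to form the projective quotients. A scalar $\lambda I$ lies in $\UT(R_n)$ iff $\lambda\overline{\lambda} = 1$, so (again by Lemma~\ref{notsquares}\eqref{notsquares1}) the scalars in $\UT(R_n)$ form the group $\mu_n \cdot I$; the extra condition $\lambda^2 = 1$ for $\SUT(R_n)$ cuts this down to $\{\pm I\}$. Because $\det(\lambda I) = \lambda^2$, the assignment $[U] \mapsto \det(U) \bmod \mu_n^2$ is a well-defined surjective homomorphism
\[
\overline{\det}\colon \PUT(R_n) \twoheadrightarrow \mu_n/\mu_n^2.
\]

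Finally I would compute the kernel directly: if $\det(U) = \lambda^2$ with $\lambda \in \mu_n$, then $\lambda^{-1} U \in \SUT(R_n)$ and $[U] = [\lambda^{-1} U]$ in $\PUT(R_n)$, so $\ker(\overline{\det})$ equals the image of $\SUT(R_n)$ in $\PUT(R_n)$, namely $\SUT(R_n)/(\SUT(R_n) \cap \mu_n \cdot I) = \SUT(R_n)/\{\pm I\} = \PSUT(R_n)$. This produces the exact sequence
\[
1 \longrightarrow \PSUT(R_n) \longrightarrow \PUT(R_n) \longrightarrow \mu_n/\mu_n^2 \longrightarrow 1,
\]
and since $n$ is even, $\mu_n$ is cyclic of even order, so $\mu_n/\mu_n^2 \cong \Z/2\Z$. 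I do not anticipate any real obstacle: the argument is formal bookkeeping with the determinant and the center, and the only nontrivial input is Lemma~\ref{notsquares}\eqref{notsquares1}, which pins down the norm-one units as exactly the roots of unity and has already been proved above.
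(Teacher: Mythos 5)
Your proof is correct and follows essentially the same route as the paper: both rely on the determinant exact sequence for $\UT(R_n)$, the identification $R_{n,1}^\times = \mu_n$ from Lemma~\ref{notsquares}\eqref{notsquares1}, and the surjectivity of $\det$ via the diagonal matrices $\left(\begin{smallmatrix}\mu_n & 0\\ 0 & 1\end{smallmatrix}\right)$. The paper simply asserts the projectivized exact sequence without spelling out the center/scalar bookkeeping, while you fill in those details explicitly; there is no substantive difference in method.
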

\begin{proof}
  If $A\in\UT\big(\Z_{K_n}^{(2)}\big)$, then 
$\alpha\colonequals \det A\in \Z_{K_n, 1}^{(2),\times}$ and
    hence $\alpha\in\mu_n$ by Lemma~\ref{notsquares}\eqref{notsquares1}.
  The proposition then follows from the exact sequence
  \[
  1\longrightarrow \PSUT\big(\Z_{K_n}^{(2)}\big)\longrightarrow 
\PUT\big(\Z_{K_n}^{(2)}\big)
 \stackrel{\det}{\longrightarrow}\mu_n/\mu_n^2\cong \Z/2\Z\longrightarrow
 1
 \]
 (in which the determinant map is surjective because 
  $\begin{pmatrix} \mu_n&0\\0&1\\ \end{pmatrix} \subseteq 
\PUT\big(\Z_{K_n}^{(2)}\big)$).
\end{proof}
The following observation is easy to check:
\begin{proposition}
  \label{standard}
  The map $\SUT(K_n)\rightarrow \H_{n,1}^\times$ defined by
  \[
  \left[\begin{array}{ll}
      r+s\sqrt{-1} & t+u\sqrt{-1}\\
      -t+u\sqrt{-1} & r-s\sqrt{-1}\end{array}\right]\mapsto
  r-ui-tj-sk ,
  \]
  where $r^2+s^2+t^2+u^2=1$, is an isomorphism.
\end{proposition}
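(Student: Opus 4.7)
The plan is to realize the map as (the inverse of) the restriction of an explicit $F_n$-algebra embedding $\Psi\colon\H_n\hookrightarrow \Matt_{2\times 2}(K_n)$ to the group of norm-one elements, and thereby deduce both bijectivity and multiplicativity at once. Because $4\mid n$, we have $\sqrt{-1}=\zeta_n^{n/4}\in K_n$ and $K_n=F_n(\sqrt{-1})$, so any element of $\SUT(K_n)$ has the standard form $\bigl(\begin{smallmatrix} a & b\\ -\bar b & \bar a\end{smallmatrix}\bigr)$ with $a\bar a+b\bar b=1$; writing $a=r+s\sqrt{-1}$ and $b=t+u\sqrt{-1}$ with $r,s,t,u\in F_n$ recovers the displayed parametrization. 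A direct norm computation then shows that the image quaternion $q=r-ui-tj-sk$ satisfies $\Nm_{\H_n/F_n}(q)=r^2+s^2+t^2+u^2=a\bar a+b\bar b=1$, so the map lands in $\H_{n,1}^\times$ and is manifestly a set-bijection onto it, since both sides are in bijection with $\{(r,s,t,u)\in F_n^4 : r^2+s^2+t^2+u^2=1\}$.

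For multiplicativity, I would introduce $\Psi\colon\H_n\to\Matt_{2\times 2}(K_n)$ as the $F_n$-linear extension of
\[
1\mapsto \begin{pmatrix}1&0\\0&1\end{pmatrix},\ \ i\mapsto\begin{pmatrix}0&-\sqrt{-1}\\-\sqrt{-1}&0\end{pmatrix},\ \ j\mapsto\begin{pmatrix}0&-1\\1&0\end{pmatrix},\ \ k\mapsto\begin{pmatrix}-\sqrt{-1}&0\\0&\sqrt{-1}\end{pmatrix},
\]
and verify on these four generators that $i^2=j^2=k^2=-I$ and $ij=k$, so that $\Psi$ is an $F_n$-algebra homomorphism (necessarily injective since $\H_n$ is a division ring). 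Expanding $\Psi(r'+s'i+t'j+u'k)$ produces a matrix of the shape $\bigl(\begin{smallmatrix}\alpha&\beta\\-\bar\beta&\bar\alpha\end{smallmatrix}\bigr)$ with $\det=\Nm_{\H_n/F_n}(\cdot)$, so $\Psi$ carries $\H_{n,1}^\times$ into $\SUT(K_n)$.

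To conclude, I would read off the coefficients of $\Psi(r'+s'i+t'j+u'k)$ and observe that the resulting matrix equals $\bigl(\begin{smallmatrix} r+s\sqrt{-1} & t+u\sqrt{-1}\\ -t+u\sqrt{-1} & r-s\sqrt{-1}\end{smallmatrix}\bigr)$ precisely when $(r',s',t',u')=(r,-u,-t,-s)$, i.e., when the quaternion is $r-ui-tj-sk$; hence $\Psi$ is the inverse of the stated map, which is therefore a group isomorphism. The only subtlety in the argument is keeping the sign conventions aligned in this final matching step, but the verification is a mechanical expansion once $\Psi$ is in hand.
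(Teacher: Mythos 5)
Your proof is correct, and it supplies precisely the kind of verification that the paper omits (the paper merely asserts the proposition is ``easy to check'' and gives no argument, so there is no authorial proof to compare against). The key computations all go through: with $\Psi(1)=I$, $\Psi(i)=\bigl(\begin{smallmatrix}0&-\sqrt{-1}\\-\sqrt{-1}&0\end{smallmatrix}\bigr)$, $\Psi(j)=\bigl(\begin{smallmatrix}0&-1\\1&0\end{smallmatrix}\bigr)$, $\Psi(k)=\bigl(\begin{smallmatrix}-\sqrt{-1}&0\\0&\sqrt{-1}\end{smallmatrix}\bigr)$ one indeed has $\Psi(i)^2=\Psi(j)^2=-I$, $\Psi(i)\Psi(j)=-\Psi(j)\Psi(i)=\Psi(k)$, matching the paper's convention $ij=-ji=k$, so $\Psi$ extends to an $F_n$-algebra homomorphism; expanding gives $\Psi(r'+s'i+t'j+u'k)=\bigl(\begin{smallmatrix}r'-u'\sqrt{-1}&-t'-s'\sqrt{-1}\\t'-s'\sqrt{-1}&r'+u'\sqrt{-1}\end{smallmatrix}\bigr)$, which has the required $\bigl(\begin{smallmatrix}\alpha&\beta\\-\bar\beta&\bar\alpha\end{smallmatrix}\bigr)$ shape with $\det=\Nm_{\H_n/F_n}$, and the coefficient matching $(r',s',t',u')=(r,-u,-t,-s)$ confirms the displayed map is the inverse of $\Psi|_{\H_{n,1}^\times}$, hence a group isomorphism.

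One small point of emphasis: the remark that both sides are set-bijective with $\{(r,s,t,u)\in F_n^4 : r^2+s^2+t^2+u^2=1\}$ is not needed once $\Psi$ is in hand, since $\Psi$ being an injective $F_n$-algebra map that carries $\H_{n,1}^\times$ into $\SUT(K_n)$ and that (by the coefficient read-off) hits every matrix in $\SUT(K_n)$ already gives bijectivity. Conversely, the set-bijection alone would not establish that the map respects multiplication, so that observation could be dropped without loss; the essential content is the algebra-homomorphism computation, which you carry out correctly.
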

The map in Proposition~\ref{standard} restricts to an isomorphism
\[
\Psi_n:\SUT\big(\Z_{K_n}^{(2)}\big)\stackrel{\simeq}{\longrightarrow}
\widetilde{\M}_{n,1}^\times ,
\]
with an induced isomorphism
\begin{equation}
  \label{iso}
  \overline{\Psi}_n:\PSUT\big(\Z_{K_n}^{(2)}\big)=
\SUT\big(\Z_{K_n}^{(2)}\big)/\langle\pm 1\rangle
  \stackrel{\simeq}{\longrightarrow}\Pp\!\widetilde{\M}_{n,1}^\times:=
  \widetilde{\M}^\times_{n,1}:=\widetilde{\M}_{n,1}^\times/
  \langle \pm 1\rangle .
  \end{equation}

We now ask whether there is an isomorphism for $\PUT$ analogous
to the isomorphism \eqref{iso} for $\PSUT$.
\begin{definition1}
  \label{class}
        {\rm
          For $A\in\UT(K_n)$, denote by $[A]$ its class in $\PUT(K_n)$.  Similarly
          for $a\in \H_n^\times$, denote by $[a]$ its class in $\Pp\!\H_n^\times$.
        }
\end{definition1}
\noindent Suppose $A\in\UT(K_n)$ and $\alpha=\det(A)$; then $\alpha\overline{\alpha}=1$.
By Hilbert's Theorem 90 we have $\alpha=\overline{\beta}/\beta$ with
$\beta\in K_n^\times$.  Consider $A'=\beta A$.  We have
$\det A'=\beta^2\alpha=\beta\overline{\beta}\in F_n$.
Hence $A'$ is of the form
\[
A'=\left[\begin{array}{rr}
    r+s\sqrt{-1} & t+u\sqrt{-1}\\
    -t+u\sqrt{-1} & r-s\sqrt{-1}\end{array}\right].
\]
We then define  for $[A]\in \PUT\big(\Z_{K_n}^{(2)}\big)$
\begin{equation}
  \label{var}
\varphi_n([A])=[r-ui-tj-sk]\in \Pp\!\H_n^\times .
\end{equation}
  Note that on $\PSUT\big(\Z_{K_n}^{(2)}\big)$ our map $\varphi_n$ agrees with
  $\overline{\Psi}_n$ and gives an injection 
\[
\PUT\big(\Z_{K_n}^{(2)}\big)
\hookrightarrow
\Pro\!\H_n^\times .
\]
  \begin{theorem}
    \label{U}
    Suppose $n=2^s$ or $n=3\cdot 2^s$, $n\geq 8$.  Then the map $\varphi_n$
    defined in \eqref{var} gives an isomorphism
    \[
    \varphi_n:\PUT\big(\Z_{K_n}^{(2)}\big)\stackrel{\simeq}
{\longrightarrow}\Pp\!\widetilde{\M}_n^\times 
=\Gamma_{0}(\widetilde{\M}_n)=\Gamma_{0}\subseteq \Pp\!\H_n^\times.
    \]
  \end{theorem}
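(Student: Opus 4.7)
The plan is to verify in sequence: (i) $\varphi_n$ is a well-defined group homomorphism $\PUT(R_n)\to\Pp\!\H_n^\times$; (ii) $\varphi_n$ is injective; (iii) the image of $\varphi_n$ is contained in $\Pp\!\widetilde{\M}_n^\times=\Gamma_0$; and (iv) the image equals $\Gamma_0$. For (i), Hilbert~90 applied to $K_n/F_n$ makes the $\beta$ with $\bar\beta/\beta=\det A$ unique modulo $F_n^\times$, and rescaling $\beta$ by $c\in F_n^\times$ rescales the associated quaternion by $c$, which is central and hence trivial in $\Pp\!\H_n^\times=\H_n^\times/F_n^\times$. Rescaling $A$ by $\mu\in\mu_n$ (the center of $\UT(R_n)$ by Lemma~\ref{notsquares}\eqref{notsquares1}) forces $\beta\mapsto\mu^{-1}\beta$, leaving $\beta A$ unchanged. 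Multiplicativity follows by extending Proposition~\ref{standard} to the $F_n$-subalgebra of $\Mat_2(K_n)$ consisting of ``special-form'' matrices $\begin{pmatrix} z_1 & z_2\\ -\bar z_2 & \bar z_1\end{pmatrix}$, which is closed under multiplication and isomorphic to $\H_n$; since $\beta_2 I$ is central, $(\beta_1 A_1)(\beta_2 A_2)=(\beta_1\beta_2)(A_1 A_2)$, matching quaternion multiplication. For (ii), if $\varphi_n([A])=[1]$ then $\beta A\in F_n^\times\cdot I$, so $A$ is a scalar in $\UT(R_n)$; unitarity combined with Lemma~\ref{notsquares}\eqref{notsquares1} puts the scalar in $\mu_n$, making $[A]$ trivial in $\PUT(R_n)$.

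For (iii) and (iv), the restriction of $\varphi_n$ to $\PSUT(R_n)$ coincides with the isomorphism $\overline{\Psi}_n$ of \eqref{iso} onto $\Gamma_1=\Pp\!\widetilde{\M}_{n,1}^\times$. By Proposition~\ref{rink} and Theorem~\ref{index}\eqref{index1}, both $\PUT(R_n)/\PSUT(R_n)$ and $\Gamma_0/\Gamma_1$ are $\Z/2\Z$, with the latter realized by the reduced norm landing in $\uRnp^\times/(\uRn^\times)^2$. It therefore suffices to produce a single $[A]\in \PUT(R_n)\setminus\PSUT(R_n)$ with $\varphi_n([A])\in\Gamma_0\setminus\Gamma_1$: then $\varphi_n$ maps each $\PSUT(R_n)$-coset bijectively onto the corresponding $\Gamma_1$-coset in $\Gamma_0$, yielding (iii) and (iv) at once.

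For the witness take $A=\mathrm{diag}(\zeta_n,1)$, so that $\det A=\zeta_n\notin\mu_n^2$, together with $\beta=1+\zeta_n^{-1}\in R_n$, for which $\bar\beta/\beta=(1+\zeta_n)/(1+\zeta_n^{-1})=\zeta_n$. Then $\beta A=\mathrm{diag}(1+\zeta_n,\,1+\zeta_n^{-1})$ is of special form with
\[
r=1+\tfrac{\zeta_n+\zeta_n^{-1}}{2},\qquad 2s\sqrt{-1}=\zeta_n-\zeta_n^{-1},\qquad t=u=0,
\]
so $\varphi_n([A])=[q]$ with $q=r-sk$. Both $r$ and $s$ lie in $R_n\cap F_n=\uRn$: $r\in\uRn$ since $2\in\uRn^\times$, while writing $\sqrt{-1}=\zeta_n^{n/4}$ exhibits $s\in\OO_n[1/2]=R_n$, and $s$ is totally real so it lies in $F_n$. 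Next,
\[
\Norm_{\H_n/F_n}(q)=r^2+s^2=2+\zeta_n+\zeta_n^{-1}=\Norm_{K_n/F_n}(1+\zeta_n),
\]
which is a $\uRn$-unit: for $n=2^s$ it equals $p_n$ of Definition~\ref{gen}, which generates the prime $\fp_n$ above the inverted rational prime $2$; for $n=3\cdot 2^s$, Lemma~\ref{norms} gives $\Norm_{K_n/\Q}(1+\zeta_n)=1$, so the norm already lies in $\uOOn^\times$. Hence $q\in\widetilde{\M}_n^\times$ and $[q]\in\Gamma_0$. Finally Lemma~\ref{notsquares}\eqref{notsquares2} says precisely that $\Norm_{K_n/F_n}(1+\zeta_n)\notin(\uRn^\times)^2$, so Theorem~\ref{index}\eqref{index1} gives $[q]\notin\Gamma_1$, completing the proof.

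The main obstacle is step (iii): controlling the image globally, since an ``integral'' choice of $\beta$ for an arbitrary $A\in\UT(R_n)$ is not obviously available. The device of reducing to a single explicit representative of the nontrivial coset sidesteps this by exploiting the $\Z/2\Z$ quotient structure on both sides.
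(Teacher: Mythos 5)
Your proof is correct and uses essentially the same ingredients as the paper: both reduce to the isomorphisms $\PUT(R_n)/\PSUT(R_n)\cong\Z/2\Z\cong\Gamma_0/\Gamma_1$ (Proposition~\ref{rink}, Theorem~\ref{index}) and both hinge on the computation that $\Nm_{K_n/F_n}(1+\zeta_n)\in\uRnp^\times\setminus(\uRn^\times)^2$ via Lemma~\ref{notsquares}\eqref{notsquares2}. The only organizational difference is that the paper first verifies $\varphi_n(\PUT(R_n))\subseteq\Gamma_0$ directly for an arbitrary $A$ (taking $\beta=(1+\zeta)^k$ when $\det A=\zeta^k$), then proves surjectivity via a commutative diagram, whereas you collapse both steps by producing a single explicit witness $A=\mathrm{diag}(\zeta_n,1)$ whose image lands in $\Gamma_0\setminus\Gamma_1$ and invoking the $\Z/2\Z$ coset decomposition on both sides.
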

  \begin{proof}
    1. $\varphi_{n}\big(\Pp\!\UT\big(\Z_{K_n}^{(2)}\big)\big)
\subseteq \Gamma_{0}$.\\
    Let $A\in\UT\big(\Z_{K_n}^{(2)}\big)$. Then 
$\alpha=\det A\in \Z_{K_n,1}^{(2),\times}
=\mu_n$
    by Proposition~\ref{notsquares}\eqref{notsquares1},
    say $\alpha=\zeta^k$ for $\zeta=\zeta_n$.
    But then we can take $\beta=(1+\zeta)^k$ to give
    $\alpha=\beta/\overline{\beta}$.  If $n=2^s$, then $\Nm_{K_n/\Q}(1+\zeta)=
    2$.  If $n=3\cdot 2^s$, then $\Nm_{K_n/\Q}(1+\zeta) =1$ by
    Lemma~\ref{norms}.
    In either case we see that $\beta=(1+\zeta)^k\in \Z_{K_n}^{(2),\times}$.
    So $A'=\beta A\in\UT\big(\Z_{K_n}^{(2)}\big)$ and so from \eqref{var} we have
    $\varphi_{n}([A])\in \Pp\!\widetilde{\M}_n^\times=\Gamma_{0}$.\\
    2. $\varphi_{n}\big(\Pp\!\UT\big(\Z_{K_n}^{(2)}\big)\big)=\Gamma_{0}$.\\
      From the diagram (in which $1$'s on the top and bottom are omitted)
  $$\begin{CD}
    \PSUT\big(\Z_{K_n}^{(2)}\big) & @>{\simeq}>> & \Gamma_{1}\\
  @VVV &  & @VVV  \\
  \PUT\big(\Z_{K_n}^{(2)}\big) & @>{\varphi_n}>> & \Gamma_{0}\\
  @VVV & & @VVV   \\
  \PUT(\Z_{K_n}^{(2)})/\PSUT(\Z_{K_n}^{(2)})\simeq \Z_{K_n,1}^{(2),\times}/
  \big(\Z_{K_n,1}^{(2),\times}\big)^2 & @>>> & \Gamma_{0}/\Gamma_{1}
  \simeq \Z_{F_n,+}^{(2),\times}/\big(\Z_{F_n}^{(2),\times}\big)^2& ,
  \end{CD}$$

  \noindent we see that $\varphi_n$ is an isomorphism if and only if
  \begin{equation}
    \label{ream}
   \frac{\Z_{K_n,1}^{(2),\times}}{\big(\Z_{K_n,1}^{(2),\times}\big)^2} \stackrel{\simeq}
      {\longrightarrow} \frac{\Z_{F_n,+}^{(2),\times}}
{\big(\Z_{F_n}^{(2),\times}\big)^2}
  \end{equation}
  is an isomorphism.  By Theorem~\ref{fields},
the right-hand side of  \eqref{ream} is isomorphic
to $\Z/2\Z$.
We have $\Z_{K_n,1}^{(2),\times}/\big(\Z_{K_n,1}^{(2),\times}\big)^2 
\cong \mu_n/\mu_n^2\cong \Z/2\Z$
by Proposition~\ref{notsquares}.
The map in (\ref{ream}) is induced by $[\zeta]\mapsto [(1+\zeta)
  (1+\overline{\zeta})]$ where $\zeta:=\zeta_n$.
But this is an isomorphism by Lemma~\ref{notsquares}\eqref{notsquares2}.
  \end{proof}
  \begin{remark1}
    \label{acts}
          {\rm
            By Theorem~\ref{U}, for $n=2^s$ or $n=3\cdot 2^s$, $n\geq 8$,
           \[
           \PUT\big(\Z_{K_n}^{(2)}\big)\cong \Pp\!\widetilde{\M}_n^\times 
=\Gamma_{0}\big(\widetilde{\M}_n\big)=\Gamma_{0}\subseteq \Pp\!\H_n^\times
           \]
           acts on the Bruhat-Tits tree $\Delta$, and likewise for
                   \[
                   \PSUT\big(\Z_{K_n}^{(2)}\big)\cong \Pp\!\widetilde{\M}_{n,1}^\times =\Gamma_{1}\big(\widetilde{\M}_n\big)=\Gamma_{1}\subseteq \Pp\!\H_n^\times.
                   \]
                   Hence in the notation of Definition~\ref{grr}
                   \begin{equation}
                     \label{grrr}
                     \PSUT\big(\Z_{K_n}^{(2)}\big)\backslash\Delta
=\Gamma_{1}\backslash
                     \Delta=\gr_n\quad
                   \mbox{ and }\quad
                  \PUT\big(\Z_{K_n}^{(2)}\big)\backslash\Delta
=\Gamma_{0}\backslash\Delta=
                    \overline{\gr}_n
                   \end{equation}
          }
\end{remark1}

We can use this remark to bound the mass $m$ as in Definition \ref{onion}
of vertices and edges in $\gr_n$ and $\ogr_n$.
The following is classical.
Note that if $m \notin \{3,4,6\}$ then $K_m^+ \subset K_n$ implies
that $K_m \subset K_n$ and hence that $m|\lcm(2,n)$.

\begin{lemma}\label{lem:psu-pu-order}
  Let $m,n$ be positive integers and let $K_n = \Q(\zeta_n)$ as usual.
  \begin{enumerate}[\upshape (a)]
  \item 
\label{lem:psu-pu-order1}
If $\PSUT(K_n)$ contains an element of order $m$, then
    $K_{2m}^+ \subset K_n$.
  \item 
\label{lem:psu-pu-order2}
If $\PUT(K_n)$ contains an element of order $m$, then
    $K_m^+ \subset K_n$.
  \end{enumerate}
\end{lemma}

\begin{proof} 
\eqref{lem:psu-pu-order1}:  Let $M$ be such an element and 
lift it to $M'$ in $\SUT(K_n)$.
  Since $M$ has order $m$, the order of $M'$ must be $m$ or $2m$.
  Thus the eigenvalues of $M'$ must be $\zeta_{2m}^i, \zeta_{2m}^{-i}$ for some
  $i$ with $\gcd(i,2m) \le 2$.  If $m$ is even, then $i$ must be odd,
  for otherwise the eigenvalues raised to the $m/2$ power would be $-1$
  and $M$ would have order dividing $m/2$.  Since the trace of
  $M'$ is contained in $K_n$, we find that $K_{2m}^+ \subset K_n$.
  If $m$ is odd, we do not know whether the eigenvalues are of order
  $m$ or $2m$.  In the first case we conclude that $K_m^+ \subset K_n$, and
  in the second that $K_{2m}^+ \subset K_n$; but $K_m^+ = K_{2m}^+$ if $m$
  is odd, so the result follows.\\
\eqref{lem:psu-pu-order2}: This is part of  \cite[Prop.~1.1]{beauville}.

\end{proof}
  
\begin{proposition}
\label{none}
Let $n=2^s$ or $n=3\cdot 2^s$.
\begin{align*}
S(n)&=\{12, 24, 60\}\cup \{m\mid m\in\N\text{ and } 2m|n\} \text{ and}\\
S'(n)&=\{12, 24, 60\}\cup \{m\mid m\in\N\text{ and } m|n\}.
\end{align*}
\begin{enumerate}[\upshape (a)]
\item
\label{none1}
Suppose $\mathbf{v}\in \Ver(\gr_n)$ and $\mathbf{e}\in \Ed(\gr_n)=\Ed_r(\gr_n)$.
Then $1/m(\mathbf{v}), 1/m(\mathbf{e})\in S(n)$.
In particular, if $n\geq 30$, then $1\geq 
m(\mathbf{v}),\, m(\mathbf{e})\geq 1/2n$ 
and we have
\begin{align*}
\VM(\gr_n)\leq & v(\gr_n)\leq 2n\VM(\gr_n),\\
\EM(\gr_n)\leq e_r(\gr_n)& =e(\gr_n)\leq 2n \EM(\gr_n).
\end{align*}
\item
\label{none2}
Suppose $\mathbf{v}\in \Ver(\ogr_n)$ and 
$\mathbf{e}\in \Ed_r(\ogr_n)$. Then 
 $1/m(\mathbf{v}), 1/m(\mathbf{e})\in S'(n)$.
In particular, if $n\geq 30$, then
$1\geq m(\mathbf{v}),\, m(\mathbf{e})\geq 1/2n$ and we have
\begin{align*}
\VM(\ogr_n)\leq & v(\ogr_n)\leq 2n\VM(\ogr_n),\\
\EM(\ogr_{\Th})\leq e_r(\ogr_{\Th})& =e(\ogr_{\Th})\leq 2n \EM(\ogr_{\Th}),\\
\EM(\ogr_{2^s})-e_h(\ogr_{2^s})\leq & e_r(\ogr_{2^s})\leq 2n\EM(\ogr_{2^s}).
\end{align*}

\end{enumerate}
\end{proposition}
\begin{proof}
\eqref{none1}: The stabilizer group of $\mathbf{v}\in \Ver(\gr_n)$
or $\mathbf{e}\in\Ed(\gr_n)$ is a finite subgroup of 
$\PSUT\big(\Z_{K_n}^{(2)}\big)$.
There is a well-known classification of 
 finite subgroups of $\PSUT(\BC)\cong \SOT$ (\cite[Thm.~3.6, Chap.~I]{V}):
 they are cyclic $\Z/m\Z$, dihedral $D_{m}$ of order $2m$,
 or one of $A_4$, $S_4$, or $A_5$.
 By Lemma \ref{lem:psu-pu-order}, it follows that no finite subgroup can have
 order greater than $2n$ if $n>30$.
 Note that $e_h(\gr_n)=0$ by Remark \ref{dale}.\\
\eqref{none2}:  The stabilizer group of $\mathbf{v}\in \Ver(\ogr_n)$
or $\mathbf{e}\in\Ed_r(\ogr_n)$ is a finite subgroup of 
$\PUT\big(\Z_{K_n}^{(2)}\big)$. 
Such groups are certainly subgroups of $\PGLT(\BC)$, which are classified
in \cite[Prop.~16]{S4}: the answer is the same as for $\PSUT(\BC)$.
Again, combining Lemma \ref{lem:psu-pu-order}\eqref{lem:psu-pu-order2}
(\cite[Prop.~1.1]{beauville}) with our remark preceding
Lemma \ref{lem:psu-pu-order} proves that
if $\Z/m\Z$ or $D_m$ is a subgroup of $\PUT(\Z_{K_n}^{(2)})$, then
$m|n$.  Hence for $n\geq 30$ the order
of a finite subgroup of $\PUT]\big(\Z_{K_n}^{(2)}\big)$ containing $\Z/m\Z$
or $D_m$ is at most $2n$. 
Note that $e_h(\ogr_{\Th})=0$ by Theorem
\ref{thm:invert}\eqref{thm:invert1}.
\end{proof}


  \subsection
{\texorpdfstring{Vertices of
    \except{toc}{\boldmath{$\overline{\gr}_n$}}\for{toc}{
$\overline{\gr}_n$}
    and maximal orders in \except{toc}{\boldmath{$\H_{n}$}}\for{toc}
{$\H_n$}}{Vertices of grn and maximal orders in Hn}}
\label{maxorders}

  \begin{definition1}
    \label{def:class}
Fix $n$. By an {\sf order} $\mN$ we will mean an
($\Z_F=\Z_{F_n}$)-order in $\H=\H_n$.
Denote by $\Cl(\mN)$ the set of left ideal classes of $\mN$. For a left $\mN$-ideal $I$, denote by $[I]$ its class in $\Cl(\mN)$
and by $\RO(I)$ its right order.
For a $\Z_F$-ideal $\fa$, denote by $[\fa]$ its class in $\Cl(\Z_F)$.

The class group
$\Cl(\Z_F)$ acts on $\Cl(\mN)$ by
$[\mathfrak{a}]\cdot [I] := [I\frak{a}] = [\mathfrak{a}I]$ for a
$\Z_F$-ideal $\mathfrak{a}$ and a left $\mN$-ideal $I$.
Define the {\sf relative} class set $\Cl_{\rm rel}(\mN)$
to be the set of orbits of $\Cl(\Z_F)$ on $\Cl(\mN)$:
\begin{equation*}
  \Cl_{\rm rel}(\mN)=\Cl(\mN)/\Cl(\Z_F)
  \end{equation*}
with {\rm relative} class number $h_{\rm rel}(\mN)=\#\Cl_{\rm rel}(\mN)$.
The relative class number $h_{\rm rel}(\M')$ is the same for all maximal
orders $\M'\subseteq \H_n$, so we write simply $h_{\rm rel}(\H_n)$.

\end{definition1}

  Let $\T(\H_n)$ be the set of {\sf types}, i.e., isomorphism classes
  of maximal orders, of $\H_n$.  If $\M$ is a maximal order, let
  $\Ty(\M)$ be its type. 
The explicit maximal order $\M=\M_n\subseteq\H=\H_n$ was given in
  \eqref{explicit}.
\begin{theorem}
  \label{thm:Mn}
  Let $n = 2^s$ or $3\cdot 2^s$, $n\geq 8$, and set $\M \colonequals \M_n$.  
The class group $\Cl (\Z_F)$ acts freely on the set $\Cl(\M)$.  The map
  $I\mapsto \Ty(\RO(I))$ is an $h(\Z_F)$-to-$1$ map from $\Cl(\M)$
  onto $\T(\H)$ which is constant on the $\Cl(\Z_F)$-orbits.
  So $\Cl_{\rm rel}(\M)$ may be identified with $\T(\H)$ and
  $\#\T(\H)=h_{\rm rel}(\H)$.

Let $\bv=\M'\in\Ver(\Delta)$ be a maximal order of $\H$ as in \eqref{ver}.
Let $\overline{\bv}=[\M']\in\Ver(\Gamma_{0}\backslash\Delta)$
be the vertex of $\overline{\gr}:=\overline{\gr}_n = \Gamma_{0}\backslash \Delta$ below
$\bv$.
  The map $\overline{\bv}\mapsto \Ty(\M')$ is a bijection between
  $\Ver(\overline{\gr})$ and $\T(\H)$.
  Hence $\Ver(\overline{\gr})$ can be identified with $\Cl_{\rm rel}(\M)$
  and $\#\Ver(\overline{\gr})=h_{\rm rel}(\H)$.
\end{theorem}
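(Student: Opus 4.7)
My plan is to prove the theorem in two stages. First I would establish the ideal-theoretic statements about $\Cl(\M)$, $\Clrel(\M)$, and $\T(\H)$ using reduced norms together with the fact that $h(\uOOn)$ is odd (Theorem \ref{fields}). Then I would identify $\Ver(\overline{\gr}_n)$ with $\Clrel(\M)$ via the description of $\Ver(\Delta)$ from \eqref{ver}; the remaining statements will follow from this identification.

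For freeness of the $\Cl(\uOO)$-action on $\Cl(\M)$, I would suppose $[\fa I] = [I]$, so that $\fa I = I\gamma$ for some $\gamma \in \H^\times$. Taking reduced norms of fractional ideals yields $\fa^2 = (\Nm(\gamma))$ as $\uOO$-ideals, hence $[\fa]^2 = 0$ in $\Cl(\uOO)$. Since $h(\uOOn)$ is odd by Theorem \ref{fields}, the squaring map on $\Cl(\uOO)$ is injective, forcing $[\fa] = 0$. The map $[I]\mapsto\Ty(\RO(I))$ is constant on $\Cl(\uOO)$-orbits because central ideals preserve right orders, and surjects onto $\T(\H)$ via the standard connecting-ideal construction: for any maximal order $\mN$ there exists a left $\M$-ideal $I$ with $\RO(I) = \mN$. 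For the fiber-size claim, given $[I],[J]$ with $\Ty(\RO(I)) = \Ty(\RO(J))$, I would pick $\gamma\in\H^\times$ conjugating $\RO(I)$ onto $\RO(J)$ so that $I\gamma$ and $J$ become left $\M$-ideals with the same right order $\mN$; the crucial fact, valid because $\H_n$ is unramified at every finite prime and $\mN_\fq \cong M_2(\uOO_{n,\fq})$ has only scalar two-sided ideals, is that every two-sided $\mN$-ideal is of the form $\fa\mN$ for $\fa$ an $\uOO$-ideal. This forces $[I] = [\fa J]$ for some $\fa \in \Cl(\uOO)$, giving transitivity; combined with freeness it yields the $h(\uOO)$-to-one statement, hence $\Clrel(\M) \cong \T(\H)$ and $\#\T(\H) = \hrel(\H)$.

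For the vertex identification, \eqref{ver} realizes $\Ver(\Delta)$ as the set of maximal orders $\M' \subset \H$ satisfying $\M'_\fq = \M_{n,\fq}$ for every $\fq \ne \fp$, on which $\Gamma_0 = \Pp\widetilde{\M}_n^\times$ acts by conjugation. The map $\overline{\bv} = [\M']\mapsto\Ty(\M')$ is evidently well-defined on $\Gamma_0$-orbits and surjective, since the connecting ideal $I$ produced above automatically satisfies $\RO(I)_\fq = \M_{n,\fq}$ for $\fq \ne \fp$, so $\RO(I)$ represents an element of $\Ver(\Delta)$ of the prescribed type. To complete the identification I would construct a natural map $\Ver(\overline{\gr}_n) \to \Clrel(\M)$ sending $[\M']$ to the class modulo $\Cl(\uOO)$ of any connecting ideal from $\M$ to $\M'$ (well-defined by the two-sided-ideal analysis above), verify it fits into a commutative triangle with the already-proved bijection $\Clrel(\M)\to\T(\H)$, and deduce that $\overline{\bv} \mapsto \Ty(\M')$ is a bijection with $\#\Ver(\overline{\gr}_n) = \hrel(\H)$.

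The main technical obstacle will be the injectivity step of the vertex-to-type map: showing that two elements of $\Ver(\Delta)$ which are $\H^\times$-conjugate are actually $\widetilde{\M}_n^\times$-conjugate. A direct local-to-global patching argument at the finite primes $\fq \ne \fp$ would require strong approximation, which fails because $\H_n$ is totally definite. The detour through $\Clrel(\M)$ finesses this, since the fiber structure on the ideal side is controlled precisely by the $2$-torsion in $\Cl(\uOO)$, which vanishes by the oddness of $h(\uOOn)$.
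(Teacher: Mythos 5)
Your proposal correctly reproduces the ideal-theoretic half of the argument: the freeness of the $\Cl(\uOO)$-action via reduced norms and oddness of $h(\uOOn)$, and the identification $\Clrel(\M)\cong\T(\H)$ via the fact that two-sided ideals of maximal orders are scalar (these match Propositions~\ref{freee} and~\ref{done} and Lemma~\ref{done} in the paper). But the vertex-identification half has a real gap and rests on a misconception.

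The gap is in the surjectivity step. You assert that the connecting ideal $I$ from $\M$ to a given maximal order $\mN$ ``automatically satisfies $\RO(I)_\fq = \M_{n,\fq}$ for $\fq \ne \fp$.'' This is false: $\RO(I)=\mN$, and for a general maximal order $\mN$ one has $\mN_\fq \ne \M_\fq$ at various $\fq\ne\fp$; the connecting ideal is simply $\M_\fq\mN_\fq$ locally and its right order is $\mN_\fq$, not $\M_\fq$. What is actually needed is that every type in $\T(\H)$ admits a representative $\M'$ with $\M'_\fq=\M_\fq$ for all $\fq\ne\fp$ --- i.e., a representative lying in $\Ver(\Delta)$. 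This is precisely Proposition~\ref{max} in the paper, and it is not a formality: it requires producing a global element $\beta$ with totally positive reduced norm $\alpha$ satisfying $\alpha I = J^2$ (here oddness of the \emph{narrow} class number $\tilde{h}(\uRn)$ is used, via Theorem~\ref{fields} and Lemma~\ref{ideal}), followed by strong approximation for the norm-one group $\H_{n,1}^\times$ away from $\fp$. Without this, your map $\Ver(\overline{\gr}_n)\to\T(\H)$ has no proof of surjectivity.

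Relatedly, your closing remark --- that a direct local-to-global patching argument ``would require strong approximation, which fails because $\H_n$ is totally definite'' --- is not correct and points to the missing ingredient. Strong approximation for $\SL_1(\H_n)$ \emph{does} hold here, precisely because $\H_n$ is split (hence noncompact) at the finite prime $\fp$; the paper uses it directly in Proposition~\ref{max}. Nor does the detour through $\Clrel(\M)$ ``finesse'' the injectivity step: unwinding the injectivity of your proposed map $\Ver(\overline{\gr}_n)\to\Clrel(\M)$ leads to exactly the statement that two $\H^\times$-conjugate elements of $\Ver(\Delta)$ are $\widetilde{\M}_n^\times$-conjugate, and proving it still requires the same local-norm computation at the $\fq\ne\fp$ and the descent from a square principal ideal to a principal one via oddness of $h(\uRn)$ --- this is Proposition~\ref{maxo} in the paper, restated, not avoided.
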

\begin{proof}
  We will prove Theorem~\ref{thm:Mn} through a series of propositions.
  First note that by the Skolem-Noether Theorem \cite[Thm.~I.2.1]{V}, 
  maximal isomorphic orders of $\H=\H_n$
  are  conjugate. For an element $\gamma\in \H^\times$ and
  a maximal order $\M'$ in $\H$, put $(\M')^\gamma = \gamma\M'\gamma^{-1}$.
  If $h\in\H^\times$ and $\fq$ is a prime of $F$, $h_{\fq}$ denotes $h$
  viewed as an element of $\H_{\fq}$.  Similarly, if $x\in F^\times$,
  $x_{\fq}$ denotes $x$ viewed as an element of $F_{\fq}^\times$.
  Our first proposition shows that every isomorphism class of maximal
  orders in $\H$ is represented by vertices in the tree $\Delta$ as in
  \eqref{ver}:
  \begin{proposition}
    \label{max}
Let $n=2^s$ or $n=3\cdot 2^s$, $n\geq 8$.
Suppose $\M'\subseteq \H:=\H_n$ is a maximal order.  Then there exists
    a maximal order $\M''\subseteq\H$ such that $\M''\cong \M'$ and
    $\M''_{\fq}=\M_{\fq}$ for all $\fq\neq \fp$.
    \end{proposition}
  \begin{proof}
    For $\fq\neq\fp$, $\M'_{\fq}\simeq\M_{\fq}$ since any two maximal orders are
    locally isomorphic.  Hence $\M'_{\fq}=\gamma_{\fq}\M_{\fq}\gamma_{\fq}^{-1}$
    for $\gamma_{\fq}\in \H_{\fq}^\times$.  We can take $\gamma_{\fq}=1$ for all
    but finitely many $\fq$ since two orders differ at only
    finitely many places.  Consider the ideal
    $I=\prod_{\fq\neq \fp}\fq^{\val_{\fq}(\Nm_{\H_{\fq}/F_{\fq}}(\gamma_{\fq}))}$ of
    $\Z_F^{(2)}=\Z_{F_n}^{(2)}$.
    Since the narrow class number
    of $\Z_{F}^{(2)}$ is odd by Theorem~\ref{fields}, 
the class of $I$ is a square
    in the narrow class group $\Cl^+\big(\Z_{F}^{(2)}\big)$.
    Hence there exists $\alpha\in \Z_{F,+}^{(2),\times}$
    such that $\alpha I=J^2$ for some ideal $J$ of $\Z_F^{(2)}$.
    Let $\beta\in\H^\times$ with $\Nm_{\H/F}(\beta)=\alpha$ (which exists by
    Lemma~\ref{ideal}).
    Consider the
    maximal order $\M(\beta):=\beta\M'\beta^{-1}$.
    We have
    \[
    \M(\beta)_{\fq}=\beta\gamma_{\fq}\M_{\fq}\gamma_{\fq}^{-1}\beta^{-1}=
    (\beta\gamma_{\fq})\M_{\fq}(\beta\gamma_{\fq})^{-1}.
    \]
    Notice that $\val_{\fq}\Nm_{\H_{\fq}/F_{\fq}}(\beta\gamma_{\fq})$ is even,
    say equal to $2t$.  Suppose  $F_{\fq}$ has uniformizer $\pi_{\fq}$.  Then
    $\pi_{\fq}^{-t}\M_{\fq}\pi_{\fq}^t=\M_{\fq}$ since $\pi_{\fq}$ is in
    the center $F_{\fq}$ of $\H_{\fq}$.  So
    \begin{equation*}
      \M(\beta)_{\fq}=\beta\gamma_{\fq}\pi_{\fq}^{-t}\M_{\fq}\pi_{\fq}^t
      \gamma_{\fq}^{-1}\beta^{-1}.
      \end{equation*}
    Notice that $\Nm_{\H_{\fq}/F_{\fq}}(\beta\gamma_{\fq}\pi_{\fq}^{-t})\in 
\Z_{F_\fq}^\times$
    as its $\fq$-valuation is $0$.  Hence there exists
    $\eta_{\fq}\in\M_{\fq}^{\times}$ with 
\[
    \Nm_{\H_{\fq}/F_{\fq}}(\eta_{\fq})=
    \Nm_{\H_{\fq}/F_{\fq}}(\beta\gamma_{\fq}\pi_{\fq}^{-t}).
    \]
    But then $\tau_{\fq}:=\beta\gamma_{\fq}\pi_{\fq}^{-t}\eta_{\fq}^{-1}$
    has norm $1$, and $\M_{\fq}^{\tau_{\fq}}=\M(\beta)_{\fq}$.
So by Strong Approximation there exists $\tau\in\H_{1}^{\times}$ such
    that $\tau=\tau_{\fq}u_{\fq}$ with
    $u_{\fq}\in\M_{\fq}^\times$ for all $\fq\neq \fp$.
    Now $\M(\beta)_{\fq}=\tau\M_{\fq}\tau^{-1}$.  
We see that $\M'' =\tau^{-1}\M(\beta)\tau$
    satisfies the statement of the proposition.
  \end{proof}

  A maximal order $\M'$ belongs to $\Ver(\Delta)$ when $\M'_{\fq}=\M_{\fq}$
  for all primes $\fq\neq\fp$ of $\Z_F$.  The group $\Gamma_{0}=
  \Pp\!\widetilde{\M}_{n}^\times$ acts on $\Delta$ by
  \begin{equation*}
    \gamma\cdot\M'=(\M')^\gamma =\gamma\M'\gamma^{-1}\quad
    \mbox{ for }\gamma\in\Gamma_{0},\,\M'\in\Ver(\Delta).
    \end{equation*}
  Let $[\M'] \in \Ver(\Gamma_0 \backslash \Delta)$ be the image of 
  $\M' \in \Ver(\Delta)$ by the natural quotient map.
  \begin{proposition}
    \label{maxo}
Let $n=2^s$ or $n=3\cdot 2^s$, $n\geq 8$.
    Suppose $\M', \M''\in\Ver(\Delta)$ are maximal orders.
    Then $\M'\cong\M''$ if and only if
\[
    [\M']=[\M'']\in\Ver(\Gamma_{0}\backslash
    \Delta=\overline{\gr}_n).
    \]
    Hence $\Ver(\overline{\gr}_n)$ is the set of  isomorphism classes
    of maximal orders in $\H=\H_n$.
  \end{proposition}
  \begin{proof}
    If $[\M']=[\M'']$, then there exists $\gamma\in\M[1/2]^\times
    =\widetilde{\M}^\times$
    so that $(\M')^\gamma=\gamma\M'\gamma^{-1}=\M''$, so  $\M'\cong\M''$.

    Now suppose $M'\cong\M''$.  Then $\M''=(\M')^\gamma=\gamma\M'\gamma^{-1}$
    for some $\gamma\in\H^\times$.  So for each $\fq\neq\fp$,
    $\gamma_{\fq}=s_{\fq}u_{\fq}$ with $s_{\fq}\in F_{\fq}^\times$,
    $u_{\fq}\in\M_{\fq}^\times$.  In particular,
    $\val_{\fq}(\Nm_{\H_{\fq}/F_{\fq}}
    (\gamma_{\fq}))$ is even and hence the principal ideal
    $(\Nm_{\H/F}(\gamma))$ of $\Z_{F_n}^{(2)}$ is the square of an ideal $J$:
    $(\Nm_{\H/F}(\gamma))=J^2$. The ideal $J$ is principal since $J^2$
    is principal and $h(\Z_{F_n}^{(2)})$ is odd by Theorem~\ref{fields}:
    \begin{equation}
      \label{sun}
      \Nm_{\H/F}(\gamma)=uj^2\quad\mbox{ with }\quad u\in \Z_{F_n}^{(2),\times},
      \,j\in F^\times.
      \end{equation}
    Let $\gamma':=j^{-1}\gamma$; then $\gamma'_{\fq}=j^{-1}s_{\fq}u_{\fq}$.

    We claim that $j^{-1}s_{\fq}\in\M_{\fq}^\times$: We have
    \[
    \Nm_{\H_{\fq}/F_{\fq}}(\gamma_{\fq})=s_{\fq}^2\Nm_{\H_{\fq}/F_{\fq}}(u_{\fq})=
    uj^2
    \]
    using \eqref{sun}. Taking $\val_{\fq}$ we get
    $2\val_{\fq}(s_{\fq})=2\val_{\fq}(j)$.  Hence $\val_{\fq}(j^{-1}s_{\fq})=0$
    and $j^{-1}s_{\fq}\in\M_{\fq}^\times$.  So $\gamma'_{\fq}\in\M_{\fq}^\times$
    for all $\fq\neq\fp$.  Hence $\gamma'\in\M[1/2]^{\times}$
    with 
\[
\M''=(\M')^{\gamma'}\quad\text{and}\quad[\M']=[\M'']\in\Ver(\Gamma_{0}\backslash
    \Delta).
\]
  \end{proof}
    \begin{lemma}
      \label{done}
      Let $\mN$ be a maximal $\Z_{F_n}$-order of $\H_n$ with $4|n$ and $n\geq 8$.
      Let $I$ be a two-sided $\mN$-ideal in $\H_n$.  Then $I = \mN\fa$
      for some ideal $\fa$ of $\Z_{F_n}$.
    \end{lemma}
    \begin{proof}
      First note that if $J$ is a $\Z_F\colonequals\Z_{F_n}$-ideal then
      $J=\cap_{\fq} (J\otimes_{\Z_F}\Z_{F_\fq})$, see
      \cite[Lemma 9.4.3]{voight}.
Now the only
$2$-sided ideals in matrix rings are generated by scalars
\cite[Thm.~3.1]{Lam2}
and $\H_n$ is unramified at all finite primes $\fq$ \cite[Prop.~3.1]{IJKLZ2}.
      Hence $I\otimes_{\Z_F}\Z_{F_\fq}=\mN_{\fq}S_{\fq}$ with $S_{\fq}$
      a scalar ideal for all primes $\fq$.  But then
      \begin{equation*}
        I = \cap_{\fq}\mN_{\fq}S_{\fq}
        = \mN\left(\cap_{\fq}S_{\fq}\right)
        =\mN\fa ,
      \end{equation*}
      where the $\Z_F$-ideal $\fa$ is given by $\fa=\cap_{\fq}S_{\fq}$.
    \end{proof}
  \begin{proposition}
    \label{freee}
    Let $n=2^s$ or $n=3\cdot 2^s$, $n\geq 8$.
    The action of \mbox{$\Cl(\Z_F=\Z_{F_n})$} on $\Cl(\M=\M_n)$ is free.
    Let $I_1, I_2, \ldots, I_h$ be representatives for
    $\Cl(\M)$ with right orders 
    $\M_1, \M_2,  \ldots , \M_{h}$, respectively,
    with $h=h(\H=\H_n)$.  Then $\M_i\cong\M_j$ if and only if
    $[I_i]=[I_j\fa]$ for some ideal $\fa$ of $\Z_F$, $1\leq i,j\leq h$.
    Hence the set $\T(\H)$ of isomorphism classes of maximal orders in $\H$
    is in natural bijection with $\Cl_{\rm rel}(\M)$.
  \end{proposition}
  \begin{proof}
    If $[I_i]=[I_j\fa]$ for a $\Z_F$-ideal $\fa$, then obviously $\M_j$
    is the right order of $I_j\fa$ and the maximal orders $\M_i, \,\M_j$
    are isomorphic. To see that $\Cl(\Z_F)$ acts freely
    on $\Cl(M)$, suppose $I_i\fa=I_i\alpha$ for $\alpha\in\H^\times$.
    Then taking norms of ideals
    we have $\Nm(I_i)\Nm(\alpha)=\Nm(I_i)\Nm(\fa)=\fa^2\Nm(I_i)$. Hence
    $(\Nm(\alpha))=\fa^2$ and $\fa^2$ is principal.  But the class number
    $h(\Z_F)$ is odd by Theorem~\ref{fields}, and so $\fa$ is principal.

    Now suppose that $\M_i \cong \M_j$, so that $\M_i = \M_j^\alpha$ for some
    $\alpha$.  Then $I_i^{-1}I_j\alpha^{-1}\subseteq \H_n$
    is a two-sided $\M_i$-ideal. 
    Now by Lemma~\ref{done}, we have $I_i^{-1}I_j\alpha^{-1}=\M\fa$ and so
    $[I_i]=[I_j\fa^{-1}]$, concluding the proof of Proposition~\ref{freee}.
        \end{proof}

  Putting Propositions~\ref{max}, \ref{maxo}, \ref{freee} together then
  proves Theorem~\ref{thm:Mn}.

\end{proof}

\section{Optimal embeddings, mass, and a torsion invariant}
\label{puppy}
In this section we fix $n \ge 8$ of the form $2^s$ or $3 \cdot 2^s$.
We then frequently omit subscript $n$'s; in particular, $\Z_F\colonequals
\Z_{F_n}$
and $\H\colonequals\H_n$.

\subsection{Mass}
\label{puppy1}

\begin{definition1}
The {\sf mass} of an $\uOO$-order $\mN\subseteq \H:=\H_n$, denoted
$m(\mN)$, is defined to be $$m(\mN):=\frac{1}{\#[\mN^\times:
    \Z_F^\times]}\, .$$
Let $\mN\subseteq \H_n$ be a maximal order. Let 
$I_1, I_2, \ldots,  I_h$ be representatives for the left ideal classes
of $\mN$ with right orders 
$\mN_1, \ldots, \mN_{h}$, respectively.
The {\sf Eichler mass} of $\H$, denoted $m(\H)$, is 
$m(\H)= \sum_{i= 1}^h m(\mN_i)$.
The Eichler mass $m(\H)$ is independent of the maximal order
$\mN\subseteq \H$ used to define it.
\end{definition1}

Note that 
\begin{equation}
\label{smelt}
[F_n:\Q]=\begin{cases} 2^{s-2}\quad\text{if $n=2^s$}\\
2^{s-1}\quad\text{if $n=3\cdot 2^s$}.
\end{cases}
\end{equation}
Eichler's mass formula \cite[Cor.~V.2.3]{V}
for the totally definite quaternion algebra
$\H_n$ gives:
\begin{theorem}
  \label{mass_g}
$m(\H_n)  = 2^{1-[F_n:\Q]}|\zeta_{F_n}(-1)|
h(\Z_{F_n})$ .
\end{theorem}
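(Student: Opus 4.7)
The plan is simply to apply Eichler's mass formula \cite[Corollaire V.2.3]{V} to the totally definite quaternion algebra $\H_n$ over the totally real field $F_n$ of degree $d:=[F_n:\Q]$. In general form the formula reads
\[
m(B) \;=\; \frac{h(\OO_F)\,|\zeta_F(-1)|}{2^{d-1}}\,\prod_{\fq\mid D(B)}\bigl(N(\fq)-1\bigr),
\]
where $B$ is a totally definite quaternion algebra over a totally real field $F$ of degree $d$, $\OO_F$ is its ring of integers, and $D(B)$ denotes the reduced finite discriminant of $B$.

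The first (and only) step is to identify the finite ramification set of $\H_n=\H\otimes_\Q F_n$. As noted in the introduction, because $F_n$ is totally real and the ramification index of every prime of $F_n$ above $2$ is even, the ramified primes of $\H_n$ are exactly the infinite primes of $F_n$; in particular $\H_n$ is unramified at every finite prime of $F_n$. Hence $D(\H_n)=(1)$ and the product $\prod_{\fq\mid D(\H_n)}(N(\fq)-1)$ is an empty product equal to $1$.

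Substituting $F=F_n$, $\OO_F=\sfO_n$, and this empty product into Eichler's formula yields
\[
m(\H_n)\;=\;\frac{h(\sfO_n)\,|\zeta_{F_n}(-1)|}{2^{d-1}}\;=\;2^{1-[F_n:\Q]}\,|\zeta_{F_n}(-1)|\,h(\sfO_n),
\]
which is exactly the claimed identity. The only point requiring care is that our definition of Eichler mass $m(\H_n)=\sum_i m(\mN_i)=\sum_i 1/[\mN_i^\times:\sfO_n^\times]$ matches the normalization in Vignéras, which it does by construction; this is routine to verify and is the main (mild) obstacle, since Vignéras's formula requires checking conventions for the local factors, the class set representatives, and the use of $h(\sfO_n)$ rather than the narrow class number.
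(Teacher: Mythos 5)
Your proof is correct and follows the same route as the paper, which simply cites \cite[Corollaire V.2.3]{V}: you apply Eichler's mass formula and observe that $\H_n$ is unramified at every finite prime of $F_n$ (so both the discriminant product and the Eichler-level product are empty), yielding the stated formula. The normalization of the mass does indeed match Vign\'eras, so the verification you flag as the "mild obstacle" goes through without issue.
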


\begin{definition1}
  \label{masss}
For $n=2^s$ or $n=3\cdot 2^s$,
set 
\begin{equation}
\label{stilts}
M_n\colonequals 2^{1-[F_n:\Q]}|\zeta_{F_n}(-1)|
= \begin{cases}2^{1-2^{s-2}}|\zeta_{F_{n}}(-1)|\quad
  \text{ if }n = 2^s \\  2^{1-2^{s-1}}|\zeta_{F_{n}}(-1)|\quad
  \text{ if } n = 3\cdot 2^s\end{cases}\, .
\end{equation}
  Then $m(\H_n)= M_n h(\Z_{F_n})$.

We can rewrite the formula for $M_n$ using the functional equation
\cite[Thm.~7.3]{N} to obtain
\begin{equation}
\label{pros}
|\zeta_{F_n}(-1)| = \zeta_{F_n}(2)\Disc(F_n)^{3/2}
(2\pi^2)^{-[F_n:\Q]} .
\end{equation}
By \cite[Prop.~2.7, Lemma 4.19]{W}, 
\begin{equation}
\label{salty}
\Disc(F_n)=\begin{cases}  2^{(s-1) 2^{s-2} - 1}\quad\text{if $n=2^s$}\\
2^{(s-1)2^{s-1}} 3^{2^{s-2}}\quad\text{if $n=3\cdot 2^s$}.
\end{cases}
\end{equation}
Substituting \eqref{smelt}  and \eqref{salty} into \eqref{pros} and using
\eqref{stilts}, we obtain
\begin{equation}
\label{stilts2}
M_n =\begin{cases}
\left(2^{(3s-7)\cdot 2^{s-3}-1/2}\right) \pi^{-2^{s-1}} \zeta_{F_n}(2)=
2^{(3s-7-4\log_2\pi)2^{s-3}-1/2}\zeta_{F_n}(2)\quad\text{if $n=2^s$}\\
\left(2^{(3s-7)\cdot 2^{s-2}+1}\right) 3^{3 \cdot 2^{s-3}} \pi^{-2^s} \zeta_{F_n}(2)\\
\qquad\qquad\qquad\qquad =
2^{1+(6s-14+3\log_2 3-8\log_2\pi)2^{s-3}}\zeta_{F_n}(2)
\quad\text{if $n=3\cdot 2^s$}.
\end{cases}
\end{equation}
\end{definition1}

We give the first six values of $M_n$ in Figures 1 and 2 below.

It will be important for us that we can bound $M_n$.  A first
step is to bound $\zeta_{F_n}(2)$.

\begin{figure}[ht]
\begin{center}
\label{table1}
\begin{tabular}{l|l}
$s$ & $M_{2^s}$\\
\hline 
$3$ & $1/24$\\
$4$ & $5/48$\\
$5$ & $1455/32 = 45 +15/32$\\ 
$6$ & 
$36089088885 + 15/64$\\
$7$ & 
$388002862466852235269386970423082656 + 47/128$\\
$8$ & 
$12590881347932996758326842620820139275560721327409501583885395489575954$\\
 & \qquad $869483072245958812094894349923+ 47/256$
\end{tabular}
\end{center}
\caption{The mass $M_n$ for $n=2^s$, $3\leq s\leq 8$.}
\end{figure}
\begin{figure}
\begin{center}
\label{table2}
\begin{tabular}{l|l}
$s$ & $M_{\Th}$\\
\hline 
$2$ & $1/12$\\
$3$ & $1/8$\\
$4$ & 
$22 + 13/16$\\ 
$5$ & 
$3235132981 + 13/32$\\
$6$ & 
$1100354993054815942169953280797061 + 61/64  $\\
$7$ & 
$358377997932455414885081307668626153899866730607775353622534814156729$\\
 & \qquad $ 75427396299786979146454217+ 29/128$
\end{tabular}
\end{center}
\caption{The mass $M_n$ for $n=3\cdot2^s$, $2\leq s\leq 7$.}
\end{figure}
\vspace*{.3in}

\begin{lemma}
\label{padthai}
We have
\[
1<\zeta_{F_n}(2)\leq \zeta_{\Q}(2)^{[F_n:\Q]}=(\pi^2/6)^{[F_n:\Q]} =
\begin{cases}
2^{(2\log_2\pi-1-\log_2 3)2^{s-2}}<2^{(1.5) 2^{s-3}}\quad\text{if $n=2^s$}\\
2^{(2\log_2\pi-1-\log_2 3)2^{s-1}}<2^{(3) 2^{s-3}}
\quad\text{if $n=3\cdot 2^s$.}

\end{cases}
\]
\end{lemma}
\begin{proof}
For a number field $F$, let $|F|$ be the set of primes of $F$.
Let $p\in |\Q|$ factor in $F_n$ as $p\uOO =\wp_1^e\cdots \wp_r^e$ with
$\wp_i$ having residue field $\F_{p^f}$.  Then $\zeta_{F_n}(s)$ is given
by an Euler product
\begin{equation*}
\label{frost}
\zeta_{F_n}(s)=\prod_{p\in |\Q|}E_p(s)\quad\text{where}\quad
E_p(s)=(1-p^{-fs})^{-r}.
\end{equation*}
Note that 
\[
1<E_p(2)=(1-p^{-2f})^{-r}\leq (1-p^{-2})^{-fr}\leq (1-p^{-2})^{-[F_n:\Q]}
\]
since 
\[
1-p^{-2f}\geq 1-p^{-2}\geq (1-p^{-2})^f .
\]
Taking the product over $p\in |\Q|$ then gives
$ 1<\zeta_{F_n}(2)\leq \zeta_{\Q}(2)^{[F_n:\Q]}$.
\end{proof}
\begin{theorem}
\label{soup}
\begin{enumerate}[\upshape (a)]
\item
\label{soup1}
$\, 2^{(3s-12.1)2^{s-3}-1/2}>M_{2^s} >2^{(3s-13.7)2^{s-3}-1/2}$.
\item
\label{soup2}
$\, 2^{(6s-19.4)2^{s-3}+1} >M_{3\cdot 2^s}>2^{(6s-22.5)2^{s-3}+1} $.
\end{enumerate}
\end{theorem}
\begin{proof}
Combine \eqref{stilts2} and Lemma \ref{padthai} with the inequalities
\[
13.6<7+4\log_2\pi<13.7\quad\text{and}\quad 22.4<14-3\log_23+8\log_2\pi<22.5 .
\qedhere
\]

\end{proof}


\begin{theorem}   
\label{masst}
  Let $n=2^s$ or $3\cdot 2^s$, $n\geq 8$,  and $\gr_n
=\PSUT\big(\Z_{K_n}^{(2)}\big)\backslash\Delta_n$, $\widebar{\gr}_n
=\PUT\big(\Z_{K_n}^{(2)}\big)\backslash\Delta_n$ as in \eqref{grrr} .
\begin{enumerate}[\upshape (a)]
    \item
\label{founded1}
We have 
\begin{eqnarray*}
  \VM(\gr_n) &= 2M_n,
&\EM(\gr_n) = 3M_n , \\
\VM(\widebar{\gr}_n) &= M_n,
&\EM(\widebar{\gr}_n)= (3/2)M_n .
\end{eqnarray*}
\item
\label{founded}
\textup{(\cite[Thm.~6.6(ii),(iv)]{IJKLZ})}
  We have the Euler-Poincar\'{e} characteristics 
  \[\chi\big(\PSUT\big(\Z_{K_n}^{(2)}\big)\big)= -M_n\quad\mbox{and}\quad
  \chi\big(\PUT\big(\Z_{K_n}^{(2)}\big)\big)=-(1/2)M_n.\]
\end{enumerate}
\end{theorem}
\begin{proof}
From Theorem~\ref{thm:Mn} and Theorem \ref{mass_g}
$$\VM(\widebar{\gr}_n) = \frac{m(\H_n)}{h(\Z_{F_n})} =  M_n\, .$$
As $\Delta_n$ is regular of valence
$3 = \Norm(\mathfrak{p}_n) +1$, 
the rest follows from Theorem~\ref{thm:three_to_two}
and Theorem~\ref{thm: mass_mult} applied to the double
cover $\pi: \gr
\rightarrow \widebar{\gr}$ of Definition~\ref{grr}.

Theorem~\ref{masst}(\ref{founded}) follows from Theorem~\ref{EP}.
\end{proof}

\begin{proposition}\label{prop:bounds}
Let $n=2^s$ or $3\cdot 2^s$, $n\geq 8$, $s\geq 2$,  with $\gr_n
=\PSUT\big(\Z_{K_n}^{(2)}\big)\backslash\Delta_n$  and $\widebar{\gr}_n
=\PUT\big(\Z_{K_n}^{(2)}\big)\backslash\Delta_n$ as in \eqref{grrr}.
\begin{enumerate}[\upshape (a)]
  \item
\label{bounds1}
Then
\begin{eqnarray*}
v_1(\gr_n) &\le 2M_n,  
&e(\gr_n) \ge 3M_n\, , \\
v_1(\widebar{\gr}_n) &\le M_n ,
&e_r(\overline{\gr}_n) \ge \frac{3}{2}M_n - e_h(\overline{\gr}_n),
\end{eqnarray*}
with $v_1, v_{<1}$ as in Definition \textup{\ref{defn: breakup}}
and $e_r$ as in Definition
\textup{\ref{gr}}.
\item
\label{bounds2}
  We have
\[
v(\gr_n)=v_{1}(\gr_n)+v_{<1}(\gr_n)\le 2M_n+v_{<1}(\gr_n).
\]
\end{enumerate}
\end{proposition}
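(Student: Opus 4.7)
The plan is to derive every bound from the mass computation of Theorem~\ref{mass} together with two essentially formal observations: first, that each individual vertex (respectively edge) mass lies in $(0,1]$, so sums of such masses bound the number of mass-$1$ vertices (respectively the number of edges); second, that $\Gamma_1$ acts on $\Delta_n$ without inversions, so $\gr_n$ is an ordinary graph with no half-edges, while $\overline{\gr}_n$ may in principle have half-edges and hence requires Lemma~\ref{lemma: lower bound e}.

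First I would handle the two bounds on $v_1$. By definition $\VM(\gr)=\sum_{\bv\in\Ver(\gr)} m(\bv)$, each summand is positive, and the summands indexed by $\bv\in\Ver_1(\gr)$ contribute exactly $1$. Hence $v_1(\gr)\le \VM(\gr)$ for any Kurihara quotient graph $\gr$. Applied to $\gr_n$ and $\overline{\gr}_n$, Theorem~\ref{mass} yields $v_1(\gr_n)\le 2M_n$ and $v_1(\overline{\gr}_n)\le M_n$.

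Next I would treat the edge bounds. Because $\Gamma_1$ acts on $\Delta_n$ without inversions (Lemma~\ref{action} and the remark following it, applied using that elements of $\widetilde{\M}_{n,1}^\times$ have norm $1$ so even $\fp$-valuation), the quotient $\gr_n$ has no half-edges, so $e(\gr_n)=e_r(\gr_n)=\tfrac12\#\Ed(\gr_n)$. Since every edge mass is at most $1$, $\EM(\gr_n)=\tfrac12\sum_{\be\in\Ed(\gr_n)} m(\be)\le e(\gr_n)$, and Theorem~\ref{mass} then gives $e(\gr_n)\ge 3M_n$. For $\overline{\gr}_n$, half-edges are possible, so I would invoke Lemma~\ref{lemma: lower bound e} directly: $e_r(\overline{\gr}_n)\ge \EM(\overline{\gr}_n)-e_h(\overline{\gr}_n)=\tfrac32 M_n-e_h(\overline{\gr}_n)$.

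Part (b) is then immediate from the definition $v(\gr_n)=v_1(\gr_n)+v_{<1}(\gr_n)$ combined with the bound $v_1(\gr_n)\le 2M_n$ proved in part (a). There is no real obstacle here: the proposition is an elementary corollary of Theorem~\ref{mass} and the generalities of Section~\ref{groupgraph}. The only point that must be checked carefully is the absence of half-edges in $\gr_n$, which is where one uses that $\Gamma_1$ consists of classes of norm-$1$ elements and hence acts without inversions by the parity statement of Lemma~\ref{action}.
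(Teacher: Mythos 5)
Your proof is correct and takes essentially the same approach as the paper's, which is a terse application of Theorem~\ref{mass}, Lemma~\ref{lemma: lower bound e}, the observation (via Lemma~\ref{action}) that $\Gamma_1$ acts without inversions so $e_h(\gr_n)=0$, and the fact that each vertex and edge mass lies in $(0,1]$. You have simply spelled out the intermediate steps that the paper leaves implicit.
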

\begin{proof}
By Lemma~\ref{action} $\PSUT\big(\Z_{K_n}^{(2)}\big)$ acts on $\Delta_n$ without
inversions, thus
$e_h(\gr) = 0$ and $e(\gr) = e_r(\gr)$. Apply Theorem~\ref{masst}, 
Lemma~\ref{lemma: lower bound e}, and the fact that the
mass of an edge or vertex is at most 1. 
\end{proof}

Although most of this paper is devoted to asymptotic {\em lower} bounds
for $b_1(\gr_n)$ and $b_1(\overline{\gr}_n)$, it is
much easier to give asymptotic {\em upper} bounds.

\begin{proposition}
  \label{upper}
  Let $n=2^s$ or $n=3\cdot 2^s$ with $n\geq 8$.
  Then $b_1(\gr_n)=2^{O(n\log n)}$ and $b_1(\overline{\gr}_n)
  =2^{O(n\log n)}$.
\end{proposition}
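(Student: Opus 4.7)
The plan is to exploit the Euler formula $\genus(\gr) = 1 + e_r(\gr) - v(\gr) \le 1 + e_r(\gr)$ from Definition~\ref{gr} and control $e_r$ through the edge mass already computed in Theorem~\ref{mass}. Set $d_n := [F_n : \Q] = \Theta(n)$. If $N_n$ denotes a uniform upper bound for the order of any stabilizer $\Gamma_{\be}$ of an edge of $\Delta_n$ under $\Gamma_1$ (or $\Gamma_0$), then $m(\be) \geq 1/N_n$ for every edge, and combined with $\sum_{\be \in \Ed(\gr)} m(\be) = 2 \EM(\gr)$ this gives $\#\Ed(\gr) \leq 2 N_n \EM(\gr)$, hence $e_r(\gr) \leq N_n \EM(\gr)$. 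Using $\EM(\gr_n) = 3M_n$ and $\EM(\overline{\gr}_n) = (3/2)M_n$ from Theorem~\ref{mass}, it therefore suffices to show $M_n = 2^{O(n \log n)}$ and $N_n = n^{O(1)}$.

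The bound on $M_n = 2^{1-d_n}|\zeta_{F_n}(-1)|$ comes from the functional equation of $\zeta_{F_n}$ at $s=-1$ for the totally real field $F_n$, which gives
\[
|\zeta_{F_n}(-1)| \;=\; \frac{|d_{F_n}|^{3/2}}{(2\pi^2)^{d_n}}\,\zeta_{F_n}(2).
\]
The Euler product yields $\zeta_{F_n}(2) \leq \zeta(2)^{d_n}$, and the standard cyclotomic discriminant bound gives $|d_{F_n}| \leq n^{d_n}$. Therefore $|\zeta_{F_n}(-1)| \leq n^{3d_n/2} C^{d_n}$ for an absolute constant $C$, and since $d_n = \Theta(n)$, this is $2^{O(n\log n)}$; multiplying by $2^{1-d_n}$ leaves the estimate $M_n = 2^{O(n \log n)}$ intact.

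For $N_n$, every finite-order element of $\Gamma_0 \subseteq \Pp\!\H_n^\times$ lifts to a quaternion unit whose image in $\H_n^\times/F_n^\times$ is torsion; such an element generates a CM extension $L/F_n$ embedded in $\H_n$, with $[L:\Q] = 2d_n$, and its order equals the order of a root of unity $\zeta_m \in L$. The inclusion $\Q(\zeta_m) \subseteq L$ forces $\phi(m) \leq 2d_n = O(n)$, and the elementary lower bound $\phi(m) \gg m/\log\log m$ then gives $m = O(n \log\log n)$. Since finite subgroups of $\Pp\!\H_n^\times$ are classified (cyclic, dihedral, or one of $A_4, S_4, A_5$) and each has size bounded polynomially in the maximum order of an element, we conclude $N_n = n^{O(1)}$. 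Putting the three estimates together, $\genus(\gr_n) \leq 1 + 3M_n N_n = 2^{O(n\log n)}$, and the identical argument gives $\genus(\overline{\gr}_n) = 2^{O(n\log n)}$.

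The main obstacle is not in any single step but in assembling several standard ingredients — the functional equation estimate, the discriminant bound, and the control of torsion in quaternion orders; none is individually deep. This is precisely why the upper bound is routine compared with the doubly-exponential \emph{lower} bound of the Main Theorem, which requires the much more delicate analysis of half-edges, ramification of the cover $\gr_n \to \overline{\gr}_n$, and class numbers of CM orders undertaken in the remaining sections.
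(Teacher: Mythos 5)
Your proposal follows essentially the same route as the paper: bound $\genus$ by $e_r$, bound $e_r$ via the edge mass from Theorem~\ref{mass} together with a uniform bound $N_n$ on stabilizer orders, and then bound $M_n$ via the functional equation for $\zeta_{F_n}$, the Euler product estimate $\zeta_{F_n}(2) < \zeta(2)^{d_n}$, and the discriminant bound $\Disc(\uOO_n) = 2^{O(n\log n)}$. The one place where you diverge is the stabilizer bound $N_n$. The paper, after invoking the ADE classification of finite subgroups of $\PUT(\BC)$, notes directly that a $\Z/m$ or $D_m$ inside $\PSUT(R_n)$ (resp.\ $\PUT(R_n)$) forces the $m$th roots of unity into $R_n$, hence $m\mid n$ (resp.\ $m\mid 2n$), giving the clean $N_n = O(n)$. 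You instead pass through the CM extension $F_n(\gamma)$ and the inequality $\phi(m)\le 2d_n$ together with $\phi(m)\gg m/\log\log m$, obtaining the cruder $N_n = n^{O(1)}$; this is slightly roundabout and a touch imprecise in the assertion that the order of $[\gamma]$ in $\Pp\!\H_n^\times$ equals the order of a root of unity in $L$ (in general the root of unity $\gamma^2/N(\gamma)$ may have order $m$ or $m/2$), but none of this affects the $O(n\log n)$ exponent. Overall the argument is correct and in substance identical to the paper's; the paper's $m\mid n$ observation is just the tighter and more economical way to finish the stabilizer step.
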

\begin{proof}
First observe that the stabilizer group for a vertex or noninverted
edge is a discrete subgroup of $\PUT(\BC)$.  By the well-known
classification of finite subgroups of $\PUT(\BC)$, it must be either
cyclic $\Z/m$, dihedral $D_m$, or one of $A_4$, $S_4$, or $A_5$.  Now,
we saw in Lemma \ref{lem:psu-pu-order} that to have either $\Z/m$ or
$D_m$ in $\PSUT\big(\Z_{K_n}^{(2)}\big)$ for $m > 6$ requires that $\Z_{K_n}^{(2)}$
contain the $2m^{th}$ roots of unity, i.e., that $2m$ divide $n$
(similarly for them to be in $\PUT\big(\Z_{K_n}^{(2)}\big)$ requires that 
$m$ divide
$n$); hence, for $n\ge60$ its order is at most $n$ for 
$\PSUT\big(\Z_{K_n}^{(2)}\big)$
and $2n$ for $\PUT\big(\Z_{K_n}^{(2)}\big)$. 
Hence by Proposition~\ref{none} and Theorem \ref{masst}\eqref{founded1} we have
$$b_1(\overline{\gr}_n)\leq b_1(\gr_n)\le
e_r(gr_n)\le 2n\EM(\gr_n)=(2n)(3M_n)=
6 n2^{1-[F_n:\Q]}\lvert\zeta_{F_n}(-1)\rvert\,\,
\text{for $n\geq 60$}.$$

Now by the functional equation 
\cite[Thm.~7.3]{N} we have
$$|\zeta_{F_n}(-1)| = \zeta_{F_n}(2)\Disc(\Z_{F_n})^{3/2}
(2\pi^2)^{-[F_n:\Q]} .$$

Notice that $\Disc(\Z_{F_n})=2^{O(n\log n)}$
whereas all the other factors involved are bounded by $2^{O(n)}$.  This
is clear for all factors except for $\zeta_{F_n}(2)$, but we showed
in the proof of  Lemma \ref{padthai} that
$\zeta_{F_n}(2) \le \zeta_{\Q}(2)^{n/4}$.  
Therefore, we get $b_1(\gr_n)=2^{O(n\log n)}$
and $b_1(\overline{\gr}_n)=2^{O(n\log n)}$ as desired.
\end{proof}

\subsection{A torsion invariant}
\label{fern}

In this section we define an invariant which measures how torsion in a group
acting on a contractible space affects the Euler characteristic of the quotient.
\begin{definition}
\label{cream}
For a group $\Gamma$ of finite homological type 
(cf.~\cite[Chap.~9, Sect.~6]{Br}),
$\chi(\Gamma)$ is the Euler-Poincar\'{e} characteristic as
in \cite[Chap.~IX, Sect.~7]{Br}.  
Suppose $\Gamma$ acts
on a contractible space $X$ with finite isotropy subgroups.
Define the invariant
\begin{equation*}
\label{git}
E(\Gamma, X)\colonequals\chi(\Gamma\backslash X)-\chi(\Gamma),
\end{equation*}
where $\chi(\Gamma\backslash X)$ is the Euler characteristic
of the quotient space $\Gamma\backslash X$.  If $\Gamma$ is torsion-free,
then $E(\Gamma,X)=0$.  However, if the action of $\Gamma$ on $X$
has nontrivial finite isotropy subgroups, then $E(\Gamma,X)$ can be 
nonzero---it measures the correction to the Euler characteristic of 
$\Gamma\backslash X$ due to the isotropy subgroups of the action.
\end{definition}

Let $n=2^s$ or $n=3\cdot 2^s$, $n\geq 8$, with $\mathfrak{p}_n$
the unique prime above $2$ in $F_n$.
The groups $\PSUT\big(\Z_{K_n}^{(2)}=\Z[\zeta_n, 1/2]\big)$ and 
$\PUT\big(\Z_{K_n}^{(2)}\big)$
act on the tree $\Delta\colonequals\Delta_{\mathfrak{p}_n}$.
It will be convenient to adopt the following abbreviated notation.
\begin{definition1}
\label{tail}
Set 
\[
E_{1,n}\colonequals E\big(\PSUT\big(\Z_{K_n}^{(2)}\big)  ,\Delta\big)
\qquad\text{and}\qquad
E_{0,n}\colonequals  E\big(\PUT\big(\Z_{K_n}^{(2)}\big)  ,\Delta\big).
\]
\end{definition1}

For the group $\PSUT\big(\Z_{K_n}^{(2)}\big)$ 
we have by Theorem \ref{masst}\eqref{founded}
\begin{align}
\label{shade2}
E_{1,n}&=\chi(\gr_{n})+M_{n}
=1-b_1(\gr_{n})+M_{n}\\
\nonumber
&=v(\gr_n)-e(\gr_n)+M_n
= v_1(\gr_n)+v_{<1}(\gr_n)-e(\gr_n)+M_n\\
\label{shade1}
&\leq 2M_n+v_{<1}(\gr_n)-3M_n+M_n=
v_{<1}(\gr_n)\text{ by Prop.~\ref{prop:bounds}}.
\end{align}

In the case of $E_{0,n}$ we break into
cases as to whether $3|n$.  First suppose $n=2^s$ with $s\geq 3$.
Since the double cover $\gr_{2^s}\rightarrow \ogr_{2^s}$ is
\'{e}tale by Theorem \ref{index}\eqref{index2},
\begin{equation}
\label{dealt}
2\chi(\ogr_{2^s})=\chi(\gr_{2^s})+e_h(\ogr_{2^s}).
\end{equation}
Combining \eqref{dealt} with the identity 
$2\chi\big(\PUT\big(\Z_{K_n}^{(2)}\big)\big)=
\chi\big(\PSUT\big(\Z_{K_n}^{(2)}\big)\big)=-M_{2^s}$ of 
Theorem \ref{masst}\eqref{founded}, we
obtain
\begin{align}
\label{dealt2}
E_{0,2^s}=& 1-b_1(\ogr_{2^s})+M_{2^s}/2\\
\label{dealt4}
&=( E_{1,2^s}+
e_h(\ogr_{2^s}))/2\\
&\leq (v_{<1}(\gr_{2^s})+e_h(\ogr_{2^s}))/2\text{ using \eqref{shade1}}.
\nonumber
\end{align}

If $n=3\cdot 2^s$ with $s\geq 2$, then $\ogr_{3\cdot 2^s}$ has no
half-edges by Theorem \ref{thm:invert}\eqref{thm:invert1}, but 
the double cover $\gr_{3\cdot 2^s}\rightarrow \ogr_{3\cdot 2^s}$ does
have $v_r(\gr_{3\cdot 2^s}/\ogr_{3\cdot 2^s})$ ramified vertices.
Hence,
\begin{equation*}
\label{molt}
2\chi(\ogr_{3\cdot 2^s})=\chi(\gr_{3\cdot 2^s})+v_r(\gr_{\Th}/\ogr_{\Th}).
\end{equation*}
Since $2\chi\big(\PUT\big(\Z_{K_n}^{(2)}\big)\big)=
\chi\big(\PSUT\big(\Z_{K_n}^{(2)}\big)\big)=M_{\Th}$ by 
Theorem \ref{masst}\eqref{founded}, we
have
\begin{equation}
\label{lava1}
E_{0,\Th}=1-b_1(\ogr_{\Th})+M_{\Th}/2
=(E_{1,\Th}+
v_r(\gr_{\Th}/\ogr_{\Th}))/2.
\end{equation}

\subsection{Naive bounds on the torsion invariant and Betti numbers}
\label{package}

In this section we make our first approach at the main problem of this paper --
to bound $E_{i,n}$, $i=0,1$, $b_1(\gr_n)$, and
$b_1(\ogr_n)$ for $n=2^s$ and $n=3\cdot 2^s$.
Here we see what we can say naively -- without counting optimal embeddings.
Proposition \ref{none}  is key to saying anything in this direction.

\begin{theorem}
\label{resolve}
Let $n=2^s$ or $n=3\cdot 2^s$.
We have the inequalities
\begin{align}
\label{resolve1}
(3-6\cdot2^s)2^{(3s-12.1)2^{s-3}-1/2}\leq & E_{1,2^s}
\leq 1+2^{(3s-12.1)2^{s-3}-1/2}\\
\nonumber
(3-6\cdot 3\cdot2^s)2^{(6s-19.4)2^{s-3}+1}\leq & E_{1,\tts}
\leq 1+2^{(6s-19.4)2^{s-3}+1}
\end{align}
and
\begin{align}
\label{resolve3}
0\leq & b_1(\gr_{2^s})\leq 1+(6\cdot 2^s-2)2^{(3s-12.1)2^{s-3}-1/2}\\
\nonumber
0\leq & b_1(\gr_{\Th})\leq 1+(18\cdot 2^s-2)2^{(6s-19.4)2^{s-3}+1} .
\end{align}
In particular, $E_{1,n}
=2^{O(n\log n)}$ and 
$b_1(\gr_n)=2^{O(n\log n)}$ as in  Proposition \textup{\ref{upper}}.
\end{theorem}
\begin{proof}
Since $e(\gr_n)=e_r(\gr_n)$ by Remark \ref{dale} and 
$\chi\big(\PSUT\big(\Z_{K_n}^{(2)}\big)\big)=-M_n$ by Theorem 
\ref{masst}\eqref{founded}, we have 
\begin{equation*}
\label{rested}
E_{1,n}=1-b_1(\gr_n)+M_n=v(\gr_n)-e(\gr_n)+M_n.
\end{equation*}
Using the bounds on $v(\gr_n)$ and $e(\gr_n)$
in Proposition \ref{none}\eqref{none1} 
as well as Theorem 
\ref{masst}\eqref{founded1} and $b_1(\gr_n)\geq 0$ gives
\begin{align}
\label{chinese}
 \VM(\gr_n)-2n\EM(\gr_n)+M_n  &\leq E_{1,n}
\leq 1+M_n\\
(3-6n)M_n=2M_n-2n(3M_n)+M_n&\leq E_{1,n}\leq 1+M_n.
\nonumber
\end{align}
Since $b_1(\gr_n)=1+M_n-E_{1,n}$, we get the following bounds on $b_1(\gr_n)$
from \eqref{chinese}:
\begin{equation}
\label{swift}
0\leq b_1(\gr_n)\leq 1+M_n-(3-6n)M_n=1+(6n-2)M_n.
\end{equation}
Finally using the bounds for $M_n$ in Theorem \ref{soup}
gives the bounds in Theorem \ref{resolve}.
\end{proof}

\begin{theorem}
\label{Resolve}
Let $n=2^s$ or $n=3\cdot 2^s$.
We have the inequalities
\begin{align}
\label{Resolve1}
(3-6\cdot 2^s)2^{(3s-12.1)2^{s-3}-3/2}\leq & E_{0,2^s}
\leq 1+2^{(3s-12.1)2^{s-3}-3/2}\\
\nonumber
(3-18\cdot 2^s)2^{(6s-19.4)2^{s-3}}\leq & E_{0,\tts}
\leq 1+2^{(6s-19.4)2^{s-3}}
\end{align}
and
\begin{align}
\label{Resolve3}
0\leq & b_1(\ogr_{2^s})\leq 1+(3\cdot 2^s-1)2^{(3s-12.1)2^{s-3}-1/2}\\
\nonumber
0\leq & b_1(\ogr_{\Th})\leq 1+(9\cdot 2^s-1)2^{(6s-19.4)2^{s-3}+1}.
\end{align}
In particular, $E_{0,n}=2^{O(n\log n)}$ and 
$b_1(\ogr_n)=2^{O(n\log n)}$ as in Proposition \textup{\ref{upper}}.
\end{theorem}
\begin{proof}
Since $\chi\big(\PUT\big(\Z_{K_n}^{(2)}\big)\big)=-M_n/2$ by Theorem 
\ref{masst}\eqref{founded}, we have 
\[
E_{0,n}= 1-b_1(\ogr_n)+M_n/2=
v(\ogr_n)-e_r(\ogr_n)+M_n/2. 
\]
Using the bounds on $v(\gr_n)$ and $e_r(\gr_n)$
in Proposition \ref{none}\eqref{none2} as well as 
Theorem 
\ref{masst}\eqref{founded1} and $b_1(\ogr_n)\geq 0$ gives
\begin{align}
\label{chinese1}
\VM(\ogr_n)-2n\EM(\ogr_n)+M_n/2  \leq & E_{0,n}\leq 1+M_n/2\\
\nonumber
(3-6n)(M_n/2)=M_n-2n(3/2)M_n+M_n/2 \leq & E_{0,n}
\leq 1+M_n/2.
\end{align}
Using $b_1(\ogr_n)=1+M_n/2-E_{0,n}$ we get from \eqref{chinese1}
\begin{equation}
\label{lunchbox}
0\leq b_1(\ogr_n)\leq 1 + M_n/2-(3-6n)(M_n/2)=1+(6n-2)(M_n/2)=1+(3n-1)M_n.
\end{equation}
Now use the bounds for $M_n$ in Theorem \ref{soup}
to get the bounds in Theorem \ref{Resolve}.
\end{proof}

\begin{remark}
\label{dinner}
The upper bounds above
for $E_{i,n}$ are trivial (and useless for studying asymptotic behavior
as $n\rightarrow \infty$),
as are the lower bounds for $b_1$.  All the work from this point on through
the end of Section \ref{three} is to obtain strong upper bounds for $E$ and
strong lower bounds for $b_1$.  
As discussed in the Introduction, to get the asymptotic growth
rates for $b_1(\gr_n)$ and $b_1(\ogr_n)$ of Main Theorem
\ref{main}, we need an upper bound for $E_{i,n}$ which
has a slower growth rate than $M_n$. 
Compare the naive bounds in Theorems \ref{resolve}, \ref{Resolve}
with the bounds we ultimately prove in Theorems \ref{main2b},
\ref{main2a}.
\end{remark}

\subsection{Optimal embeddings}
\label{sec: optimal}

We remind the reader that the notation $v_1, v_{<1}$ is in Definition~\ref{defn: breakup} and that $e_r, e_{h}$ appear in Definition~\ref{gr}.
We are seeking lower bounds on $b_1(\gr) = e(\gr) - v(\gr) +1$
and $b_1(\overline{\gr}) = e_r(\overline{\gr}) - v(\overline{\gr})
+1$. These will follow from upper bounds on $v(\gr)$ and
$v(\overline{\gr})$ and lower bounds on $e(\gr)$ and
$e_r(\overline{\gr})$. From Proposition~\ref{prop:bounds} we get an upper bound on $v_1(\gr)$ and a
lower bound on $e(\gr)$. Then we obtain upper bounds on
$v_{<1}(\gr)$ and thus on $v(\gr) = v_1(\gr) + v_{<1}(\gr)$. We derive
an upper bound on $v(\overline{\gr})$ from that on $v(\gr)$ by
bounding from above the number of vertices in $\gr$ which are
ramified in the the double cover $\gr \rightarrow \overline{\gr}$.
We finally obtain an upper bound on the number of edges in $\gr$ which
are inverted by $\PUT\big(\Z_{K_n}^{(2)}\big)$ which bounds $e_h(\overline{\gr})$ from above,
thereby bounding $e_r(\overline{\gr})$ from below by
Proposition~\ref{prop:bounds}.

All of these bounds will be achieved by bounding maximal orders containing  specific types of elements: roots of unity, {\it ramifying elements}
(Definition~\ref{defn:ramify}), and {\it inverting elements}
(Definition~\ref{defn:invert}). If an $\Z_{F_n}$-order $\mN\subseteq\H_n$ contains an element
$\gamma$ generating a quadratic extension of $F_n$, then the 
quadratic $\Z_{F_n}$-order $\fo := \Z_{F_n}[\gamma]$ embeds in $\mN$.

\begin{definition}\label{optimal embedding}
  Let $\mN$ be an $\Z_{F_n}$-order in $\H_n$, $L$ a quadratic extension
of $F_n$,  and $\fo$ a (not necessarily maximal) $\Z_{F_n}$-order in
$L$. An {\sf optimal embedding} of $\fo$ into $\mN$ is an
embedding  $L \xrightarrow{\iota} \H_n$ with
$\iota(L) \cap \mN = \iota(\fo)$. 

We say that two optimal embeddings $\iota_1, \iota_2$ are {\sf equivalent} if
$\iota_1(x) = u^{-1}\iota_2(x)u$ for all $x \in \fo$, where $u$ is a unit in
$\mN$.  Define
$m(\fo, \mN)$ to be the number of optimal embeddings of $\fo$
into $\mN$ up to equivalence. Now fix a complete set $I_1, \ldots,
I_h$ of representatives of the left ideal classes of $\mN$ and let
$\mN_i$ be their right orders. Set $m_{\mN}(\fo) := \sum_{i=1}^h
m(\fo, \mN_i)$.
\end{definition}

We specialize \cite[Corollaire III.5.12]{V} to our setting.
\begin{theorem}\label{class_no}
Let $n = 2^s$ or $3\cdot 2^s$ and let $L/F_n$ be a CM-extension of
$F_n$.  Fix a $\Z_{F_n}$-order $\fo$ in $L$. Then
$ m_{\M_n}(\fo) = h(\fo)$.
\end{theorem}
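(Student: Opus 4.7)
The approach is to invoke the Eichler--Vign\'eras optimal embedding formula, of which Theorem~\ref{class_no} is a direct specialization to a favorable setting. Recall that in the general statement \cite[Corollaire III.5.12]{V}, for a totally definite quaternion algebra $B/F$ with an Eichler $\mathcal{O}_F$-order $\mathcal{O}\subseteq B$, a CM extension $L/F$ that embeds in $B$, and an $\mathcal{O}_F$-order $\mathfrak{o}\subseteq L$, one has
\[
m_{\mathcal{O}}(\mathfrak{o}) \;=\; h(\mathfrak{o})\,\prod_{v}\, m_v(\mathfrak{o}_v,\mathcal{O}_v),
\]
where $v$ runs over the finite primes of $F$ and $m_v$ is the number of local optimal embeddings of $\mathfrak{o}_v$ into $\mathcal{O}_v$ up to conjugation by $\mathcal{O}_v^\times$.

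First I would verify the hypotheses in our setting. The algebra $\H_n = \H\otimes_\Q F_n$ is totally definite since $F_n$ is totally real and $\H$ is ramified at the unique real place of $\Q$, and by the discussion in the introduction $\H_n$ is unramified at every finite prime of $F_n$. The order $\M_n$ is a maximal $\sfO_n$-order by definition, and by Proposition~\ref{max} together with the Skolem--Noether argument used there, every maximal order is locally conjugate to $\M_n$ at every finite prime, so we are free to apply the formula to $\mN = \M_n$ and sum over the left-ideal class representatives $I_i$ with right orders $\mN_i$ as in Definition~\ref{optimal embedding}.

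Next I would show that every local factor $m_v$ equals $1$. Since $\H_n$ is split at every finite place $v$, we may fix an isomorphism $\H_{n,v}\cong \Matt_{2\times 2}(F_{n,v})$ sending $\M_{n,v}$ to the local maximal order $\Matt_{2\times 2}(\sfO_{F_{n,v}})$. The fact that a local maximal order in a split quaternion algebra admits, up to $\GLT(\sfO_{F_{n,v}})$-conjugation, a unique optimal embedding of any $\sfO_{F_{n,v}}$-order in any separable quadratic extension is standard (see \cite[Th\'eor\`eme II.3.1/Corollaire II.3.2]{V}); one way to see it is through the action on the Bruhat--Tits tree $\Delta_{\fp_v}$, where the embedding is determined by (and recovers) its set of fixed vertices. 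In particular, no conductor condition intervenes because $\M_n$ is maximal at $v$ and $\H_n$ is split there. Hence $\prod_v m_v = 1$ and the formula collapses to $m_{\M_n}(\fo) = h(\fo)$.

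The only step that requires genuine care is the local count at primes $v$ (notably at $\fp_n$ itself) where $\fo_v$ may be a non-maximal quadratic order; a priori the local count could depend on the conductor $\cond_{\sfO_{n,v}}(\fo_v)$ computed as in Section~\ref{quad}. The key point, furnished by the local theorem above, is that for a \emph{maximal} quaternion order at a split place the local embedding number of any quadratic order is still $1$, so the conductor plays no role. With the global formula reduced to the trivial product of local factors, Theorem~\ref{class_no} follows immediately.
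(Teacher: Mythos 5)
Your proposal is correct and takes essentially the same approach as the paper, which literally says ``We specialize \cite[Corollaire III.5.12]{V} to our setting.'' You spell out what the paper leaves implicit, namely that all local embedding numbers are $1$ because $\H_n$ is split at every finite place and $\M_n$ is locally maximal, so the global formula collapses to $m_{\M_n}(\fo)=h(\fo)$; the one small point you state as a hypothesis rather than verify is that $L$ embeds in $\H_n$, which the paper handles in the remark immediately following via \cite[Corollaire~III.3.4]{V} (the archimedean places of $F_n$ are precisely the ramified places of $\H_n$, and these all ramify in the CM extension $L$).
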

\begin{remark} The only ramified primes of $\H$ are the archimedean ones, 
  and these all ramify in $L/F$.  It thus follows from
  \cite[Corollaire~III.3.4]{V}
  that $L$ can be embedded into $\H$.
\end{remark}

\begin{definition1}\label{defn:ramify}
Let $n = 2^s$ or $3\cdot2^s$ and set $F= F_n$, $\M \colonequals \M_n$.
A noncentral element $\gamma$ in $\widetilde{\M}^\times=\M[1/2]^\times$
is said to be a {\sf ramifying element}
if $\Norm_{F(\gamma)/F}(\gamma)\in \Z_{F_n,+}^{(2),\times}$ is a nonsquare.  We will only be
concerned with ramifying elements up to multiplication by
$\Z_{F_n}^{(2),\times}$.
\end{definition1}
\begin{theorem}\label{thm:ram}
Let $n = 2^s$ or $3\cdot 2^s$ and set $\Delta \colonequals \Delta_n$, 
$\gr\colonequals
\PSUT\big(\Z_{K_n}^{(2)}\big)\backslash \Delta$, and 
$\overline{\gr} \colonequals \PUT\big(\Z_{F_n}^{(2)}\big)\backslash\Delta$. 
Let $\M
\in \Ver(\Delta)$ be a maximal order and $\bv$ the vertex it covers in
$\gr$. Then $\bv$ is ramified in the cover $\gr \xrightarrow{\pi}
\overline{\gr}$ if and only if $\M$ contains a ramifying element.
\end{theorem}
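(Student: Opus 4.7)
The plan is to reformulate the ramification of $\bv$ in the Galois cover $\pi$ group-theoretically, and then to match this with the existence of a ramifying element sitting inside $\M$.

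Since $\pi$ is a Galois double cover with deck group $\Gamma_{0}/\Gamma_{1}\simeq\Z/2\Z$ by Theorem~\ref{index}\eqref{index1}, the vertex $\bv$ is ramified exactly when the $\Gamma_{0}$-stabilizer of the lift $\tilde{\bv}:=\M\in\Ver(\Delta)$ is not contained in $\Gamma_{1}$. Unwinding, this is equivalent to the existence of some $\gamma\in\widetilde{\M}_n^\times$ with $\gamma\M\gamma^{-1}=\M$ and $\Nm_{\H_n/F_n}(\gamma)\in\uRnp^\times\setminus(\uRn^\times)^2$, where the norm criterion again uses Theorem~\ref{index}\eqref{index1}.

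For the forward direction, I would adjust such a $\gamma$ by a central scalar so that it lies in $\M$. Since $\gamma$ normalizes $\M$, the left ideal $\M\gamma$ is two-sided, so Lemma~\ref{done} gives $\M\gamma=\M\fa$ for some fractional $\uOOn$-ideal $\fa$. Comparing reduced norms of ideals yields $(\Nm_{\H_n/F_n}(\gamma))=\fa^{2}$; because $\Nm_{\H_n/F_n}(\gamma)\in\uRn^\times$, this ideal is supported only at $\fp$, so the oddness of $h(\uOOn)$ from Theorem~\ref{fields} combined with the principality of $\fp=(p_n)$ from Proposition~\ref{prin} forces $\fa=(\alpha)$ for some $\alpha\in F_n^\times$. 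Setting $\mu:=\gamma\alpha^{-1}$, the containment $\gamma\in\M\alpha$ gives $\mu\in\M$; moreover $\Nm_{\H_n/F_n}(\mu)=\Nm_{\H_n/F_n}(\gamma)\alpha^{-2}$ is a totally positive unit of $\uOOn$ that remains nonsquare in $\uRn^\times$ (since $\alpha^{2}\in(\uRn^\times)^{2}$), and $\mu$ is noncentral (else $\Nm(\gamma)=\gamma^{2}$ would be a square). Hence $\mu\in\M^\times$ is a ramifying element living inside $\M$.

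Conversely, a ramifying element $\mu$ lying inside $\M$ (which by the norm conditions necessarily sits in $\M^\times\subseteq\widetilde{\M}_n^\times$) automatically satisfies $\mu\M\mu^{-1}=\M$, and $[\mu]\in\Gamma_{0}\setminus\Gamma_{1}$ by Theorem~\ref{index}\eqref{index1}. Therefore $[\mu]$ is a non-$\Gamma_{1}$ element of $\Gamma_{0}$ fixing $\tilde{\bv}$, so $\bv$ is ramified. The main obstacle is the forward adjustment: producing the central scalar $\alpha$, which hinges on $\fa$ being principal --- itself a consequence of $\fp$ being principal with a totally positive generator (Proposition~\ref{prin}) together with $h(\uOOn)$ being odd (Theorem~\ref{fields}). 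Once this step is available, both directions fall out of the identification of $\Gamma_{0,\tilde{\bv}}$ with a suitable quotient of the normalizer of $\M$ and the norm-based description of $\Gamma_{0}/\Gamma_{1}$.
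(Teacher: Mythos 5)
Your argument is correct and shares the paper's overall skeleton: ramification of $\bv$ is recast as the $\Gamma_0$-stabilizer of $\M$ meeting $\Gamma_0\setminus\Gamma_1$, and this coset condition is read off from the norm via Theorem~\ref{index}\eqref{index1}. The one place you diverge substantively is the ``ramified $\Rightarrow$ ramifying element'' direction. The paper notes that $\gamma$ fixing the vertex $\M$ means $\gamma_\fp\in F_\fp^\times\M_\fp^\times$, which combined with $\gamma\in\widetilde{\M}_n^\times$ gives $\gamma\in\widetilde{\M}_n^\times\cap F_\fp^\times\M_\fp^\times=\uRn^\times\M^\times$; it then factors $\gamma=v\gamma_1$ directly. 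You instead treat $\M\gamma$ as a two-sided fractional $\M$-ideal, apply Lemma~\ref{done} to write $\M\gamma=\M\fa$, and compare reduced norms of ideals to deduce $\fa=\fp^k$ is principal, extracting the central factor that way. The two routes land at the same decomposition; the paper's is shorter because it leans on the local description of vertex stabilizers built into~\eqref{ver}, while yours avoids that local analysis at the cost of invoking the classification of two-sided ideals and a norm-of-ideals calculation. Both uses of the odd class number / principal $\fp$ are equally necessary in either route.

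Two small points worth correcting. First, the parenthetical ``else $\Nm(\gamma)=\gamma^{2}$ would be a square'' should read ``else $\Nm(\mu)=\mu^{2}$ would be a square'' --- the noncentrality being established is that of $\mu$, not $\gamma$. Second, in the converse direction you assert that a ramifying element $\mu\in\M$ ``by the norm conditions necessarily sits in $\M^\times$''; strictly, the definition of ramifying element only gives $\Nm(\mu)\in\uRnp^\times$, which combined with $\mu\in\M$ gives $\Nm(\mu)\in\uOOn\cap\uRn^\times$ but not {\em a priori} $\Nm(\mu)\in\uOOn^\times$. The natural reading of ``$\M$ contains a ramifying element'' is a ramifying element of $\M^\times$, and the paper's own proof implicitly makes the same passage (``Thus, $\Nm(\gamma)\in\uOOnp^\times$''), so this is a shared convention rather than a gap peculiar to your argument --- but it is worth flagging that it is not a formal consequence of the definitions as written.
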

\begin{proof}
Fix $\M \in \Ver(\Delta_n)$.  Let $\gamma \in \M \subset
\widetilde{\M}_n$ be a ramifying element. Thus,
$\Norm_{F_n(\gamma)/F_n}(\gamma)\in \Z_{F_n,+}^\times$ is a nonsquare
totally positive unit so that $\gamma$ is a unit in $\widetilde{\M}_n$
such that $\varphi_n^{-1}(\gamma)$ generates $\PUT(R_n)$ over its
index-$2$ subgroup $\PSUT\big(Z_{K_n}^{(2)}\big)$ (Theorem~\ref{rink}). Hence
$\pi^{-1}(\pi(\bv)) = \{ \bv, \bv^\gamma\} = \{\bv\}$ because
$\M^{\gamma} = \M$.

Now suppose $\bv$ ramifies. Then there exists $[A] 
\in \PUT\big(\Z_{K_n}^{(2)}\big)
\setminus \PSUT\big(\Z_{K_n}^{(2)}\big)$ with $\M^{\varphi_n([A])} = \M$. Let
$\gamma\in\widetilde{\M}_n^\times$ be a lift of
$\varphi_n([A])\in\Pp\!\widetilde{\M}_n^\times$, note that
$\Norm(\gamma)$ isn't a square.  Thus by (\ref{ver}), $\gamma \in
\widetilde{\M}_n^\times\cap F_{\fp}^\times\M_{\fp}^\times=\Z_{F_n}^{(2),\times}\M^\times$.  Write
$\gamma = v\gamma_1$ with $v\in \Z_{F_n}^{(2),\times}$ and
$\gamma_1\in\M^\times$.  Note that $\Norm(\gamma_1)$ isn't a square
since $\Norm(\gamma)$ isn't; thus, $\gamma_1$ is a ramifying element in
$\M$.
\end{proof}
\begin{lemma}\label{lem:ram}
If $n = 2^s$, then $\widetilde{\M}^\times=\M_n[1/2]^\times$ contains no ramifying  elements. 
\end{lemma}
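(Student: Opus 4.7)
The plan is to combine Weber's theorem for $n=2^s$ (Theorem~\ref{fields}\eqref{fields1}) with the \'etaleness of the cover $\gr_n\to\overline{\gr}_n$ from Theorem~\ref{index}\eqref{index2} and the ramification criterion of Theorem~\ref{thm:ram}. Suppose for contradiction that $\gamma\in\widetilde{\M}_n^\times$ is a ramifying element, so $\Norm_{F(\gamma)/F}(\gamma)\in\uRnp^\times$ is a nonsquare. By Theorem~\ref{fields}\eqref{fields1} we have $\sfO_{n+}^\times=(\sfO_n^\times)^2$; together with the principality of $\fp_n=(p_n)$ (Proposition~\ref{prin}\eqref{prin1}) this pins down $\uRnp^\times/(\uRn^\times)^2\cong\Z/2\Z$ with nontrivial class represented by $p_n=(1+\zeta_n)(1+\overline{\zeta}_n)$ via Lemma~\ref{notsquares}\eqref{notsquares2}. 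Using the freedom to rescale ramifying elements by $\uRn^\times$ (the remark at the end of Definition~\ref{defn:ramify}), I would normalize $\Norm(\gamma)=p_n$.

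The next step is to exhibit a maximal $\sfO_n$-order $\M'\in\Ver(\Delta)$ that actually contains $\gamma$. Since $\gamma$ satisfies $x^2-\Tr(\gamma)\,x+p_n=0$ and the trace and norm both lie in $\uRn$, a further adjustment of $\gamma$ by a unit of $\uRn^\times$ places $\Tr(\gamma)\in\sfO_n$, so $\gamma$ becomes integral over $\sfO_n$ and lies in some maximal $\sfO_n$-order $\mathcal{N}\subseteq\H_n$. Following the construction in Proposition~\ref{max}, I would then conjugate $\mathcal{N}$ into $\Ver(\Delta)$ by a norm-$1$ element supplied by strong approximation together with Lemma~\ref{ideal}. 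Such conjugation preserves both membership in $\widetilde{\M}_n^\times$ and the ramifying property (it fixes the reduced norm), so the conjugated element is a ramifying element sitting inside some $\M'\in\Ver(\Delta)$.

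Applying Theorem~\ref{thm:ram} to this $\M'$ shows that the vertex of $\gr_n$ lying above $\M'$ is ramified in the cover $\gr_n\to\overline{\gr}_n$. But by Theorem~\ref{index}\eqref{index2} this cover is \'etale for $n=2^s$, giving the desired contradiction.

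The main obstacle I anticipate is the simultaneous rescaling step: justifying that one can multiply $\gamma$ by a single element of $\uRn^\times$ so that both $\Tr(\gamma)$ and $\Norm(\gamma)$ land in $\sfO_n$ while $\Norm(\gamma)$ still represents the nontrivial class in $\uRnp^\times/(\uRn^\times)^2$. This is where Theorem~\ref{fields}\eqref{fields1} enters most essentially, since it is the statement $\sfO_{n+}^\times=(\sfO_n^\times)^2$ that controls the totally positive units of $\sfO_n$ and so determines precisely which square classes the norm of a would-be ramifying element can actually occupy when reduced to $\sfO_n$.
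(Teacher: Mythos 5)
The paper's proof is a one-line appeal to Theorem~\ref{index}\eqref{index2}: the cover $\gr_n\to\overline{\gr}_n$ is \'etale for $n=2^s$, so by Theorem~\ref{thm:ram} no $\M\in\Ver(\Delta)$ contains a ramifying element. Your proposal attempts to re-derive this by explicitly producing a ramified vertex from a hypothetical ramifying element, but the normalization step breaks the argument irreparably. Once you rescale so that $\Norm_{\H_n/F_n}(\gamma)=p_n$, you have $\val_{\fp}(\Norm(\gamma))=1$, which is odd. By Lemma~\ref{action}, such a $\gamma$ moves every vertex of $\Delta$ an odd distance; in particular it fixes none, so $\M^{\gamma}\ne\M$ for every $\M\in\Ver(\Delta)$. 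This is exactly the opposite of what the ``if'' direction of Theorem~\ref{thm:ram} needs: that proof requires $\M^{\gamma}=\M$, i.e.\ a unit $\gamma\in\M^{\times}$ with $\Norm(\gamma)\in\uOOnp^{\times}$ of $\fp$-valuation zero, to conclude $\pi^{-1}(\pi(\bv))=\{\bv\}$. What you actually construct is a candidate \emph{inverting} element in the sense of Definition~\ref{defn:invert}; such elements genuinely exist for $n=2^s$ (one is built explicitly in the proof of Theorem~\ref{index}\eqref{index2}), and they are responsible for the half-edges of $\overline{\gr}_n$, not for ramified vertices. So the concluding contradiction never materializes.

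The secondary concern you flag about simultaneously controlling $\Tr(\gamma)$ and $\Norm(\gamma)$ is real, but it is a symptom of the same confusion: there is no $u\in\uRn^{\times}$ making both $\Tr(u\gamma)\in\uOOn$ and $\Norm(u\gamma)=p_n$, and privileging the trace changes the $\fp$-valuation of the norm. The role of Theorem~\ref{fields}\eqref{fields1} is more direct than your use of it: Weber's theorem gives $\uOOnp^{\times}=(\uOOn^{\times})^2$, so no unit of any $\M\in\Ver(\Delta)$ can have norm a nonsquare totally positive unit of $\uOOn$; equivalently, the nontrivial class of $\uRnp^{\times}/(\uRn^{\times})^{2}$ is represented only by elements of odd $\fp$-valuation, which cannot stabilize any vertex and hence cannot ramify the cover.
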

\begin{proof}
The cover $\PSUT\big(\Z_{K_n}^{(2)}\big)\backslash
\Delta_n \rightarrow \PUT\big(\Z_{K_n}^{(2)}\big)
\backslash \Delta_n$ is \'{e}tale by
Theorem~\ref{index}\eqref{index2}.
\end{proof}
The group $\PU_2\big(\Z_{K_n}^{(2)}\big)$ preserves the partition
of $\Delta$ according to the parity of the distance from a fixed
vertex $v_0$ (this partition does not depend on $v_0$).  However,
it may interchange the two sets of this partition, in which case
it acts with inversions.
\begin{definition1}\label{defn:invert}
Let $n = 2^s$ , $s \ge 2$,  with $F\colonequals F_n$, $\M\colonequals
 \M_n$, $\fp:=\fp_n$, and
$p=p_n$ 
the totally positive generator of $\fp$ in Definition~\ref{gen}.
An element $\gamma$ in $\widetilde{\M}^\times=\M[1/2]^\times$ is said to be an 
{\sf inverting
  element} if
\begin{enumerate}[\upshape (a)]
\item \label{defn:invert1} $\Norm(\gamma) = p$, and
\item \label{defn:invert2} $\gamma^2 = up$, with $u \in \M^\times$.
\end{enumerate}
\end{definition1}
Recall that for $\gamma \in \H$, $ \M \in \Ver(\Delta)$, we denote
by $\M^\gamma$ the order which differs from $\M$ only at $\fp$ where we
conjugate by $\gamma$.

\begin{theorem}\label{thm:invert}
  \begin{enumerate}[\upshape (a)]
  \item
    \label{thm:invert1}
  If $n = 3\cdot 2^s$ then $\PUT\big(\Z_{K_n}^{(2)}\big)$
  acts on $\PSUT\big(\Z_{K_n}^{(2)}\big)\backslash \Delta_n$ without inversions.
\item
  \label{thm:invert2}
    If $n = 2^s$, let $\M\in \Ver(\Delta_n)$ be a maximal order.
    Then $\M$ contains an inverting
element if and only if the vertex it covers in $\gr_n :=
\PSUT\big(\Z_{K_n}^{(2)}\big)\backslash \Delta_n$ is connected to
an edge that
is inverted by $\PUT\big(\Z_{K_n}^{(2)}\big)$.
\item
  \label{thm:invert3}
If an inverting element $\gamma \in \M$
exists, then  $\M^{\gamma^2} = \M$ and
$\mathcal{E} \colonequals \M \cap \M^{\gamma}$ is an Eichler order of
level $\fp \colonequals \fp_n$ that contains $\gamma$. Thus, $\gamma$ inverts an
edge between the images of $\M$ and $\M^{\gamma}$ in $\gr_n$.
\end{enumerate}
\end{theorem}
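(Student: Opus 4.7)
The approach is to prove \eqref{thm:invert3} first as a direct local computation, then \eqref{thm:invert1} via a signature argument on totally positive units of $\sfR_n$, and finally \eqref{thm:invert2} by combining the two: $(\Rightarrow)$ follows from \eqref{thm:invert3}, while $(\Leftarrow)$ requires normalizing an abstract inverter of the edge so that it lies in $\M$ with the prescribed norm and trace conditions. The main obstacle will be the $(\Leftarrow)$ direction of \eqref{thm:invert2}, where extracting the conditions $\Nm(\gamma)=p$ and $\gamma^2 = up$ ($u\in\M^\times$) from an arbitrary inverter demands both a rescaling (using Theorem~\ref{fields}\eqref{fields1}) and a Cayley--Hamilton analysis at $\fp$.

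For \eqref{thm:invert3}: the identity $\gamma^2 = up$ with $u \in \M^\times$ and $p \in F_n$ central immediately gives $\M^{\gamma^2} = \M^{up} = \M$, and $\gamma \in \mathcal{E} = \M \cap \M^\gamma$ is clear since $\gamma \in \M$ and $\gamma = \gamma\cdot\gamma\cdot\gamma^{-1} \in \gamma\M\gamma^{-1} = \M^\gamma$. To verify that $\mathcal{E}$ is Eichler of level $\fp$, I work locally. At $\fq \ne \fp$: $\Nm(\gamma) = p$ is a unit in $\sfO_\fq$ while $\gamma \in \M_\fq$, so $\gamma \in \M_\fq^\times$, forcing $\M_\fq^\gamma = \M_\fq$ and $\mathcal{E}_\fq = \M_\fq$. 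At $\fp$: identifying $\H_\fp \cong \Mat_{2\times 2}(F_\fp)$ and $\M_\fp \cong \Mat_{2\times 2}(\sfO_\fp)$, the condition $\det(\gamma_\fp) = p$ of $\fp$-valuation $1$ forces via Smith normal form that $\gamma_\fp$ is $\GLT(\sfO_\fp)$-equivalent to $\mathrm{diag}(1,p)$, and a direct computation identifies $\mathcal{E}_\fp$ with a $\GLT(\sfO_\fp)$-conjugate of $\left[\begin{smallmatrix} \sfO_\fp & \sfO_\fp \\ \fp\sfO_\fp & \sfO_\fp \end{smallmatrix}\right]$. The edge inversion is then immediate: $\gamma\cdot\M = \M^\gamma$ and $\gamma\cdot\M^\gamma = \M^{\gamma^2} = \M$.

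For \eqref{thm:invert1}: for any $\gamma \in \widetilde{\M}_n^\times$, $\Nm(\gamma) \in \uRnp^\times$ is a totally positive unit. I claim that when $n = 3\cdot 2^s$, every totally positive unit of $\sfR_n$ has even $\fp$-valuation. Writing such a unit as $w = \epsilon\, (p_n')^k$ with $\epsilon \in \sfO_n^\times$, Proposition~\ref{prin}\eqref{prin2} gives $\Nm_{F_n/\Q}(\epsilon) = 1$ and $\Nm_{F_n/\Q}(p_n') = -2$, so $\Nm_{F_n/\Q}(w) = (-2)^k$; total positivity forces $k$ to be even. By Lemma~\ref{action}, $\gamma$ therefore acts on $\Delta_n$ by even-distance moves and cannot invert any edge, so $\Gamma_0 = \PUT(R_n)$ acts on $\Delta_n$ without inversions. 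Since any inversion in $\gr_n$ by $\Gamma_0/\Gamma_1$ would lift (through an adjustment by an element of $\Gamma_1$) to an inversion in $\Delta_n$ by $\Gamma_0$, the conclusion passes to $\gr_n$.

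For \eqref{thm:invert2}: the direction $(\Rightarrow)$ is immediate from \eqref{thm:invert3}. For $(\Leftarrow)$, lift the inverted edge at $[\M]$ to an edge $\tilde e = (\M, \M')$ of $\Delta_n$ inverted by some $\gamma \in \widetilde{\M}_n^\times$, so $\gamma\M\gamma^{-1} = \M'$ and $\gamma\M'\gamma^{-1} = \M$. By Lemma~\ref{action}, $\val_\fp(\Nm(\gamma))$ is odd; multiplying $\gamma$ by a suitable power of the central element $p$ (which acts trivially on $\Delta_n$) reduces to $\Nm(\gamma) = \epsilon p$ with $\epsilon \in \uOOnp^\times$. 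Theorem~\ref{fields}\eqref{fields1} gives $\eta \in \sfO_n^\times$ with $\epsilon = \eta^2$, so $\gamma/\eta$ has reduced norm $p$; Smith normal form at $\fp$ and unit-hood at primes $\fq \ne \fp$ then force $\gamma/\eta \in \bigcap_\fq \M_\fq = \M$. Finally, the inversion constraint $\gamma^2\M_\fp\gamma^{-2} = \M_\fp$ means $\gamma_\fp^2 \in F_\fp^\times\M_\fp^\times$; combined with Cayley--Hamilton $\gamma^2 = \Tr(\gamma)\gamma - p$ and the local matrix structure (comparing entries of $\gamma_\fp^2/p$), this forces $\Tr(\gamma) \in \fp$, say $\Tr(\gamma) = pt$ with $t \in \sfO_n$. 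Then $\gamma^2 = p(t\gamma - 1)$, and a short calculation using the quadratic relation gives $\Nm(t\gamma - 1) = 1$, so $u := t\gamma - 1 \in \M^\times$ and $\gamma$ is an inverting element in $\M$.
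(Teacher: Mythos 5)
Your proof is correct, and it follows the same essential strategy as the paper (Bass--Serre plus local analysis at $\fp$), but you reorganize and sharpen it in a few ways worth noting. The paper runs a single unified computation: starting from an inverted edge, it normalizes $\gamma_0$ to $\gamma = \gamma_0/p^i$, deduces $\gamma^2 = pu$ with $u\in\M^\times$ and $\Nm(\gamma)$ a totally positive generator of $\fp$, and then peels off (a) (for $n = 3\cdot 2^s$ no such generator exists), (b), and (c) from that one calculation. You instead prove (c) directly from the definition, dispose of (a) with a cleaner \emph{a priori} argument (for $n = 3\cdot 2^s$, every reduced norm from $\widetilde{\M}_n^\times$ is a totally positive unit of $\uR_n$, and Proposition~\ref{prin}\eqref{prin2} forces its $\fp$-valuation to be even, so Lemma~\ref{action} already rules out inversions on $\Delta_n$, hence on $\gr_n$), and then carry out the local normalization only for (b)$(\Leftarrow)$, where it is actually needed. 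Your treatment of (b) is also more careful than the paper's in two places the paper glosses over: the paper asserts $\Nm(\gamma) = p$ directly, whereas a priori you only get $\Nm(\gamma) = \epsilon p$ with $\epsilon \in \uOO_{n+}^\times$, and you correctly appeal to Weber's theorem (Theorem~\ref{fields}\eqref{fields1}) to write $\epsilon = \eta^2$ and replace $\gamma$ by $\gamma/\eta$; and the paper never verifies that the normalized inverter actually lies in $\M$ (the Definition~\ref{defn:invert} only requires $\gamma\in\widetilde{\M}$, yet parts (b) and (c) of the theorem, and Proposition~\ref{char_poly} downstream, use $\gamma \in \M$), while you address it via Smith normal form.

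One small point of exposition: when you invoke ``Smith normal form at $\fp$ \dots force $\gamma/\eta \in \bigcap_\fq \M_\fq$,'' the reduced-norm condition $\Nm(\gamma/\eta) = p$ alone does not put $(\gamma/\eta)_\fp$ in $\M_\fp$; you also need that $\gamma/\eta$ moves $\M_\fp$ to an \emph{adjacent} vertex, which pins the elementary divisors to $(1,p)$ rather than, say, $(p^{-1},p^2)$. That adjacency is of course available (it is built into being an inverter of $\tilde e$), so this is not a gap in substance, but the reader should be told that the argument uses it. Your final verification that $\Tr(\gamma)\in\fp$, and that $u := t\gamma - 1$ has reduced norm $1$ and hence lies in $\M^\times$, is correct and completes the circle back to the definition.
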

\begin{proof}
Let $n = 2^s$ or $3\cdot 2^s$ with $s \ge 2$ and set $\fp \colonequals
\fp_n$, where $\fp$ is generated by $p = p_n$. Suppose 
$\PSUT\big(\Z_{K_n}^{(2)}\big)$ inverts the
edge $\be \in \gr:= \PSUT\big(\Z_{K_n}^{(2)}\big) \backslash \Delta_n$. Set $\bv =
o(\be)$ and fix a lift $\M \in \Delta:= \Delta_n$
of $\gr$. We can find an edge $\tilde{\be}\in \Delta$ covering $\be$ with origin
$\M$, and $\gamma_0 \in \PUT\big(\Z_{K_n}^{(2)}\big)$ 
inverting $\tilde{\be}$. Thus
$t(\tilde{\be}) = \M^{\gamma_0}$, $\M^{\gamma_0^2} = \M$, and
$\mathcal{E} := \M \cap \M^{\gamma_0}$ is an Eichler order of level $\fp$.
Since $\gamma_0$ moves $\M$ to an adjacent vertex, it must be that
$v_{\fp}\left(\Norm(\gamma_0)\right) =n$, with $n$ odd, say $n =
2i+1$. Set $\gamma = \gamma_0/p^i$, so
$v_{\fp}\left(\Norm(\gamma)\right) = 1$. As $p$ is central, we have
$\M^{\gamma} = \M^{\gamma_0}$ and $\M^{\gamma^2} = \M$.
As $\gamma^2$ fixes $\M$ we have $\gamma^2 \in F_{\fp}^\times
\M_{\fp}^{\times}$. Thus we may write $\gamma^2 = p^m u$ with $u \in
\M_{\fp}^\times$. Since $2m = v_{p}\left(\Norm(\gamma^2) \right) =
2v_p\left(\Norm(\gamma)\right) = 2$ we have $\gamma^2 = pu$. Finally,
$u = \gamma^2/p \in \M[1/2]^\times\cap \M_{\fp}^{\times} =
\M^{\times}$. But this says that $\Norm(\gamma^2)$ generates the ideal
$\fp^2$ in $\Z_F\colonequals \Z_{F_n}$, which forces the totally positive element
$\Norm(\gamma)$ to generate $\fp$.

\eqref{thm:invert1}: If $n = 3\cdot 2^s$, $\fp$ has no
totally positive generator so $\PUT\big(\Z_{K_n}^{(2)}\big)$ 
acts without inversions on
$\gr$.

\eqref{thm:invert2}:  If $n = 2^s$, then $\Norm(\gamma) = p$, 
the totally positive
generator of $\fp$. Hence $\gamma \in \M$ is an inverting element.

\eqref{thm:invert3}: Now 
assume that an inverting element $\gamma \in
\M$ exists. Then a local argument at $\fp$ shows that 
$\mathcal{E} \colonequals \M\cap \M^\gamma$ is an Eichler order of level
$\left(\Norm(\gamma)\right) =\fp$. But
$\gamma^2 = up$ with $p\in \uOO$  central and $u \in M^\times$,
$\M^{\gamma^2} = \M$, so $\gamma$
inverts an  edge in $\gr$ covered by an edge from $\M$ to $\M^\gamma$. Moreover, $\gamma
\in \M^{\gamma}$ because
$\gamma \in \gamma \M_{\fp} \gamma^{-1}$, so $\gamma \in \mathcal{E}$. 
\end{proof}

\subsection{\texorpdfstring{The \except{toc}{\boldmath{$n=2^s$}}\for
{toc}{$n=2^s$} family}{The n=2\unichar{"5E}s family}}
\label{calmly}

\subsubsection[Quadratic
  \texorpdfstring{$\Z_{F_{2^s}}$}
{Z[\unichar{"03B6}2\unichar{"5E}s]+}-orders
  containing roots of unity]
              {Quadratic
                $\Z_{F_{2^s}}$-orders containing roots of unity}
  \label{sec: norm}

The only quadratic extensions of 
$F\colonequals F_{2^s}$ generated by roots of $1$ are $F(i) = K_{2^s}
\equalscolon K$ and
$F(\sqrt{-3})$.
If $\M^\times/\Z_{F}^\times$ is nontrivial, then $\M$ contains
a $\Z_F$-order in one of these fields.
\begin{definition1}\label{def:Ok}
  Set $\zeta \colonequals \zeta_{2^s}$. For $0 \le k < 2^{s-2}$, let
\[
  \fo_k\colonequals
  \fo_{2^s,k} \colonequals \Z_F[i, (\zeta+\zeta^{-1})^k\zeta] \subset K .
  \]
\end{definition1}

(We stop at $k = 2^{s-2}-1$ because $\fo_{k} = \Z_f[i]$ for $k \ge 2^{s-2}-1$.)

\begin{prop}\label{indices}
  Fix $s$ with $K= K_{2^s}$. The $\fo_k$ are $\Z_F$-orders in $K$ with $\fo_0$ maximal and $\fo_{2^{s-2}-1} = \Z_F[i]$. The index of $\fo_k$ in $\fo_0$ is $2^k$
  and the conductor of $\fo_k$
over $\fo_0$
is $\fP^{2k}$. The $\fo_k$ are the only $\Z_F$-orders in $K$
containing an irrational root of unity. In particular, $\Z_F[\zeta_{2^w}] =
\fo_{2^{s-w}-1}$ for $w = 2, \ldots,
s$.
\end{prop}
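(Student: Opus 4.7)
The strategy is to identify $\pi := \zeta + \zeta^{-1}$ as a uniformizer at $\fp = \fp_s$ in $\uOO$; this will convert the definition $\fo_k = \uOO[i,(\zeta+\zeta^{-1})^k\zeta]$ into the ideal-theoretic form $\uOO + \fp^k \fo_0$, at which point Theorem~\ref{thm:cond} does most of the structural work. That $\pi$ is a uniformizer follows from Proposition~\ref{prin}\eqref{prin1}: $p_n = 2 + \pi$ generates $\fp$, and since the ramification index of $\fp$ over $2$ in $F$ is $2^{s-2} \geq 2$, we have $v_\fp(2) \geq 2 > 1 = v_\fp(p_n)$, so $v_\fp(\pi) = 1$ as well.

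First I would show $\fo_k = \uOO[\pi^k\zeta]$, with $\{1,\pi^k\zeta\}$ a free $\uOO$-basis. Since $\zeta^2 = \pi\zeta - 1$, the element $\pi^k\zeta$ satisfies the monic integral equation $(\pi^k\zeta)^2 = \pi^{k+1}(\pi^k\zeta) - \pi^{2k}$. Setting $\eta := -i(\zeta - \zeta^{-1}) \in \uOO$ gives $\eta^2 = 4 - \pi^2$ and hence $v_\fp(\eta) = 1$ (as $v_\fp(4) = 2^{s-1} > 2 = v_\fp(\pi^2)$). At the extremum $k = 2^{s-2}-1$, the identity $2\pi^k\zeta = \pi^{k+1} + i\eta\pi^k$ together with $v_\fp(\pi^{k+1}/2) = v_\fp(\eta\pi^k/2) = 0$ shows $\pi^k\zeta \in \uOO[i]$; a Dedekind discriminant--index comparison (both orders have index $2^{2^{s-2}-1}$ in $\fo_0$) forces $\uOO[\pi^{2^{s-2}-1}\zeta] = \uOO[i]$. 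For smaller $k$, $\pi^{2^{s-2}-1}\zeta = \pi^{2^{s-2}-1-k}(\pi^k\zeta)$ lies in $\uOO[\pi^k\zeta]$, so $\uOO[i] \subseteq \uOO[\pi^k\zeta]$ and hence $\fo_k = \uOO[i,\pi^k\zeta] = \uOO[\pi^k\zeta]$. Rewriting $\fo_k = \uOO + \uOO\cdot\pi^k\zeta = \uOO + \fp^k\fo_0$ then presents $\fo_k$ as the order of $\uOO$-conductor $\fp^k$ under the bijection of Theorem~\ref{thm:cond}; the conductor in $\fo_0$ is then $\fp^k\fo_0 = \fP^{2k}$ (using $e(\fP/\fp)=2$), the index $[\fo_0 : \fo_k] = \Nm_{F/\Q}(\fp^k) = 2^k$ is immediate from the explicit bases, and $\fo_{2^{s-2}-1} = \uOO[i]$ is already in hand.

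For the remaining two assertions: if $\fo$ is any $\uOO$-order in $K$ containing some $\zeta_{2^w}$ with $w \geq 2$, then $i = \zeta_{2^w}^{2^{w-2}} \in \fo$, so $\fo \supseteq \uOO[i] = \fo_{2^{s-2}-1}$. By Theorem~\ref{thm:cond} the orders $\fo \subseteq \fo_0$ containing $\fo_{2^{s-2}-1}$ correspond to divisors of $\fp^{2^{s-2}-1}$ in $\uOO$, which are just $\fp^0,\ldots,\fp^{2^{s-2}-1}$, forcing $\fo$ to be some $\fo_k$. For the identification $\uOO[\zeta_{2^w}] = \fo_{2^{s-w}-1}$, I would compute the discriminant of the minimal polynomial $x^2 - (\zeta_{2^w}+\zeta_{2^w}^{-1})x + 1$ of $\zeta_{2^w}$ over $F$, namely $(\zeta_{2^w}-\zeta_{2^w}^{-1})^2$. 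Using $\zeta_{2^w} - \zeta_{2^w}^{-1} = \zeta_{2^w}^{-1}(\zeta_{2^w}^2 - 1)$ and the standard fact $v_\fP(1 - \zeta_{2^w}) = 2^{s-w}$ (from ramification in the tower $\Q \subset \Q(\zeta_{2^w}) \subset K$) yields $v_\fp\bigl(\Disc(\uOO[\zeta_{2^w}]/\uOO)\bigr) = 2^{s-w+1}$; comparison with $\Disc(\fo_0/\uOO) = (\pi^2-4) = \fp^2$ via the discriminant--index formula gives $[\fo_0 : \uOO[\zeta_{2^w}]] = 2^{2^{s-w}-1}$, matching $\fo_{2^{s-w}-1}$.

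The main obstacle is the $\fp$-adic bookkeeping---in particular pinning down $v_\fp(\eta) = 1$ and $v_\fP(1 - \zeta_{2^w}) = 2^{s-w}$---after which all the ring-theoretic conclusions fall out of Theorem~\ref{thm:cond} applied to the extension $\uOO \subseteq \fo_0$.
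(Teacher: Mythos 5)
Your proof is correct, but it takes a genuinely different route from the paper's. The paper works through the \emph{antitrace} mechanism of Proposition~\ref{prop:atr-order}: it observes that $\sA := \ATr(\fo_0) = \uOO(\zeta-\zeta^{-1})$ has $\fP$-valuation $2$, checks that the $\uOO$-module generators $i^j\bigl((\zeta+\zeta^{-1})^k\zeta\bigr)^\ell$ of $\fo_k$ all have antitrace in $\fP^{2(k+1)}$, and reads off the conductor $\fp^k$ from the description $\OO_I = \{x : \ATr(x) \in I\sA\}$. You instead identify $\pi := \zeta+\zeta^{-1}$ as a generator of $\fp$ (correctly, since $v_\fp(p_n) = 1$ and $v_\fp(2) = 2^{s-2} \geq 2$ force $v_\fp(\pi)=1$, and $\fp$ is the unique prime of $\uOO$ above $2$), prove the nontrivial ring-theoretic inclusion $\uOO[i] \subseteq \uOO[\pi^k\zeta]$ via the identity $2\pi^k\zeta = \pi^{k+1} + i\eta\pi^k$ at the extremal $k$ together with a discriminant--index comparison, and thereby exhibit $\fo_k = \uOO[\pi^k\zeta] = \uOO + \fp^k\fo_0$ in the exact shape $\OO_M + I\OO_L$ required by Theorem~\ref{thm:cond}. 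This avoids antitraces entirely in favor of explicit $\uOO$-bases and the Dedekind discriminant--index formula, and has the added benefit of making the final two assertions (uniqueness of the $\fo_k$, and $\uOO[\zeta_{2^w}] = \fo_{2^{s-w}-1}$) fall out cleanly from Theorem~\ref{thm:cond} plus one more discriminant computation, whereas the paper leaves these steps implicit. The one thing you should say rather than assume: maximality of $\fo_0 = \uOO[\zeta_{2^s}] = \Z[\zeta_{2^s}]$ is itself one of the claims of the proposition and is needed both to invoke Theorem~\ref{thm:cond} (which compares against $\OO_L$ maximal) and to anchor $\Disc(\fo_0/\uOO) = \fp^2$; the paper handles this by citing \cite[Prop.~2.16]{W}.
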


\begin{proof} 
That $\fo_0 \colonequals \Z_F[i,\zeta] = \Z_F[\zeta_{2^s}]$ is maximal  
is \cite[Prop.~2.16]{W}.  It is also clear that $(\ATr\mO_K) = (\fP^2)$, since 
$(\zeta-\zeta^{-1}) = (\fP^2)$ and $\zeta^j-\zeta^{-j}$ is a multiple of
$\zeta-\zeta^{-1}$ for all $j$.  In addition $\ATr((\zeta+\zeta^{-1})^k\zeta) =
(\zeta+\zeta^{-1})^k(\zeta-\zeta^{-1})$ is a generator of $(\fP^{2(k+1)})$ and
the antitrace of $i^j ((\zeta+\zeta^{-1})^k\zeta)^\ell$ is equal to
$(\zeta+\zeta^{-1})^{k\ell}\ATr(i^j \zeta^\ell)$.  

Now, the antitrace of
$\pm \zeta^\ell$ is $\pm (\zeta^\ell-\zeta^{-\ell})$, while that of
$\pm i\zeta^\ell$ is $\pm (\zeta^{2^{s-2}+\ell}-\zeta^{-2^{s-2}-\ell})$, both of which
belong to $\fP^2$.  It follows that every element of $\fo_k$ has antitrace in
$\fP^{2(k+1)}$, since this order is spanned by the 
$i^j ((\zeta+\zeta^{-1})^k\zeta)^\ell$ for $j, \ell \in \N$.  Applying
Proposition~\ref{prop:atr-order} we conclude that the conductor
of $\fo_k$ is $\fP^{2k}$ as claimed.

The antitrace of every order containing $\Z_F[i] = \fo_{2^{s-2}-1}$ is an ideal
containing $\fP^{2(2^{s-1}-1)}$, and the only such ideals that descend to $F$
are the $\fP^{2k}$ for $0 \le k < 2^{s-2}$.  For every such $k$ we have
found an order with antitrace $\fP^{2k}$, so by Corollary~\ref{cor:distinct-atr}
we have found all the orders containing $\Z_F[i]$.
Since the roots of unity in $K$ are powers of $\zeta_{2^s}$, every order
containing a root of unity other than $\pm 1$ contains $i$, and so we have
found all orders containing an irrational root of unity.

\end{proof}

\begin{prop}\label{orders}
The order $\Z_F[\zeta_3]$ of $F_s(\sqrt{-3})$ is maximal and is therefore the
only $\Z_F$-order of $F_{2^s}(\sqrt{-3})$ containing an irrational root of $1$.
\end{prop}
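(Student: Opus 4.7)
The plan is to prove that $\uOO[\zeta_3]$ is the full ring of integers $\OO_L$ of $L := F_s(\sqrt{-3}) = F_s(\zeta_3)$, and then to observe that every irrational root of unity in $L$ generates this same order over $\uOO$, so any $\uOO$-order of $L$ containing such a root must in fact equal $\uOO[\zeta_3]$.

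For maximality I would invoke the classical fact that if two number fields have coprime discriminants then the ring of integers of their compositum is the product of the two rings of integers. Here $\Disc(F_s)$ is a power of $2$ (as $F_s \subseteq \Q(\zeta_{2^s})$, whose discriminant is a power of $2$) while $\Disc(\Q(\zeta_3)) = -3$, so $\OO_L = \uOO \cdot \Z[\zeta_3] = \uOO[\zeta_3]$. The local content of this is that $x^2+x+1$ reduces to $(x-1)^2$ modulo every prime of $\uOO$ above $3$, so each such prime is totally and tamely ramified in $L/F_s$ with residue degree $1$; hence $\Disc(L/F_s) = \prod_{\fq \mid 3}\fq = (3)\uOO$, matching $\Disc(\uOO[\zeta_3]/\uOO) = (3)\uOO$, which again gives maximality.

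Next I would identify the group $\mu(L)$ of roots of unity in $L$. Since $L$ is abelian over $\Q$ with conductor $3 \cdot 2^s$ it sits inside $\Q(\zeta_{3\cdot 2^s})$, whose group of roots of unity is $\mu_{3\cdot 2^s}$; so any $\zeta_m \in L$ has $m = 2^a 3^b$. If $a \ge 2$ then $i \in L$, which forces $L \supseteq F_s(i) = K_s$ and (since $[L:F_s] = [K_s:F_s] = 2$) gives $L = K_s$, hence $\sqrt{-3} \in K_s$; but $\Q(\sqrt{-3})$ and $K_s$ have coprime conductors $3$ and $2^s$, so their intersection is $\Q$, a contradiction. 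Similarly $b \ge 2$ would require $\Q(\zeta_9) \subseteq L$, impossible on degree grounds since $6 \nmid [L:\Q] = 2^{s-1}$. Therefore $\mu(L) = \mu_6$, and every element of $\mu_6 \setminus \{\pm 1\}$ generates $\uOO[\zeta_3]$ over $\uOO$ (using $\zeta_3^2 = -1 - \zeta_3$ and $\zeta_6 = 1 + \zeta_3$). Any $\uOO$-order of $L$ containing an irrational root of unity therefore contains $\uOO[\zeta_3] = \OO_L$ and must equal it.

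The only mild obstacle is the maximality step; once the coprime-discriminant observation (or equivalently the local computation at primes of $\uOO$ above $3$) is in hand, the identification of $\mu(L)$ and of the $\uOO$-order generated by each irrational root of unity is routine.
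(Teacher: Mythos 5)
Your proof is correct and uses the same key idea as the paper: the paper's proof cites the standard fact that if $L_1, L_2$ have disjoint sets of ramified primes then $\mO_{L_1 L_2} = \mO_{L_1}\mO_{L_2}$ (Neukirch, Proposition I.2.11), applied to $F_s$ and $\Q(\sqrt{-3})$, which is precisely your coprime-discriminant observation. You additionally spell out the local verification at $3$ and the classification of $\mu(F_s(\sqrt{-3})) = \mu_6$, details the paper treats as immediate once maximality of $\uOO[\zeta_3]$ is established.
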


\begin{proof} If
$L_1, L_2$ are number fields with disjoint sets of ramified primes, then
$\Z_{L_1L_2} = \Z_{L_1}\Z_{L_2}$
\cite[Prop.~I.2.11]{N2}.  Applying this to $F_{2^s}, \Q(\sqrt{-3})$ gives the 
first statement.
\end{proof}

Let $L$ be a CM number field with totally real subfield $L^+$.  Put
$h^+(L)=h(L^+)$ and $h^-(L)=h(L)/h(L^+)$.  This extends to orders:
for an order \mbox{$\fo\subseteq L$}, let $\fo^+=\fo\cap L^+$ and put
$h(\fo)=\#\Pic(\fo)$, 
$h^+(\fo)=\#\Pic(\fo^+)$, and $h^-(\fo)=h(\fo)/h^+(\fo)$.
\begin{prop}\label{mass-g-4th-root}
  Let $n=2^s$.  Then
$$v_{<1}(\gr) \le 2h^-(F_n(\sqrt{-3})) +
  2\sum_{k=0}^{2^{s-2}-1}h^-(\fo_k)  \, .$$
\end{prop}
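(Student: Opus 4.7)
The plan is to identify the vertices of mass less than one with those whose lift in $\Delta$ corresponds to a maximal order $\M'\subseteq \H_n$ containing a nontrivial root of unity, and then bound the number of such $\M'$ up to $\Gamma_1$-conjugation using the theory of optimal embeddings.

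First I would observe that $\bv\in\Ver(\gr)$ has $m(\bv)<1$ precisely when the stabilizer $(\Gamma_1)_{\bv}\cong \M'^{\times}_1/\{\pm1\}$ is nontrivial, for any lift of $\bv$ to a maximal order $\M'\in\Ver(\Delta)$. Because $\H_n$ is totally definite, every element of $\M'^{\times}_1$ is a torsion element and hence a root of unity. A nontrivial root of unity $\gamma\in\M'$ generates a CM quadratic extension of $F_s$ that contains a root of unity strictly larger than $\pm 1$; by construction, the only such extensions of $F_s$ are $K_s=F_s(i)$ and $F_s(\sqrt{-3})$. The $\uOO$-order $F_s(\gamma)\cap \M'$ therefore embeds optimally into $\M'$ and contains a nontrivial root of unity. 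Applying Propositions~\ref{indices} and~\ref{orders}, this optimally embedded order must be one of $\fo_0,\ldots,\fo_{2^{s-2}-1}$ or $\uOO[\zeta_3]$.

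Let $V_{\fo}$ denote the set of vertices $\bv\in\Ver(\gr)$ some lift of which admits an optimal embedding of $\fo$. By the previous step,
\[
v_{<1}(\gr)\le \#V_{\uOO[\zeta_3]}+\sum_{k=0}^{2^{s-2}-1}\#V_{\fo_k}.
\]
Next I would bound $\#V_\fo$ by counting pairs $(\M',\iota)$ with $\iota:\fo\hookrightarrow\M'$ optimal, modulo $\Gamma_1$-conjugation. Taking representatives for $\T(\H_n)=\Ver(\overline{\gr})$ and summing, Theorem~\ref{class_no} combined with Theorem~\ref{thm:Mn} (the free action of $\Cl(\uOO)$ on $\Cl(\M_n)$, which partitions the $h(\H_n)$ left ideal classes into $h(\uOO)$ representatives per type) gives
\[
\sum_{\M'\in\T(\H_n)} m(\fo,\M') \;=\; \frac{m_{\M_n}(\fo)}{h(\uOO)}\;=\;\frac{h(\fo)}{h(\uOO)}\;=\;h^{-}(\fo),
\]
since $\fo^{+}=\uOO$. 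This counts the pairs $(\M',\iota)$ up to $\Gamma_0$-conjugation. Because $[\Gamma_0:\Gamma_1]=2$ (Theorem~\ref{index}), each $\Gamma_0$-orbit breaks into at most two $\Gamma_1$-orbits, so $\#V_\fo\le 2h^-(\fo)$.

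Combining these inequalities and using $h^{-}(\uOO[\zeta_3])=h^{-}(F_s(\sqrt{-3}))$ (the order is maximal) produces the claimed bound. The main subtlety to get right is bookkeeping: distinguishing $\Gamma_1$- versus $\Gamma_0$-equivalence of optimal embeddings, and correctly dividing by $h(\uOO)$ when passing from the $h(\H_n)$ left ideal class representatives appearing in Theorem~\ref{class_no} to the $h_{\rm rel}(\H_n)$ type representatives that index $\Ver(\overline{\gr})$. Everything else is a direct application of the preceding structural results.
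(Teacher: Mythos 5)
Your proposal is correct and follows essentially the same route as the paper: characterize $v_{<1}(\gr)$ via maximal orders containing nontrivial roots of unity, reduce to the orders $\fo_k$ and $\uOO[\zeta_3]$ via Propositions~\ref{indices} and~\ref{orders}, and then count optimal embeddings across types using Theorems~\ref{class_no} and~\ref{thm:Mn} to obtain the factor $h^-(\fo)=h(\fo)/h(\uOO)$ per order, with a final factor of $2$ from the degree-two cover $\gr\to\overline{\gr}$. If anything, your write-up is a little more explicit than the paper's about why the factor of $2$ is legitimate: you state the count of pairs $(\M',\iota)$ up to $\Gamma_0$-conjugation equals $h^-(\fo)$ and then observe that passing to $\Gamma_1$-orbits multiplies the count by at most $[\Gamma_0:\Gamma_1]=2$, whereas the paper compresses this into the remark that each optimal embedding gives rise to stabilizer elements at ``at most $2$ vertices of $\gr$.'' Both versions of the argument are sound.
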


\begin{proof}
Fix $s\ge2$ and set $F=F^{2^s}$, $K=K_{2^s}$, $\H=\H_{2^s}$ and  $\M = \M_{2^s}$.
 Let $\fo \subset \H$ be a quadratic $\Z_F$-order
containing a nontrivial root of unity. By Propositions~\ref{indices}
and \ref{orders} $\fo$ is $\Z_F[\zeta_3]$ or one of the $\fo_k$, $k = 0, \ldots,
2^{s-2}-1$ (Definition~\ref{def:Ok}). Fix an $\M$-class $\mathcal{C}$
of orders and let $\M_1, \ldots, \M_{h(\H)/h^+(\Z_K)}$ be a complete set of
elements of $\mathcal{C}$ representing the $\Cl(\Z_F)$-orbits in
$\mathcal{C}$.

Combining theorem~\ref{thm:Mn} with the fact that $\gr$ covers
$\PUT\big(\Z_K^{(2)}\big)\backslash\Delta_s$ with degree $2$, we see that an optimal 
embedding of
$\fo$ into one of the $\M_i$ gives rise to embeddings of the roots of
unity  in $\fo$ into
the
$\SUT\big(\Z_K^{(2)}\big)$-stabilizer group of at most 2 vertices of $\gr$. 
\end{proof}
\noindent
Every nontrivial vertex stabilizer arises from an optimal embedding of at least
one of our orders $\fo$. A given maximal order may receive optimal
embeddings of more than one of the orders, or indeed, more than one
inequivalent embedding of the same order.

For a commutative ring $R$ let $\mu(R)$ be the group of roots of unity
in $R$ and let $w_k$ be such that $\# \mu(\fo_k) = 2^{w_k}$.
\begin{prop}
  \label{granted}
The unit
index $[\fo_0^\times:\fo_k^\times]$ is $2^{s-w_k} =
[\mu(\fo_0):\mu(\fo_k)]$.
\end{prop}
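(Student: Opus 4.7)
The plan is to establish the first equality via a short group-theoretic argument, with the second equality being immediate from the definitions. The proof rests on a single classical input: for the prime-power cyclotomic field $K_s = \Q(\zeta_{2^s})$, the Hasse unit index is trivial, i.e.,
\[
\OO_{K_s}^\times = \mu(K_s)\cdot \OO_{F_s}^\times,
\]
equivalently $\fo_0^\times = \mu(\fo_0)\cdot \uOO^\times$ (this is Washington, Corollary~4.13, which handles $Q=1$ for prime-power conductor). I would simply cite this.

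From there the argument is short. Since $\uOO^\times \subseteq \fo_k^\times \subseteq \fo_0^\times$, substituting into the displayed identity yields
\[
\fo_0^\times = \mu(\fo_0)\cdot \fo_k^\times.
\]
Moreover, because $\fo_k \subseteq \fo_0$,
\[
\mu(\fo_0)\cap \fo_k^\times = \mu(\fo_0)\cap \fo_k = \mu(\fo_k).
\]
The second isomorphism theorem applied to $\mu(\fo_0)$ and $\fo_k^\times$ inside $\fo_0^\times$ then gives
\[
[\fo_0^\times:\fo_k^\times] = [\mu(\fo_0)\cdot \fo_k^\times : \fo_k^\times] = [\mu(\fo_0):\mu(\fo_0)\cap \fo_k^\times] = [\mu(\fo_0):\mu(\fo_k)],
\]
which is the first claimed equality.

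For the second equality, $\mu(\fo_0) = \mu(K_s) = \langle \zeta_{2^s}\rangle$ has order $2^s$, while $\mu(\fo_k)$ has order $2^{w_k}$ by the definition of $w_k$; since both are $2$-groups (subgroups of $\mu(K_s)$), the index is $2^{s-w_k}$. Combining the two equalities yields $[\fo_0^\times:\fo_k^\times] = 2^{s-w_k} = [\mu(\fo_0):\mu(\fo_k)]$, as required.

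There is no real obstacle here: the only nontrivial ingredient is the prime-power cyclotomic unit index result, after which the argument is purely formal. If one did not wish to quote Washington, one could instead verify $Q=1$ directly for $K_s$ by noting that for $u\in \OO_{K_s}^\times$ the ratio $u/\bar u$ lies in $\mu(K_s)$ by Kronecker and has even exponent (since $u\bar u$ is a totally positive real unit, and for $2$-power conductor a descent argument shows $u/\bar u \in \mu(K_s)^2$), so $u = \zeta\cdot v$ with $\zeta\in\mu(K_s)$ and $v\in \OO_{F_s}^\times$.
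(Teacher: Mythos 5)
Your proof is correct, but it takes a genuinely different route from the paper's. The paper works with cyclotomic units: it cites Washington's Theorem~8.2 (the index of the cyclotomic units $U$ in the full unit group is $h^+$) together with Theorem~10.4 (oddness of $h^+$), observes via the mod-$2$ reduction $\fo_0^\times \to (\fo_0/2)^\times$ that $[\fo_0^\times:\fo_k^\times]$ must be a power of $2$, and then sandwiches the index between $2^{s-w_k}$ (the obvious lower bound from roots of unity) and the odd-times-$2^{s-w_k}$ upper bound coming from $[\fo_0^\times:\fo_k^\times \cap U]$. You instead invoke the Hasse unit index $Q(K_s)=1$ (Washington Corollary~4.13), i.e.\ $\fo_0^\times = \mu(\fo_0)\cdot\uOO^\times$, and then the inclusion chain $\uOO^\times \subseteq \fo_k^\times \subseteq \fo_0^\times$ plus the second isomorphism theorem yields $[\fo_0^\times:\fo_k^\times] = [\mu(\fo_0):\mu(\fo_0)\cap\fo_k^\times] = [\mu(\fo_0):\mu(\fo_k)]$ in one stroke, with $\mu(\fo_0)\cap\fo_k^\times = \mu(\fo_k)$ being immediate. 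Both proofs lean on the same book, but your approach bypasses the cyclotomic-unit machinery and the separate lower/upper bound argument entirely; it is shorter, avoids the need to verify that the $(\zeta^i-1)/(\zeta-1)$ land in $\fo_k$ up to roots of unity, and makes the identity $[\fo_0^\times:\fo_k^\times] = [\mu(\fo_0):\mu(\fo_k)]$ transparent rather than an accident of the bound being tight. The paper's route, by contrast, exposes the parity structure (the index is a power of $2$ because $h^+$ is odd), which is thematically consistent with the class-number parity arguments used elsewhere in Section~\ref{cyclo}.
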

\begin{proof} The cyclotomic units of $K^+ =F$ are generated by 
  the \mbox{$\zeta^{(1-i)/2} \frac{1-\zeta^i}{1-\zeta}$} for odd $i$ from
  $1$ to $n-1$, while the cyclotomic units of $K$ are
  generated by these and $\zeta$ \cite[Lemma~8.1]{W}.  Observe that
  every generator $\frac{1-\zeta^i}{1-\zeta}$ is equal to
  $\zeta^{(i-1)/2} \sum_{j=-(i-1)/2)}^{(i-1)/2} \zeta$ and is therefore the product
  of a real unit and a root of unity.
  
  Let $U$ be the group of cyclotomic units of $\fo_0$ and
  let $U_k = U \cap \fo_k$.  
  By \cite[Thm.~8.2]{W}, the index of $U$ in the
  full unit group is $h^+(K)=h(F)$,
  which is odd by \cite[Thm.~10.4]{W}.
  We show that $[U:U_k] = 2^{s-w_k}$.  

Indeed, the class of $\zeta$ in the
  quotient has order $2^{s-w_k}$, while the remark above shows that
  $\zeta$ generates the quotient.
  On the other hand, we know that $[\fo_0^\times:\fo_k^\times]$ is a power of $2$,
  because $(\fo_0/2)^\times$ is a $2$-group and every unit that is $1$ mod $2$
  is contained in every $\fo_k$.  Since
\[
  [U:U_k] | [\fo_0^\times:\fo_k^\times] | [\fo_0^\times:U_k] = h^+(K) [U:U_k],
\]
  it follows that $[\fo_0^\times:\fo_k^\times] = [U:U_k] = 2^{s-w_k}$.
\end{proof}

\begin{prop}
  \label{prop: class_no_k_1}
The class number of $\fo_k$ is equal to 
$h^+(K)h^-(K) \cdot 2^{k-s+w_k}$.
\end{prop}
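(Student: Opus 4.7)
The plan is to compute $h(\fo_k)=\#\Pic(\fo_k)$ by invoking the standard
exact sequence that relates the Picard group of a one-dimensional
Noetherian order to that of its normalization via the conductor, and then
to evaluate each term in that sequence explicitly using the data already
established in Proposition~\ref{indices} and Proposition~\ref{granted}.
We may assume $k\ge 1$; the case $k=0$ reads $h(\fo_0)=h^+h^-$ and is
trivial once one notes that $w_0=s$.

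First I would write down, for $\fo_k\subseteq \fo_0=\OO_{K_s}$ with
$\OO_{K_s}$-conductor $\ff=\fP^{2k}$, the exact sequence
\[
1\longrightarrow \fo_0^\times/\fo_k^\times\longrightarrow
(\fo_0/\fP^{2k})^\times/(\fo_k/\fP^{2k})^\times
\longrightarrow \Pic(\fo_k)\longrightarrow \Pic(\fo_0)\longrightarrow 1,
\]
which is the standard localization/conductor sequence (compare
\cite[Theorem~I.12.12]{N2}) applied to our setting; the injectivity of
$(\fo_k/\fP^{2k})^\times\hookrightarrow(\fo_0/\fP^{2k})^\times$ is
automatic because a unit in a subring is a unit in the ambient ring.
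Taking orders gives
\[
h(\fo_k)=h(\fo_0)\cdot
\frac{\#(\fo_0/\fP^{2k})^\times}{[\fo_0^\times:\fo_k^\times]\cdot
\#(\fo_k/\fP^{2k})^\times}.
\]

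Next I would plug in the four pieces. We have $h(\fo_0)=h(K_s)=h^+h^-$
by definition of $h^\pm$, and $[\fo_0^\times:\fo_k^\times]=2^{s-w_k}$ by
Proposition~\ref{granted}. Because $\fP$ has residue field $\F_2$, the
local ring $\fo_0/\fP^{2k}$ has order $2^{2k}$ with maximal ideal of
order $2^{2k-1}$, so $\#(\fo_0/\fP^{2k})^\times=2^{2k-1}$. The only
nontrivial computation is the last factor: using Proposition~\ref{indices}
we have $[\fo_0:\fo_k]=2^k$ and $[\fo_0:\fP^{2k}]=N(\fP)^{2k}=2^{2k}$,
whence $\#(\fo_k/\fP^{2k})=2^k$. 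I would then argue that
$R:=\fo_k/\fP^{2k}$ is a local ring with residue field $\F_2$: it is a
finite commutative subring of the local ring $\fo_0/\fP^{2k}$, hence has
no nontrivial idempotents, hence is local; and $1\in\fo_k$ forces the
residue field map $\fo_k\to\fo_0/\fP=\F_2$ to be surjective. It follows
that $\#R^\times=2^k-2^{k-1}=2^{k-1}$.

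Combining these four evaluations yields
\[
h(\fo_k)=h^+h^-\cdot\frac{2^{2k-1}}{2^{s-w_k}\cdot 2^{k-1}}
=h^+h^-\cdot 2^{k-s+w_k},
\]
as claimed. The only non-routine step is verifying that $\fo_k/\fP^{2k}$
is local with residue field $\F_2$, but this falls out quickly from the
fact that it embeds in the local ring $\fo_0/\fP^{2k}$; so I expect no
serious obstacle.
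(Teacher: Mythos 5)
Your proposal is correct and follows essentially the same route as the paper: both invoke the conductor sequence (Neukirch, Theorem~I.12.12) for $\fo_k\subseteq\fo_0$ with conductor $\fP^{2k}$, plug in $[\fo_0^\times:\fo_k^\times]=2^{s-w_k}$ from Proposition~\ref{granted}, and compute the unit groups of the local rings $\fo_0/\fP^{2k}$ and $\fo_k/\fP^{2k}$ to be $2^{2k-1}$ and $2^{k-1}$. The only difference is cosmetic: you spell out why $\fo_k/\fP^{2k}$ is local with residue field $\F_2$, where the paper simply asserts it.
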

\begin{proof} By \cite[Thm.~I.12.12]{N2}, the class number of $\fo_k$ is
$$(h^+(K)h^-(K)/[\fo_0^\times:\fo_k^\times]) (\#(\fo_0/\f)^\times/\#(\fo_k/\f)^\times),$$
where $\f$ is the
conductor.  Both quotients by $\f$ are local rings with residue field of order
$2$, so half of their elements are units; since the conductor of $\fo_k$ has
index $2^{2k}$ in $\fo_0$, its index in $\fo_k$ is $2^k$, and so the second 
factor is $2^{2k-1}/2^{k-1} = 2^k$.  The result follows by using Proposition~\ref{granted}
to evaluate $[\fo_0^\times:\fo_k^\times]$.
\end{proof}

\begin{prop}\label{prop: roots bound}
  Let $n=2^s$.  Then
  $$v_{<1}(\gr) \le 2h^-(F_n(\sqrt{-3}))+ 2^{2^{s-2}+1} h^-(K_n) \, .$$
\end{prop}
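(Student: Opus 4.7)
The plan is to bound the sum $2\sum_{k=0}^{2^{s-2}-1}h^{-}(\fo_k)$ appearing in Proposition~\ref{mass-g-4th-root}, using the explicit class number formula of Proposition~\ref{prop: class_no_k_1}.

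First I observe that since $\fo_k$ is an $\uOO$-order in the quadratic extension $K_s/F_s$ and $\uOO=\uOO_s$ is already the maximal order of $F_s$, we have $\fo_k^+ = \fo_k \cap F_s = \uOO$. Hence $h^+(\fo_k) = h(\uOO) = h^+$, so by definition
\[
h^-(\fo_k) = \frac{h(\fo_k)}{h^+(\fo_k)} = \frac{h(\fo_k)}{h^+}.
\]
Combining this with Proposition~\ref{prop: class_no_k_1} gives $h^-(\fo_k) = h^- \cdot 2^{k-s+w_k}$.

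Next I would bound $w_k$. Since $\mu(\fo_k) \subseteq \mu(\fo_0) = \langle \zeta_{2^s}\rangle$, we have $2^{w_k} \le 2^s$, i.e., $w_k \le s$. Therefore $2^{k-s+w_k} \le 2^k$, and summing the geometric series yields
\[
\sum_{k=0}^{2^{s-2}-1} 2^{k-s+w_k} \;\le\; \sum_{k=0}^{2^{s-2}-1} 2^{k} \;=\; 2^{2^{s-2}}-1 \;\le\; 2^{2^{s-2}}.
\]
Consequently,
\[
2\sum_{k=0}^{2^{s-2}-1} h^-(\fo_k) \;\le\; 2h^- \cdot 2^{2^{s-2}} \;=\; 2^{2^{s-2}+1} h^-(K_s).
\]
Substituting this into the bound of Proposition~\ref{mass-g-4th-root} gives the claim. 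The computation is essentially routine once the identifications $\fo_k^+ = \uOO$ and $w_k \le s$ are in hand; no real obstacle arises, since the hard analytic work (namely the conductor computation and the class number formula) has already been done in Propositions~\ref{indices}, \ref{granted}, and~\ref{prop: class_no_k_1}.
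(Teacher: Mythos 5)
Your proof is correct and follows the same route as the paper's: apply Proposition~\ref{prop: class_no_k_1} to express $h^-(\fo_k)$ as $h^-\cdot 2^{k-s+w_k}$, bound $w_k\le s$, sum the geometric series to get $2^{2^{s-2}}$, and plug into Proposition~\ref{mass-g-4th-root}. The only (welcome) difference is that you spell out the identification $\fo_k^+=\uOO$ justifying $h^-(\fo_k)=h(\fo_k)/h^+$, which the paper uses implicitly by dividing $\sum h(\fo_k)$ by $h(F_s)$.
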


\begin{proof}
  By Proposition~\ref{prop: class_no_k_1} we have
$$\frac{\sum_{k=0}^{2^{s-2}-1}h\left(\fo_k\right) }{h(F_n)} =
  h^-(K_n)\sum_{k = 0}^{2^{s-2}-1}2^{k-s+w_k} \le h^-(K_n)\sum_{k =
    0}^{2^{s-2}-1}2^{k-s+s}< h^-(K_n)2^{2^{s-2}}\, ,$$
where $\fo_k$ has exactly $2^{w_k}$ roots of unity.
Combining this bound with Proposition~\ref{mass-g-4th-root} completes
the proof.
\end{proof}

\subsubsection[Quadratic
  \texorpdfstring{$\Z_{F_{2^s}}$}{Z[\unichar{"03B6}2\unichar{"5E}s]+}-orders
  containing inverting elements]
              {Quadratic
                $\Z_{F_{2^s}}$-orders
                containing inverting elements}
\label{Sec:invert}
By Theorem~\ref{thm:invert}, we can bound inversions in 
$\gr:= \SUT\big(\Z_{K_n}^{(2)}\big) \backslash \Delta_n$, and
hence half-edges in $\overline{\gr} \colonequals
 \PUT\big(\Z_{K_n}^{(2)}\big)\backslash \Delta_n$,
by counting optimal embeddings of quadratic $\Z_{F_n}$-orders
containing inverting elements into an $\M_n$-class of maximal
quaternionic orders.

\begin{theorem}\label{which_fields}
Let $L$ be a totally complex 
quadratic extension of $F\colonequals F_{2^s}$.  
Suppose that $\p \Z_L$ is the square of a principal ideal.  
Then $L$ is either $K= K_{2^s}$ or
$F(\sqrt{-p_{2^s}})$.  In either case a generator of the
prime ideal of $L$ above
$\p := \p_{2^s}\subset \Z_F$ generates the maximal order $\Z_L$ over $\Z_F$.
\end{theorem}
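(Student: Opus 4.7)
The plan has two parts: (i) classify the possible fields $L$, and (ii) show that a generator of $\fq$ generates $\fo_L$ over $\uOO$.

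\emph{Reduction and rescaling.} Unique factorization in the Dedekind domain $\fo_L$ forces $\p$ to ramify in $L/F$: were $\p$ inert, $\p\fo_L$ would be prime and not the square of any ideal, and were $\p$ split as $\fq_1\fq_2$ with $\fq_1 \ne \fq_2$, the factorization would again not be a square. Write $\fq = (\pi)$, so that $\pi^2 = p_s u$ for some $u \in \fo_L^\times$. Since $L$ is CM with complex conjugation $\sigma$, for any embedding $\tau \colon L \hookrightarrow \BC$ we have $\tau(\Nm(\pi)) = |\tau(\pi)|^2 > 0$, so $\Nm(\pi) \in \uOO$ is totally positive; combined with $(\Nm(\pi)) = \p$, this gives $\Nm(\pi) = p_s v$ with $v$ a totally positive unit. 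By Theorem~\ref{fields}\eqref{fields1}, $v = w^2$, and replacing $\pi$ by $\pi/w$ we may assume $\Nm(\pi) = p_s$. Then $\sigma(\pi) = p_s/\pi$, and applying $\sigma$ to $\pi^2 = p_s u$ yields $\sigma(u) = 1/u$.

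\emph{Dichotomy on $u$.} If $u \in F$, then $u \in \uOO^\times$ with $u^2 = u\sigma(u) = 1$, so $u = \pm 1$; the totally complex hypothesis excludes $u = 1$ (since $F(\sqrt{p_s})$ is totally real), forcing $\pi^2 = -p_s$ and $L = F(\sqrt{-p_s})$. If $u \notin F$, then $L = F(u)$, and at every complex place $\tau$ of $L$ one has $|\tau(u)|^2 = \tau(u\sigma(u)) = 1$; Kronecker's theorem then yields $u \in \mu(L)$. A direct enumeration, using $F \cap \Q(\zeta_m) = \Q(\zeta_{\gcd(2^s,m)})^+$, shows that the only roots of unity $\zeta_m \notin F$ for which $[F(\zeta_m):F] = 2$ are $\zeta_{2^a}$ with $2 \le a \le s$, each giving $F(\zeta_{2^a}) = K_s$, and $\zeta_3$ or $\zeta_6$, both giving $F(\sqrt{-3})$. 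But $F_\p(\sqrt{-3})$ is the unique \emph{unramified} quadratic extension of $F_\p$, corresponding to the residue field extension $\F_2 \subset \F_4$, so $\p$ would not ramify in $F(\sqrt{-3})/F$, contradicting the reduction step. Hence $L = K_s$.

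\emph{The maximal order.} For $L = F(\sqrt{-p_s})$, the generator $\pi = \sqrt{-p_s}$ of $\fq$ satisfies $\uOO[\pi] \subseteq \fo_L$; equality follows by localizing, since $\pi$ is a uniformizer at $\p$ with $\pi^2/p_s$ a unit (making $\uOO_\p[\pi]$ maximal there), while at every $\fq' \ne \p$ the discriminant $-4p_s$ of $x^2 + p_s$ is a unit, so $\uOO_{\fq'}[\pi]$ is \'etale and already maximal. For $L = K_s$, the prime $\fP_s$ is generated by $1 - \zeta_{2^s}$, and $\uOO[1 - \zeta_{2^s}] = \uOO[\zeta_{2^s}] = \fo_{K_s}$ by the standard description of cyclotomic integers. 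The main technical hurdle is the Kronecker step, where one must both recognize $u$ as a root of unity and then rule out $F(\sqrt{-3})$ via the local unramified structure at $\p$; the rescaling at the start depends essentially on Theorem~\ref{fields}\eqref{fields1}.
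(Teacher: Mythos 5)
Your classification of $L$ is correct but proceeds by a genuinely different, more elementary route than the paper's. You reduce to $\pi^2 = p_s u$ with $u \in \fo_L^\times$ and $\sigma(u) = u^{-1}$, split on whether $u \in F$ (giving $u = -1$ and $L = F(\sqrt{-p_s})$, since $F(\sqrt{p_s})$ is totally real), and otherwise invoke Kronecker's theorem to make $u$ a root of unity, enumerate the quadratic extensions $F(\zeta_m)/F$, and discard $F(\sqrt{-3})$ because $\p$ is unramified (indeed inert) there, contradicting the ramification forced by the hypothesis. The paper instead applies $\Gal(L/F)$-cohomology to $0 \to \fo_L^\times \to \fo_L[1/q]^\times \to \Z \to 0$ and then to $0 \to \fo_L[1/q]^\times \to L^\times \to \Prin(\fo_L[1/q]) \to 0$ to deduce that the relative discriminant of $L/F$ is a negative-definite unit in $\uOO[1/p]$, hence $-1$ or $-p_s$ up to squares by Theorem~\ref{fields}. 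Your approach is cohomology-free and makes the role of Theorem~\ref{fields}\eqref{fields1} (absorbing the totally positive unit into $\pi$) transparent; the paper's is shorter once the cohomological input is in hand.

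There is a gap, however, in the final ``generates the maximal order'' step when $L = K_s$. Your normalized $\pi$ satisfies $\pi^2 = p_s\zeta^m$ with $m$ odd, so $\pi = \pm\zeta^{(m-1)/2}(1+\zeta)$, yet the last paragraph only verifies $\uOO[1-\zeta_{2^s}] = \fo_{K_s}$ for the particular generator $1-\zeta_{2^s}$; you still need $\uOO[\zeta^j(1+\zeta)] = \fo_{K_s}$ for arbitrary $j$, which is what the statement requires for an arbitrary generator of the prime. The paper closes this by multiplying $\zeta^j(1+\zeta)$ by the explicit element $\zeta^{-j}(1+\zeta^{2j+1})(1+\zeta)^{-1} \in \uOO^\times$ to reduce to $1+\zeta^{2j+1}$, which generates $\OO_{K_s}$ since $\gcd(2j+1,2^s)=1$. (Alternatively, note that $\ATr(\zeta^j(1+\zeta)) = \zeta^{-j-1}(1+\zeta)(\zeta^{2j+1}-1)$ is a $\uOO$-unit times $\zeta-\zeta^{-1}$, so Proposition~\ref{prop:atr-order} gives trivial conductor.) This is fillable but is a genuine omission. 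The $L = F(\sqrt{-p_s})$ case is complete as you wrote it, since your normalization already forces $\pi = \pm\sqrt{-p_s}$ and the localization argument handles that generator directly.
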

\begin{proof}
Let $q$ be a generator of the prime ideal, $\q$, above $\p$ and
set $p=p_{2^s}$. Consider the exact
sequence
$$0\to\Z_L^\times\to\Z_L^\q=\Z_L[1/q]\xrightarrow{v_\q}\Z\to0.$$ Now
applying $L/F$-Galois cohomology
gives
\begin{align*}
  0\to\Z/v_\q(\Z_F[1/p]^\times)&(\cong\Z/2\Z)\to H^1_{L/F}(\Z_L^\times)\\
& \to
  H^1_{L/F}(\Z_L[1/q]^\times) \to H^1_{L/F}(\Z) = 0.
\end{align*}
\noindent It follows from
\cite[Thm.~4.12]{W} that $\#H^1_{L/F}(\Z_L^\times)$ is either $1$ or
$2$.  Hence we must have $H^1_{L/F}(\Z_L^\times)\cong\Z/2\Z$ and
$H^1_{L/F}(\Z_L[1/q]^\times)=0$.

We next apply Galois cohomology to the exact
sequence $$0\to\Z_L[1/q]^\times\to L^\times\to\Prin(\Z_L[1/q])\to0$$ to
obtain $$0\to\Z_F[1/p]^\times\to F^\times\to\Prin(\Z_L[1/q])^{\text{Gal}(L/F)}
\to
H^1_{L/F}(\Z_L[1/q]^\times)=0.$$
Therefore,
$\Prin(\Z_L[1/p])^{\text{Gal}(L/F)}=\Prin(\Z_F[1/p])$ and in particular the
discriminant of $L/F$ must be a unit in $\Z_F[1/p]$.  This unit must
also be negative definite since $L$ is totally complex.  By Theorem~\ref{fields} and its proof the only such
units up to squares are $-1$ and $-p$.  Hence $L$ is either $K$
or $F(\sqrt{-p})$.

In the latter case, $q$ must be equal to $\sqrt{-p}$ up to units of $\Z_F$
(again by Theorem~\ref{fields}), hence it generates the maximal order
$\Z_F[\sqrt{-p}]$.  In the
former case it is easy to see that $q = \zeta^k(1+\zeta)$
up to units of $\Z_F$.  Hence multiplying by
$\zeta^{-k}(1+\zeta^{2k+1})(1+\zeta)^{-1}\in\Z_F^\times$ gives us
$1+\zeta^{2k+1}$ which generates the maximal order.
\end{proof}
\begin{lemma}\label{lem:invert}
Let $n = 2^s$ and set $F = F_{2^s}, K = K_{2^s}, p = p_{2^s},
\M=\M_{2^s}$. 
If $\gamma \in \M[1/2]^\times$ is an inverting
element, then $F(\gamma)$ is either $K$ or $F(\sqrt{-p})$. 
\end{lemma}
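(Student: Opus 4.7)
The plan is to apply Theorem~\ref{which_fields} with $L := F(\gamma)$; it suffices to show that $L$ is a totally complex quadratic extension of $F$ and that $\fp\fo_L$ is the square of a principal ideal.

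First, $\gamma$ cannot be central: if $\gamma \in F$ then its reduced norm in $\H_n$ would be $\gamma^2$, forcing $p = \gamma^2$, which contradicts $\Norm_{F/\Q}(p) = 2$ from Proposition~\ref{prin}\eqref{prin1}. So $L/F$ is genuinely quadratic, and since $L$ embeds in the totally definite quaternion algebra $\H_n$, $L$ is totally complex.

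Next I would rule out $\fp$ being inert or split in $L$. Using the formula
\[
\val_\fp(\Norm_{L/F}(\gamma)) = \sum_{\fP \mid \fp} f(\fP/\fp)\,\val_\fP(\gamma) = 1,
\]
the inert case ($f = 2$) gives $2\val_\fP(\gamma) = 1$, impossible. The split case requires the full strength of Definition~\ref{defn:invert}: if $L \otimes_F F_\fp \cong F_\fp \times F_\fp$, then $\gamma$ corresponds to $(\gamma_1, \gamma_2)$ with $\gamma_1\gamma_2 = p$, and $u = \gamma^2/p \in L$ corresponds to $(u_1,u_2)$ with $u_i = \gamma_i^2/p$. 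Since $u \in \M^\times$, $u$ lies in the $\uOO_\fp$-order $(L\otimes F_\fp) \cap \M_\fp$ of $F_\fp \times F_\fp$ and is a unit there; as any such order is contained in the maximal order $\uOO_\fp \times \uOO_\fp$, we conclude $u_i \in \uOO_\fp^\times$, and then $\gamma_i^2 = u_i p$ forces $2\val_\fp(\gamma_i) = 1$, a contradiction. Therefore $\fp$ ramifies as $\fp\fo_L = \fP^2$, and the norm formula yields $\val_\fP(\gamma) = 1$.

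Finally, at each prime $\fq \neq \fp$ of $F$ one has $\widetilde{\M}_\fq = \M_\fq$, so $\gamma \in \widetilde{\M}^\times$ implies $\gamma \in \M_\fq^\times$, hence $\val_{\fq'}(\gamma) = 0$ at every prime $\fq' \mid \fq$ of $L$. Combined with $\val_\fP(\gamma) = 1$, this shows $\gamma \in \fo_L$ and $(\gamma)\fo_L = \fP$; thus $\fp\fo_L = (\gamma)^2$ is the square of a principal ideal, and Theorem~\ref{which_fields} forces $L \in \{K, F(\sqrt{-p})\}$. The hard part is the split-case analysis, where the bare condition $\Norm(\gamma) = p$ is insufficient and one must genuinely use that the $u$ in Definition~\ref{defn:invert}\eqref{defn:invert2} is a unit of $\M$, not merely an element of the right reduced norm.
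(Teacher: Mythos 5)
Your proof is correct and follows the same route as the paper's: reduce to Theorem~\ref{which_fields} by showing $\fp\fo_L$ is the square of a principal ideal, with condition~(b) of Definition~\ref{defn:invert} supplying the unit needed to rule out splitting. The paper reaches this more compactly by noting that $u = \gamma^2/p \in \M^\times \cap L$ lies in the $\uOO$-order $\M \cap L \subseteq \fo_L$, hence $u \in \fo_L^\times$ and $(\gamma)^2\fo_L = (up)\fo_L = \fp\fo_L$, which in one step forces $\fp$ to ramify and $(\gamma) = \fP$; your explicit inert/split elimination and the valuation check at $\fq \neq \fp$ are valid but can be folded into this single observation.
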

\begin{proof}
  By Proposition~\ref{which_fields} it suffices to show that $\p$ ramifies
as the square of a principal ideal in $L=F(\gamma)$. By definition
$\Norm(\gamma) = p$ so $\gamma^2$ generates $\Z_L \cdot p = \Z_L \fp$.
\end{proof}

\begin{proposition}\label{char_poly}
  Let $n = 2^s$. Set $F = F_n$, $K= K_n$,
  $p = p_{n}$, and $\Delta =
  \Delta_{n}$. Fix a maximal order
  $\M \in \Ver(\Delta)$ and let $\bv$, $\overline{\bv}$ be
  its image in $\gr$ and $\overline{\gr}$ respectively. Every embedding of the ring of integers in $F(\sqrt{-p})$
  or $K$ into $\M$ is associated to exactly one inverted pair of an edge and its opposite in $\gr\colonequals
  \PSUT\big(\Z_K^{(2)}\big)\backslash \Delta$ incident 
upon $\bv$. This pair thus covers a unique
  half-edge  in $\overline{\gr} \colonequals
  \PUT\big(\Z_K^{(2)}\big)\backslash \Delta$ incident upon $\overline{\bv}$. All
  half-edges in $\overline{\gr}$  incident upon $\overline{\bv}$ arise
  from such embeddings.
  Moreover, an equivalent embedding gives rise to the same half-edge in 
$\overline{\gr}$.
\end{proposition}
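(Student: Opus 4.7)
The plan is to realize each embedding $\iota\colon \fo_L \hookrightarrow \M$, for $L\in\{K,F(\sqrt{-p})\}$, as an inverting element of $\M$ in the sense of Definition~\ref{defn:invert}, apply Theorem~\ref{thm:invert} to produce an inverted edge pair in $\gr$ (and hence a half-edge in $\overline{\gr}$), and then use $\Gamma_0/\Gamma_1\simeq \Z/2\Z$ from Theorem~\ref{index}\eqref{index1} to check that every relevant ambiguity preserves the unordered pair.

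For the construction, fix such an $\iota$. By Theorem~\ref{which_fields} there is a generator $q\in \fo_L$ of the unique prime above $\fp$, and this $q$ generates $\fo_L$ over $\uOO$. Ramification of $\fp$ in $L$ forces $(q)=(\bar q)$ in $\fo_L$, so $q/\bar q\in \fo_L^\times$ and $q^2=(q\bar q)(q/\bar q)\in p\,\uOO^\times \fo_L^\times$. After adjusting $q$ by a unit of $\fo_L$ we may assume $\Norm_{L/F}(q)=p$, so that $\gamma:=\iota(q)\in \M$ satisfies Definition~\ref{defn:invert}. Theorem~\ref{thm:invert}\eqref{thm:invert3} then produces an edge $\widetilde{\be}$ in $\Delta$ from $\M$ to $\M^\gamma$ inverted by $\gamma$; its image $\be$ in $\gr$, together with $\bar \be$, is an edge pair at $\bv$ inverted by $\PUT(R_n)$, which descends to a single half-edge at $\overline{\bv}$ in $\overline{\gr}$.

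Independence of $q$ and of the equivalence-class representative of $\iota$ reduces to a single calculation. Any other generator is $q'=uq$ with $u\in \fo_L^\times$, giving $\gamma'=\iota(u)\gamma$ with $\iota(u)\in \M^\times\subseteq \Gamma_0$. Modulo $\Gamma_1$ we have $\iota(u)\equiv \gamma^\epsilon$ for some $\epsilon\in\{0,1\}$, and since $\gamma$ itself swaps $\be$ with $\bar\be$ in $\gr$, the new edge lies in $\{\be,\bar\be\}$. The analogous computation with $v^{-1}\iota v$, $v\in \M^\times$, handles equivalent embeddings. For surjectivity, any half-edge at $\overline{\bv}$ pulls back to an edge pair at $\bv$ inverted by $\PUT(R_n)$; by Theorem~\ref{thm:invert}\eqref{thm:invert2} it arises from an inverting element $\gamma\in \M$, by Lemma~\ref{lem:invert} $F(\gamma)$ is $K$ or $F(\sqrt{-p})$, and Theorem~\ref{which_fields} identifies $\uOO[\gamma]$ with $\fo_{F(\gamma)}$, giving an embedding whose associated pair recovers the half-edge.

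The main obstacle is the care needed with inversions: in $\gr$ the edge $\be$ and its opposite $\bar\be$ remain distinct, and the various choices involved (generator $q$, representative of the $\M^\times$-equivalence class of $\iota$) can interchange them. The key observation is that every such ambiguity is realized by conjugation by an element of $\M^\times\subseteq \Gamma_0$, whose image in $\Gamma_0/\Gamma_1\simeq \Z/2\Z$ either acts trivially on $\be$ in $\gr$ or acts by the same swap already performed by $\gamma$, so the unordered pair $\{\be,\bar\be\}$, and therefore the resulting half-edge in $\overline{\gr}$, is unaffected.
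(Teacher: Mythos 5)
Your proposal is correct and follows essentially the same route as the paper: both proofs pass from an embedding of $\fo_L$ to an inverting element $\gamma$, invoke Theorem~\ref{thm:invert}, Lemma~\ref{lem:invert}, and Theorem~\ref{which_fields}, and use the fact that $\M^\times\subseteq\Gamma_0$ to see that the resulting edge pair descends to a well-defined half-edge. The only real difference is in the uniqueness step, where the paper explicitly enumerates the inverting elements ($\pm\sqrt{-p}$, resp.\ $\zeta^i(1+\zeta)$) and exploits the commutativity of $\iota(L)$ to show the edge in $\Delta$ is literally unchanged, whereas you fold all ambiguities into a single argument via the $\Z/2\Z$ structure of $\Gamma_0/\Gamma_1$; this is slightly weaker (pair preserved rather than edge preserved) but still suffices for the stated conclusion.
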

\begin{proof}

Set $\zeta =\zeta_{2^s}$. From Theorem~\ref{thm:invert} and
Proposition~\ref{lem:invert} it suffices to consider optimal
embeddings of the orders $\fo_{\gamma} \colonequals\Z_F[\gamma]$ 
into $\M$ for $\gamma$ an inverting
element in  $L = F(\sqrt{-p})$ or $L=K$. 
Fix such an embedding so that $\fo_\gamma\subset\Z_L$. 
We have $\Norm(\gamma) = p$
and $\gamma^2 = up$ with $u = \gamma^2/p \in  \M
^\times \cap L = \fo_{\gamma}^\times$.

Now $\gamma$ inverts the
unique pair of an edge and its opposite
between $\M$ and $\M^{\gamma}$ in $\Delta$ which maps to an inverted
pair of an edge and its opposite between $\bv$ and $\bv^{\gamma}$ in
$\gr$. This pair covers a
half-edge $\be$ (which is its own opposite) in $\overline{\gr}$ with 
$o(\be) = \overline{\bv} = t(\be)$.

Any other element in $L$ of norm $p$
must differ from $\gamma$ by a unit $\mu$ whose norm
down to $F$ is 1.  Such a unit is a root of
unity in $L$. 

If $L= F(\sqrt{-p})$, then $\sqrt{-p}$ is clearly an inverting element
so $\gamma = \pm \sqrt{-p}$ and thus $\fo_{\gamma} = \Z_L$. Since
$\M^{\gamma} = \M^{-\gamma}$ both choices for $\gamma$ invert the same pair of
edges in $\gr$ which covers the same half-edge $\be$ in $\overline{\gr}$.

Now suppose $L= K$. It is obvious that
$\gamma_i = \zeta^i(1 + \zeta)$ has norm $p = 2+\zeta+\zeta^{-1}$
(this is how we defined $p=p_{2^s}$ at the beginning of this section)
for $i = 0,\ldots, 2^s$. Since
\[
\gamma_i^2 = \zeta^{2i}(1+\zeta)^2
= \zeta^{2i}(1 + 2\zeta + \zeta^2) =
\zeta^{2i+1}(2 + \zeta+ \zeta^{-1}) = \zeta^{2i+1} p\, ,
\]
we have that $\gamma_i$ is an inverting element. There cannot be any others in
$L$. We also have $\fo_{\gamma_i} = \Z_L$ for all $i$. In particular
$\zeta$ lies in $\M^\times$. Thus
$\M^{\zeta^i\gamma} = \M^{\gamma}$ so all choices for $\gamma$ invert the same pair of
edges in $\gr$ which covers the same half-edge $\be$ in
$\overline{\gr}$.

Inequivalent embeddings $\Z_L \hookrightarrow \M$ differ by conjugation
by a unit in $\M$. So take $\mu\in \M^{\times}$ and  consider
$\M^{\mu^{-1}\gamma\mu} = (\M^{\gamma})^{\mu}$. But $\mu \in
\M^{\times} \subset \M[1/2]^\times$. By Theorem~\ref{U} the action of
$\PUT\big(\Z_K^{(2)}\big)$ on $\Delta$ is via its identification with
$\M[1/2]^\times$ modulo scalars. Thus the pair of edges inverted by
$\gamma$ and the pair inverted by $\mu^{-1}\gamma\mu$ all cover the same half-edge $\be$ in $\overline{\gr}$.
\end{proof}
Note that inequivalent embeddings may give rise to the same half-edge,
so counting inequivalent embeddings gives only an upper bound on half-edges,
not an exact count.
\begin{theorem}
  \label{ponds}
  Let $n = 2^s$ with $s \ge 2$. The number of half-edges $e_h(\overline{\gr})$
  of $\overline{\gr}\colonequals \PUT\big(\Z_{K_n}^{(2)}\big)
\backslash \Delta_n$ satisfies
  \begin{equation*}
    e_h(\overline{\gr}_s) \leq   h^-(K_n)+h^-(F_n(\sqrt{-p}))\, .
     \end{equation*}
\end{theorem}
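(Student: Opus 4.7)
The plan is to reduce the count of half-edges to counting optimal embeddings of specific quadratic orders into a representative set of maximal quaternion orders, and then apply the Eichler-style formula of Theorem~\ref{class_no}. By Theorem~\ref{thm:invert}, every half-edge of $\overline{\gr}_s$ comes from an edge of $\gr_s$ that is inverted by $\PUT(R_n)$, and this happens precisely when the corresponding maximal order $\M \in \Ver(\Delta_s)$ contains an inverting element $\gamma$. By Lemma~\ref{lem:invert}, such a $\gamma$ generates over $F_s$ one of the two quadratic CM extensions $L = K_s$ or $L = F_s(\sqrt{-p_s})$, and in both cases the order $\uOO[\gamma]$ equals the maximal order $\fo_L$, since by Theorem~\ref{which_fields} a generator of the prime above $\p_s$ already generates $\fo_L$ over $\uOO$.

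Next, Proposition~\ref{char_poly} upgrades this into a counting statement: for a vertex $\overline{\bv} \in \Ver(\overline{\gr}_s)$ with any chosen lift $\M_{\overline{\bv}} \in \Ver(\Delta_s)$, the set of half-edges of $\overline{\gr}_s$ incident to $\overline{\bv}$ is the image of an \emph{at most}-injective map from the disjoint union, as $L$ ranges over $\{K_s, F_s(\sqrt{-p_s})\}$, of the equivalence classes of optimal embeddings $\fo_L \hookrightarrow \M_{\overline{\bv}}$. In the notation of Definition~\ref{optimal embedding}, this gives
\[
e_h(\overline{\gr}_s) \;\le\; \sum_{\overline{\bv} \in \Ver(\overline{\gr}_s)} \Bigl( m(\fo_{K_s}, \M_{\overline{\bv}}) + m(\fo_{F_s(\sqrt{-p_s})}, \M_{\overline{\bv}}) \Bigr).
\]

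I now invoke Theorem~\ref{thm:Mn}: the left $\M_n$-ideal class representatives $I_1, \dots, I_h$ (with right orders $\mN_1, \dots, \mN_h$, $h = h(\H_s)$) are acted upon freely by $\Cl(\uOO)$, and the $h(\uOO)$ orbits all consist of classes with pairwise isomorphic right orders; these orbits are precisely the vertices of $\overline{\gr}_s$. Since the count $m(\fo_L, \mN)$ depends only on the isomorphism class of $\mN$, for each $\overline{\bv}$ its $h(\uOO)$ representative orders contribute the same number of embeddings. Hence, by Theorem~\ref{class_no},
\[
h(\fo_L) \;=\; m_{\M_n}(\fo_L) \;=\; \sum_{i=1}^{h} m(\fo_L, \mN_i) \;=\; h(\uOO)\sum_{\overline{\bv} \in \Ver(\overline{\gr}_s)} m(\fo_L, \M_{\overline{\bv}}).
\]
Since $\fo_L$ is the maximal order of the CM field $L$ with totally real subfield $F_s$ and $h(\uOO) = h(F_s)$, dividing gives
\[
\sum_{\overline{\bv}} m(\fo_L, \M_{\overline{\bv}}) \;=\; \frac{h(L)}{h(F_s)} \;=\; h^-(L).
\]
Applying this for $L = K_s$ and $L = F_s(\sqrt{-p_s})$ and summing produces the asserted bound on $e_h(\overline{\gr}_s)$.

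The only genuinely delicate step is the one already subsumed into Proposition~\ref{char_poly}, namely checking that each half-edge is produced by \emph{some} embedding (and that inequivalent embeddings may only overcount, not undercount). Everything else is a straightforward orbit count once the identification $\Ver(\overline{\gr}_s) = \Cl_{\mathrm{rel}}(\M_s)$ from Theorem~\ref{thm:Mn} and the formula $m_{\M_s}(\fo_L) = h(\fo_L)$ are in hand.
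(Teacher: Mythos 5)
Your proof is correct and follows exactly the same route as the paper: the paper's proof is the one-line "Combine Theorem~\ref{thm:Mn}, Theorem~\ref{class_no}, and Proposition~\ref{char_poly}," and you have simply expanded that chain (half-edges $\to$ inverting elements via Theorem~\ref{thm:invert} $\to$ optimal embeddings of the maximal orders of $K_s$, $F_s(\sqrt{-p_s})$ via Proposition~\ref{char_poly} and Lemma~\ref{lem:invert} $\to$ the Eichler count $m_{\M_n}(\fo_L)=h(\fo_L)$ $\to$ division by $h(\uOO)$ using the $h(\uOO)$-to-$1$ structure of $\Cl(\M)\to\T(\H)$ from Theorem~\ref{thm:Mn}) into full detail, correctly noting that the embedding-to-half-edge map is surjective per vertex but possibly not injective, so the count is an upper bound.
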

\begin{proof}
  Combine  Theorem~\ref{thm:Mn}, Theorem~\ref{class_no}, and  Proposition~\ref{char_poly}.
\end{proof}

\subsection
{\texorpdfstring{The \except{toc}{\boldmath{$n=3\cdot 2^s$}}
\for{toc}{$n=3\cdot 2^s$} family}{The n=3\unichar{"B7}2\unichar{"5E}s family}}
\label{harried}

\begin{notation}
\label{dark}
{\rm
In this Section \ref{harried} we have $n=3\cdot 2^s$ and $\zeta=\zeta_n$.
We have $K=K_n$ and  $F=F_n=K_n^+$.
Let $\fP'\colonequals \fP_n$ be the unique prime above $2$ in $K$ and
$\fp'\colonequals \fp_n$ the unique prime above $2$ in $F$. Set 
$p'_n = 1 + \zeta_{3\cdot 2^s} + \zeta_{3\cdot 2^s}^{-1}$ and  put
$\gr'=\gr_n=\gr_{\Th}$.
}
\end{notation}

\subsubsection[Quadratic
  \texorpdfstring{$\Z_{F_\tts}$}{Z[\unichar{"03B6}3\unichar{"B7}2\unichar{"5E}s]+}-orders
  containing roots of unity] 
              {Quadratic $\Z_{F_\tts}$-orders
  containing roots of unity}\label{sec: norm3}

The only quadratic extensions of $F\colonequals F_n$  that contain roots of $1$
other than $\pm 1$
are $K=K_n$ and $F(i)$.  These two fields are isomorphic for $s \ge 2$.

%
\begin{definition}
  Let $n = 3\cdot 2^s$ with $s \ge 2$.
  For $0 \le k \le 2^{s-1}$,
  let 
$$\fo'_k := \fo'_{n,k} := \Z_F[i,(\zeta+\zeta^{-1}+1)^k \zeta]
  \subset K . $$
\end{definition}

\begin{prop}\label{indices-three}
  Fix $n = 3\cdot 2^s$ with $s \ge 2$ and set $K= K_n$. The $\fo'_k$
  are $\Z_F$-orders in $K$ with $\fo'_0 = \Z_F[\zeta_{3\cdot 2^s}]$
  maximal, $\zeta_3 \notin \fo'_1$, and $\fo'_{2^{s-1}} = \Z_F[i]$. The index
 of $\fo'_k$ in $\fo'_0$ is $2^k$
  and the conductor of $\fo'_k$
in $\fo'_0$
is ${\fP'}^k$. Moreover, $\Z_{F}[\zeta_{2^w}] =
\fo'_{2^{s-w+1}}$ for $w = 2, \ldots, s$.
\end{prop}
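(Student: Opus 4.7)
The plan is to parallel the strategy of Proposition~\ref{indices} by first identifying $\fo'_0$ with $\OO_K$ and then computing, for each $k$, the structure of $\fo'_k$ as a rank-$2$ $\uOO$-submodule of $\OO_K$ in the basis $\{1,\zeta\}$ (using $\zeta^2 = \alpha\zeta - 1$ with $\alpha = \zeta + \zeta^{-1}$). First, $i = \zeta^{3\cdot 2^{s-2}} \in \Z[\zeta]$ for $s \ge 2$, so $\fo'_0 = \uOO[\zeta] = \OO_K$ is maximal. In this basis $(\alpha+1)^k\zeta$ has coordinates $(0,(\alpha+1)^k)$; and since $\alpha + 1 = p'_n$ generates $\fp'$ by Proposition~\ref{prin}, the $\uOO$-submodule it spans in $\uOO\zeta$ is precisely $\fp'^k\zeta$.

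The crux is computing the $\uOO$-coordinates of the relevant powers of $\zeta$. Writing $\zeta^a = A_a + B_a\zeta$ and applying complex conjugation gives $B_a = (\zeta^a - \zeta^{-a})/(\zeta - \zeta^{-1}) \in \uOO$. Because $n = 3 \cdot 2^s$ with $s \ge 2$ is neither a prime power nor twice a prime power, Lemma~\ref{norms} shows $\Nm_{K/\Q}(\zeta \pm 1) = 1$, so $\zeta - \zeta^{-1}$ is a unit of $\OO_K$ and $v_{\fp'}(B_a) = v_{\fp'}(\zeta^a - \zeta^{-a})$. Three cases arise: (i) for $a = 3\cdot 2^{s-2}$ one has $\zeta^a - \zeta^{-a} = 2i$, giving $v_{\fp'}(B_a) = v_{\fp'}(2) = 2^{s-1}$ since $\fp'$ is totally ramified in $F/\Q$ of degree $2^{s-1}$; (ii) for $a = 3\cdot 2^{s-w}$ with $2 \le w \le s$, Lemma~\ref{norms} gives $\Nm(\zeta_{2^w} \pm 1) = 2$, yielding $v_{\fp'}(B_a) = 2^{s-w+1}$; (iii) for $a = 2^s$, $\zeta_3 - \zeta_3^{-1} = \pm\sqrt{-3}$ is a unit of $\OO_K$, so $B_{2^s} \in \uOO^\times$.

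With these valuations in hand the rest is bookkeeping. For $0 \le k \le 2^{s-1}$, both generators of $\fo'_k$ lie in the order $\uOO + \fp'^k\zeta$ of Section~\ref{quad} (case (i) handles $i$, which is why the range $k \le 2^{s-1}$ is imposed), so $\fo'_k \subseteq \uOO + \fp'^k\zeta$; conversely $\uOO + \fp'^k\zeta = \uOO + \uOO\cdot(\alpha+1)^k\zeta \subseteq \fo'_k$, giving equality. Theorem~\ref{thm:cond} then delivers the $\OO_K$-conductor $\fP'^k = \fp'^k\OO_K$ and the index $[\OO_K:\fo'_k] = [\uOO:\fp'^k] = 2^k$. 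The identifications $\fo'_{2^{s-1}} = \uOO[i]$ and $\uOO[\zeta_{2^w}] = \fo'_{2^{s-w+1}}$ then follow by matching the $\zeta$-coordinate ideal against $\fp'^{2^{s-1}}$, resp.\ $\fp'^{2^{s-w+1}}$, and $\zeta_3 \notin \fo'_1$ follows from (iii) since $B_{2^s}$ is a unit so does not lie in $\fp'$. The only real obstacle is keeping the $\fp'$-adic valuations straight; no input is required beyond Lemma~\ref{norms} and Proposition~\ref{prin}.
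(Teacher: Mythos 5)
Your proposal is correct in substance and uses the same conceptual core as the paper, namely the classification of quadratic $\uOO$-orders by conductor from Section~3.1. The difference is cosmetic: the paper works with $\ATr(x)$ while you work with the ``$\zeta$-coordinate'' $B_x$ in the $\uOO$-basis $\{1,\zeta\}$; since $\ATr(x) = B_x(\zeta-\zeta^{-1})$ and $\zeta-\zeta^{-1}$ is a unit of $\OO_K$ when $n = 3\cdot 2^s$, $s \geq 2$, the two are interchangeable. Where you do diverge meaningfully from the paper is in proving $\fo'_{2^{s-1}} = \uOO[i]$: the paper argues indirectly, using the count of ideals containing a given conductor to pin down $\cond(\uOO[i])$, plus a hands-on verification that $2\zeta^j \in \uOO[i]$; you simply compute $v_{\fp'}(B_{3\cdot 2^{s-2}}) = 2^{s-1}$ and read off the answer from the $\OO_I$-classification. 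Your route is cleaner and also handles the claims $\uOO[\zeta_{2^w}] = \fo'_{2^{s-w+1}}$ and $\zeta_3 \notin \fo'_1$ uniformly, whereas the paper leaves them implicit.

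Two small points to clean up. First, in case (iii) you assert that $\zeta_3 - \zeta_3^{-1} = \pm\sqrt{-3}$ ``is a unit of $\OO_K$'' and that $B_{2^s} \in \uOO^\times$; neither is true, since $\sqrt{-3}$ has norm a power of $3$. What is true, and all that the argument needs, is that both are $\fp'$-adic units, i.e.\ lie outside $\fP'$ (resp.\ $\fp'$); the conclusion $\zeta_3 \notin \fo'_1$ survives. Second, to conclude $\uOO[i] = \fo'_{2^{s-1}}$ and $\uOO[\zeta_{2^w}] = \fo'_{2^{s-w+1}}$ via Proposition~\ref{prop:distinct} you need the principal ideal $(B_a)\uOO$ to be \emph{exactly} $\fp'^{v_{\fp'}(B_a)}$, not merely to have the stated valuation at $\fp'$. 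This holds because in cases (i) and (ii) the element $\zeta^a - \zeta^{-a}$ is supported only at primes above $2$ (its norm to $\Q$ is a power of $2$ by Lemma~\ref{norms}), and $\zeta-\zeta^{-1}$ is a unit, so $(B_a)\OO_K$ is a power of $\fP'$ and hence $(B_a)\uOO$ is the corresponding power of $\fp'$; but this ought to be said rather than left to the reader.
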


\begin{proof}
The proof is essentially identical to that of Proposition~\ref{indices}.
As before, $\zeta+\zeta^{-1}+1$ generates $\fp'$, but now 
$\fP' = \fp' \Z_K$ (previously we had $\fP = \fp^2 \Z_K$).  So the antitrace
of \mbox{$(\zeta+\zeta^{-1}+1)^k i^j \zeta^n$} is a multiple of $(\zeta+\zeta^{-1}+1)^k$
and hence belongs to $\fP'^k$.  Conversely we have 
$\ATr((\zeta+\zeta^{-1}+1)^k\zeta) = (\zeta+\zeta^{-1}+1)^k(\zeta-\zeta^{-1})$,
in which the second factor is a unit.  This proves that the given orders are
distinct.

To complete the proof, we need to show that the conductor of 
$\fo'_{2^{s-1}} = \Z_F[i]$ is ${\fP'}^{2^{s-1}}$.  If this ideal is contained in 
$\Z_F[i]$, then the conductor cannot be smaller, because the number of
distinct orders containing $\Z_F[i]$ is equal to the number of ideals 
containing the conductor, and we have already found $2^{s-1}+1$ such orders.

Thus we must prove that all elements of ${\fP'}^{2^{s-1}} = (2)$ belong to
$\fo'_{2^{s-1}}$.  It suffices to do this for elements of the form $2\zeta^j$;
an easy induction on $s$ allows us to assume that $j$ is odd, while the case
$3|j$ reduces to the result of Proposition~\ref{indices}.  Since 
$\fo'_{2^{s-1}}$ is Galois-invariant, it is enough to consider $j = 1$,
for which we note that $(2\zeta-(\zeta+1/\zeta))/i$ is a real algebraic 
integer and hence belongs to $\Z_F$.
\end{proof}



\begin{definition1}\label{def:T}
  Let $n = 3\cdot 2^s$ with $s\ge 3$ and set $T:= \Z_F[\zeta_3]$ to be the
  $\Z_F$-order
of $K$ generated by $\zeta_3$.  Let $\fp'_3$ be the unique prime of $F=K^+$ 
above
$3$ and let $\fP'_3, \fP''_3$ be the two primes of $K$ above $\fp'_3$.
\end{definition1}

\begin{prop}\label{cond-zeta-3}
The conductor of $T$ is $\fP'_3 \fP''_3 = \fp'_3$.
\end{prop}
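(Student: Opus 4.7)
The plan is to compute the $\uOO$-conductor $\ff$ of $T$ via the discriminant relation for quadratic orders over the Dedekind domain $\uOO$: if $T\subseteq\OO_K$ is an $\uOO$-order of rank $2$, then as $\uOO$-ideals
\[
\Disc(T/\uOO) = \ff^{2}\cdot\Disc(\OO_K/\uOO).
\]
I will compute both sides and read off $\ff$.

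Since $T = \uOO[\zeta_3]$ and $\zeta_3$ has minimal polynomial $x^{2}+x+1$ over $\uOO$, we have $\Disc(T/\uOO) = -3$, generating the $\uOO$-ideal $(3) = (\fp'_3)^{2}$. For $\Disc(\OO_K/\uOO)$ I invoke the standard identification $\OO_K = \Z[\zeta_n] = \uOO[\zeta_n]$; setting $\eta := \zeta_n+\zeta_n^{-1}$, the minimal polynomial of $\zeta_n$ over $F$ is $x^{2}-\eta x+1$, so $\Disc(\OO_K/\uOO) = \eta^{2}-4 = (\eta-2)(\eta+2)$.

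The main (and essentially only nontrivial) step is to show $\eta^{2}-4\in\uOO^{\times}$. For this I would combine the identity $(1\mp\zeta_n)(1\mp\zeta_n^{-1}) = 2\mp\eta$ with Lemma~\ref{norms}: since $n = 3\cdot 2^{s}$ has two distinct prime factors, that lemma yields $\Norm_{K/\Q}(1\pm\zeta_n) = 1$, and pushing the norm through $F$ gives $\Norm_{F/\Q}(2\mp\eta) = 1$. Hence both $\eta-2$ and $\eta+2$, and therefore their product, lie in $\uOO^{\times}$.

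With $\Disc(\OO_K/\uOO)$ the unit ideal, the discriminant relation collapses to $\ff^{2} = (\fp'_3)^{2}$; unique factorization of $\uOO$-ideals then forces $\ff = \fp'_3$. Since Definition~\ref{def:T} records the factorization $\fp'_3\OO_K = \fP'_3\fP''_3$ in $K$, the corresponding $\OO_K$-conductor is $\fF = \ff\OO_K = \fP'_3\fP''_3$, giving the claim.
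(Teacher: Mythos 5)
Your proof is correct and takes a genuinely different route from the paper's. The paper argues directly with the orders themselves: it first shows $T\neq\OO_K$ by noting that the surjective reduction $\OO_K\to\OO_K/\fP'_3\oplus\OO_K/\fP''_3$ maps $T$ onto only a diagonal copy of $\OO_K/\fP'_3$, and then shows $\fp'_3\OO_K=(\zeta_6+1)\subset T$ by checking that $(\zeta^{2^{s-1}}+1)\zeta^i$ lies in $T$ (reducing by induction and Galois-invariance to $i\in\{1,3\}$ and exhibiting an explicit real algebraic integer in each case); since $\fp'_3$ is maximal, the conductor must then equal $\fp'_3$. You instead use the discriminant--conductor identity $\Disc(T/\uOO)=\ff^{2}\,\Disc(\OO_K/\uOO)$, which reduces the whole question to showing that the relative discriminant of $K/F$ is trivial; that follows cleanly from Lemma~\ref{norms} (since $n=3\cdot2^s$ has two distinct prime factors, both $\Nm_{K/\Q}(1\pm\zeta_n)=1$, so $\eta\mp 2\in\uOO^{\times}$). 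Your approach is more structural and records along the way the independently useful fact that $K/F$ is unramified at all finite places, whereas the paper's proof stays within the antitrace machinery of Proposition~\ref{prop:atr-order} that the rest of Section~4 relies on, keeping the methods uniform. One small point worth flagging: the identity $\Disc(\OO_I/\uOO)=I^{2}\Disc(\OO_K/\uOO)$ that your argument rests on is standard but not stated in the paper; it follows from Proposition~\ref{prop:distinct}, which shows $\OO_K/\OO_I\cong\uOO/I$, together with the discriminant--index formula for full-rank sublattices over a Dedekind domain.
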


\begin{proof}
First we note that $T$ is not the maximal order.  Indeed, consider the 
reduction map $\mO \to \mO/\fP'_3 \oplus \mO/\fP'_3$.  By a standard result it
is surjective; however, the image of $T$ is generated by elements of the form
$(a,\beta(a))$, where $\beta$ is the isomorphism that identifies the images of
$\zeta$ in the two quotients, and so it is isomorphic to $\mO/\fP'_3$.

Now we show that $\fP'_3 \fP''_3 \subset T$.  Note that 
$\fP'_3 \fP''_3 = (\zeta_6+1) = (\zeta^{2^{s-1}}+1)$, so it suffices to show
that elements of the form $(\zeta^{2^{s-1}}+1)\zeta^i$ belong to $T$.
As before, by induction we may assume that $i$ is odd, and
by Galois invariance we reduce to the cases $i = 1, 3$.  In each case,
observe that $[(\zeta^{2^{s-1}}+1)\zeta^i-(\zeta^i+\zeta^{-i})]\zeta_3^{-1}$ is a 
real algebraic integer and hence an element of $\uOO$; the claim follows.

Thus the conductor of $T$ is an ideal containing $\fp'_3$ but not equal to
$\Z_F$.  Since $\fp'_3$ is prime, the result follows.
\end{proof}

\begin{corollary}\label{orders-three-zeta3} There are two orders of $K$ containing $T$, namely $T$ and
the maximal order of $K$.
\end{corollary}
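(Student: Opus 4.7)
My plan is to invoke Theorem~\ref{thm:cond} directly, with $M = F = K^+$ and $L = K$, so that $\OO_M = \uOO$ and $\OO_L = \OO$. That theorem provides an order-preserving bijection between $K$-orders containing $\uOO$ and nonzero ideals of $\uOO$, sending an order to its $\uOO$-conductor. Since $T = \uOO[\zeta_3]$ is such an order, any order of $K$ containing $T$ automatically contains $\uOO$ and is thus captured by this bijection.

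By the inclusion-preserving part of Proposition~\ref{prop:distinct}, an order $\OO' = \OO_I$ contains $T$ if and only if $\cond_{\uOO}(T) \subseteq I$. By Proposition~\ref{cond-zeta-3}, $\cond_{\uOO}(T) = \fp'_3$, which is a nonzero prime and hence maximal ideal of the Dedekind domain $\uOO$. Therefore the only ideals of $\uOO$ containing $\fp'_3$ are $\fp'_3$ itself and the unit ideal $\uOO$.

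Translating back through the bijection, these two ideals correspond to exactly two orders: $\OO_{\fp'_3} = T$ and $\OO_{\uOO} = \uOO + \uOO \cdot \OO = \OO$, the maximal order of $K$. This yields the claim. There is no real obstacle here—the entire content of the corollary is that $\fp'_3$ is maximal in $\uOO$, which is immediate since it is a nonzero prime in a Dedekind domain; the structural work was already done in Proposition~\ref{cond-zeta-3} and in the classification Theorem~\ref{thm:cond}.
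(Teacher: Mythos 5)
Your proof is correct and follows essentially the same route as the paper, which simply observes that since the $\uOO$-conductor $\fp'_3$ of $T$ is a nonzero prime (hence maximal) ideal of the Dedekind domain $\uOO$, the order-preserving bijection of Theorem~\ref{thm:cond} leaves exactly two ideals $\supseteq \fp'_3$, namely $\fp'_3$ and $\uOO$, corresponding to $T$ and $\OO$. Your write-up just spells out the intermediate steps (that any order containing $T$ contains $\uOO$, and that inclusion of orders translates to inclusion of conductors via Proposition~\ref{prop:distinct}) that the paper leaves implicit.
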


\begin{proof}
Since the conductor of $T$ is prime, this follows from Theorem~\ref{thm:cond}.
\end{proof}

\begin{prop}\label{mass-g three}
  Let $n=3\cdot2^s$ with $s \ge 3$.  Then
$$v_{<1}(\gr') \le 2h^-\left(T\right) +
  2\sum_{k=0}^{2^{s-1}}h^-\left(\fo'_k\right)  \, .$$
\end{prop}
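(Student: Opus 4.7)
The plan is to adapt the argument of Proposition~\ref{mass-g-4th-root} to the case $n=3\cdot 2^s$, where the classification of quadratic $\uOO$-orders containing a nontrivial root of unity differs. The inputs are Proposition~\ref{indices-three} and Corollary~\ref{orders-three-zeta3}, together with ingredients already used in the $n=2^s$ case: Theorem~\ref{class_no} for optimal embedding counts, Theorem~\ref{thm:Mn} for the identification $\Ver(\overline{\gr}')=\Cl_{\rm rel}(\M_n)=\T(\H_n)$, and the degree-$2$ cover $\gr'\to\overline{\gr}'$, where $\overline{\gr}':=\PUT(R_n)\backslash\Delta_n$, furnished by Theorem~\ref{index}\eqref{index1}.

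First I would classify the quadratic $\uOO$-orders $\fo$ sitting inside some totally imaginary quadratic extension of $F:=F_n$ and containing a root of unity distinct from $\pm 1$. Any such extension is contained in a cyclotomic field over $\Q$ and is totally imaginary; since both $F(i)$ and $F(\sqrt{-3})$ coincide with $K:=K_n$, such an extension must equal $K$. An order $\fo\subseteq K$ of this kind must contain either $\zeta_3$, in which case Corollary~\ref{orders-three-zeta3} gives $\fo\in\{T,\fo'_0\}$, or some $\zeta_{2^w}$ with $w\ge 2$, in which case Proposition~\ref{indices-three} gives $\fo\in\{\fo'_0,\ldots,\fo'_{2^{s-1}}\}$. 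The combined list is $\{T,\fo'_0,\fo'_1,\ldots,\fo'_{2^{s-1}}\}$.

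Next I would translate the count $v_{<1}(\gr')$ into an embedding count. A vertex $\bv\in\Ver(\gr')$ is represented by a maximal order $\M'\subseteq\H_n$ in $\Ver(\Delta_n)$; its stabilizer in $\Gamma_1=\PSUT(R_n)$ is isomorphic to $(\M')^\times_1/\{\pm 1\}$, a finite group since $\H_n$ is totally definite. This stabilizer is nontrivial exactly when $\M'$ contains a root of unity $\mu\neq\pm 1$; then $\uOO[\mu]$ embeds optimally into $\M'$ as one of the orders on the list above. By Theorem~\ref{class_no}, the total number of equivalence classes of optimal embeddings of a fixed $\fo$ into a set of representatives of $\Cl(\M_n)$ is $h(\fo)$. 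By Theorem~\ref{thm:Mn} and Proposition~\ref{freee}, the free action of $\Cl(\uOO)$ on $\Cl(\M_n)$ has $h(\uOO)$ ideal classes per $\T(\H_n)$-orbit, all with isomorphic right orders; hence at most $h(\fo)/h(\uOO)=h^-(\fo)$ vertices of $\overline{\gr}'$ admit an optimal embedding of $\fo$. Multiplying by $2$ to lift through the double cover $\gr'\to\overline{\gr}'$ and summing over the list yields the claimed bound.

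The main point requiring care is the classification of orders: $T$ sits outside the chain $\fo'_{2^{s-1}}\subset\cdots\subset\fo'_0$, because in contrast to the $n=2^s$ situation the nested family built from $\zeta_{2^w}$ does not subsume the one built from $\zeta_3$. One must verify via Proposition~\ref{indices-three} (using $\zeta_3\notin\fo'_1$ and the containment $\fo'_k\subseteq\fo'_1$ for $k\ge 1$) that $\zeta_3\notin\fo'_k$ for any $k\ge 1$, so that $T$ contributes an independent summand $h^-(T)$ in the bound. Once this classification is in hand, the remainder is bookkeeping along the lines of Proposition~\ref{mass-g-4th-root}.
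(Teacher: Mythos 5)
Your proof is correct and follows essentially the same approach as the paper, which simply states that the argument is identical to that of Proposition~\ref{mass-g-4th-root} with Proposition~\ref{indices-three} and Corollary~\ref{orders-three-zeta3} supplying the order classification. The one place where your write-up is slightly loose is the step ``Proposition~\ref{indices-three} gives $\fo\in\{\fo'_0,\ldots,\fo'_{2^{s-1}}\}$'': to conclude that \emph{every} order containing $\uOO[i]=\fo'_{2^{s-1}}$ (equivalently, containing some $\zeta_{2^w}$, $w\ge 2$) is one of the $\fo'_k$, you also need the conductor--order bijection of Theorem~\ref{thm:cond}, since Proposition~\ref{indices-three} only records the conductors of the $\fo'_k$ and, unlike Proposition~\ref{indices}, does not explicitly assert completeness. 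Also, your final remark about $T$ being ``independent'' guards against over-counting, which is harmless for an upper bound; the substantive point one must verify is that no order containing a nontrivial root of unity is \emph{missing} from the list $\{T,\fo'_0,\ldots,\fo'_{2^{s-1}}\}$, which your classification establishes.
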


\begin{proof}
  The proof is similar to that of Proposition~\ref{mass-g-4th-root},
this time using Proposition~\ref{indices-three} and Corollary~\ref{orders-three-zeta3}. 
\end{proof}

For a commutative ring $R$, let $\mu(R)$ be the group of
torsion elements of $R^\times$ and let $\mu_p(R)$ be the subgroup of $\mu(R)$
consisting of elements of $p$-power order.
Define $w'_k$ to be the $2$-adic valuation of $|\mu_2(\fo'_k)|$.
\begin{prop}\label{unit-index-3}
  The unit index $[{\fo'_0}^\times:{\fo'_1}^\times]$ is equal to $3 =
  [\mu(\fo'_0): \mu(\fo'_1)]$.
  For $k > 1$, the unit index $[{\fo'_0}^\times:{\fo'_k}^\times]$ is equal to
  $3\cdot 2^{s-w'_k +1} = 2[\mu(\fo'_0): \mu(\fo'_k)]$. 
\end{prop}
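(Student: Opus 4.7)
The plan is to compute $[\fo'_0{}^\times:\fo'_k{}^\times]$ by interposing the subgroup $\mu_k \uOO^\times$ and exploiting the CM structure of $K/F$. Since $n = 3 \cdot 2^s$ is not a prime power, the CM unit index $Q := [\fo'_0{}^\times:\mu_0\uOO^\times]$ equals $2$ by Hasse's theorem (cf.\ \cite[Thm.~4.12]{W}). Noting that $\mu_0 \cap \uOO^\times = \{\pm 1\} \subseteq \mu_k$, the natural surjection $\mu_0 \twoheadrightarrow \mu_0\uOO^\times/\mu_k\uOO^\times$ has kernel $\mu_k$, so $[\mu_0\uOO^\times:\mu_k\uOO^\times] = [\mu_0:\mu_k]$. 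Hence
\[
[\fo'_0{}^\times:\fo'_k{}^\times] \;=\; \frac{2\,[\mu_0:\mu_k]}{[\fo'_k{}^\times:\mu_k\uOO^\times]},
\]
and the task reduces to deciding whether the nontrivial coset of $\fo'_0{}^\times/\mu_0\uOO^\times$ meets $\fo'_k{}^\times$.

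For $k=1$ I argue directly with residue fields. The inclusion $\uOO \hookrightarrow \fo'_0$ induces $\uOO/\fp' = \F_2 \hookrightarrow \fo'_0/\fP' = \F_4$, and a unit $u \in \fo'_0{}^\times$ lies in $\fo'_1 = \uOO + \fP'\fo'_0$ iff $u \bmod \fP' \in \F_2^\times = \{1\}$. Since $\F_4^\times$ is generated by the image of $\zeta_3$, multiplication by an appropriate power of $\zeta_3$ puts any $u$ into $\fo'_1{}^\times$. Combined with $\zeta_3 \notin \fo'_1$, this gives $[\fo'_0{}^\times:\fo'_1{}^\times] = 3$ and hence $[\fo'_1{}^\times:\mu_1\uOO^\times] = 2$.

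For $k \geq 2$ I produce an explicit representative of the nontrivial CM coset and control its conductor. Set $\epsilon := \zeta_3(1+\zeta_n)$. Both factors are units in $\fo'_0$ by Lemma~\ref{norms}, and $\Nm_{K/F}(\epsilon) = \Nm_{K/F}(1+\zeta_n)$ lies in $\uOO^\times$ but not in $(\uR^\times)^2$ by Lemma~\ref{notsquares}\eqref{notsquares2}, hence not in $(\uOO^\times)^2$; therefore $\epsilon \notin \mu_0\uOO^\times$. To test membership of $\zeta_n^a v \epsilon$ (with $v \in \uOO^\times$) in $\fo'_j$, I apply the antitrace criterion of Proposition~\ref{prop:atr-order} with $\sA = \uOO(\zeta_n - \zeta_n^{-1})$. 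The identity
\[
\zeta_n^c + \zeta_n^{c+1} - \zeta_n^{-c} - \zeta_n^{-c-1} \;=\; (1+\zeta_n)\,\zeta_n^{-c-1}\,(\zeta_n^{2c+1}-1), \qquad c = a + 2^s,
\]
shows that $\ATr(\zeta_n^a v \epsilon)/(\zeta_n - \zeta_n^{-1})$ equals $v$ times a unit of $\fo'_0$ times $(\zeta_n^{2c+1}-1)/(\zeta_n-1)$. Because $2c+1$ is odd, $\gcd(2c+1,n) \in \{1,3\}$: when the gcd is $1$ the element $\zeta_n^{2c+1}$ is a primitive $n$-th root, $\zeta_n^{2c+1}-1$ is a unit (Lemma~\ref{norms}), and $\val_{\fp'} = 0$; when the gcd is $3$, $\zeta_n^{2c+1}$ is a primitive $2^s$-th root of unity and contributes $\val_{\fp'} = 1$, coming from the standard cyclotomic factorization $(1-\zeta_{2^s})^{2^{s-1}} \sim 2$. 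In either case $\val_{\fp'}(\ATr(\zeta_n^a v\epsilon)/(\zeta_n-\zeta_n^{-1})) \leq 1$, so no member of $\epsilon\mu_0\uOO^\times$ lies in $\fo'_2 \supseteq \fo'_k$; thus $[\fo'_k{}^\times:\mu_k\uOO^\times] = 1$ for $k \geq 2$, and the displayed formula gives $[\fo'_0{}^\times:\fo'_k{}^\times] = 2[\mu_0:\mu_k] = 3 \cdot 2^{s-w'_k+1}$.

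The main obstacle is this final uniform valuation bound: showing that $\val_{\fp'}\bigl((\zeta_n^{2c+1}-1)/(\zeta_n-1)\bigr) \leq 1$ for every odd $2c+1$. The dichotomy (always $0$ or $1$, never $\geq 2$) is exactly what makes the index formula hold—if the valuation could reach $2$, the extra CM unit would persist into some $\fo'_k$ with $k \geq 2$ and the conclusion would fail by a factor of $2$.
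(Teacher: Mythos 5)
Your proof is correct and proves the proposition, though by a route that differs in its technical machinery from the paper's. Both arguments share the skeleton: both exploit Hasse's unit index $Q=[\fo'_0{}^\times:\mu_0\uOO^\times]=2$ and reduce to deciding whether a representative of the nontrivial Hasse coset survives into $\fo'_k$. But you interpose $\mu_k\uOO^\times$ where the paper interposes $\langle{\fo'_k}^\times,\zeta\rangle$, and more significantly your handling of the two key steps is different. For $k=1$ you use the residue map $\fo'_0{}^\times\to(\fo'_0/\fP')^\times\cong\F_4^\times$, whose kernel (intersected with $\fo'_0{}^\times$) is precisely $\fo'_1{}^\times$, and observe $\zeta_3$ surjects onto $\F_4^\times$; the paper instead notes $\zeta-1$ reduces to a non-$\F_2$ element. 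For $k\ge 2$ you invoke Proposition~\ref{prop:atr-order}, factor $\ATr(\zeta^a v\epsilon)=v(1+\zeta)\zeta^{-c-1}(\zeta^{2c+1}-1)$, and get the uniform bound $\val_{\fP'}\le 1$ from the dichotomy $\gcd(2c+1,3)\in\{1,3\}$; the paper computes instead in the order-4 additive quotient $\fo'_0/\fo'_2$. Your valuation computation is arguably more systematic and dovetails with the conductor machinery already developed in Proposition~\ref{indices-three}. One point you use implicitly but should flag: $\sA_{\fp'}=\uOO_{\fp'}(\zeta-\zeta^{-1})$, which holds globally because $\uOO[\zeta_n]=\OO_K$ (the relative discriminant $(\zeta-\zeta^{-1})^2$ is a unit). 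Finally, a small bonus: your criterion in fact shows $1+\zeta\notin\fo'_1$ (its antitrace $\zeta-\zeta^{-1}$ is a unit), so where the paper asserts $\zeta+1\in\fo'_1$ the correct representative of that coset in $\fo'_1$ must be rotated by a suitable root of unity, e.g.\ $\zeta(1+\zeta)$; your framework makes this automatic by leaving the twist $\zeta^a$ free.
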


\begin{proof} 
  We first observe that $[{\fo'_0}^\times:{\fo'_k}^\times]$ is of the form
  $2^r 3^j$ with $j \le 1$, because the order of $(\fo'_0/2)^\times$ is of that
  form (since $(2)$ is a power of $\fP'$, whose residue field has order $4$).
  Every element of $\fo'_k$ is congruent to $0$ or $1 \bmod \fP'$, because
  $\fo'_k$ is generated by 
\[
\zeta+\zeta^{-1}+1,\,\,\, (\zeta+\zeta^{-1}+1)^k\zeta, 
\text{  which belong to $\fP'$, and $i$,}
\]
 which is $1 \bmod \fP'$.
  On the other hand, the cyclotomic unit $\zeta-1$ is not $0$ or $1 \bmod \fP'$.
  Thus the factor $3$ always appears.  It remains to determine
  the power of $2$.

  Consider the subgroup of $\Z_K^\times$ generated by
  $\Z_F^\times$ and $\zeta$.  By \cite[Thm.~4.12,
    Cor.~4.13]{W} we know that it is of index $2$.  However, it is a
  subgroup of the group generated by ${\fo'_k}^\times$ and $\zeta$, because
  $\Z_F\subset \fo'_k$.  So it suffices to determine whether
  $\langle \Z_F^\times, \zeta \rangle =
    \langle {\fo'_k}^\times, \zeta \rangle$.
  
  We will show that this holds for $k > 1$ but not for $k = 1$.  Consider
  the unit $\zeta+1$, which is not in $\langle \Z_F^\times, \zeta \rangle$.
  However, it is in $\fo'_1$, because $\fo'_1$ has index $2$ in the maximal
  order and does not contain $\zeta^r$ if $r$ is not a multiple of $3$.
  On the other hand, the index of $\fo'_2$ is $4$, with the quotient generated
  by $\zeta, \zeta^2$, each of order $2$.
  Since $\zeta^3 \in \fo'_2$, the class of
  $\zeta^{3r+j} \bmod \fo'_2$ is equal to that of $\zeta^j$, and so the classes
  of $\zeta^{j}$ and $\zeta^{j+1}$ are always distinct.  This establishes that
  $(\zeta+1)$ multiplied by any power of $\zeta$ does not belong to $\fo'_2$;
  {\it a fortiori} no such product belongs to $\fo'_k$ for $k \ge 2$.
\end{proof}

\begin{prop}\label{unit-index-t} The unit index of
  $T = \Z[\zeta + \zeta^{-1}, \zeta_3]$ is $2^s$.
\end{prop}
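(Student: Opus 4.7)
The strategy is to compute $[\OO^\times:T^\times]$ directly using the conductor of $T$ in $\OO$. By Proposition~\ref{cond-zeta-3} the conductor is $\fp'_3$. I would first show that $\fp'_3$ splits in $K/F$ as $\fp'_3\OO = \fP'_3\fP''_3$: since $\sqrt{3}=\zeta_{12}+\zeta_{12}^{-1}\in F$ is a uniformizer of $\fp'_3$ (all ramification of $3$ in $F$ coming from $\Q(\sqrt{3})\subset F$), the residue field $\F_q:=\uOO/\fp'_3$ has order $q=3^{2^{s-2}}$; and since $K=F(\sqrt{-3})$ with $-3=-(\sqrt{3})^2$, Hensel's lemma reduces the splitting to asking that $-1$ be a square in $\F_q$, which holds because $q\equiv 1\pmod 4$ for $s\geq 3$.

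With the splitting in hand, the standard order-theoretic computation gives an injection
\[
  \OO^\times/T^\times \;\hookrightarrow\; (\OO/\fp'_3\OO)^\times\,\big/\,(T/\fp'_3\OO)^\times,
\]
where $(\OO/\fp'_3\OO)^\times \cong \F_q^\times\times \F_q^\times$ and $(T/\fp'_3\OO)^\times = (\uOO/\fp'_3)^\times \cong \F_q^\times$ sits as the diagonal, so the quotient is $\F_q^\times$ via $(a,b)\mapsto a/b$. Because complex conjugation generates $\Gal(K/F)$ and swaps $\fP'_3$ with $\fP''_3$, the induced composition is
\[
  \OO^\times \longrightarrow \F_q^\times, \qquad u \longmapsto (u/\overline{u})\bmod\fP'_3.
\]

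I would then identify the image. The map $u\mapsto u/\overline{u}$ sends $\OO^\times$ into $\{\alpha\in \OO : \alpha\overline{\alpha}=1\} = \mu_n$ (as in Lemma~\ref{notsquares}\eqref{notsquares1}) with kernel $\uOO^\times$; the Weber index $Q = [\OO^\times:\mu_n\uOO^\times] = 2$ cited in the proof of Proposition~\ref{unit-index-3} then gives $[\OO^\times:\uOO^\times] = Q\cdot [\mu_n:\mu_2] = 2\cdot(n/2) = n = |\mu_n|$, so $\OO^\times/\uOO^\times \xrightarrow{\sim} \mu_n$ is an isomorphism. Hence the image of $\OO^\times$ in $\F_q^\times$ equals the reduction of $\mu_n=\mu_3\times\mu_{2^s}$ modulo $\fP'_3$: the factor $\mu_3$ collapses to $\{1\}$ (choose $\fP'_3$ to be the prime at which $\zeta_3\equiv 1$), while $\mu_{2^s}$ has order coprime to the residue characteristic $3$ and embeds injectively into $\F_q^\times$ (consistent with $2^s\mid q-1$). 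Thus the image is $\mu_{2^s}$, of order $2^s$, and $[\OO^\times:T^\times] = 2^s$.

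The main obstacle is the first step: establishing $\fp'_3\OO=\fP'_3\fP''_3$ requires $q\equiv 1\pmod 4$, which is exactly why we restrict to $s\geq 3$. Once that split is available, the rest is a clean translation of Weber's $Q=2$ into a modular statement at the prime above $3$, with the Weber unit $\zeta+1$ playing the role of lifting the image from $\mu_{2^{s-1}} = \mu_n^2|_{\fP'_3}$ up to the full $\mu_{2^s}$.
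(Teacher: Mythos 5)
Your plan is correct, and it takes a genuinely different route from the paper. The paper's proof first uses the root-of-unity count $\#\mu(T)=6$ to reduce the claim to $[\OO^\times:\langle T^\times,\zeta\rangle]=2$; the upper bound comes from Weber's index ($[\OO^\times:\langle\uOO^\times,\zeta\rangle]=2$, Proposition~\ref{unit-index-3}) via $\uOO^\times\subseteq T^\times$, and the lower bound is shown by the explicit antitrace computation $\ATr\bigl((1-\zeta)\zeta^i\bigr)\notin\fP'_3\fP''_3$, so $(1-\zeta)\zeta^i\notin T$ for every $i$. You instead compute the index head-on from the conductor, using the standard injection $\OO^\times/T^\times\hookrightarrow(\OO/\fp'_3\OO)^\times/(T/\fp'_3\OO)^\times\cong\F_q^\times$ (the diagonal quotient, which is exactly the local structure the paper itself exploits in Proposition~\ref{prop:class T}), recognizing the composed map as $u\mapsto(u/\bar u)\bmod\fP'_3$. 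Weber's $Q=2$ enters in the same spot for both of you: for you it is what makes $\OO^\times\twoheadrightarrow\mu_n$, so that the image is all of $\mu_n\bmod\fP'_3=\mu_{2^s}$; for the paper it gives the index-$\le 2$ half. Your version is cleaner conceptually, unifies the proposition with the class-number computation in Proposition~\ref{prop:class T}, and makes the exact power $2^s$ visible as the order of $\mu_{2^s}$ in $\F_q^\times$ (using $v_2(q-1)=s$); the paper's version stays entirely within the antitrace machinery already built. One small remark: the splitting $\fp'_3\OO=\fP'_3\fP''_3$ that you establish via Hensel and $q\equiv 1\bmod 4$ for $s\ge 3$ is already assumed by the paper in Definition~\ref{def:T}, so you are proving slightly more than needed, but your argument for it is correct.
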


\begin{proof} Let $T^\times$ be the group of units in $T$ and $\mu_{3 \cdot 2^s}$
  the group generated by $\zeta =\zeta_{3 \cdot 2^s}$.  Since $T$ contains exactly $6$
  roots of unity, it suffices to show that
  $\langle T^\times, \zeta_{3 \cdot 2^s} \rangle$
  has index $2$ in $\Z_K^\times$.

  To see that the groups are not equal, we show that
  $(1-\zeta)\zeta^r$ does not belong to $T$ for any integer $r$.
  As seen above (and as can easily be verified directly), every element
  $t \in T$ satisfies $\ATr(t) \in \fP'_3 \fP''_3$.  On the other hand, we
  have 
  \begin{align*}
    \ATr((1-\zeta)\zeta^r) &= \zeta^r - \zeta^{-r} - \zeta^{r+1} + \zeta^{-r-1}\\
    &= (1-\zeta)(\zeta^r+\zeta^{-1-r})\\
    &= (1-\zeta)(\zeta^{-1-r})(1+\zeta^{2r+1}).
  \end{align*}
  The first two factors are units, while the third has norm $4$ if $3|(2r+1)$ 
  and $1$ otherwise (this follows from Lemma~\ref{norms} together with the
  fact that $N_{L/K}(x) = x^{[L:K]}$ for $x \in K$).  Hence 
  $\ATr((1-\zeta)\zeta^r) \notin \fP'_3 \fP''_3$ and 
  $(1-\zeta)\zeta^r \notin T$ as claimed.

\end{proof}

\begin{prop}\label{prop: class number}
Set $h'=h(\Z_K)$ for $n=3\cdot 2^s$ with $s \ge 3$.
  The class number of $\fo'_1$ is $h'$.
  For $k > 1$ the class number of $\fo'_k$ is
  $h'\cdot 2^{k -s + w'_k-2}$.
\end{prop}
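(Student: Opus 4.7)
The plan is to imitate the proof of Proposition~\ref{prop: class_no_k_1} by applying the conductor formula for class numbers of orders \cite[Theorem~I.12.12]{N2}:
\[
h(\fo'_k) = \frac{h(\OO)}{[{\fo'_0}^\times : {\fo'_k}^\times]} \cdot \frac{\#(\OO/\f)^\times}{\#(\fo'_k/\f)^\times},
\]
where $\f$ is the conductor of $\fo'_k$ in $\fo'_0 = \OO$. By Proposition~\ref{indices-three} we have $\f = {\fP'}^k$, and the unit index is supplied by Proposition~\ref{unit-index-3}. So the task reduces to computing the ratio of local unit groups at the conductor.

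Since $f(\fP'/\fp')=2$, the residue field of $\fP'$ is $\F_4$, so $\#(\OO/{\fP'}^k)^\times = 3 \cdot 4^{k-1}$. The crucial step is identifying the residue field of $\fo'_k$ at its unique prime above $\fp'$. In $\OO/\fP' \cong \F_4$, the generator $\zeta+\zeta^{-1}+1$ of $\fP'$ reduces to $0$, so $(\zeta+\zeta^{-1}+1)^k\zeta$ reduces to $0$ for $k \ge 1$; and $i$, satisfying $x^2+1 = 0$, must reduce to the unique solution $1 \in \F_4$. Therefore the image of $\fo'_k = \uOO[i,(\zeta+\zeta^{-1}+1)^k\zeta]$ in $\F_4$ lies in $\F_2$, so the residue field of $\fo'_k$ is $\F_2$. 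Combined with $\#\fo'_k/{\fP'}^k = 4^k/2^k = 2^k$ (from Proposition~\ref{indices-three}), this gives $\#(\fo'_k/{\fP'}^k)^\times = 2^{k-1}$.

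Assembling these ingredients: for $k = 1$, the unit-group ratio is $3/1 = 3$, matched exactly by the unit index $[{\fo'_0}^\times:{\fo'_1}^\times]=3$ in Proposition~\ref{unit-index-3}, yielding $h(\fo'_1) = h' \cdot 3/3 = h'$. For $k > 1$, the unit-group ratio is $3\cdot 4^{k-1}/2^{k-1} = 3\cdot 2^{k-1}$ and the unit index is $3\cdot 2^{s-w'_k+1}$, so
\[
h(\fo'_k) = \frac{h'}{3\cdot 2^{s-w'_k+1}} \cdot 3\cdot 2^{k-1} = h' \cdot 2^{k-s+w'_k-2}.
\]

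The main subtlety is the residue-field identification in the middle paragraph: the asymmetric formula between $k = 1$ and $k > 1$ mirrors exactly the factor-of-two asymmetry in Proposition~\ref{unit-index-3}, and ultimately rests on the fact that in residue characteristic $2$ the element $i$ reduces to $1 \in \F_2$ rather than generating $\F_4$, so that $\fo'_k$ has residue field $\F_2$ even though $\OO$ is unramified at $\fp'$ with residue field $\F_4$.
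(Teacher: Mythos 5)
Your proof is correct and follows exactly the same route as the paper's: apply the order class-number formula \cite[Theorem~I.12.12]{N2}, plug in the conductor from Proposition~\ref{indices-three} and the unit index from Proposition~\ref{unit-index-3}, and compute the local unit ratio at the conductor. The only difference is cosmetic: where the paper simply asserts $\fo'_k/\f \cong \F_2[t]/(t^k)$, you justify it by observing that $i \mapsto 1$ and $(\zeta+\zeta^{-1}+1)^k\zeta \mapsto 0$ in $\OO/\fP' \cong \F_4$, so the residue field of $\fo'_k$ is $\F_2$ — a detail the paper leaves implicit (it appears in the proof of Proposition~\ref{unit-index-3}).
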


\begin{proof} We evaluate the factors in \cite[Thm.~I.12.12]{N2}.
  We computed the unit index in Proposition~\ref{unit-index-3}
  to be $3$ for $k = 1$ and $6\cdot 2^{s-w'_k}$ for $k > 1$.
  Since $\Z_K/\f_{\fo'_k}$ is isomorphic to
  $\F_4[t]/(t^k)$, its group of units has order $3 \cdot 4^{k-1}$.
  Finally, $\fo'_k/\f_{\fo'_k} \cong \F_2[t]/(t^k)$; hence the group of 
units there has order
$2^{k-1}$. So the class number is
  \[
  \frac{h(\Z_K) \cdot 3 \cdot 4^{k-1} \cdot 2^{w'_k}}{6 \cdot 2^s \cdot 2^{k-1}}
  = h'\cdot 2^{k-2-s+ w'_k} 
  \]
as claimed.
\end{proof}

Let $n=\tts$, $n\geq 12$.
Let $T_n\colonequals\Z_{F_n}[\zeta_{3}]$ and $\f=\f_{n}$ be the conductor
of $T_n$ in $\Z_{K_n}$.
\begin{prop} \label{prop:class T}The class number of $T_n$
   is $h(K_n) \cdot (3^{2^{s-2}}-1)/2^s$.
\end{prop}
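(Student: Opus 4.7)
The plan is to apply the general class-number formula for orders~\cite[Theorem~I.12.12]{N2},
\[
h(T_s) \;=\; \frac{h(\OO_s)\,\#(\OO_s/\fF)^\times}{[\OO_s^\times : T_s^\times]\,\#(T_s/\fF)^\times},
\]
where $\fF$ denotes the conductor of $T_s$ in $\OO_s$. Proposition~\ref{cond-zeta-3} identifies $\fF = \fP'_3\fP''_3 = \fp'_3\OO_s$, and Proposition~\ref{unit-index-t} supplies $[\OO_s^\times:T_s^\times]=2^s$, so everything reduces to computing the two finite unit groups on the right-hand side.

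First I would pin down the residue field at $\fp'_3$. In $\Q(\zeta_{2^s})/\Q$ the Frobenius at $3$ is multiplication by $3$ in $(\Z/2^s)^\times$, which has order $2^{s-2}$; moreover $-1 \notin \langle 3\rangle$ in $(\Z/2^s)^\times$ for $s\ge 3$, so complex conjugation swaps the two primes of $K_s$ above $3$. Consequently each of $\fP'_3,\fP''_3$ is unramified of residue degree one over $\fp'_3$, whose residue field $\F_q$ has $q := 3^{2^{s-2}}$ elements. By the Chinese remainder theorem,
\[
\OO_s/\fF \;\cong\; \F_q\times\F_q, \qquad \#(\OO_s/\fF)^\times = (q-1)^2.
\]

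The crux of the argument is the computation of $T_s/\fF$. Since $\fF\subset T_s$, there is a natural inclusion $T_s/\fF \hookrightarrow \OO_s/\fF \cong \F_q\times\F_q$. The key observation is that in characteristic~$3$ the only cube root of unity is~$1$, equivalently $x^2+x+1=(x-1)^2$ in $\F_q[x]$; therefore $\zeta_3 \equiv 1 \pmod{\fP'_3}$ and $\zeta_3 \equiv 1 \pmod{\fP''_3}$, so $\zeta_3$ maps to $(1,1)$ in $\F_q\times\F_q$. Combined with the fact that $\uOO_s$ surjects onto the diagonal $\Delta \subset \F_q\times\F_q$ (via $\uOO_s/\fp'_3 = \F_q$), the subring $T_s = \uOO_s + \uOO_s\zeta_3$ lands exactly in $\Delta$, giving $T_s/\fF \cong \F_q$ and $\#(T_s/\fF)^\times = q-1$.

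Assembling the pieces,
\[
h(T_s) \;=\; \frac{h(K_s)(q-1)^2}{2^s(q-1)} \;=\; \frac{h(K_s)(3^{2^{s-2}}-1)}{2^s},
\]
as claimed. The main obstacle is the identification of $T_s/\fF$; the diagonal-embedding argument above is the cleanest route, though one could instead analyze $T_s/\fp'_3 T_s \cong \F_q[\varepsilon]/(\varepsilon^2)$ and show that $\fF/\fp'_3 T_s$ is its unique nontrivial proper ideal $(\varepsilon)$, arriving at the same conclusion.
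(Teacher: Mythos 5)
Your proof is correct and follows essentially the same route as the paper's: apply \cite[Theorem~I.12.12]{N2} with the conductor $\fp'_3$ from Proposition~\ref{cond-zeta-3} and the unit index $2^s$ from Proposition~\ref{unit-index-t}, compute $\#(\OO_s/\fF)^\times = (q-1)^2$ via CRT, and show the image of $T_s$ in $\OO_s/\fF \cong \F_q \times \F_q$ is the diagonal. The one step you handle differently is the identification of $T_s/\fF$ with the diagonal: the paper deduces it from Proposition~\ref{prop:atr-order}, noting that every element of $T_s$ has antitrace in $\fp'_3$ and hence has equal residues at the two conjugate primes, whereas you deduce it from the elementary observation that $x^3-1 = (x-1)^3$ in characteristic $3$ forces $\zeta_3 \equiv 1$ modulo each $\fP_3',\fP_3''$, so $T_s = \uOO_s + \uOO_s \zeta_3$ lands in the same subring as $\uOO_s$. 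Both observations are correct and give the same count $q-1$; your version is a bit more self-contained since it does not invoke the antitrace machinery from Section~\ref{quad}, while the paper's version is the one that naturally parallels its treatment of the orders $\fo_k'$.
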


\begin{proof} This follows from \cite[Thm.~I.12.12]{N2} as before.
  The unit index is $2^s$, as we saw in Proposition~\ref{unit-index-t}, while
  the conductor is $\fp_3$ (Corollary~\ref{orders-three-zeta3}).
  So we have $\#(\Z_{K_n}/\f)^\times = (3^{2^{s-2}}-1)^2$.  Since every element of
  $T_n$ has antitrace in $\fp_3$ by Proposition~\ref{prop:atr-order},
  the image of $(T_n/\f)^\times$ in $(\Z_{K_n}/\f)^\times$ is the diagonal subgroup
  and hence has order $3^{2^{s-2}}-1$.
\end{proof}

\begin{prop}\label{prop: three class}
  Let $n=3\cdot2^s$ with $s \ge3$.  Then
\[
v_{<1}(\gr'_n) \le \big(2^{2^{s-1}-s+1}+2^{2^{s-1}}\big)h^-(\Q(\zeta_n)).
  \]
\end{prop}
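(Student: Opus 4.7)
The plan is to feed the explicit class-number formulas of Propositions~\ref{prop: class number} and~\ref{prop:class T} into Proposition~\ref{mass-g three}, which already bounds $v_{<1}(\gr'_n)$ by $2h^-(T)+2\sum_{k=0}^{2^{s-1}}h^-(\fo'_k)$. The preliminary observation is that each of the orders $T$ and $\fo'_k$ has totally real subring equal to $\uOO_n$ (the antitrace of any generator lies in $K_n\setminus F_n$), so $h^+=h(F_n)$ uniformly, and hence every $h^-$ is the order's class number divided by $h(F_n)$.

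The first summand is handled directly. Proposition~\ref{prop:class T} gives $h^-(T)=h^-(\Q(\zeta_n))(3^{2^{s-2}}-1)/2^s$, and since $\log_2 3<2$ we have $3^{2^{s-2}}<2^{2^{s-1}}$, producing $2h^-(T)\le 2^{2^{s-1}-s+1}\,h^-(\Q(\zeta_n))$.

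For the second summand I would separate $k\in\{0,1\}$, where Proposition~\ref{prop: class number} and the maximality of $\fo'_0$ show that each term equals $h^-(\Q(\zeta_n))$, from the range $k\ge 2$, where Proposition~\ref{prop: class number} gives $h^-(\fo'_k)=h^-(\Q(\zeta_n))\cdot 2^{k-s+w'_k-2}$. The coarse bound $w'_k\le s$---valid because the $2$-primary roots of unity in $\Q(\zeta_n)$ form exactly $\mu_{2^s}$---collapses the $w'_k$-dependence and reduces the sum to a geometric series $\sum_{k=2}^{2^{s-1}}2^{k-2}<2^{2^{s-1}-1}$. Doubling and combining with the two boundary terms, and absorbing the resulting $O(1)$ additive constant into the dominant exponential (legitimate for $s\ge 3$), yields $2\sum_{k=0}^{2^{s-1}}h^-(\fo'_k)\le 2^{2^{s-1}}\,h^-(\Q(\zeta_n))$.

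Adding the two bounds gives the claimed inequality. The substantive work is already packaged upstream in the class-number computations of Propositions~\ref{prop: class number} and~\ref{prop:class T}, which rest on the conductor calculations of Proposition~\ref{indices-three} and Corollary~\ref{orders-three-zeta3}, the unit-index computations of Propositions~\ref{unit-index-3} and~\ref{unit-index-t}, and Neukirch's conductor formula; the only obstacle once those are in hand is keeping track of the boundary $k=0,1$ contributions carefully enough to absorb them into the exponential terms, which is the routine geometric-series arithmetic sketched above.
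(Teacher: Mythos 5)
Your overall strategy is the same as the paper's: feed the class-number formulas of Propositions~\ref{prop: class number} and~\ref{prop:class T} into Proposition~\ref{mass-g three} and collapse the sum with a crude geometric series. However, your grouping differs from the paper's, and this leads to a genuine arithmetic gap in the step where you ``absorb'' the boundary terms.

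Concretely: the paper pairs $h^-(T)$ with $h^-(\fo'_0)$ \emph{before} taking any upper bound, noting
\[
h^-(T)+h^-(\fo'_0)=h^-\Bigl(\frac{3^{2^{s-2}}-1}{2^s}+1\Bigr)\le h^-\,2^{2^{s-1}-s},
\]
so that after doubling the first piece contributes at most $h^- 2^{2^{s-1}-s+1}$. You instead bound $2h^-(T)\le h^- 2^{2^{s-1}-s+1}$ on its own and push both $\fo'_0$ and $\fo'_1$ into the second sum. With the coarse bound $w'_k\le s$, your second piece is
\[
2\sum_{k=0}^{2^{s-1}}h^-(\fo'_k)\le 2h^-+2h^-+2h^-\sum_{k=2}^{2^{s-1}}2^{k-2}
= 4h^- + 2h^-\bigl(2^{2^{s-1}-1}-1\bigr)= h^-\,2^{2^{s-1}}+2h^-.
\]
This exceeds the target $h^- 2^{2^{s-1}}$ by exactly $2h^-$, and the excess cannot be ``absorbed'': the statement you need is a literal upper bound by $h^- 2^{2^{s-1}}$, not a bound up to a constant factor. (For $s=3$ the discrepancy is not merely cosmetic: the inequality $3^{2^{s-2}}+2^s-1\le 2^{2^{s-1}}$ used by the paper is an \emph{equality} at $s=3$, so there is no hidden slack, and your decomposition really does overshoot the stated bound by $2h^-$ unless the excess is compensated against the slack $h^-2^{2^{s-1}-s+1}-2h^-(T)$, which is itself exactly $2h^-$ at $s=3$.) To repair the argument you should either adopt the paper's pairing of $T$ with $\fo'_0$, or explicitly carry the slack $h^-2^{2^{s-1}-s+1}-2h^-(T)\ge 2h^-$ forward and show it cancels the $2h^-$ excess; merely invoking $w'_k\le s$ and hand-waving about $O(1)$ constants does not suffice.
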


\begin{proof}
  Set $h^{-}=h^{-}(\Q(\zeta_n))$.
Using Propositions~\ref{indices-three} and~\ref{prop:class T} we get
$$h^-(T_n) + h^-(\fo'_0)= h^-\cdot\bigg( \frac{3^{2^{s-2}} -1}{2^s} + 1\bigg)\, .$$
Now
$$\frac{3^{2^{s-2}} -1}{2^s} + 1 \le 2^{2^{s-1}}/2^s = 2^{2^{s-1}-s} \, .$$  
  By Proposition~\ref{prop: class number} we have
$$\sum_{k=1}^{2^{s-1}}2h^-\left(\fo'_k\right)  = h^-\sum_{k =
    1}^{2^{s-1}}2^{k-s+w'_k-1} \le h^-\sum_{k = 1}^{2^{s-1}}2^{k-s+s-1}
  < \, h^-2^{2^{s-1}}.$$
Combining these bounds with Proposition~\ref{mass-g three} completes
the proof.
\end{proof}

\subsubsection[Quadratic
  \texorpdfstring{$\Z_{F_\tts}$}{Z[\unichar{"03B6}3\unichar{"B7}2\unichar{"5E}s]+}-orders
    containing ramifying elements] 
    {Quadratic $\Z_{F_\tts}$-orders containing ramifying elements}
\label{Sec:ramify}

When $n = 3 \cdot 2^s$, by Theorem~\ref{thm:invert}, 
$\PUT\big(\Z_{K_\tts}^{(2)}\big)$ 
acts on $\Delta_n$ without inversions, so the graph
$\overline{\gr}_n$ does not contain half-edges.  However, the covering
$\gr_n\to\overline{\gr}_n$ does ramify at some vertices, so
$$v(\overline{\gr}_n)= v(\gr_n)/2 + v_r(\gr_n/\overline{\gr}_n)/2$$
where $v_r(\gr_n/\overline{\gr}_n)$ is the number of ramified
vertices.  This is in contrast to the case of $n = 2^s$.

By Theorem~\ref{thm:ram} a vertex in $\gr_n$ is ramified if and only if a
maximal order $\M \in \Ver(\Delta_n)$ covering it contains a ramifying
element $\gamma$. The norm of a ramifying element is a 
totally positive nonsquare unit.  By Theorem~\ref{fields},
there is only one such up to squares, namely:
\begin{equation}
  \label{billy}
  u_+:=\zeta+\zeta^{-1}+2=p_n'+1 .
  \end{equation}

We will now characterize possible extensions $F_n(\gamma)$.

\begin{theorem}\label{thm:ramifying}
Let $L$ be a totally complex quadratic extension of $F= F_n$ such that
there exists $\gamma\in \Z_L$ of norm $u_+$.  Then $L$ is isomorphic
to $K_n$ or to $F(\sqrt{-u_+}\,)$, and these fields are not isomorphic
to each other.
  \begin{enumerate}[\upshape (a)]
  \item
\label{thm:ramifying1}
    If $L \cong F(\sqrt{-u_+}\,)$, then
    $\gamma=\sqrt{-u_+}$ and $\Z_F[\gamma]=\Z_L$.
  \item
\label{thm:ramifying2}
    If $L \cong K_n$, then $\gamma=\zeta^{t+1}+\zeta^t$ for some $t$, and
    $\Z_F[\gamma]$ is either the maximal order $\Z_L = \fo'_0$ or 
    the unique order $\fo'_1$ with conductor $\fP$.
  \end{enumerate}
\end{theorem}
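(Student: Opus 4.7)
The approach mirrors Theorem~\ref{which_fields}, with the totally positive unit $u_+$ now playing the role the uniformizer $p$ played there. First I would observe that, since $\Nm_{K/F}(\gamma)=u_+\in\uOO^\times$, the element $\gamma$ is a unit of $\OO_K$ whose minimal polynomial over $F$ is $x^2-ax+u_+$ with $a=\Tr_{K/F}(\gamma)\in\uOO$. Hence $K=F(\sqrt{D})$ where $D:=a^2-4u_+\in\uOO$ is totally negative (total complexity of $K$ forces $|a^{(j)}|<2\sqrt{u_+^{(j)}}$ at every real embedding of $F$), and $(D)=\Disc(\OO_K/\uOO)\cdot\ff^2$ where $\ff$ is the conductor of $\uOO[\gamma]$ in $\OO_K$.

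The main step, which I expect to be the hard part, is to identify $K$ up to isomorphism. The plan is to combine Theorem~\ref{fields}(b)---which gives $\uR_+^\times/(\uR^\times)^2\cong\Z/2\Z$ generated by $u_+$, so that the totally negative units of $\uR^\times$ modulo squares are represented exactly by $-1$ and $-u_+$---with a Galois-cohomological argument parallel to that of Theorem~\ref{which_fields}, applied to the exact sequence $0\to\OO_K^\times\to K^\times\to\Prin(\OO_K)\to 0$. Together these should force the discriminant class of $K/F$ in $F^\times/(F^\times)^2$ to be represented by a totally negative unit of $\uOO$. Since $4\mid n$ gives $i=\zeta_n^{n/4}\in K_s$, one has $F(\sqrt{-1})=F(i)=K_s$, yielding the two candidates $K=K_s$ and $K=F(\sqrt{-u_+})$; they are distinct because $-1$ and $-u_+$ lie in different classes of $\uOO^\times/(\uOO^\times)^2$ (as $u_+$ is a nonsquare in $\uR^\times$, hence in $\uOO^\times$).

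Finally, in each case I pin down $\gamma$ and $\uOO[\gamma]$. In Case (a), $K=F(\sqrt{-u_+})$, the element $\sqrt{-u_+}\in\OO_K$ has norm $u_+$, and for any other $\gamma$ the ratio $\gamma/\sqrt{-u_+}$ is a unit of $\OO_K$ of norm $1$; the standard CM argument (writing $u/\overline{u}\in\mu(K)$ and squaring to get $u^2=\epsilon\,\Nm(u)$, so $u\in\mu(K)$ when $\Nm(u)=1$) shows this ratio is a root of unity, and the only roots of unity in $F(\sqrt{-u_+})$ are $\pm 1$. Hence $\gamma=\pm\sqrt{-u_+}$ and $\uOO[\gamma]=\uOO[\sqrt{-u_+}]$, whose maximality in $\OO_K$ follows from a local 2-adic computation at $\fp$. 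In Case (b), $K=K_s$, the element $1+\zeta$ has norm $(1+\zeta)(1+\zeta^{-1})=u_+$, and the same norm-one-is-a-root-of-unity argument gives $\gamma=\zeta^t(1+\zeta)=\zeta^t+\zeta^{t+1}$ for some integer $t$. Expanding $\gamma=A_t+B_t\zeta$ in the $\uOO$-basis $\{1,\zeta\}$ of $\OO$ yields $\uOO[\gamma]=\uOO+B_t\uOO\zeta$, whose $\OO$-conductor is the ideal generated by $B_t$; one verifies directly that $B_0=1$ (giving $\uOO[\gamma]=\fo'_0$ maximal) while for other $t$ either $B_t$ is a unit again or $B_t\in\fp\setminus\fp^2$, in which case the conductor is $\fP'=\fp\OO$ and $\uOO[\gamma]=\fo'_1$ by Proposition~\ref{indices-three}. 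The local 2-adic checks are routine but delicate because $\fp$ has ramification index $2^s$ over $2$.
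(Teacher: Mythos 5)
Your proposal has a genuine gap at the step that identifies $K$. You propose to transport the Galois-cohomology argument of Theorem~\ref{which_fields}, but the hypothesis there was that $\fp\fo_L$ is the square of a \emph{principal} ideal: it is the generator $q$ of that prime which builds the short exact sequence $0 \to \fo_L^\times \to \fo_L[1/q]^\times \to \Z \to 0$ driving the whole calculation. Your hypothesis is instead the existence of a \emph{unit} $\gamma \in \OO_K^\times$ with $\Nm_{K/F}(\gamma)=u_+$; it supplies no prime to invert, and it is not shown (nor obvious) that the cohomological bookkeeping transfers. And even granting the identification of $K$, in Case (b) you would still need to prove that $B_t$ is a unit or a generator of $\fp$ according to whether $3\nmid(2t+1)$ or $3\mid(2t+1)$ --- you leave this as ``one verifies directly,'' but that dichotomy is the real content of the order computation and needs an argument.

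The fix is to apply the CM norm-one-unit argument you already invoke, but one step earlier. The element $\eta := \gamma^2/u_+ = \gamma/\overline\gamma$ is a unit of $\OO_K$ of relative norm $1$, hence a root of unity --- \emph{before} you know $K$. Then $\gamma^2=\eta u_+$ identifies $K=F(\sqrt{\eta u_+})$ immediately: $\eta=\pm1$ forces $\eta=-1$ by total complexity and $K=F(\sqrt{-u_+})$, while $\eta\neq\pm1$ puts a nontrivial root of unity in $K$, forcing $K=K_s$, $\eta=\zeta^k$ with $k=2t+1$ odd (since $u_+$ is not a square), and $\gamma=\zeta^t(1+\zeta)$. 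Crucially $\eta=u_+^{-1}\gamma^2\in\uOO[\gamma]$, so $\uOO[\gamma]$ contains $\zeta^{\gcd(k,n)}$: if $3\nmid k$ then $\zeta\in\uOO[\gamma]$ and the order is maximal, while if $3\mid k$ then $\zeta^3\in\uOO[\gamma]$ and a short antitrace computation pins the conductor at $\fP$. This is the paper's route, and it closes both gaps in one move.
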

\begin{proof}
  Let $\eta=\gamma^2/u_+$.  Then $\eta$ is a unit of relative
  norm $1$ in the CM extension $L/F$, so $\eta$ is a root of unity.
  The two cases in the theorem are different, because $K_n = F(\sqrt{-1})$
  and $u_+$ is not a square in $F$.
  We now consider them separately:

  \begin{description}
    \item[{\bf Case 1. $\eta = \pm 1$}]  This means that $\gamma^2 = \pm u$.
      Since $L$ is a totally complex quadratic extension and $u_+$ is
      totally positive, it must be that $\gamma^2 = -u_+$: in other words,
      $L=F(\sqrt{-u_+})$.

      It remains to show that $\Z_F[\gamma]$ is maximal.  Consider
      $a + b\gamma \in \Z_L$: we must show that $a, b$ are integral.
      First, $2a = T_{L/F}(a+b\gamma) \in \Z_F$, so $a$ is integral away
      from $\p$; likewise $2(a+b\gamma)-2a$ is integral, so $b\gamma$ is
      integral away from $\p$.  Thus $b$ is as well, because $\gamma$ is
      a unit.

      Now suppose that $c, d \in \Z_F$, but not both are in $\p$, and
      consider $N_{L/F} (c+d\gamma) = c^2 + u_+d^2$.  If one of $c, d$ is in
      $\p$, then this expression is not in $\p$.  If neither $c$ nor $d$
      belongs to $\p$, then $c^2, d^2 \equiv 1 \bmod \p^2$, because $\p$
      has residue field of order $2$.  But $u_+ \not \equiv 1 \bmod \p^2$,
      so $c^2 + u_+d^2 \notin \p^2$.

      If $a + b\gamma \in \Z_L$, then write
      $a = c{p_n'}^k, b = d{p_n'}^k$,
      where $k \in \Z$ and not both $c$ and $d$ are in $\p$.  From the
      above, the $\p$-adic value of $N_{L/F}(a+b\gamma)$ is either
      $2k$ or $2k+1$, so $k \ge 0$ as desired.
  \item[{\bf Case 2. $\eta \ne \pm1$}] Since $\eta\in L$, we must
    have $L=K_n$ (recall that $K_n \equiv F_n(\zeta_r)$ for all
    $r>2$ dividing $s$).  Let $\eta=\zeta^k$.  Then $u_+\zeta^k=\gamma^2$;
    here $k$ must be odd, since $u_+$ is not a square.  Let
    $k=2t+1$ and $\gamma=\zeta^{t+1}+\zeta^t$, so that
    $N_{L/F} \gamma = N(\zeta^t) N(\zeta+1) = u_+$ as desired.
    Now again consider two cases:
      \begin{description}
      \item[{\bf Case 2(i). $3\nmid k$}]  In this case $k$ is a
        unit mod $n$: let $k'$ be its inverse.  Thus
        $\zeta=\eta^{k'}\in\Z_F[\gamma]$, so $\Z_F[\gamma]$ is maximal.
      \item[{\bf Case 2(ii). $3\mid k$}]  Then
        $t \equiv 1 \pmod{3}$ and there exists a $k'$ such that
        $kk' \equiv 3 \pmod{n}$.  Hence $\zeta^3=\eta^{k'}\in\Z_F[\gamma]$.
        It follows that
        $\zeta^2+\zeta = \gamma(\zeta^3)^{\frac{s+1-t}{3}} \in \Z_F[\gamma]$,
        and so we may take $t = 1$.  The antitrace of $\zeta$ is a unit, while
        the antitrace of $\zeta^2+\zeta$ generates $\fP \Z_L$.  It follows
        that the antitrace of any power of $\zeta^2+\zeta$ is in $\fP \Z_L$,
        so $\fP$ is the conductor of $\Z_F[\gamma]$.
      \end{description}
  \end{description}
\end{proof}

Set $h^-(\Z_F[\sqrt{-u_+}\,]) = h(\Z_F[\sqrt{-u_+}\,])/h(\Z_F)$.
\begin{corollary}\label{cor:ramifying}
  Let $n = 3\cdot 2^s$ with $s \ge 2$.  Then
  $$v_r(\gr_n/\overline{\gr}_n) \le 2h^-(\Z_{K_n}) +
 h^-(\Z_{F_n}[\sqrt{-u_+}\,]) \,.$$
\end{corollary}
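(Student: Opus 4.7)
The plan is to bound $v_r(\gr_n/\overline{\gr}_n)$ by the number of isomorphism types of maximal orders of $\H_n$ containing a ramifying element, and then to bound that count by summing $h(\fo)/h(\uOO)$ over the three orders $\fo$ permitted by Theorem~\ref{thm:ramifying}.

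First I would reduce to counting types. By Theorem~\ref{thm:ram} a vertex $\bv\in\Ver(\gr_n)$ is ramified in the cover $\pi_n\colon\gr_n\to\overline{\gr}_n$ precisely when a maximal order $\M$ covering $\bv$ contains a ramifying element. Whether $\M$ contains such an element depends only on $\Ty(\M)$, and each ramified vertex of $\overline{\gr}_n$ has a unique preimage in $\gr_n$; so $v_r(\gr_n/\overline{\gr}_n)$ equals the number of types $T\in\T(\H_n)$ that admit a ramifying element.

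Next I would classify the orders that can arise. The norm of a ramifying element is a totally positive non-square unit of $\uOO$, and by Theorem~\ref{fields}\eqref{fields2} any such unit equals $u_+$ up to squares; dividing $\gamma$ by an appropriate unit of $\uOO$ we may assume $\Norm(\gamma)=u_+$. Theorem~\ref{thm:ramifying} then identifies $\uOO[\gamma]$ as one of $\uOO[\sqrt{-u_+}]$, $\fo'_0$, or $\fo'_1$, and since $\uOO[\sqrt{-u_+}]$ and $\fo'_0$ are maximal in their respective fields (and the orders $\fo'_k$ form a chain by Proposition~\ref{indices-three}), the optimal order $F(\gamma)\cap\M$ also lies in this same list.

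Finally I would convert to class numbers. By Theorem~\ref{class_no}, $m_{\M_n}(\fo)=h(\fo)$; by Theorem~\ref{thm:Mn} the group $\Cl(\uOO)$ acts freely on $\Cl(\M_n)$ with quotient $\T(\H_n)$, and the number of equivalence classes of optimal embeddings is constant on each $\Cl(\uOO)$-orbit, so the number of types admitting an optimal embedding of $\fo$ is at most $h(\fo)/h(\uOO)$. Proposition~\ref{prop: class number} gives $h(\fo'_0)=h(\fo'_1)=h_n$, each contributing $h_n/h(\uOO)=h^-$; the third contribution is $h(\uOO[\sqrt{-u_+}])/h(\uOO)=h^-(\uOO[\sqrt{-u_+}])$ by definition. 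Summing yields the claimed bound. The main obstacle is precisely this last bookkeeping step: verifying that freeness of the $\Cl(\uOO)$-action really does force the $h(\uOO)$ representatives of a given type to contribute identically to $m_{\M_n}(\fo)$, so that the number of ramifying types is bounded by total embeddings divided by $h(\uOO)$. The rest is a fairly mechanical application of the results already in place.
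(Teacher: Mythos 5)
Your proof is correct and takes essentially the same approach as the paper, whose one-line proof simply combines Theorems~\ref{thm:Mn} and~\ref{thm:ramifying} with Proposition~\ref{prop: class number}. You have appropriately filled in the implicit ingredients (Theorem~\ref{thm:ram} to reduce to counting types containing a ramifying element, Theorem~\ref{class_no} to convert embedding counts to class numbers, and the observation that conjugate maximal orders receive equal embedding counts, which justifies the division by $h(\uOO)$).
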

\begin{proof}
Combine Theorems~\ref{thm:Mn} and~\ref{thm:ramifying} with   Proposition~\ref{prop: class number}.
\end{proof}

\section{The main theorem for
  \texorpdfstring{$n=2^s$}{n=2\unichar{"5E}s}}
\label{two}

\subsection{Class number bounds}
\label{subsec:bounds-first}

Throughout Section \ref{subsec:bounds-first} $n=2^s$ for $s\geq 3$.  
We have $h^+(K_n)=h(F_n)$ and $h(K_n)=h^+(K_n)h^-(K_n)$.
Recall that $p_n=2+\zeta_{n}+\zeta^{-1}_{n} = N_{K_n/F_n}(\zeta_{n}+1)$
is a totally positive element of norm $2$ generating the unique
ideal $\fp=\fp_n$ of $\Z_{F_n}$ above $2$.
Normally subscripts will not change in a single calculation, so
they will frequently be omitted.

In both Section \ref{subsec:bounds-first}   and Section \ref{kitty},
we bound certain class numbers from above.
This
is the key to bounding the first Betti numbers of the graphs $\gr_{2^s}$, 
$\overline{\gr}_{2^s}$, $\gr_{\tts}$, $\overline{\gr}_{\tts}$ from below.
Specifically, let $L$ be a CM-field which is abelian over $\Q$ with
associated Dirichlet characters $\{\chi\}$.
Let $E$ be the unit group of $L$, $E^+$ the unit group of its totally
real subfield $L^+$, $W$ the group of roots of unity in $L$, and $w=w(L)=\#W$.
Set $Q=Q(L)=[E:WE^+]$; it is always $1$ or $2$.
We have to bound $h^-(L)$ for certain such fields $L$.
In every case, we do so by applying the following technique, which we
learned from \cite{N}:
\begin{enumerate}[\upshape (a)]
\item
  \label{technique1}
Express $h^{-}$ in terms of a product of generalized Bernoulli numbers
$B_{1,\chi}$ \cite[Thm.~4.17]{W}:
\begin{equation}
  \label{classn}
  h^{-}(L)=Qw\prod_{\chi\mbox{ \footnotesize{odd}}}\Big(-\frac{1}{2}B_{1,\chi}\Big)
\end{equation}
with $B_{1,\chi} =
\sum_{i=1}^{f_\chi} i\chi(i)/f_\chi$.
\item
  \label{technique2}
Apply the arithmetic mean/geometric mean inequality to \eqref{classn}
to bound from above,
cf. \cite[Cor.~2
to Prop.~8.12]{N}. 

\end{enumerate}
A result such as \cite[Thm.~4.20]{W} is not sufficient for our purposes
because it is not effective.

These class numbers grow very rapidly.  They can be estimated from
below using \cite[Prop.~11.16]{W}.  For example, this
proposition shows that
\[
\log h^{-}(\Q(\zeta_{2^s})) \ge (s-1)2^{s-3}\log 2 - (1.08)\cdot 2^{s-1}
\]
for $s > 8$.
With $s = 9$ the two sides are approximately $126.2$ and $78.4$, so the bound
is not terrible.

Schrutka von Rechtenstamm \cite{Sch}
computed $h^{-}(\Q(\zeta_{n}))$ for $\phi(n)\leq 256$; the table of values
is reproduced in \cite[p. 412]{W}. Computed values for the 
$n=2^s$ family are given in Figure~\ref{minus2} below and for the $n=3\cdot 2^s$
family in Figure~\ref{minus3}.

\begin{figure}[ht]
\begin{center}
\label{minus2}
\begin{tabular}{l|l}
$s$ & $h^-(\Q(\zeta_{2^s}))$\\
      \hline
     $ 4$ & $1$\\
      $5$ & $1$\\
     $ 6$ & $17$\\
     $ 7$ & $359057=17\cdot 21121$\\
     $ 8$ & $10\,449592\,865393\,414737=17\cdot 21121\cdot
      29\,102880\,226241$\\
      $ 9$ & $ 6\,262503\,984490\,932358\,745721\,482528\,922841\,978219\,389975\,605329$\\
      &$=17\cdot21121\cdot76\,532353\cdot29\,102880\,226241$\\
   &   $\cdot
      7830\,753969\,553468\,937988\,617089$
\end{tabular}
\end{center}
\caption{The minus part of the class number for the cyclotomic $n=2^s$ family}
\end{figure}

\begin{theorem}
  \label{class1}
  Let $n=2^s$ with $s\geq 3$.
  \begin{enumerate}[\upshape (a)]
  \item
    \label{class11}
   $h^-(K_n)\leq 2^{s+(s-4)2^{s-3}}/3^{2^{s-3}}=
    2^{s + (s - 4 - \log_2 3)2^{s-3}}.$
  \item
    \label{class12}
$h^-(F_n(\sqrt{-3}))\leq
    3^{1-2^{s-2}} 2^{1+s\cdot2^{s-3}}=2^{1+\log_{2}3+(s-2\log_23)2^{s-3}}$. 
  \item
    \label{class13}
    $h^-(F_n(\sqrt{-p_n}))\leq 2^{1-2^{s-1}+s\cdot2^{s-3}}=2^{1+(s-4)2^{s-3}}$.
    \end{enumerate}
\end{theorem}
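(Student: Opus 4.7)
The plan is to apply the analytic class number formula \eqref{classn}
\[
h^{-}(L) = Qw\prod_{\chi \text{ odd}}\bigl(-\tfrac{1}{2}B_{1,\chi}\bigr)
\]
to each of the three CM abelian fields $L = K_s,\ F_s(\sqrt{-3}),\ F_s(\sqrt{-p_s})$, then bound each product via the GM--AM inequality applied to $|B_{1,\chi}|^{2}$, with the resulting arithmetic mean evaluated in closed form by orthogonality of Dirichlet characters. This is precisely the technique (\ref{technique1})--(\ref{technique2}) outlined just above the theorem.

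For (a), with $L = K_s$, one has $w = 2^s$, $Q = 1$, and the $2^{s-2}$ odd Dirichlet characters cut out by $K_s$ all have $2$-power conductor. Since $\chi^{*}(2) = 0$ for every primitive $\chi^{*}$ of $2$-power conductor, the Euler factor relating $B_{1,\chi^{*}}$ to $B_{1,\chi}$ (viewing $\chi$ as the character mod $2^{s}$ induced from $\chi^{*}$) is trivial, so $B_{1,\chi} = 2^{-s}\sum_{a=1}^{2^{s}}a\chi(a)$ uniformly. Orthogonality of odd characters then gives the closed form
\[
\sum_{\chi\text{ odd mod }2^{s}}|B_{1,\chi}|^{2}
= \frac{\phi(2^{s})}{2\cdot 2^{2s}}\sum_{\substack{1\le a\le 2^{s}\\ a\text{ odd}}}(2a^{2}-2^{s}a) = \frac{2^{2s}-4}{48},
\]
so the arithmetic mean of $|B_{1,\chi}|^{2}$ is strictly less than $2^{s}/12$. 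Applying GM--AM to the squares produces $\prod_{\chi}|B_{1,\chi}|\le (2^{s}/12)^{2^{s-3}}$, and multiplying by $w = 2^{s}$ and by the $2^{-2^{s-2}}$ arising from the $2^{s-2}$ factors of $\tfrac{1}{2}$ recovers exactly $h^{-}(K_{s})\le 2^{s+(s-4)2^{s-3}}/3^{2^{s-3}}$.

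For (b), the field $F_{s}(\sqrt{-3})$ has $w = 6$, and its $2^{s-2}$ odd Dirichlet characters are products $\chi_{3}\cdot\chi_{2}$ where $\chi_{3}$ is the nontrivial character mod $3$ and $\chi_{2}$ runs over even characters of $(\Z/2^{s}\Z)^{\times}$. The same orthogonality calculation, carried out at modulus $3\cdot 2^{s}$, produces a closed form for $\sum|B_{1,\chi_{3}\chi_{2}}|^{2}$ analogous to that of (a); the $3^{1-2^{s-2}}$ factor in the statement tracks the $3$ in the modulus together with the prefactor from $Qw$. For (c), the main obstacle is proving that $F_{s}(\sqrt{-p_{s}})/\Q$ is abelian with purely $2$-power conductor, so that the analytic class number formula applies at all. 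Using $p_{s} = \Norm_{K_{s}/F_{s}}(1+\zeta_{2^{s}})$ and Proposition \ref{prin}(\ref{prin1}), I verify directly that for each $\sigma\in\Gal(F_{s}/\Q)$ the ratio $\sigma(p_{s})/p_{s}$ is a square in $F_{s}$ (the case $s=3$ being $(2-\sqrt{2})/(2+\sqrt{2}) = (\sqrt{2}-1)^{2}$), whence abelianness of $F_{s}(\sqrt{-p_{s}})/\Q$ follows; the character description then shows that all relevant characters have $2$-power conductor, and combined with $w = 2$ (since $F_{s}(\sqrt{-p_{s}})$ contains neither $i$ nor $\zeta_{3}$), the GM--AM estimate proceeds exactly as in (a) and yields the bound $2^{1-2^{s-1}+s\cdot 2^{s-3}}$.
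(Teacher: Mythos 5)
Your part (a) is essentially the paper's proof, carried out to the end with the same AM--GM step and the same orthogonality evaluation of $\sum_\chi|B_{1,\chi}|^2$; the closed form $(2^{2s}-4)/48$ agrees with the paper's $2^{s-2}(2^{3s-2}-2^s)/3 \cdot 2^{-2s}$.

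Parts (b) and (c) are only sketched, and both hide real work. In (b), ``the same orthogonality calculation at modulus $3\cdot 2^s$'' is not a routine variant of (a): the characters of $F_s(\sqrt{-3})$ are a proper index-$2$ subgroup of all odd characters mod $3\cdot 2^s$, so $\sum_\chi \chi(i)\bar\chi(j)$ is nonvanishing not only for $i \equiv \pm j \bmod 3\cdot 2^s$ but also for $i \equiv \pm j \bmod 2^s$, and in addition $\chi_3$ (conductor $3$) has $\chi_3(2)\neq 0$, so it picks up a nontrivial Euler factor at $2$ that must be tracked separately. The paper evaluates all of these extra terms and also records a threshold: the final overestimate $2\cdot 2^{3s} + 57\cdot 2^{2s} \le 4\cdot 2^{3s}$ only holds for $s\ge 6$, with the small cases checked directly. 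Your sketch does not surface any of this.

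In (c) the logic is not quite complete as stated. Showing $\sigma(p_s)/p_s \in (F_s^\times)^2$ for every $\sigma\in\Gal(F_s/\Q)$ shows that $F_s(\sqrt{-p_s})/\Q$ is \emph{Galois}, but a central $\Z/2$-extension of an abelian group need not be abelian (e.g.\ $D_4$ or $Q_8$ over $(\Z/2)^2$); you need the additional observation that $\Gal(F_s/\Q)\cong \Z/2^{s-2}$ is cyclic, so that $G/Z(G)$ is cyclic and $G$ abelian. You also only exhibit the square for $s=3$; for general $s$ one needs an argument such as writing $p_s = (\zeta_{2^{s+1}}+\zeta_{2^{s+1}}^{-1})^2$ and checking that $\tilde\sigma(\alpha)/\alpha$, $\alpha := \zeta_{2^{s+1}}+\zeta_{2^{s+1}}^{-1}$, is fixed by $\Gal(F_{s+1}/F_s)$. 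Finally, to know the conductor is a power of $2$, one still has to place $F_s(\sqrt{-p_s})$ inside some $\Q(\zeta_{2^m})$. The paper cuts through all of this at once: it simply notes $(\zeta_{2^{s+1}}+\zeta_{2^{s+1}}^{-1})^2 = p_s$, hence $F_s(\sqrt{-p_s})\subseteq K_{s+1}$, which gives abelianness, the $2$-power conductor, and the explicit description of the characters as those $\chi$ on $(\Z/2^{s+1}\Z)^\times$ with $\chi(2^s-1)=1$, in one stroke. You should adopt that observation; your roundabout route can be repaired but is strictly more work.
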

\begin{proof}
 \eqref{class11}: For $K_n$  we have $Q = 1$ by \cite[Cor.~4.13]{W} 
and $w = 2^s$.
The product \eqref{classn}  runs over odd characters $\chi$ of conductor
  $2^s$, and $B_{1,\chi} = \sum_{i=1}^{2^s} i\chi(i)/2^s$.
 Gathering all the powers of $2$, we have
 \begin{equation}
   \label{h_}
h^-(K_n) = 2^{s - (s+1)\cdot 2^{s-2}} 
\bigg(\prod_\chi \bigg| \sum_{i=1}^{2^s} i\chi(i) \bigg|^2 \bigg)^{1/2}.
\end{equation}
Now apply the arithmetic mean/geometric mean inequality to \eqref{h_}:
\begin{align*}
    h^-(K_n)&\le 2^{s - (s+1)\cdot 2^{s-2}}
    \Bigg(\frac{\sum_\chi \Big|\sum_{i=1}^{2^s} i\chi(i)
\Big|^2}{2^{s-2}} \Bigg)^{2^{s-3}} \\
\nonumber &= 2^{s - (s+1)\cdot 2^{s-2}} \left( 2^{3s-2}/3 - 2^{s+1}/6\right)^{2^{s-3}}\\
\nonumber &< 2^{s - (s+1)\cdot 2^{s-2}}\left( 2^{3s-2}/3\right)^{2^{s-3}}
= 2^{s + (s - 4 - \log_2 3)2^{s-3}}.
\end{align*}

The only nontrivial step here is the one that evaluates the sum of the squares
of absolute values of the Bernoulli numbers.  
We have
\begin{equation}
  \label{dated}
\sum_\chi \Big|\sum i\chi(i)\Big|^2 = \sum_\chi \sum_{i,j} ij\chi(i) \bar\chi(j)
= \sum_{i,j} \sum_\chi ij \chi(i) \bar\chi(j).
\end{equation}

Now, for $i = j$ we have $\chi(i) \bar\chi(j) = 1$ for all $\chi$, and for $i = 
2^s-j$ we have $\chi(i) \bar\chi(j) = -1$ since our characters are odd.
For other values of $i,j$, choose an even character $\epsilon$ such that
$\epsilon(i/j) \ne 1$; then 
$$\sum_\chi \chi(i) \bar\chi(j) = 
\sum_\chi \chi\epsilon(i) \bar{\chi\epsilon}(j) =
\epsilon(i/j)\sum_\chi \chi(i) \bar\chi(j),$$
so the sum is $0$.  Since there are $2^{s-2}$ characters,
the sum \eqref{dated} reduces to
\begin{equation}
  \label{cyc 2 chi sum}
  2^{s-2}
\sum_{i=1\atop
 i\mbox{\tiny{ odd}}}^{2^s} i^2 - i(2^s-i)=2^{s-2}\left(
2^{3s-2}/3-2^s/3\right)
\end{equation}
by induction, proving Theorem~\ref{class1}\eqref{class11}.\\
\eqref{class12}: Let $L_n=F_n(\sqrt{-3})$.  Then $Q=1$ and $w=6$.
Note that we now use only the characters belonging to $L_n$, in other words
those of conductor divisible by $3$, rather than all odd characters.
From \eqref{classn}
we have
\begin{align*}
  h^-(L_n) &= 6\prod_{\chi \text{ odd}}\Big( \frac{-1}{2}B_{1,\chi}\Big)
 =6\cdot 2^{-2^{s-2}}
  \prod_{\chi}| B_{1,\chi}|\\
 & =6\cdot 2^{-2^{s-2}}\bigg(\prod_{\chi}\bigg| \sum_{i=1}^{f_{\chi}}i\chi(i)/f_{\chi}
  \bigg|^2\bigg)^{1/2}\\
  &=6\cdot 2^{-2^{s-2}}\bigg(\prod_{\chi}\bigg|
  \sum_{i=1}^{3\cdot2^s}i\chi(i)/(3\cdot2^s)
  \bigg|^2\bigg)^{1/2}\\
  & =3^{1-2^{s-2}} 2^{1-(s+1)2^{s-2}}\bigg(\prod_{\chi}\bigg| \sum_{i=1}^{3\cdot2^s}i\chi(i)
  \bigg|^2\bigg)^{1/2}\\
  &\le3^{1-2^{s-2}}
  2^{1-(s+1)2^{s-2}}\Bigg(\frac{\sum_{\chi}\big|\sum_{i=1}^{3\cdot2^s}
    i\chi(i)\big|^2}{2^{s-2}}\Bigg)^{2^{s-3}}\\
  &= 3^{1-2^{s-2}} 2^{1-(s+1)2^{s-2}} \bigg(\frac{2^{s-2}(2\cdot 2^{3s} + 
(-1)^{s+1}\cdot 4 \cdot 2^{2s})-3\cdot 2^{2s}}{2^{s-2}}\bigg)^{2^{s-3}}\\
  &= 3^{1-2^{s-2}} 2^{1-(s+1)2^{s-2}}\big(2 \cdot 2^{3s}+
(-1)^{s+1}\cdot 4 \cdot 2^{2s}-3\cdot 2^{s+2}\big)^{2^{s-3}}\\
  &\le 3^{1-2^{s-2}} 2^{1-(s+1)2^{s-2}}(2 \cdot 2^{3s}+ 2^{2s+2})^{2^{s-3}}\\
  &\le 3^{1-2^{s-2}} 2^{1-(s+1)2^{s-2}+(3s+2)2^{s-3}}
  =3^{1-2^{s-2}} 2^{1+s\cdot2^{s-3}},\\
  \end{align*}
\noindent where for simplicity
we have used a very poor approximation $2^{2s+2} < 2^{3s+1}$ for $s>1$
 in the last line.

To perform this calculation,
we  evaluate the sum of the squares of absolute values
of the Bernoulli numbers as in \eqref{dated}.
Note that the conductors of the characters are all of the form
$3\cdot 2^k$ for some $k$.  Thus all the characters vanish on $i$ and
$j$'s not relatively prime to $6$ except for $\chi_3$.

Now, as before for $i = j$ we have $\chi(i) \bar\chi(j) = 1$ for all
$\chi$, and for $i = 3\cdot 2^s-j$ we have $\chi(i) \bar\chi(j) = -1$ since
our characters are odd.   However, there are other nonzero values.
Namely, for $i\equiv j\pmod{2^s}$ with $i\ne j$ we have $\chi(i)
\bar\chi(j) = -1$, and for $i\equiv -j\pmod{2^s}$ with $i\ne
3\cdot2^s-j$ we have $\chi(i)
\bar\chi(j) = 1$.  For other values of $i,j$  relatively prime to $6$
we still have
$\sum_\chi \chi(i) \bar\chi(j) = 0$.
Since there are $2^{s-2}$ characters,
the sum \eqref{dated} expands to
\begin{equation}
\label{alumni}
\begin{split} 
  2^{s-2} \Bigg(\sum_{i=1\atop (i,6)=1}^{3\cdot 2^s} i^2\quad \!\!
+&\quad\!\! (-1)^{s+1} \!\!\!\!\!\!\!\!\!\!\!\!\!\!\!\!\!\sum_{{i,j=1\atop (i,6)=1}\atop j \equiv (2^s+(-1)^s)i \bmod 3\cdot 2^s}^{3\cdot 2^s}\!\!\!\!\!\!\!\!\!\!\!\!\!\!\!ij 
\quad\!\! - \quad\!\!
\sum_{i=1\atop (i,6)=1}^{3 \cdot 2^s} i(3 \cdot 2^s-i) 
\quad \!\! +\quad \!\!
(-1)^s \!\!\!\!\!\!\!\!\!\!\!\!\!\!\!\!\!\!\!\!\!\sum_{{i,j=1\atop (i,6)=1}\atop j \equiv (2^{s+1}+(-1)^{s+1})i \bmod 3\cdot 2^s}^{3\cdot 2^s}\!\!\!\!\!\!\!\!\!\!\!\!\!\!\!\!\!\!\!\!ij \Bigg) + c_3,
\end{split}
\end{equation}
where 
\[
c_3 = \sum_{i,j=1\atop{i,j \mbox{\tiny{ even}}}}^{3\cdot 2^s} ij \chi_3(ij)
+2\sum_{i,j=1\atop{i \mbox{\tiny{ even}}, j \mbox{\tiny{ odd}}}}^{3\cdot 2^s} ij \chi_3(ij)
\]
is a correction term that arises from evaluating $\chi_3$ at even arguments.
(All the other characters have conductor a multiple of $6$, so their values
at even integers are $0$.)
Let $G_n$ be the first sum in the parentheses in \eqref{alumni}.  
One readily checks that
$G_n = 3\cdot 2^{3s} + 2^s$ and that the second term is
$G_n/2-\frac{3}{2}\cdot 2^s$.
The third and fourth sums are evaluated by breaking up the range for $i$
according to the congruence class of $i \bmod 3$ and $\lfloor i/2^s \rfloor$.
For example, in the third sum, with $s$ even and $i \equiv 1 \bmod 3$, we have
$j = 2^s+i, 2^s+i, -2(2^s)+i$ for $\lfloor i/2^s \rfloor = 0, 1, 2$
respectively.  This reduces the evaluation of these sums to sums of squares
and arithmetic progressions.  When $s$ is even, we obtain
$G_n + 2^{2s}(2-2^s) = 2 \cdot 2^{3s} + 2 \cdot 2^{2s} + 2^s$ and
$\frac{5}{2} 2^{3s} - 2 \cdot 2^{2s} - 2^s$;
when $s$ is odd, the results are
$\frac{5}{2}\cdot 2^{3s}+2\cdot 2^{2s}-2^s$ and $2\cdot 2^{3s}-2\cdot2^{2s}+2^s$.
Whether $s$ is even or odd, the sum of the four terms in parentheses in
\eqref{alumni}
works out to $2 \cdot 2^{3s} + (-1)^{s+1} \cdot 4 \cdot 2^{2s}$.
It is easily checked that $c_3 = -3 \cdot 2^{2s}$, which proves the statement.

\eqref{class13}:  Put $M_n=F_{n}(\sqrt{-p_n})$.
Note that $M_n\subseteq K_{2^{s+1}}$ since
$(\zeta_{2^{s+1}}+\overline{\zeta}_{2^{s+1}})^2=p_n$. The characters $\chi$
associated to $M_n$ are the characters $\chi:(\Z/2^{s+1}\Z)^\times\rightarrow
\BC^\times$ satisfying $\chi(2^s-1)=1$ and $\chi(-1) = -1$.
For $M_n$ we have $w=2$ and
$Q=1$.  Hence from \eqref{classn}
\begin{align*}
  h^-(M_n) &= 2\prod_{\chi \text{ odd}}\Big( \frac{-1}{2}B_{1,\chi}\Big)=
  2\cdot 2^{-2^{s-2}}
  \prod_{\chi}\left| B_{1,\chi}\right|\\
  &=2\cdot 2^{-2^{s-2}}\bigg(\prod_{\chi}\Big| \sum_{i=1}^{f_{\chi}}i\chi(i)/f_{\chi}
  \Big|^2\bigg)^{1/2}\\
 & =2^{1-2^{s-2}}\bigg(\prod_{\chi}\Big| \sum_{i=1}^{2^{s+1}}i\chi(i)/2^{s+1})
  \Big|^2\bigg)^{1/2}\\
  &=2^{1-2^{s-2}} 2^{-(s+1)2^{s-2}}\bigg(\prod_{\chi}\Big| \sum_{i=1}^{2^{s+1}}i\chi(i)
  \Big|^2\bigg)^{1/2}\\
  &\le 2^{1-(2+s)2^{s-2}}
  \Bigg(\frac{\sum_{\chi}\big|\sum_{i=1}^{2^{s+1}}i\chi(i)\big|^2}{2^{s-2}}
\Bigg)^{2^{s-3}}\\
  &=2^{1-2^{s-1}-s\cdot2^{s-2}-s\cdot2^{s-3}+2^{s-2}}(2^{4s-2})^{2^{s-3}}\\
  &=2^{1-2^{s-2}-3s\cdot 2^{s-3}}2^{4s\cdot 2^{s-3}-2^{s-2}}
  =2^{1-2^{s-1}+s\cdot2^{s-3}}.
  \end{align*}

Again we use \eqref{dated}.
For $i = j$ we have $\chi(i) \bar\chi(j) = 1$ for all $\chi$, and for
$i = 2^{s+1}-j$ we have $\chi(i) \bar\chi(j) = -1$ since our
characters are odd.  For other odd values of $i,j$ the sum is $0$, except that
for $i\equiv j+2^{s}\pmod{2^{s+1}}$ we have $\chi(i) \bar\chi(j) = -1$,
and for $i\equiv 2^s-j\pmod{2^{s+1}}$ we have $\chi(i) \bar\chi(j) = 1$.
Since there are $2^{s-2}$ characters,
\eqref{dated} in this case expands to
\begin{align*}
&2^{s-2}\bigg(
\sum_{i=1\atop
 i\mbox{\tiny{ odd}}}^{2^{s+1}}(i^2 - i(2^{s+1}-i)) + \sum_{i=1\atop
  i\mbox{\tiny{ odd}}}^{2^{s}}i(2^s-i) + \\
&\qquad\qquad\qquad\qquad\sum_{i=2^s+1\atop
 i\mbox{\tiny{ odd}}}^{2^{s+1}}i(3\cdot2^s-i)-2\sum_{i=1\atop
  i\mbox{\tiny{ odd}}}^{2^{s}}i(2^s+i) \bigg)\\
&\mbox{}=2^{s-2}\big( 
(2^{3s+1}-2^{s+1})/3 + (2^{3s-2}+2^{s-1})/3 +\\
&\qquad\qquad\qquad
(13\cdot 2^{3s-2}+2^{s-1})/3 - 2(5\cdot 2^{3s-2}-2^{s-1})/3 \big) \\
&=2^{s-2}2^{3s}=2^{4s-2}.\qedhere
\end{align*}
\end{proof}

\subsection
{\texorpdfstring{The main theorem for \except{toc}
{\boldmath{$\PSUT\big(\Z_{K_{2^s}}^{(2)}=\Z[\zeta_{2^s},1/2]\big)$}}
\for{toc}{$\PSUT\big(\Z_{K_{2^s}}^{(2)}=\Z[\zeta_{2^s},1/2]\big)$}}{The main theorem for
                           PSU2(Z[\unichar{"03B6}2\unichar{"5E}s,1/2])}}
\label{rubbish1}

We first bound 
\[
E_{1,2^s}=E\big(\PSUT\big(Z_{K_{2^s}}^{(2)}\big),\Delta\big)
\]
as in
Section \ref{fern} from above.
\begin{align}
\nonumber
E_{1,2^s}
 &\leq v_{<1}(\gr_s)
\text{ by \eqref{shade1}}\\
\nonumber
&\leq 2h^{-}(F_{2^s}(\sqrt{-3}))+2^{2^{s-2}+1}h^{-}(K_{2^s})
\text{ by Prop.~\ref{prop: roots bound}}\\
\nonumber
&\leq 2^{2+s2^{s-3}}3^{1-2^{s-2}}+2^{1+s+(s-2)2^{s-3}}3^{-2^{s-3}}
\text{ by Thm.~\ref{class1}}\\
\nonumber
&=2^{(s-2\log_23)2^{s-3}}\big(12+2^{1+s-(2-\log_23)2^{s-3}}\big)\\
\nonumber
&\leq 2^{(s-3.2)2^{s-3}}\big(12+2^{1+s-(.4)2^{s-3}}\big)\\
\nonumber
&\leq 2^{(s-3.2)2^{s-3}}(12+2^5)
=44\cdot 2^{(s-3.2)2^{s-3}}\\
& <2^{5.5+(s-3.2)2^{s-3}}\label{green}
\end{align}
since $1+s-(0.4)2^{s-3}\leq 5$ .

We now bound $b_1(\gr_{2^s})$ from below using \eqref{green} and the bounds
on $M_{2^s}$ in Theorem \ref{soup}\eqref{soup1}.
\begin{align}
\label{eq:bound-psu2-2s}
\nonumber
  b_1(\gr_{2^s}) &= 1+ M_{2^s}-E_{1,2^s}
\text{ from Thm.~\ref{masst}\eqref{founded}}\\
\nonumber &>1+2^{(3s-13.7)2^{s-3}-1/2}-2^{5.5+(s-3.2)2^{s-3}}\\   
 &= 1+2^{(s-3.2)2^{s-3}}\big( 2^{(2s-10.5)2^{s-3}-1/2}-2^{5.5}\big).
\end{align}
From \eqref{eq:bound-psu2-2s} we see that $b_1(\gr_{2^s})$ grows as 
$2^{\Omega(s2^s)}$. We know that $b_1(\gr_{2^s})=2^{O(s2^s)}$ from
Proposition~\ref{upper}.  Hence $b_1(\gr_{2^s})=2^{\Theta(s2^s)}$, as claimed.
In addition, if 
$s\geq 6$, then 
\[
(2s-10.5)2^{s-3}-1/2>5.5
\]
 and so $b_1(\gr_{2^s})>1$.
For $s=5$ the bound that follows from this argument is not strong enough to
show that $b_1(\gr_{2^s}) > 1$, but a direct computation 
using \cite[Sect.~7.3]{IJKLZ2} shows that
$b_1(\gr_{32})=40$.
This completes the
proof of Main Theorem~\ref{main} for $\PSU_2\big(\Z_{K_{2^s}}^{(2)}\big)$. 

\subsection
{\texorpdfstring{The main theorem for 
\except{toc}{\boldmath{$\PUT\big(\Z_{K_{2^s}}^{(2)}=\Z[\zeta_{2^s},1/2]\big)$}}
\for{toc}{$\PUT\big(\Z_{K_{2^s}}^{(2)}
=\Z[\zeta_{2^s},1/2]\big)$}}{The main theorem for 
                 PU2(Z[\unichar{"03B6}2\unichar{"5E}s,1/2])}}
\label{rubbish2}

We will need a bound on the number $e_h(\overline{\gr}_{2^s})$
of half-edges in $\overline{\gr}_{2^s}$:
\begin{proposition}
  \label{cor: upshot}
We have 
\begin{align*}
e_h(\overline{\gr}_{2^s})  &\leq
2^{s + (s - 4 - \log_2 3)2^{s-3}} +  2^{1+(s-4)s2^{s-3}} = 2^{(s-4)2^{s-3}}\big(
2^s/3^{2^{s-3}}+2\big)\leq 5\cdot 2^{(s-4)2^{s-3}}.
\end{align*}
\end{proposition}
\begin{proof}
  For the first inequality,
  combine Theorem \ref{class1} with the class number bounds
  in Theorem \ref{ponds}.  For the second, use the easily checked fact that
  $2^s/3^{2^{s-3}}\leq 3$.
\end{proof}

Using Proposition \ref{cor: upshot}, we can
bound $E_{0,2^s}=E\big(\PUT\big(\Z_{K_{2^s}}^{(2)}\big),\Delta\big)$ from above for $s\geq 3$:
\begin{align}
\label{lunch}
\nonumber
E_{0,2^s}&=(E_{1,2^s}+
e_h(\ogr_{2^s}))/2\text{ by \eqref{dealt2}}\\
\nonumber
&< \big( 2^{5.5+(s-3.2)2^{s-3}}+5\cdot 2^{(s-4)2^{s-3}}\big)/2\text{ by \eqref{green}
and Prop.~\ref{cor: upshot}}\\
\nonumber
&=2^{(s-4)2^{s-3}}\big( 2^{5.5-0.8\cdot 2^{s-3}}+5\big)/2\\
&<2^{4+(s-4)2^{s-3}}
\end{align}
since $2^{5.5-0.8\cdot 2^{s-3}}+5<32$ for $s\geq 3$.

We now use this to bound $b_1(\ogr_{2^s})$ from below:
\begin{align}
\label{punted}
\nonumber
 b_1(\ogr_{2^s})&=1+M_{2^s}/2-E_{0,2^s}
\text{ by Thm.~\ref{masst}\eqref{founded}}\\
\nonumber
 &> 1+2^{(3s-13.7)2^{s-3}-3/2}-2^{4+(s-4)2^{s-3}}\text{ by Thm.~\ref{soup}\eqref{soup1}
and \eqref{lunch}}\\
&=1+2^{(s-4)2^{s-3}}\big(2^{(2s-9.7)2^{s-3}-3/2}-2^4\big)\quad\text{for $s\geq 3$}.
\end{align}

We see that $b_1(\overline{\gr}_{2^s})$ grows as $2^{\Omega(s2^s)}$ as $s \to \infty$.
Combining this with Proposition~\ref{upper} then gives 
$b_1(\overline{\gr}_{2^s})=2^{\Theta(s2^s)}$.
Further, $(2s-9.7)2^{s-3}-3/2> 4$ if $s\geq 6$,
so $b_1(\overline{\gr}_{2^s})>1$ if $s\geq 6$.

For $s=5$ the bound coming from this argument is not sufficiently
strong, but the direct computation 
of \cite{IJKLZ2}*{Sect.~7.3} shows that
$b_1(\overline{\gr}_{32}) = 16$.  This completes the proof of Main Theorem~\ref{main}
for $\PUT\big(\Z_{K_{2^s}}^{(2)}\big)$.

\subsection{Summary of the cyclotomy bounds at $\fp$ for $\mathbf{n=2^s}$}
\label{deaf}

Below we summarize the cyclotomy bounds for the $\tts$ family
in this section.
\begin{theorem}
\label{main2b}
Suppose $n=2^s$ with $n\geq 8$.
\begin{align*}
\hspace*{-1.2in}\textup{(a)}\hspace*{2in} E_{1,2^s}&< 2^{5.5+(s-3.2)2^{s-3}}\\
E_{0,2^s}&< 2^{4+(s-4)2^{s-3}}\quad\text{if $s\geq 3$}\\[.1in]
\hspace*{-1in}\textup{(b)}\hspace*{1.77in} b_1(\gr_{2^s})&>1+2^{(s-3.2)2^{s-3}}\big( 2^{(2s-10.5)2^{s-3}-1/2}-2^{5.5}\big)\\
b_1(\ogr_{2^s})&>1+2^{(s-4)2^{s-3}}\big(2^{(2s-9.7)2^{s-3}-3/2}-2^4\big)\quad
\text{if $s\geq 3$}
\end{align*}
\end{theorem}

\section{The main theorem for
  \texorpdfstring{$n=3\cdot 2^s$}{n=3\unichar{"B7}2\unichar{"5E}s}}
\label{three}
\subsection{Class number bounds}
\label{kitty}

          Throughout Section~\ref{kitty}
          we let $n=3\cdot 2^s$ for $s\geq 3$.
        The unit $u_{+}:=2+\zeta_{3\cdot 2^s}+\zeta_{3\cdot 2^s}^{-1}$
          was defined in \eqref{billy}. We have
$h^+(K_n)=h(F_n)$ and $h(K_n)=h^+(K_n)h^-(K_n)$.
Recall that $p_n'=1+\zeta_{3\cdot 2^s}+\zeta^{-1}_{3\cdot 2^s}$
is an element of norm $-2$ generating the unique
ideal $\fp=\fp_n$ of $\Z_{F_n}$ above $2$, and that
$u_{+}\in\Z_{F_n}$  is a totally positive nonsquare unit.

We require two class number bounds to prove the Main Theorem~\ref{main}
in the case $n=3\cdot 2^s$, $s\geq 3$. These class numbers are known
to grow quickly, as we illustrate with known values of $h^{-}(K_n)$.
Again we can prove a lower bound using \cite[Prop.~11.16]{W}:
this time it is 
$$\log h^-(K_\tts) \ge 2^{s-1}(\log 3 + (s-1) \log 2)/4 
- 1.08 \cdot 2^s - 5 \cdot 2^{s-1}/24.$$
  Though this bound is not as
precise as the one in Section~\ref{subsec:bounds-first}, it is certainly
doubly exponential.
\begin{figure}[ht]
\begin{center}
\label{minus3}
\begin{tabular}{l|l}
$s$ & $h^-(\Q(\zeta_{3\cdot 2^s}))$\\
      \hline
     $ 3$ & $1$\\
      $4$ & $1$\\
     $ 5$ & $9=3^2$\\
     $ 6$ & $61353=3^2\cdot 17\cdot 401$\\
     $ 7$ & $107878\,055185\,500777=3^2\cdot 17\cdot 401\cdot 1697\cdot 21121\cdot
      49057$\\
     $ 8$ & $1067\,969144\,915565\,716868\,049522\,568978\,331378\,093561\,484521$\\
       & $ =3^2\cdot 17\cdot 401\cdot 1697\cdot 13313 \cdot 21121\cdot
      49057\cdot 175361\cdot 198593$\\
      & $\cdot 733697\cdot29\,102880\,226241$
 \end{tabular}
\end{center}
\caption{The minus part of the class number for the cyclotomic $n=\Th$ family}
\end{figure}

\begin{theorem}
  \label{class2}
  Let $n=3\cdot 2^s$ with $s\geq 3$.
\begin{enumerate}[\upshape (a)]
  \item
    \label{class21}
  $h^-(K_n)\leq 3^{1-2^{s-1}}2^{s+1+(s-1)2^{s-2}}=
    2^{s+1+\log_23 +(s-1-2\log_23)2^{s-2}}.$
  \item
    \label{class22}
    $h^-\big(F_n(\sqrt{-u_+})\big)\leq 2^{(s-7/2)2^{s-2}+2}$. 
    \end{enumerate}
\end{theorem}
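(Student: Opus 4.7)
The plan for both parts is to execute the two-step recipe announced at the top of the subsection: first apply the analytic class number formula \eqref{classn} to write $h^-(L)$ as a multiple of a product of generalized Bernoulli numbers $B_{1,\chi} = f_\chi^{-1}\sum_{i=1}^{f_\chi} i\chi(i)$, then apply the arithmetic--geometric mean inequality to $\bigl(\prod_\chi |S_\chi|^2\bigr)^{1/2}$ with $S_\chi := \sum_i i\chi(i)$, reducing the task to an upper bound on $\sum_{\chi \text{ odd}} |S_\chi|^2$. The latter is evaluated via character orthogonality
\[
  \sum_{\chi \text{ odd mod } m} \chi(i)\bar\chi(j) \;=\; \frac{\phi(m)}{2}\bigl([i \equiv j] - [i \equiv -j]\bigr) \quad \text{for } (ij,m)=1,
\]
with adjustments from characters of smaller conductor---especially the character $\chi_3$ of conductor $3$---handled as in the proofs of Theorem~\ref{class1}\eqref{class12}--\eqref{class13}.

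For part (a), $L = K_s$ has conductor $m = 3 \cdot 2^s$, $w = 3 \cdot 2^s$, $Q = 2$ (since $m$ is not a prime power), and $2^{s-1}$ odd Dirichlet characters. After pulling out the constants, AM--GM reduces the task to estimating $\sum_\chi |S_\chi|^2 = \sum_{i,j} ij \sum_{\chi \text{ odd}}\chi(i)\bar\chi(j)$, which by the orthogonality relation above collapses to a sum indexed by the congruences $i \equiv \pm j \pmod{3 \cdot 2^s}$ with $(ij,6) = 1$, plus a correction term coming from $\chi_3$. This is the same kind of computation as in Theorem~\ref{class1}\eqref{class12}, carried out modulo $3 \cdot 2^s$, and should produce the stated powers of $3$ and $2$ after elementary bookkeeping on sums of squares of integers coprime to $6$.

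For part (b), $L = F_s(\sqrt{-u_+})$, the key preliminary observation is that $L \subseteq K_{s+1}$, since $\sqrt{u_+} = \zeta_{3 \cdot 2^{s+1}} + \zeta_{3 \cdot 2^{s+1}}^{-1} \in F_{s+1}$ and $i \in K_{s+1}$. Having degree $2^s$ over $\Q$, $L$ is one of three quadratic subextensions of $K_{s+1}/F_s$, and it is distinct from both $K_s = F_s(i)$ and from the totally real $F_{s+1} = F_s(\sqrt{u_+})$ because $u_+$ is a nonsquare in $F_s$ by Theorem~\ref{fields}. Consequently $i, \zeta_3 \notin L$ (either would force $L = K_s$), so $w(L) = 2$. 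The same AM--GM procedure, now with modulus $3 \cdot 2^{s+1}$ and the $2^{s-1}$ odd Dirichlet characters mod $3 \cdot 2^{s+1}$ that are trivial on the index-$2$ subgroup $\Gal(K_{s+1}/L) \subseteq (\Z/3\cdot 2^{s+1}\Z)^\times$, leads to an orthogonality sum whose evaluation mirrors that of Theorem~\ref{class1}\eqref{class13}.

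The principal obstacle in both parts is the careful accounting in the orthogonality computation: identifying exactly which congruence pairs $(i,j)$ contribute, correctly incorporating characters of non-maximal conductor, and ensuring the resulting numerical bound on $\sum_\chi |S_\chi|^2$ has a leading constant sharp enough to yield the stated powers of $2$ and $3$. For part (b) there is the additional preliminary task of pinning down which of the three quadratic subextensions of $K_{s+1}/F_s$ is cut out by which index-$2$ subgroup of $(\Z/3 \cdot 2^{s+1}\Z)^\times$; once that character-theoretic identification is made, the remaining work---combining exponents and comparing with the claimed bounds---is routine.
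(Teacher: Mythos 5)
Your outline for part (a) is exactly the paper's: apply \eqref{classn} with $Q=2$, $w=3\cdot 2^s$, use AM--GM on $\prod_\chi |S_\chi|^2$, and evaluate $\sum_{\chi\text{ odd}}|S_\chi|^2$ by splitting the odd characters into those with $3\nmid f_\chi$ (whose sum reduces to the $2^s$-modulus computation \eqref{cyc 2 chi sum} already done for Theorem~\ref{class1}\eqref{class11}, scaled by $9$) and those with $3\mid f_\chi$ (the full $3\cdot 2^s$-modulus sum done in Theorem~\ref{class1}\eqref{class12}). That matches.

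For part (b) you take a genuinely different route in the analytic estimate. Your correct preliminary observations --- that $\sqrt{u_+}=\zeta_{3\cdot 2^{s+1}}+\zeta_{3\cdot 2^{s+1}}^{-1}$, so $L_s\subseteq K_{s+1}$, and that $L_s\neq K_s$ and $L_s\neq F_{s+1}$ force $w(L_s)=2$ --- agree with the paper (which makes the same identification and deduces $w=2$ by noting $\sigma$ cannot fix $\zeta_3$ or $\zeta_4$). But you then propose to evaluate $\sum_\chi|S_\chi|^2$ exactly via the orthogonality relation for the characters of $(\Z/3\cdot 2^{s+1}\Z)^\times$ trivial on $\Gal(K_{s+1}/L_s)$, mirroring Theorem~\ref{class1}\eqref{class13}. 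The paper instead bypasses the orthogonality computation entirely for this case: it bounds $\sum_\chi|S_\chi|^2\le\sum_\chi\sum_{i=1}^{f_\chi}|i\chi(i)|^2$ and then sums the right side conductor by conductor, using the explicit count of how many characters of $L_s$ have each conductor $2^k$ or $3\cdot 2^k$ and the elementary closed form for $\sum_{(i,f)=1,\,i\le f}i^2$. Your route would give a sharper bound at the cost of a more elaborate bookkeeping of the congruence pairs $(i,j)$ contributing to the orthogonality sum modulo $3\cdot 2^{s+1}$; the paper's cruder estimate is easier and still suffices for the stated inequality.

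One real gap in your write-up of (b): you never address $Q(L_s)$. The factor $Q$ appears multiplicatively in \eqref{classn}, so for an upper bound on $h^-(L_s)$ you must either compute $Q(L_s)$ or at least invoke $Q\le 2$; the paper proves $Q(L_s)=2$ by exhibiting the nonreal unit $i(\zeta_{3\cdot 2^s}+\zeta_{3\cdot 2^s}^{-1})$, which cannot be a root of unity times a real unit since $W(L_s)=\{\pm1\}$. Absent that, your constant prefactor is not pinned down. The rest of the plan, once $Q$ is handled, would produce a bound of the claimed quality.
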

\begin{proof}
The method of proof is the same as that used to prove Theorem~\ref{class1}.\\
 \eqref{class21}:  For $K_n$ we have $Q=2$ and $w=3\cdot2^s$.  From \eqref{classn}
we have
\begin{align*}
 h^-(K_n)& = 3\cdot2^{s+1}\prod_{\chi \text{ odd}}\Big( \frac{-1}{2}B_{1,\chi}\Big)=
  3\cdot2^{s+1}2^{-2^{s-1}}
  \prod_{\chi}\left| B_{1,\chi}\right|\\
  &=3\cdot2^{s+1-2^{s-1}}\bigg(\prod_{\chi}\Big| \sum_{i=1}^{f_{\chi}}i\chi(i)/f_{\chi}
  \Big|^2\bigg)^{1/2}\\
  &=3\cdot 2^{s+1-2^{s-1}}\bigg(\prod_{\chi}\Big| \sum_{i=1}^{3\cdot2^s}i\chi(i)/(3\cdot2^s)
  \Big|^2\bigg)^{1/2}\\
 &=3^{1-2^{s-1}} 2^{s+1-(s+1)2^{s-1}}\bigg(\prod_{\chi}\Big| \sum_{i=1}^{3\cdot2^s}i\chi(i)
  \Big|^2\bigg)^{1/2}\\
&\le3^{1-2^{s-1}}
  2^{s+1-(s+1)2^{s-1}}\Bigg(\frac{\sum_{\chi}\Big|\sum_{i=1}^{3\cdot2^s}i\chi(i)\Big|^2}{2^{s-1}}\Bigg)^{2^{s-2}}\\
\,\,\,\le 3^{1-2^{s-1}}&2^{s+1-(s+1)2^{s-1}}\Big(\frac{3\cdot 2^{s-2}\cdot(2^{3s-2}-2^s)+2^{2s-1}(2^{2s}+2^{s+1}-6)}{2^{s-1}}\Big)^{2^{s-2}}\\
 &=3^{1-2^{s-1}} 2^{s+1-(s+1)2^{s-1}}(11 \cdot 2^{3s-3}+2^{2s+1}-15\cdot 2^{s-1})^{2^{s-2}}\\
 &\le3^{1-2^{s-1}} 2^{s+1-(s+1)2^{s-1}} \left(2^{3s+1}\right)^{2^{s-2}} = 3^{1-2^{s-1}}2^{s+1+(s-1)2^{s-2}}.\\
\end{align*}

However, this time we split the characters
into two sets based on whether $3$ divides the conductor of the character:
\begin{align*}
\sum_\chi \big|\sum i\chi(i)\big|^2 &= \sum_\chi \sum_{i,j} ij\chi(i) \bar\chi(j)
= \sum_{3\nmid f_\chi} \sum_{i,j=1}^{3\cdot2^s} ij\chi(i) \bar\chi(j) + \sum_{3\mid
  f_\chi} \sum_{i,j=1}^{3\cdot2^s} ij\chi(i) \bar\chi(j)\\
&=9\sum_{3\nmid f_\chi} \sum_{i,j=1}^{2^s} ij\chi(i) \bar\chi(j) + \sum_{3\mid
  f_\chi} \sum_{i,j=1}^{3\cdot2^s} ij\chi(i) \bar\chi(j).
\end{align*}
These two character sums were already evaluated above: the first in
\eqref{cyc 2 chi sum} and the second in the course of proving part 2 of
Theorem~\ref{class1}.  
In particular, they are equal to
$$2^{s-2} \cdot \Big( \frac{2^{3s-2}-2^s}{3}\Big), \quad 2^{2s-1}\big(2^{2s}+(-2)^{s+1}-6\big).$$

 \eqref{class22} Let $L_n=F_n(\sqrt{-u_+})$ and note that
  $L_n\subseteq K_{3\cdot 2^{s+1}}$, fixed by $\sigma\in\Gal(K_{3\cdot 2^{s+1}}/\Q)$
  with
  $\sigma: \zeta_{3\cdot 2^{s+1}}\mapsto \zeta_{3\cdot 2^{s+1}}^{3\cdot 2^s-1}$.
  We claim that $w(L_n)=2$; if either $\zeta_3$ or $\zeta_4$ belonged
  to $L_n$ they would be fixed by $\sigma$, but they are not since
  $3\cdot 2^s-1$ is not congruent to $1$ mod $3$ or $4$.
  To see that $Q=Q(L_n)=2$, note that $i(\zeta_{3\cdot 2^s}+\zeta_{3\cdot 2^s}^{-1})$ is a nonreal unit of $L_n$, and, since $W(L_n)=\langle\pm 1\rangle$, it cannot
  be the product of a real unit and a root of unity.

  The $2^{s-1}$ odd characters corresponding to $L_n$ all satisfy
  $\chi(-1) = -1 \ne \chi(3 \cdot 2^s-1)$, so their conductors cannot
  divide $3 \cdot 2^s$.  Thus there are $2^{s-2}$ each of conductor $2^{s+1}$
  and $3 \cdot 2^{s+1}$.  The product of these conductors is
  $3^{2^{s-2}} \cdot 2^{(s+1)\cdot 2^{s-1}}$.
  We note that with $f=f_{\chi}$,  $\sum_{i=1}^f
  \left|i\chi(i)\right|^2$ is equal to $f^3/9+f/3$ if $f=3\cdot 2^{k}$
  and to $(f^3-f)/6$ if $f=2^k$.

  We use this to estimate $\sum_{\chi}\big|\sum_{i=1}^{f_\chi}i\chi(i)\big|^2
  \leq \sum_{\chi}\sum_{i=1}^{f_\chi}|i\chi(i)|^2$.
  The characters whose conductors are not multiples of $3$ give
  \begin{equation*}
2^{4s-2},
\end{equation*}
while those with $3|f$ give
\begin{equation*}
2^{s-2}\big(3\cdot 2^{3(s+1)}+(-1)^s\cdot 3\cdot 2^{2s+4}\big).
\end{equation*}
The total is bounded above by 
\begin{equation*}
  (1+7)\cdot 2^{s-3} \cdot 2^{3(s+1)} = 2^{4s+3}.
  \end{equation*}

We now put everything together.  We need an upper bound
for
\[
h^-(L_n)=Qw\prod_{\chi}\Big(-\frac{B_{1,\chi}}{2}\Big)=
\frac{\prod_{\chi}B_{1,\chi}}{2^{2^s-2}}.
\]
Recalling $B_{1,\chi}=\sum_{i=1}^{f_{\chi}}i\chi(i)/f_{\chi}$ and using our
determination of the product of the conductors above, we rewrite this as
\begin{equation}
  \label{eagle}
  \frac{\prod_{\chi}\sum_{i=1}^{f_\chi}i\chi(i)}
  {3^{2^{s-2}} \cdot 2^{(s+1)\cdot 2^{s-1}} \cdot 2^{2^s-2}}.
\end{equation}
To estimate the numerator of \eqref{eagle}, note that 
\begin{align*}
  \prod_{\chi}\sum_{i=1}^{f_\chi}i\chi(i) &\leq \Big(\prod_{\chi}\sum
  _{i=1}^{f_{\chi}}\left| i\chi(i)\right|^2\Big)^{1/2}
  \leq \bigg(\frac{\sum_{\chi}\sum_{i=1}^{f_\chi}
    | i\chi(i)|^2}{2^{s-1}}\bigg)^{2^{s-2}}\\
&\leq\Big(\frac{2^{4s+3}}{2^{s-1}}\Big)^{2^{s-2}}=2^{(3s+4)\cdot 2^{s-2}}.
\end{align*}
On the other hand, the denominator of \eqref{eagle} is
\[
3^{2^{s-2}} \cdot 2^{(s+3) \cdot 2^{s-1}-2} > 2^{(2s+15/2)2^{s-2}-2},
  \]
  where we have taken $3>2^{3/2}$.  We conclude that
  $h^{-}(L_n)\leq 2^{(s-7/2)2^{s-2}+2}$.

\end{proof}

\subsection
{\texorpdfstring{The main theorem for 
\except{toc}{\boldmath{$\PSUT\big(\Z_{K_\tts}^{(2)}=\Z[\zeta_{3\cdot 2^s},1/2]\big)$}}
\for{toc}{$\PSUT\big(\Z_{K_\tts}^{(2)}=\Z[\zeta_{3\cdot 2^s},1/2]\big)$}}
{The main theorem for 
                 PSU2(Z[\unichar{"03B6}3\unichar{"B7}2\unichar{"5E}s,1/2])}}
\label{rubber1}

As in the $2^s$ case, our next task is to bound 
$E_{1,\tts}=E\big(\PSUT\big(\Z_{K_\tts}^{(2)}\big),\Delta\big)\big)$.
\begin{align}
\label{west1}
\nonumber
E_{1,\tts}&\leq v_{<1}(\gr_{\Th})
\text{ by \eqref{shade1}}\\
&\leq\big(2^{2^{s-1}-s+1}+2^{2^{s-1}}\big)h^-(K_\tts)\text{ by
Prop.~\ref{prop: three class}}\\
\nonumber
&\leq \big(2^{2^{s-1}-s+1}+2^{2^{s-1}}\big)2^{s+1+\log_23+(s-1-2\log_23)2^{s-2}}\text{ by
Thm.~\ref{class2}\eqref{class21}}\\
\nonumber
&=2^{(s-2)2^{s-2}}\left( 2^{2+\log_23}+2^{s+1+\log_23}\right)\\
\nonumber
&<2^{(s-2)2^{s-2}}( 2^{4}+2^{s+3})
\text{ since $\log_23<2$, $1-2\log_23<-2$}\\
\label{west}
&\leq 2^{(s-2)2^{s-2}}( 2^{s+4})=2^{s+4+(s-2)2^{s-2}}.
\end{align}

We now use this to bound the first Betti number
$b_1(\gr_{\Th})$ from below.

\begin{align}
\label{pig}
  b_1(\gr_{\Th})
  &= 1 +M_{\Th}-E_{1,\tts}
\text{ from Thm.~\ref{masst}\eqref{founded}}\\
\nonumber
&> 1+ 2^{(3s-12)2^{s-2}+1}-2^{s+4+(s-2)2^{s-2}}\text{ by Thm.~\ref{soup}\eqref{soup2}
and \eqref{west}}\\
\label{SU 3 2}
&=1+2^{s2^{s-2}}\big(2^{(2s-12)2^{s-2}+1}-2^{s+4-2^{s-1}}   \big)
\end{align}
For large $s$ the term $2^{s+4-2^{s-1}}$ is negligible compared
to $2^{(2s-12)2^{s-2}+1}$, and so $b_1(\gr_{\Th})$ grows as $2^{\Omega(s2^s)}$.
Additionally,
\[
(2s-12)2^{s-2}+1\geq s+4-2^{s-1}\quad\text{if}\quad s\geq 6,
\]
and hence $b_1(\gr_{\Th})\geq 1$ if $s\geq 6$.
If $s=5$, we have from Table 2 that $h^-(\Q(\zeta_{3\cdot 2^5}))=9$.
Then from \eqref{west1} and Theorem \ref{soup}\eqref{soup2}, we have
\begin{equation}
\label{nuts}
E_{1,3\cdot 2^5}\leq \big(2^{12}+2^{16}\big)9<2^{20}
\quad\text{and}\quad M_{3\cdot 2^5}>2^{25}.
\end{equation}
Hence from \eqref{pig} we have
\begin{equation}
\label{pig1}
b_1(\gr_{3\cdot 2^5})\geq 1+M_{3\cdot 2^5}-E_{1,3\cdot 2^5}
\geq 1+ 2^{25}-2^{20}.
\end{equation}
For $s=4$ the direct computation of \cite[Sect.~7.6]{IJKLZ2}
shows that $b_1(\gr_{48})=20$.

\subsection
{\texorpdfstring{The main theorem for 
\except{toc}{\boldmath{$\PUT\big(\Z_{K_\tts}^{(2)}=\Z[\zeta_{3\cdot 2^s},1/2]\big)$}}
\for{toc}{$\PUT\big(\Z_{K_\tts}^{(2)}=(\Z[\zeta_{3\cdot 2^s},1/2]\big)$}}{The main theorem for 
                 PU2(Z[\unichar{"03B6}3\unichar{"B7}2\unichar{"5E}s,1/2])}}
\label{rubber2}

Combining the class number bounds
in Theorem~\ref{class2}
and Corollary~\ref{cor:ramifying} gives
a  bound on the number $v_r(\gr_{3\cdot 2^s}/\ogr_{\Th})$
of ramified vertices of $\gr_n$ over $\ogr_n$ with $n=\tts$: 
\begin{align}
\label{cart}
\nonumber
v_r(\gr_n/\ogr_n)&\le 2h^-(K_n) +
h^-(\Z_{F_n}[\sqrt{-u_+}\,])\\
\nonumber
&\le 2\cdot3^{1-2^{s-1}}2^{s+1+(s-1)2^{s-2}} +
2^{(s-7/2)2^{s-2}+2}\\
\nonumber
&= 2^{s2^{s-2}}\big(3^{1-2^{s-1}}2^{s+2-2^{s-2}}+2^{(-7/2) 2^{s-2}+2}\big)\\
&\leq 2^{1+s2^{s-2}}\text{ if $s\geq 3$}
\end{align}
since $\log_23(1-2^{s-1})+s+2-2^{s-2}<0$ and  $(-7/2) 2^{s-2}+2<0$ if $s\geq 3$.

We can now bound $E_{0,\tts}=E\big(\PUT\big(\Z_{K_\tts}^{(2)}\big),\Delta\big)$:
\begin{align*}
E_{0,\tts}&=\big(E_{1,\tts}+
v_r(\gr_{\Th}/\ogr_{\Th})\big)/2 \text{  by \eqref{lava1}}\\
\nonumber
&<2^{s+4+(s-2)2^{s-2}}+  2^{1+s2^{s-2} }\text{ by \eqref{west} and \eqref{cart}}\\
&= 2^{1+s2^{s-2}}\big(2^{3+s-2^{s-1}}+1\big)<2^{2+s2^{s-2}}\text{ if $s\geq 4$}
\end{align*}
since $3+s-2^{s-1}<0$ if $s\geq 4$.

Lastly we bound $b_1(\ogr_{\Th})$ for $s\geq 4$:
\begin{align}
\label{supper}
\nonumber
b_1(\ogr_{\Th})&= 1+M_{\Th}/2-E_{0,\tts}\text{ by 
Thm.~\ref{masst}\eqref{founded}}\\
&>1+2^{(3s-12)2^{s-2}}-2^{2+s2^{s-2}}=1+2^{s2^{s-2}}\big(2^{(2s-12)2^{s-2}}-2^2\big)
\end{align}

We see that $b_1(\overline{\gr}_{\Th})$ grows as $2^{\Omega(s2^s)}$ as $s
\to \infty$. By Proposition~\ref{upper} we have
$b_1(\overline{\gr}_{\Th})=2^{\Theta(s2^s)}$.
Further, if $s\geq 6$, then $(2s-12)2^{s-2}\geq 2$ and so
$b_1(\ogr_{\Th})>1$ if $s\geq 6$.
If $s=5$, then from Corollary \ref{cor:ramifying} we have
\begin{align}
\label{gin}
\nonumber
v_r(\gr_{3\cdot 2^5}/\ogr_{3\cdot 2^5})&\leq 2h^-(K_{3\cdot 2^5})+
h^-\big(F_{3\cdot 2^5}(\sqrt{-u_{+}}\,)\big)\\
&\leq 2\cdot 9 + 2^{(1/2)2^3 +2}=2 \cdot 9+2^6
\end{align}
using Table 2 and Theorem \ref{class2}\eqref{class22}.
So substituting   \eqref{nuts} and \eqref{gin} into \eqref{lava1}, 
\begin{equation}
\label{sunny}
E_{0,3\cdot 2^5}\leq \big(2^{20}+18+2^6\big)/2<
2^{20}.
\end{equation}
But then combining \eqref{nuts} with Theorem \ref{masst}\eqref{founded}
gives
\[
b_1(\ogr_{3\cdot 2^5})=1 +M_{3\cdot 2^5}/2-E_{0,3\cdot 2^5}
\geq 1+2^{24}-2^{20}.
\]
  And for $s=4$ the direct 
computation of \cite[Sect.~7.6]{IJKLZ2} shows that $b_1(\ogr_{3\cdot 2^4}) = 8$.

\subsection{Summary of the cyclotomy bounds
at $\fp$ for $\mathbf{n=\tts}$}
\label{blind}

Below we summarize the cyclotomy bounds for the $\tts$ family
in this section.
\begin{theorem}
\label{main2a}
Suppose $n=3\cdot 2^s$ with $n\geq 12$.
\begin{align*}
\hspace*{-1in}\textup{(a)}\hspace*{1.95in}E_{1,\Th}&< 2^{s+4+(s-2)2^{s-2}}\\
E_{0,\Th}& <2^{2+s2^{s-2}}\quad\text{if $s\geq 4$}\\[.1in]
\hspace*{-1in}\textup{(b)}\hspace*{1.74in}
b_1(\gr_{\Th})&>1+2^{s2^{s-2}}\big(2^{(2s-12)2^{s-2}+1}-2^{s+4-2^{s-1}}\big)\\
b_1(\ogr_{\Th})&>1+2^{s2^{s-2}}\big(2^{(2s-12)2^{s-2}}-2^2\big)\quad\text{if
$s\geq 4$}.
\end{align*}
\end{theorem}

\section{A second approach to bounding the corank from below}
\label{sec:bounding-below}

In this section we give an alternate analytic approach to bounding the corank
from below.  
We will use a result of Zograf
\cite{zograf} which relies on the famous inequality $\lambda_{1}\geq 3/16$
of Selberg \cite{Selberg}
as well as work of Yang and Yau
\cite{yy}.
The formulation we need can be found
in \cite[Lemma 1.1]{voight2}.

It is possible to deduce results on $b_1(\gr_n)$ from 
analytic results on eigenvalues of the Laplacian 
because of ``{\em interchanging local invariants}\,'', 
  which shows that
the graph $\gr_n$ arises from the bad
reduction of a Shimura curve over $F_n$.
Interchanging local invariants is  originally due to \v{C}erednik \cite{cer}
with subsequent generalizations and reformulations by Varshavsky
\cite{var1}, \cite{var}.  The specific theorems for 
Shimura curves are in \cite[Sect.~5]{var} and more accessibly summarized
in \cite[Sect.~3]{jlv}.

We begin with preliminary Sections \ref{taco}, \ref{sale},
and \ref{credit} which give the general theory of
interchanging local invariants. 
Then in Sections \ref{super}, \ref{debt}, and \ref{cats}
we specialize to our case of the Hamilton quaternions
$\H_n$ over $F_n=\Q(\zeta_n)^+$ with $n=2^s$
or $n=\tts$, $n\geq 8$.

To establish notation for these 
preliminary sections, we
let $F$ be a totally real number field with a finite prime $\wp$
above the rational prime $p$
and infinite primes $\infty_1$, \ldots , $\infty_r$ with
$r=[F:\Q]$. Denote by $F_+^\times$ the totally positive elements
of $F^\times$.  Let $\Z_F$ be the integers of $F$ with
completions $\Z_{F_w}$ for primes $w$ of $F$ and with
profinite completion $\hat{\Z}_F\cong \prod_w \Z_{F_w}$.
Set 
\[
\Z_F^{\wp}=\Z_F[1/p]=\{\tau\in F^\times\mid \text{$\tau$ is integral at $v$
 for all 
finite  primes }v\neq \wp\} ,
\]
i.e., $\Z_F^{\wp}$ is the ring of $\wp$-integers of $F$.
 Denote by $\A$ the ad\`{e}les of $F$ with
$\Af$ the finite ad\`{e}les of $F$ and $\Afp$ the
finite ad\`{e}les of $F$ away from $\wp$.
Let $\I$, $\If$, $\Ifp$ be  the id\`{e}les of $F$, the finite
id\`{e}les of $F$, and the finite id\`{e}les  of $F$ away from $\wp$, 
respectively.

For a subgroup $H\leq \GLT(R)$ with $R$ a subring of a completion
of $F$, denote by 
$\Pro\!\! H$
its image in $\PGLT(R)$. (If $H\leq \SLT(R)$, then $\Pro\!\! H\leq \PSLT(R)$.)

\subsection{Archimedean uniformization of Shimura curves}
\label{taco}

Let $\Bi$ be a quaternion algebra over $F$
ramified at $\wp$, $\infty_2$, \ldots, $\infty_r$ and split at
$\infty_1$. Let $\O\subseteq \Bi$ be a maximal order.
For each finite prime $w$ of $F$, $\mathcal{O}_w\colonequals
\mathcal{O}\otimes_{\Z_F}\Z_{F_w}$ is
a maximal order in $B^{\rm{int}}_w$. 
Let $\GGi$ be the algebraic group over $F$ associated to
$B^{\rm{int},\times}$; we have $\GGi(F)=\Bit$. Put $\Gif\colonequals \GGi(\Af)$
and $G^{\rm{int},\wp}_{\rm{f}}\colonequals \GGi(\Afp)$.

For a compact open subgroup  $U\subset \Gif$ the corresponding 
complex analytic Shimura curve is
\[
\YiUt\colonequals \left(\fhpm\times (U\backslash \Gif)\right)/\Bit
\]
with $\fhpm\colonequals \BC\setminus \BR$
and $\Bit$ acting on $\fhpm\times (U\backslash \Gif)$
by $(x,g)\gamma = (\gamma^{-1}(x),g\gamma)$
 has a canonical model
$\YiU$ defined over $F$ and embedded into $\BC$ via $\infty_1$.
The geometrically irreducible components of $Y^{\rm int}_U$ are 
\begin{equation}
\label{fox}
\pi_0(Y^{\rm{int}}_U)=\pi_0(\tilde{Y}^{\rm{int}}_U)=U\backslash \Gif/
B^{\rm{int},\times}_+,
\end{equation}
where  
\[
B^{\rm{int},\times}_+=\{b\in B^{\rm{int},\times}\mid \infty_1(\nrd b)>0\}=\{b
\in B^{\rm{int},\times}\mid \nrd b\in F^\times_+\}
\]
with $\nrd: B^{\rm{int},\times}\rightarrow F^\times$
the reduced norm.
Using Strong Approximation reduced norm induces an isomorphism
\begin{equation}
\label{hare}
\nrd:\pi_0(Y^{\rm{int}}_U)=U\backslash\Gif/B^{\rm{int},\times}_+
\stackrel{\sim}{\longrightarrow}
\nrd(U)\backslash \If/F^\times_+ =\If/\nrd(U)F^\times_+\equalscolon \Cl^+_U(\Z_F) .
\end{equation}
By classfield theory, the subgroup $\nrd(U)F^\times_+\subset \If$
corresponds to an abelian extension $F_+(U)/F$ with $\Gal(F_+(U)/F)\cong
\If/\nrd(U)F^\times_+$.  Each (geometrically) 
connected component of $Y^{\rm{int}}_U$
is defined over the abelian extension $F_+(U)/F$ and $\pi_{0}(Y^{\rm{int}}_U)$ is a torsor under $\Gal(F_+(U)/F)$. Since the
connected components of $Y^{\rm{int}}_U$ are all conjugate under Galois,
they all are curves of the same genus $g(U)$ (although not necessarily
isomorphic).  For $a\in\Gif$, set $\Gamma_a(U)=a^{-1}Ua\cap B^{\rm{int},
\times}_+$.  Then each connected component of $\tilde{Y}^{\rm int}_U$
is of the form $\tilde{Y}^{\rm{int}}_U(a)\colonequals\Gamma_a(U)\backslash \fh$
for $a\in\Gif$. Put $\Gamma(U)=\Gamma_a(U)$ in case 
$a=(1,1,\ldots, 1)\in\Gif$. We have
\begin{equation*}
\label{forty}
\pi_0(\tilde{Y}^{\rm{int}}_U)=
\{\tilde{Y}^{\rm{int}}_U(a)\mid a\in U\backslash\Gif/B^{\rm{int},\times}_+\}.
\end{equation*}

The $\YiU$'s form a projective system with inverse limit
\[
\Yi=\lim_{\leftarrow}{}_{\!U}\, \YiU ,
\]
a scheme over $F$ (which is not of finite type)  acted upon
by $\Gif$. We have $U\backslash \Yi=\YiU$. Set $\Gg\colonequals
\GGi(\Afp)$ and fix a quaternion division algebra $\Btp$ over $F_\wp$.
We then have $\Gif=\Btp^\times \times \Gg$. The ring of integers
$\mathcal{O}_{\Btp}$ is normalized by $\Btp^\times$, and $\Btp^\times/
\mathcal{O}_{\Btp}^\times$ is identified with $\Z$ via the valuation of the norm.
Set $Y^{\rm{int},\wp}\colonequals \mathcal{O}_{\Btp}^\times\backslash Y^{\rm{int}}$.
The group $\Gif$ acts on $Y^{\rm{int},\wp}$ via its quotient
\[
\Gif/\mathcal{O}_{\Btp}^\times \cong \Gg\times\Z.
\]
Let $U_0\subset \Gif$ be the compact open subgroup
\[
U_0\colonequals \prod_w\mathcal{O}^\times_w =
\mathcal{O}\otimes_{\Z_F}\hat{\Z}_F\subset \Gif\quad\text{with}\quad
U^{\wp}_0\colonequals \prod_{v\neq \wp}\mathcal{O}^\times_v
\subset\Gfp=\mathcal{G} .
\]
Note that $U_0=U_0^\wp\times\mathcal{O}_{\tilde{B}_\wp}^\times$
and 
\[
\mathcal{O}_{\tilde{B}_\wp}^\times\backslash U_0=U_0^\wp\times
\{0\}\subseteq \mathcal{G}\times\Z=\mathcal{G}\times 
\left(\mathcal{O}_{\tilde{B}_\wp}^\times
\backslash \tilde{B}_{\wp}^\times\right).
\]
We have
\[
Y^{\rm{int}}_{U_0}=U_0\backslash Y^{\rm{int}}=U_0^\wp\backslash Y^{\rm{int},\wp}.
\]

By \cite[Lemma 13.4.9, Cor.~28.6.8]{voight}, we have
$\nrd(U_0)=\prod_w\Z_{F_w}^\times$.  Hence
\[ 
\Cl^+_{U_0}(\Z_F)=\I_{\rm{f}}/\nrd(U_0)F^\times_+=\Cl^+(F),
\]
the strict class group of $F$. So, by \eqref{hare},
\begin{equation}
\label{ghost}
\pi_0(Y^{\rm{int}}_{U_0})\cong \Cl^+(F)
\end{equation}
and each component of $Y^{\rm{int}}_{U_0}$ is defined over 
the strict Hilbert classfield
$\Hi^+(F)$ of $F$.  
Set
\begin{align}
\label{fat}
\mathcal{O}^\times_+&\colonequals\{\gamma\in\mathcal{O}^\times\mid
\infty_1(\nrd\gamma)>0\}=\{\gamma\in\mathcal{O}^\times\mid
\nrd\gamma\in F_+^\times\},\\
\nonumber
\mathcal{O}_1^\times&\colonequals \{\gamma\in\mathcal{O}^\times\mid
\nrd\gamma =1\}
\end{align}
and note that $U_0\cap B_+^\times=\mathcal{O}_+^\times$.
Define
\begin{align}
\label{ranch}
\Gamma_+(\O)&\colonequals \Pro\!\O^{\times}_+\subset\PGL_2^+(F)\\
\nonumber
\Gamma_1(\O)&\colonequals \Pro\!\O^\times_1\subset \PSLT(F).
\end{align}
\begin{lemma}
\label{radish}
Reduced norm induces an isomorphism
\begin{equation}
\label{rat}
\nrd:\Gamma_+(\O)/\Gamma_1(\O)\stackrel{\sim}{\longrightarrow}
\Z_{F,+}^\times/(\Z_F^\times)^2.
\end{equation}
\end{lemma}
\begin{proof}
It is immediate that the map induced by $\nrd$ in \eqref{rat}
in injective.  It is surjective by \cite[Cor.~31.1.11]{voight}.
\end{proof}

\subsection{Nonarchimedean uniformization of Shimura curves}
\label{sale}

Now we turn to the interchanged quaternion algebra.
Let $B$ be a quaternion algebra over $F$ split at $\wp$;
ramified at $\infty_1$, $\infty_2$, \ldots, $\infty_r$; 
and with $B_v\cong B^{\rm{int}}_v$ for all finite primes $v\neq \wp$.
Let $\fO\subset B$ be a maximal $\Z_F^{\wp}$-order 
with $\fO_v\colonequals \fO\otimes_
{\Z_F}\Z_{F_v}$ a maximal order in $B_v$ for each finite prime $v\neq\wp$ of $F$.
Let $\GG$ be the algebraic group over $F$ associated to
$B^\times$; we have $\GG(F)=B^\times$. Put $G_{\rm{f}}\colonequals \GG(\Af)$
and $\Gfp\colonequals \GG(\Afp)$.
We fix an isomorphism
\begin{equation}
\label{birthday}
B^{\rm{int}}\otimes \Afp\cong B\otimes \Afp.
\end{equation}
Set $\mathcal{G}=\Gfp=G_{\rm{f}}^{\rm{int},\wp}$.

Let $L/\Q_p$ be a finite extension with integers
$\Z_L$ having the unique prime ideal $v$. 
For an integer $n>0$, let $L^{(n)}$ be the unramified
extension of $L$ of degree $n$ with integers $\Z_{L^{(n)}}$.
A summary of  results on $v$-adic uniformization of 
curves follows; see \cite[Sect.~3, 4]{JL} and \cite{jlv} for more 
complete treatments
and references. 

The $v$-adic upper half plane $\mathscr{P}_L$ is a formal scheme over $\Z_L$
which admits a natural action by $\PGLT(L)$. Its special fiber
$\mathscr{P}_{L,0}$ has components which are smooth rational curves with
dual graph canonically identified
with the Bruhat-Tits building $\Delta_v$ of $\SLT(L)$.  Suppose
$\overline{\Gamma}$ is a discrete, cocompact subgroup of 
$\PGLT(L)$.  Then the quotient $\overline{\Gamma}\backslash\mathscr{P}_L$
exists and is canonically the formal completion of a scheme
$\mathscr{P}_{\overline{\Gamma}}/\Z_L$ along its closed fiber.
\begin{prop}
\label{dog}
The genus $g(\mathscr{P}_{\overline{\Gamma}})$ of the curve $\mathscr{P}_{\overline{\Gamma}}$
is the first Betti number $b_1(\overline{\Gamma}\backslash \Delta_v)$
of the graph $\overline{\Gamma}\backslash\Delta_v$.
\end{prop}
\begin{proof}
This follows from \cite[Prop.~3-2]{K} once we observe
that Kurihara's graph $(\overline{\Gamma}\backslash \Delta_v)^\ast$ 
is homotopic to $\overline{\Gamma}\backslash \Delta_v$.
\end{proof}

Let $\Z_L^{\rm{nr}}$ be a strict Henselization of $\Z_L$ and let 
$\Fr_L:\Z_L^{\rm{nr}}\rightarrow \Z_{L}^{\rm{nr}}$ be the Frobenius map.
Drinfeld \cite{drin}  constructed a projective system of \'{e}tale covers of the 
$v$-adic upper
half plane with projective limit
\[
\Omega^{\rm{nr}}_L=\mathscr{P}_L\times_{\Spf \Z_L}
\Spf \Z_L^{\rm{nr}},  
\]
viewed as a formal scheme over $\Z_L$ .
For an element $\gamma\in\GLT(L)$ denote by $[\gamma]$ the image
of $\gamma$ in $\PGLT(L)$. We let $\GLT(L)$ act on $\Omega_L^{\rm{nr}}$
by
\[
\gamma:(x,u)\rightarrow ([\gamma]x, \Fr_L^{\val_v(\nrd\gamma)}u)
\]
for $x$ a point of $\mathscr{P}_L$ and $u$ of $\Spf \Z_{L^{\rm{nr}}}$, where
$[\gamma]$ acts on $x$ via the natural action.

Let $\Gamma\subset\GLT(L)$ be a subgroup and set $Z\Gamma\colonequals
\Gamma\cap L^\times$ so that $\Pro\!\Gamma = \Gamma/Z\Gamma$.
As in \cite[Sect.~2]{jlv},
define $k=k(\Gamma)$ and $k_+=k_+(\Gamma)$ by $\val_v(Z\Gamma)=k\Z$
and $\val_v(\det\Gamma)=k_+\Z$.
Then $2k$ is a multiple of $k_+$.  
Assume that $\Pro\!\Gamma\subset\PGLT(L)$
is discrete and cocompact and also that $k(\Gamma)>0$.
Set
\begin{equation}
\label{fit}
\Gamma'\colonequals \{\gamma\in\Gamma\mid 2k|\val_v(\det\gamma)\}.
\end{equation}
Then $Z\Gamma\leq \Gamma'\unlhd\Gamma$ and $\Gamma/\Gamma'$ is
cyclic of order $2k/k_+$.
Then the  quotient 
\[
\Gamma\backslash\Omega_L^{\rm{nr}}=\Pro\!\Gamma\backslash\big(\Z\Gamma\backslash
\Omega_L^{\rm{nr}}\big)=\Pro\!\Gamma\backslash \big(\mathscr{P}_L
\times_{\Spf \Z_L}\Spf \Z_{L^{(2k)}}\big)
\]
exists, and is
canonically the completion along the closed fiber of a 
projective geometrically connected scheme
$\Omega_\Gamma$ defined over $\Z_{L^{(k_+)}}$.  More precisely
$\Pro\!\Gamma'\backslash \mathscr{P}_L$ algebraizes to a projective
geometrically connected variety $\mathscr{P}_{\Pro\!\Gamma'}/\Z_L$
and 
\[
\Omega_\Gamma=(\Gamma/\Gamma')\backslash (\mathscr{P}_{\Pro\!\Gamma'}
\times_{\Z_L}\Z_{L^{(2k)}}).
\]
So $\Omega_\Gamma$ is a \textsf{(Frobenius) twist} of the \textsf{Mumford
uniformized} $\mathscr{P}_{\Pro\!\Gamma'}$.  In particular the genus
$g(\Omega_{\Gamma})$ of the curve $\Omega_\Gamma$ given by
\begin{equation}
\label{real}
g(\Omega_{\Gamma})=g(\mathscr{P}_{\Pro\!\Gamma'})=b_1(\Pro\!\Gamma'\backslash 
\Delta_v)
\end{equation}
by Proposition \ref{dog}.

We now work over $F$ and its completion $F_\wp$.
Let $n\in\Z$ act on  $\Omega_{F_\wp}^{\rm{nr}}=
\mathscr{P}_{F_\wp}\times_{\Spf \Z_{F_\wp}}\Spf \Z_{F_\wp}^{\rm{nr}}$ 
as $\Fr_{F_\wp}^{-n}$ on $\Z_{F_\wp}^{\rm{nr}}$.
This gives an $F_\wp$-rational action of $\GLT(F_\wp)\times \Z$
on $\Omega_{F_\wp}$.  Let $B^\times$ act on $\Omega_{F_\wp}^{\rm{nr}}$ via 
$B^\times\hookrightarrow \GLT(F_\wp)$, and on $\Gg$ through the 
natural embedding.
For a compact open subgroup  $\Uu\subset\Gg$ the corresponding
$\wp$\hspace*{.01in}-analytic Shimura curve is the $F_\wp$-analytic space
\[
\tilde{Y}_\Uu\colonequals \left(\Omega_{F_\wp}^{\rm{nr}}
\times(\Uu\backslash\Gg)\right)/B^\times ,
\]
which
algebraizes canonically to a projective variety $Y_{\Uu}$ over 
$F_\wp$. The inverse limit of $Y_\Uu$ over such $\Uu$'s is a scheme
$Y$ over $F_\wp$ with an $F_\wp$-rational action by $\Gg\times\Z$.
We have $\Uu\backslash Y=Y_{\Uu}$.

\subsection{Interchanging local invariants}
\label{credit}

A special case of the main result of \cite{var1} gives the following
result (\cite[Thm.~5.3]{var}; see also \cite[Thm.~3.1]{jlv}):
\begin{theorem}
\label{sick}
\text{\rm (\v{C}erednik, Varshavsky) }
There is a \textup{(}$\Gg\times \Z$\textup{)}-equivariant, $F_\wp$-rational 
isomorphism
\[
Y^{\rm{int},\wp}\otimes_{F}F_\wp\cong Y.
\]
\end{theorem}

\subsection{Preliminaries for Sections \ref{debt}, \ref{cats}; Selberg-Zograf}
\label{super}

This section consists of common preliminaries for the two sections
which follow. We return to our standard notation from this point on.
We assume $n=2^s$ 
or $n=\tts$, 
$n\geq 8$.  Set  $F=F_{n}=\Q(\zeta_{n})^+$ with 
$\fp=\fp_n$ the unique prime above $p=2$.
The ideal $\fp_n$ is principal with totally real generator
$p_{n}=2+\zeta_n+\zeta_n^{-1}$ if $n=2^s$ and
generator $p_n'=1+\zeta_n+\zeta_n^{-1}$ if $n=\tts$,
cf.~Definition \ref{gen} and Proposition \ref{prin}.
For $n=\tts$ we have $\Nrm_{F_n/\Q}(p_n')=-2$;
all units of $F_n$ have norm $1$ and there is no element of $\Z_{F_n}$
of norm $2$.

\subsubsection{Arithmetic subgroups constructed from quaternion algebras}
\label{eagle1}

Let $\bH$ be the Hamilton quaternions
over $\Q$, and we take  $B=B_n=\bH_{n}=\bH\otimes_{\Q}F_{n}$ with
$B^{\rm{int}}=B^{\rm{int}}_n$ the quaternion algebra over $F_n$ ramified precisely
at $\fp$, $\infty_{2}$,\ldots, $\infty_{[F:\Q]}$. Set
\[
\widetilde{\mathcal{M}}\colonequals\widetilde{\mathcal{M}}_{n}=
\Z_F^{\fp}\langle 1, i, j, k\rangle
\]
to be the \textsf{standard} maximal
$\Z_F^{\fp}$-order of $B$ as in Notation \ref{chicken}.

For a maximal $\Z_F^{\fp}$-order $\fO\subset B$, set
\begin{align}
\label{pind}
\widetilde{\Gamma}_0(\fO)&\colonequals \fO^\times\subset \GLT(F_\fp)\text
{ with }\Gamma_0(\fO)=\Pro\!\widetilde{\Gamma}_0(\fO)\subset \PGLT(F_\fp)\\
\widetilde{\Gamma}_+(\fO) &\colonequals \{\gamma\in \fO^\times\mid
\val_\fp(\nrd\gamma)\text{ is even}\}
\text{ with }\Gamma_+(\fO)=\Pro\!\widetilde{\Gamma}_+(\fO)\subset
\PGLT(F_\fp)\nonumber\\
\widetilde{\Gamma}_1(\fO) &\colonequals \{\gamma\in\fO^\times\mid
\nrd\gamma =1\}
\text{ with }\Gamma_1(\fO)=\Pro\!\widetilde{\Gamma}_1(\fO)\subset \PGLT(F_\fp)
\nonumber
\end{align}
following Ihara, and in agreement with \eqref{gamma}
and \cite[Defn.~24]{IJKLZ2}.
We have 
\[
\widetilde{\Gamma}_0(\fO)\supseteq\widetilde{\Gamma}_+(\fO)
\supseteq\widetilde{\Gamma}_1(\fO)\quad\text{and}\quad
\Gamma_0(\fO)\supseteq\Gamma_+(\fO)\supseteq\Gamma_1(\fO).
\]

Apply these definitions to the maximal $\Z_F^{\fp}$-order 
$\widetilde{\mathcal{M}}=\widetilde{\mathcal{M}}_n$ in $B=B_n$.
\begin{prop}
\label{rabbit}
\begin{enumerate}[\upshape (a)]
\item
\label{rabbit1}
Let $n=2^s$.  Then we have 
\[
\Gamma_1\big(\widetilde{\mathcal{M}}\big)=
\Gamma_+\big(\widetilde{\mathcal{M}}\big)\quad\text{and}\quad 
[\Gamma_0\big(\widetilde{\mathcal{M}}\big):
\Gamma_1\big(\widetilde{\mathcal{M}}\big)=
\Gamma_+\big(\widetilde{\mathcal{M}}\big)]=2.
\]
\item
\label{rabbit2}
Let $n=\tts$.  Then we have
\[
\Gamma_+\big(\widetilde{\mathcal{M}}\big)=
\Gamma_0\big(\widetilde{\mathcal{M}}\big)\quad\text{and}\quad
[\Gamma_0\big(\widetilde{\mathcal{M}}\big)=
\Gamma_+\big(\widetilde{\mathcal{M}}\big)  
:\Gamma_1\big(\widetilde{\mathcal{M}}\big)  ]=2.
\]
\end{enumerate}
\end{prop}
\begin{proof}
As in the proof of Theorem \ref{index}, reduced norm $\nrd$
induces an isomorphism
\begin{equation}
\label{ghost1}
\nrd: \Gamma_0(\widetilde{\mathcal{M}})/
\Gamma_1(\widetilde{\mathcal{M}})\stackrel{\sim}{\longrightarrow} 
\frac{\Z_{F,+}^{\fp,\times}}
{(\Z_F^{\fp,\times})^2}\cong \Z/2\Z.
\end{equation}

\eqref{rabbit1}: Note that $\Z_F^{\fp}$ contains the totally positive
element $p_{2^s}$ with $\val_{\fp}(p_{2^s})=1$. Moreover there is an
element $x\in\widetilde{\mathcal{M}}_{2^s}=\widetilde{\mathcal{M}}$ with
$\nrd x=p_{2^s}\in\Z_F^{\fp,\times}$ by \cite[Cor.~31.1.11]{voight}
or Lemma \ref{ideal}.  But then the image of $x$ in
$\Gamma_0\big(\widetilde{\mathcal{M}}\big)$ is not in 
$\Gamma_+\big(\widetilde{\mathcal{M}}\big)$ and 
$\Gamma_0\big(\widetilde{\mathcal{M}}\big)
\supsetneq \Gamma_+\big(\widetilde{\mathcal{M}}\big)$.  Now  \eqref{ghost1}
implies the assertions in \eqref{rabbit1}.

\eqref{rabbit2}: Let $n=\tts\geq 12$. Then $\gamma =(1+\zeta_{n})
(1+\zeta_{n}^{-1})\in\Z_{F,+}^\times$ and $\gamma\notin(\Z_F^{\fp,\times})^{2}$
by Lemma \ref{notsquares}\eqref{notsquares2}. \
By \cite[Cor.~31.1.11]{voight} there
exists $x\in\widetilde{\mathcal{M}}$ with $\nrd x=\gamma$.  Then
the image of $x$ in $\Gamma_+\big(\widetilde{\mathcal{M}}\big)$
is not in $\Gamma_1\big(\widetilde{\mathcal{M}}\big)$.  Again now
\eqref{ghost1} implies the assertions in \eqref{rabbit2}.
\end{proof}

 We take the compact open subgroup
\[
\mathcal{U}=\mathcal{U}_n=
\prod_{v\neq \fp}\widetilde{\mathcal{M}}_v^\times\subset \Gg=G_{\rm{f}}^{\fp}=
G_{\rm{f}}^{\rm{int},\fp}.
\]
Since $\mathcal{U}\cap B^\times =\widetilde{\mathcal{M}}^\times$, 
we have the following.
\begin{lemma}
\label{thumb}
One connected component of $Y_{\mathcal{U}}$ is $\Omega_
{\tilde{\Gamma}_0(\widetilde
{\mathcal{M}})}$ with ${\tilde{\Gamma}_0\big(\widetilde
{\mathcal{M}}\big)}$ as in \eqref{pind}.
\end{lemma}

Let $U=U_n\subset G^{\rm{int}}_{\rm{f}}=G_{\rm{f}}^{\rm{int},\fp}\times
\tilde{B}_{\fp}^\times$
be the compact open subgroup $U=\mathcal{U}\times \tilde{\mathcal{O}}^\times_\fp$.
\begin{defn}
\label{linear}
We have  $U\cap B^{\rm{int},\times} =\mathcal{M}_{n}^\times$ for a maximal $\Z_F$-order 
$\mathcal{M}_{n}\equalscolon \mathcal{M}$
in 
$B^{\rm{int}}$. 
\end{defn}
\noindent Recall that  
$\mathcal{M}_+^\times$, $\mathcal{M}_1^\times$,
$\Gamma_+(\mathcal{M})\subseteq \PGL_2^+(F)$, and $\Gamma_1(\mathcal{M})
\subseteq \PSLT(F)$ have been defined in  \eqref{fat}, \eqref{ranch}.
In particular since $U\cap B^{\rm{int},\times}_+=\mathcal{M}_+^\times$, we
have the following.
\begin{lemma}
\label{fist}
One component of 
$Y^{\rm{int}}_U=Y^{\rm{int},\fp}_\mathcal{U}$ is $\Gamma_+(\mathcal{M})
\backslash\fh$.
\end{lemma}

Now by  Theorem \ref{sick},
\begin{equation*}
\label{girth}
Y^{\rm{int}}_U\otimes_{F}F_\wp\cong Y_\mathcal{U}.
\end{equation*}
Hence from Lemmas \ref{fist} and \ref{thumb} we see the following.
\begin{prop}
\label{yard}
The genus $g\big(\Gamma_+(\mathcal{M})\backslash\fh\big)$ of the Shimura
curve $\Gamma_+(\mathcal{M})\backslash \fh$ is equal to the genus
$g\big(\Omega_{\tilde{\Gamma}_0(\widetilde{\mathcal{M}})}\big)$ of the Drinfeld-uniformized
curve $\Omega_{\tilde{\Gamma}_0(\widetilde{\mathcal{M}})}$.
\end{prop}

\begin{prop}
\label{urn}
\begin{enumerate}[\upshape (a)]
\item
\label{urn1}
Let $n=2^s$.  Then $\Gamma_1(\mathcal{M})=
\Gamma_+(\mathcal{M})$.
\item
\label{urn2}
Let $n=\tts$.  Then $\Gamma_+(\mathcal{M})/
\Gamma_1(\mathcal{M})\cong \Z/2\Z$.
\end{enumerate}
\end{prop}
\begin{proof}
By Lemma \ref{radish} 
reduced norm induces an isomorphism
\begin{equation}
\label{mouse}
\nrd:\Gamma_+(\mathcal{M})/\Gamma_1(\mathcal{M})\stackrel{\sim}{\longrightarrow}
\Z_{F,+}^\times/(\Z_F^\times)^2.
\end{equation}
\eqref{urn1}: Let $n=2^s$.  
Weber's Theorem (cf.~Proof of Theorem \ref{fields})
asserts that all totally positive units in $\Z_{F_{2^s}}$ are squares,
proving \eqref{urn1} using \eqref{mouse}.\\
\eqref{urn2}: Let $n=\tts$. We have from Theorem \ref{fields}\eqref{fields2}
that $\Z_{F,+}^{\fp,\times}/(\Z_F^{\fp,\times})^2\cong\Z/2\Z$. But
$\Z_{F,+}^\times/(\Z_{F}^\times)^2\hookrightarrow 
\Z_{F,+}^{\fp,\times}/(\Z_F^{\fp,\times})^2$ and $(1+\zeta_n)(1+\zeta_n^{-1})
\in\Z_{F,+}^\times\setminus (\Z_F^{\times})^2$ from Lemma 
\ref{notsquares}\eqref{notsquares2}.  
By \eqref{mouse},
this proves \eqref{urn2}.
\end{proof}

\subsubsection{Selberg-Zograf}
\label{snow}

Set $\mathcal{M}=\mathcal{M}_n$ as in Definition \ref{linear}.
We now consider the area of a fundamental domain of $\Gamma_1(\mathcal{M})$
acting on the Poincar\'{e} upper half plane $\fh$, where we normalize
hyperbolic area $\mu(D)=\frac{1}{2\pi}\int\!\int_{D} \frac{dxdy}{y^2}$
so that an ideal triangle has area $1/2$. A formula of Shimizu
\cite[Eqn.~1]{voight2} gives the area $A\big(\Gamma_{1}(\mathcal{M})\big)$ 
of a fundamental domain
for the Fuchsian group $\Gamma_1(\mathcal{M})$:
\begin{equation}
\label{rant3}
A\big(\Gamma_1(\mathcal{M})\big) = \frac{4}{(2\pi)^{2[F:\Q]}} \Disc(F)^{3/2} 
\zeta_{F}(2) \Phi({\mathcal D}) \Psi({\mathcal N}),
\end{equation}
where, in our case, ${\mathcal D} = \p_2$ and so
$\Phi({\mathcal D}) = \#(\Z_F/\p_2)^\times = 1$ and, since we are
considering a maximal order in the quaternion algebra, 
the ideal $\mathcal N$
is $(1)$ and therefore $\Psi({\mathcal N})=1$ as well.
Comparing \eqref{rant3} with \eqref{stilts} and \eqref{pros}
gives the following
using the mass notation of Definition \ref{masss}.

\begin{thm}
\label{area}
\textup{ \rm{(Shimizu)} }
For $n=2^s$ or $n=\tts$, $n\geq 8$,
we have $A\big(\Gamma_{1}(\mathcal{M}_n)\big)=2M_{n}$.
\end{thm}
From this we can deduce:
\begin{thm}
\label{tub}
\begin{enumerate}[\upshape (a)]
\item
\label{tub1}
If $n=2^s$, then $A\big(\Gamma_+(\mathcal{M}_n)\big)=2M_n$.
\item
\label{tub2}
If $n=\tts$, then $A\big(\Gamma_+(\mathcal{M}_n)\big)=M_n$.
\end{enumerate}
\end{thm}
\begin{proof}
Combine Theorem \ref{area} with Proposition \ref{urn}.
\end{proof}

With $\mathcal{M}\colonequals \mathcal{M}_n$,
the area $A(\Gamma_+(\mathcal{M}))$ of the fundamental domain
 in turn is related
to the Euler-Poincar\'{e} characteristic 
$\chi(\Gamma_+(\mathcal{M}))$: Choosing a sufficiently
small congruence subgroup gives a torsion-free subgroup $H\subseteq
\Gamma_+(\mathcal{M})$ of finite index $d$
 having a fundamental domain with area
$A(H)$.  By Gauss-Bonnet,
\[
dA(\Gamma_+(\mathcal{M}))= A(H)=     -\chi(H\backslash\fh)=-\chi(H)=-d
\chi(\Gamma_+(\mathcal{M})),
\]
and hence from Theorem \ref{tub} we get
\begin{equation}
\label{paddy}
-\chi\big(\Gamma_+(\mathcal{M}_n)\big)= A\big(\Gamma_+(\mathcal{M}_n)\big)=
\begin{cases} 2M_n\text{ if $n=2^s$,}\\
M_n\text{ if $n=\tts$}.
\end{cases}
\end{equation}
The genus $g=g(\Gamma_+(\mathcal{M})\backslash \fh)$ of the Shimura
curve $\Gamma_+(\mathcal{M})\backslash \fh$ is given by
Riemann-Hurwitz (see, e.g., \cite[(2)]{voight2}):
\begin{equation}
\label{fly}
A(\Gamma_+(\mathcal{M}_n))=2g-2+\sum_{q}e_q\Big(1-\frac{1}{q}\Big)=
-\chi\big(\Gamma_+(\mathcal{M}_n)\backslash\fh\big)+\sum_{q}e_q\Big(
1-\frac{1}{q}\Big) ,
\end{equation}
where $e_q$ is the number of elliptic elements in $\Gamma_+(\mathcal{M}_n)$
of order $q\geq 2$, up to equivalence.

We now consider the invariant 
\begin{equation*}
\label{rated}
\tilde{E}_n\colonequals
E(\Gamma_+(\mathcal{M}_n),\fh)=\chi(\Gamma_+(\mathcal{M}_n)\backslash \fh)
-\chi(\Gamma_+(\mathcal{M}_n))
\end{equation*}
of Definition \ref{cream}. Combining \eqref{paddy} and \eqref{fly}
then gives
\begin{equation}
\label{pebble}
\tilde{E}_n=\sum_{q}\Big( 1-\frac{1}{q}\Big)=2-2g(\Gamma_+(\mathcal{M}_n))
+\begin{cases} 2M_n\text{ if $n=2^s$,}\\
M_n \text{ if $n=\tts$};
\end{cases}
\end{equation}
in particular, $\tilde{E}_n\geq 0$.

A result of Selberg and Zograf gives a lower bound for the genus
of a congruence arithmetic Fuchsian group in terms of the area of its fundamental domain.  It is deduced from Selberg's famous bound
$\lambda_1(\Gamma)\geq 3/16$ on the first nonzero eigenvalue of the
Laplacian of $\Gamma$ and states the following 
(see, e.g., \cite[Lemma~1.1]{voight2}).
\begin{thm}
\label{eat}
{\textup{\rm{ (Selberg-Zograf) }}}
Let $\Gamma\subset\PSLT(\BR)$ be a congruence arithmetic Fuchsian
group of genus $g=g(\Gamma\backslash \fh)$ with a fundamental 
domain in $\fh$ of area $A$.
Then $A<64(g+1)/3$.
\end{thm}

We can recast the Selberg-Zograf upper bound for $A(\Gamma_+(\mathcal{M}_n))$
in Theorem \ref{eat} as a lower bound for $\tilde{E}_n$.
Substitute
\[
g(\Gamma_+(\mathcal{M}_n)\backslash \fh)= 1-(1/2)\tilde{E}_n+
\begin{cases} M_n\quad\text{if $n=2^s$,}\\
M_n/2\quad\text{if $n=\tts$}
\end{cases}
\]
from \eqref{pebble} and \eqref{paddy} into the Selberg-Zograf bound
\[
\frac{3}{64}A(\Gamma_+(\mathcal{M}_n))-1<g(\Gamma_+(\mathcal{M}_n)
\backslash \fh)
\]
from Theorem \ref{eat} to obtain:
\begin{theorem}
\label{eclipse}
\textup{(Selberg-Zograf bounds)} 
Let $\tilde{E}_n=E(\Gamma_{+}(\mathcal{M}_n),\fh)$.
We have
\[
g(\Gamma_+(\mathcal{M}_n))>\begin{cases}
\frac{3}{32}M_n -1,\text{ $n=2^s$}\\
\frac{3}{64}M_n-1,\text{ $n=\tts$}
\end{cases}
\text{and }\quad
\tilde{E}_n<
\begin{cases}
\frac{29}{16}M_n+4,\text{ $n=2^s$}\\
\frac{29}{32}M_n+4,\text{ $n=\tts$}.
\end{cases}
\]
\end{theorem}

From \eqref{stilts2} and Theorem \ref{soup} we have
\begin{align}
\label{ride2}
2^{(3s-12.1)2^{s-3}-1/2}> M_{2^s}&=2^{1-2^{s-2}}|\zeta_{F_{2^s}}(-1)|>2^{(3s-13.7)2^{s-3}-1/2}\\
2^{(6s-19.4)2^{s-3}+1}> M_\tts&=2^{1-2^{s-1}}|\zeta_{F_\tts}(-1)|>2^{(6s-22.5)2^{s-3}+1}.
\nonumber
\end{align}
Using the bounds \eqref{ride2} in Theorem \ref{eclipse}
gives the explicit bounds:
\begin{align}
\label{shine}
g(\Gamma_+(\mathcal{M}_{2^s}))&>(3/32)M_{2^s}-1>
2^{(3s-13.7)2^{s-3}-3.92}-1\\
g(\Gamma_+(\mathcal{M}_{\tts}))&>(3/64)M_{\tts}-1>
2^{(6s-22.5)2^{s-3}-3.42}-1\nonumber\\
\tilde{E}_{2^s}&<(29/16)M_{2^s}+4<2^{(3s-12.1)2^{s-3}+0.36}+4
\nonumber\\
\tilde{E}_{\tts}&<(29/32)M_{\tts}+4<2^{(6s-19.4)2^{s-3}+0.86}+4.
\nonumber
\end{align}
In particular the genera $g(\Gamma_+(\mathcal{M}_{2^s}))$\and 
$g(\Gamma_+(\mathcal{M}_{\tts}))$
grow doubly exponentially in $s$.

\subsection{The Hamilton quaternions over $F_{2^s}=\Q(\zeta_{2^s})^+$}
\label{debt}

To begin 
we compute the invariants $k(\Gamma)$, $k_+(\Gamma)$,
and $\Gamma'$ of Section \ref{sale}
for the subgroup $\Gamma =
\widetilde{\Gamma}_0\big(\widetilde{\mathcal{M}}_{2^s}\big)\subset
\GLT(F_\fp)$.
\begin{prop}
\label{grain}
With $\Gamma=\widetilde{\Gamma}_0\big(\widetilde{\mathcal{M}}_{2^s}\big)$   
as above, we have
\[
k(\Gamma)=k_+(\Gamma)=1\quad\text{and}\quad \Gamma'=\widetilde{\Gamma}_+\big(
\widetilde{\mathcal{M}}_{2^s}\big)
\]
with notation as in \eqref{pind}.
\end{prop}
\begin{proof}
We need only note that $\Z_F^{\fp}$ contains the totally positive
element $p_{2^s}$ defined in the beginning of Section \ref{super}
with $\val_\fp(p_{2^s})=1$.  Moreover there is an element
$x\in\widetilde{\mathcal{M}}$ with $\nrd x=p_{2^s}\in\Z_F^{\fp,\times}$ by
\cite[Cor.~31.1.11]{voight} or Lemma \ref{ideal}.
\end{proof}

Finally we are in a position to connect back with coranks 
of cyclotomic unitary groups.
Recall that 
\begin{equation}
\label{putty}
\PSUT\big(\Z[\zeta_{2^s},1/2]\big)\cong
\Gamma_1\big(\widetilde{\mathcal{M}}_{2^s}\big)
\end{equation}
 by \eqref{iso}.
With $\gr_{2^s}\colonequals
\Gamma_1\big(\widetilde{\mathcal{M}}_{2^s}\big)\backslash\Delta_{\fp}$
and \eqref{putty},
the corank
 of the group $\PSUT\big(\Z[\zeta_{2^s},1/2]\big)$ is 
\begin{equation}
\label{paint}
\corank\PSUT\big(\Z[\zeta_{2^s},1/2]\big)=b_1(\gr_{2^s})
\end{equation}
by Corollary \ref{cor:surj-fg} and Remark \ref{dale}.

\begin{thm}
\label{kittya}
\begin{enumerate}[\upshape (a)]
\item
\label{kittya1}
We have 
\[
g\big(\Gamma_+(\mathcal{M}_{2^s})\backslash\fh \big)=
g\big(\Omega_{\widetilde{\Gamma}_0(
\widetilde{\mathcal{M}}_{2^s})}\big)=
b_1\big(\Gamma_+(\widetilde{\mathcal{M}}_{2^s})\backslash\Delta_{\fp}\big)=
b_1(\gr_{2^s})=\corank\PSUT\big(\Z_{K_2^s}^{(2)}\big).
\]
\item
\label{kittya2}
With $E_{1,n}=E(\Gamma_1(\widetilde{\mathcal{M}}_n),\Delta)$ as in 
Definition \textup{\ref{tail}}  ,
we have $2E_{1,2^s}=\tilde{E}_{2^s}$.
\end{enumerate}
\end{thm}
\begin{proof}
\eqref{kittya1}:
Combine \eqref{real}, \eqref{paint}, Proposition \ref{rabbit}\eqref{rabbit1},
Proposition \ref{yard}, and  
Proposition \ref{grain}.\\
\eqref{kittya2}: Combine Theorem \ref{kittya}\eqref{kittya1},\eqref{shade2}, and
\eqref{pebble}.
\end{proof}

\begin{remark}
\label{peas}
From \eqref{shine} and Theorem \ref{kitty}, we get the
explicit $\infty_1$-bounds below arising from Selberg-Zograf.
\begin{align}
\label{sonny}
b_1(\gr_{2^s})=\corank\PSUT\big(\Z[\zeta_{2^s},1/2]\big)>&(3/32)M_{2^s}-1>
2^{(3s-13.7)2^{s-3}-4.41}-1\\
\nonumber
E_{1,2^s}<& (29/32)M_{2^s}+2< 2^{(3s-12.1)2^{s-3}-0.64}+2.
\end{align}
\end{remark}

The Selberg-Zograf bounds in Remark \ref{peas} are strong
enough to prove Main Theorem \ref{main} for $\PSUT\big(\Z_{K_{2^s}}^{(2)}\big)$:
\begin{theorem}
\label{cod}
\textup{(Main Theorem \ref{main} for $\PSUT\big(\Z_{K_{2^s}}^{(2)}\big)$
via the archimedean prime $\infty_1$.) } Let $s\geq 3$.
\begin{enumerate}[\upshape (a)]
\item
\label{cod1}
We have $\corank \PSUT\big(\Z_{K_{2^s}}^{(2)}\big)=b_1(\gr_{2^s})=0$ if and only if 
$n\in\{8,16\}$.
\item
\label{cod2}
We have $\corank \PSUT\big(\Z_{K_{2^s}}^{(2)}\big)=b_1(\gr_{2^s})=2^{\Omega(s2^s)}$.
\end{enumerate}
\end{theorem}
\begin{proof}
\eqref{cod1}: From Remark \ref{peas} we get that $b_1(\gr_{2^s})>0$ if 
$M_{2^s}>32/3$. But from Figure 1 and the lower bound for $M_{2^s}$
in \eqref{ride2}, we have that $M_{2^s}>32/3$ if and only if
$s\geq 5$.\\
\eqref{cod2}: This is implied by the lower bound on $b_1(\gr_{2^s})$
in Remark \ref{peas}. 
\end{proof}
\begin{remark}
\label{fist1}
To conclude that 
$\corank \PSUT\big(\Z_{K_{2^s}}^{(2)}\big)=b_1(\gr_{2^s})=2^{\Theta(s2^s)}$
as in Main Theorem \ref{main}, one need only combine Theorem
\ref{cod}\eqref{cod2} with the much easier upper bound
on $b_1(\gr_{2^s})$ in  Proposition \ref{upper}.
\end{remark}

Our $\infty_1$-bound
in  Remark \ref{peas} is
\begin{equation}
\label{beans}
E_{1,2^s}<(29/32)M_{2^s}+2<2^{(3s-12.1)2^{s-3}-0.64}+2,
\end{equation}
whereas our $\fp$-bound in \eqref{green} is
\begin{equation}
\label{beans2}
E_{1,2^s}<2^{5.5+(s-3.2)2^{s-3}}.
\end{equation}
The dominant terms in these formulas are $2^{3s2^{s-3}}$ in
\eqref{beans} and $2^{s2^{s-3}}$ in \eqref{beans2}. Not surprisingly,
the bound at $\fp$ deduced from delicate cyclotomic arguments
specific to the $2^s$ family is asymptotically much smaller than the
general Selberg-Zograf bound at
$\infty_1$. We give a short table of values to illustrate
the difference.

\begin{figure}[ht]
\begin{center}
\label{table5}
\begin{tabular}{r|l|l}
$s$ & upper bound on $E_{1,2^s}$ at $\infty_1$
& upper bound on $E_{1,2^s}$ at $\fp$\\
\hline 
 $5$&$1994$&$6654$\\
  $6$&$1.04\cdot 10^{14}$&$2.50 \cdot 10^8$\\
  $7$&$4.72\cdot 10^{42}$&$9.08 \cdot 10^{19}$\\
  $8$&$2.75 \cdot 10^{114}$&$7.83 \cdot 10^{47}$\\  
\end{tabular}
\end{center}
\caption{Comparing our upper bounds on $E_{1,2^s}$ at $\infty_1$ 
using Selberg-Zograf and at 
$\fp$
bounding class numbers}
\end{figure}

Our $\infty_1$-bound using Selberg-Zograf is stronger than our
$\fp$-bound for $s=5$, but much
weaker for larger values of $s$.  In particular, we have shown that
$\corank \PSUT\big(\Z_{K_{2^s}}^{(2)}\big)=b_1(\gr_{2^s})$ is of the
form $(1+o(1))M_{2^s}$, and in fact the $o(1)$ diminishes very rapidly
to $0$.  Equivalently, $E_{1,2^s}/M_{2^s}$ goes to $0$ rapidly
as $s\rightarrow \infty$, since our upper bound \eqref{beans2} for $E_{1,2^s}$
has $2^{s2^{s-3}}$ as its leading term asymptotically, while
for $M_{2^s}$ the lower bound \eqref{ride2} has leading term $2^{3s2^{s-3}}$.
\subsection{The Hamilton quaternions over 
$F_\tts=\Q(\zeta_{3\cdot 2^n})^+$}
\label{cats}

As in the $2^s$ case, we begin by computing
the invariants $k(\Gamma)$, $k_+(\Gamma)$,
and $\Gamma'$ of Section \ref{sale}
for the subgroup $\Gamma =
\widetilde{\Gamma}_0(\widetilde{\mathcal{M}}_{\tts})\subset
\GLT(F_\fp)$.
\begin{prop}
\label{grain1}
With $\Gamma=\widetilde{\Gamma}_0(\widetilde{\mathcal{M}}_{\tts})$   
as above, we have
\[
k(\Gamma)=1,\,\,  k_+(\Gamma)=2,\quad\text{and}\quad \Gamma'=
\widetilde{\Gamma}_0(
\widetilde{\mathcal{M}}_{\tts})=
\widetilde{\Gamma}_+(
\widetilde{\mathcal{M}}_{\tts})
\]
in the notation \eqref{pind}.
\end{prop}
\begin{proof}
By Proposition \ref{prin}\eqref{prin2}, $\fp=(p_\tts'\colonequals
1+\zeta_\tts +\zeta_\tts^{-1})$ with $\Nrm_{F_\tts/\Q}(p_\tts')=-2$;
in particular, $p_\tts'$ is not totally real and in fact there
is no element of $\Z_{F_\tts}$ of norm $2$.
The remainder of Proposition \ref{grain1} now follows
from Proposition \ref{rabbit}\eqref{rabbit2}.
\end{proof}

Returning to coranks, recall from Theorem \ref{U}. that 
\begin{equation}
\label{grist}
\PUT(\Z[\zeta_\tts, 1/2])\cong\Gamma_0(\widetilde{\mathcal{M}}_\tts).
\end{equation}
With $\overline{\gr}_{\tts}\colonequals
\Gamma_0(\widetilde{\mathcal{M}}_{\tts})\backslash\Delta_{\fp}$
and \eqref{grist},
the corank
 of the group $\PUT(\Z[\zeta_{\tts},1/2])$ is 
\begin{equation}
\label{paint1}
\corank\PUT(\Z[\zeta_{\tts},1/2])=b_1(\overline{\gr}_{\tts})
\end{equation}
by Corollary \ref{reddish1}.

\begin{thm}
\label{kitty10}
\begin{enumerate}[\upshape (a)]
\item
\label{kitty101}
We have 
\begin{align*}
g\big(\Gamma_+(\mathcal{M}_{\tts})\backslash\fh \big)
=g\big(\Omega_{\widetilde{\Gamma}_0(
\widetilde{\mathcal{M}}_{\tts})}\big)
&=b_1\big(\Gamma_0(\widetilde{\mathcal{M}}_{\tts})\backslash\Delta_{\fp}\big)\\
&= b_1(\overline{\gr}_{\tts})=\corank\PUT(\Z[\zeta_{\tts},1/2]).
\end{align*}
\item
\label{kitty102}
With $E_{0,n}=E(\Gamma_0(\widetilde{\mathcal{M}}_n),\Delta)$ as in 
Definition \textup{\ref{tail}}  ,
we have $2E_{0,\tts}=\tilde{E}_{\tts}$.
\end{enumerate}
\end{thm}
\begin{proof}
\eqref{kitty101}: Combine \eqref{real}, 
\eqref{paint}, Proposition \ref{rabbit}\eqref{rabbit2},
Proposition \ref{yard}, and  
Proposition \ref{grain1}.\\
\eqref{kitty102}: Combine Theorem \ref{kitty10}\eqref{kitty101},
\eqref{lava1}, and \eqref{pebble}.
\end{proof}

\begin{remark}
\label{snitch}
From  \eqref{fox} and Theorem \ref{kitty10}, we get the explicit
$\infty_1$-bounds below arising from Selberg-Zograf.
\begin{align}
\label{sonny1}
b_1(\ogr_{\tts})=\corank\PUT\big(\Z[\zeta_{\tts},1/2]\big)>&(3/64)M_{\tts}-1>
2^{(6s-22.5)2^{s-3}-3.42}-1\\
\nonumber
E_{0,\tts}<& (29/64)M_{\tts}+2< 2^{(6s-19.4)2^{s-3}-0.14}+2.
\end{align}
\end{remark}
\begin{remark}
\label{snitch1}
Theorem \ref{kittya}\eqref{kittya2} and Theorem \ref{kitty10}\eqref{kitty102}
combined with \eqref{pebble} shows that $E_{1,2^s}\geq 0$\and $E_{0,\tts}\geq 0$.
Knowing that $E_{1,2^s}\geq 0$ implies that $E_{0,2^s}\geq 0$ by
\eqref{dealt4}. 
\end{remark}

Our Selberg-Zograf $\infty_1$-bounds in Remark \ref{snitch} are strong
enough to prove Main Theorem \ref{main} for $\PUT\big(\Z_{K_{\tts}}^{(2)}\big)$:
\begin{theorem}
\label{code}
\textup{(Main Theorem \ref{main} for $\PUT\big(\Z_{K_{\tts}}^{(2)}\big)$
via the archimedean prime $\infty_1$.) } Let $s\geq 2$.
\begin{enumerate}[\upshape (a)]
\item
\label{code1}
We have $\corank \PUT\big(\Z_{K_{\tts}}^{(2)}\big)=b_1(\ogr_{\tts})=0$ if and only if $n\in\{12, 24\}$.
\item
\label{code2}
We have $\corank \PUT\big(\Z_{K_{\tts}}^{(2)}\big)=b_1(\ogr_{\tts})=2^{\Omega(s2^s)}$.
\end{enumerate}
\end{theorem}
\begin{proof}
\eqref{code1}: From Remark \ref{snitch} we get that $b_1(\ogr_{\tts})>0$ if 
$M_{\tts}>64/3$. But from Figure 2 and the lower bound for $M_{\tts}$
in \eqref{ride2}, we have that $M_{\tts}>64/3$ if and only if
$s\geq 4$.\\
\eqref{code2}: This is implied by the lower bound on $b_1(\ogr_{\tts})$
in Remark \ref{snitch}. 
\end{proof}
\begin{remark}
\label{fist2}
To conclude that 
$\corank \PUT\big(\Z_{K_{\tts}}^{(2)}\big)=b_1(\gr_{\tts})=2^{\Theta(s2^s)}$
as in Main Theorem \ref{main}, combine Theorem
\ref{code}\eqref{code2} with the upper bound
on $b_1(\ogr_{\tts})$ in  Proposition \ref{upper}.
\end{remark}

\section{Deducing Theorem~\ref{rant} from 
Main Theorem~\ref{main}}
\label{bird}

In this final section we supply proofs for two reductions suggested by Sarnak
\cite{S}.  Both of our proofs use the amalgamated product structure
results of Radin and Sadun \cite{RS},
which we formulate in \cite[Thm.~4.1]{IJKLZ}
as follows:
\begin{theorem}[Radin and Sadun]
  \label{dreadful}
    $\PGn\simeq S_4\ast_{D_{4}}D_{n}$.
\end{theorem}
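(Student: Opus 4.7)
The plan is to realize $\PGn$ as the fundamental group of a graph of groups whose underlying graph is a single non-loop edge, with vertex groups $S_4$ and $D_n$ meeting in the edge group $D_4$, and then invoke the Bass--Serre correspondence \cite[Theorem~6]{S2}. This is the approach taken by Radin--Sadun \cite{RS} and in \cite[Theorem~4.1]{IJKLZ}.

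First I would pin down explicit copies of the three groups inside $\PGn$. The subgroup $\langle[H],[T_n^{n/4}]\rangle$ is the single-qubit projective Clifford group, classically isomorphic to $S_4$: the element $[T_n^{n/4}]$ has order $4$ (since $\zeta_n^{n/4}=i$), $[H]$ has order $2$ (since $H^2=iI$), and the two generators satisfy the standard $S_4$-presentation. The class in $\PUT(K_n)$ of the Pauli matrix $X=\pmat{0 & 1\\ 1 & 0}$ equals $[HT_n^{n/2}H]$ and therefore lies in $\PGn$, and the calculation $XT_nX^{-1}=\zeta_n T_n^{-1}$ shows that $[X]$ inverts $[T_n]$, so $\langle[T_n],[X]\rangle\cong D_n$ inside $\PGn$. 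The intersection of these two subgroups is the $D_4$ generated by $[T_n^{n/4}]$ and $[X]$.

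Next I would have $\PGn$ act on the Bruhat--Tits tree $\Delta=\Delta_{\fp_n}$ via the embedding $\PGn\hookrightarrow\PUT(R_n)\cong\Gamma_0$ of Theorem~\ref{U}, and show that $\PGn\backslash\Delta$ is a single non-loop edge. Take $\bv_0$ to be the vertex corresponding to the standard maximal order $\widetilde{\M}_n$; it is stabilized by the $S_4$ above. The computation in the proof of Theorem~\ref{U} gives $\Norm_{\H_n/F_n}(\varphi_n([T_n]))=p_n$, which generates $\fp_n$, so $\bv_1:=[T_n]\bv_0$ is adjacent to $\bv_0$ and lies in the opposite part of the bipartition of $\Delta$ by Lemma~\ref{action}; its stabilizer is a conjugate of the $D_n$ above, and the stabilizer of the connecting edge is $S_4\cap D_n=D_4$. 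Since $\Delta$ is $3$-regular ($\fp_n$ has residue field of order $2$), it suffices to verify that $S_4$ acts transitively on the three edges out of $\bv_0$ and $D_n$ does so at $\bv_1$; this reduces to finite matrix calculations with $H$ and $T_n$.

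With such an edge quotient and stabilizers in hand, Bass--Serre theory formally yields $\PGn\simeq S_4\ast_{D_4}D_n$. The main obstacle is the combinatorial verification that the vertex stabilizers are exactly $S_4$ and $D_n$ and not proper overgroups or extensions, together with the transitivity of the local action on the stars of $\bv_0$ and $\bv_1$; both amount to explicit computations with the generators $H$ and $T_n$, carried out in detail in \cite{RS}.
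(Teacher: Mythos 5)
Your identification of the subgroups $S_4$, $D_n$, and $D_4$ inside $\PGn$ is correct, but the tree argument you propose cannot work. Note first that this paper does not prove Theorem~\ref{dreadful}; it cites it from Radin--Sadun \cite{RS}, formulated as \cite[Theorem 4.1]{IJKLZ}. Their argument is intrinsic to the archimedean picture: they establish a normal form (every element of $\PGn$ has a unique reduced-word expansion alternating between coset representatives of $D_4$ in $S_4$ and of $D_4$ in $D_n$), which yields the amalgam structure directly without any $\fp$-adic tree.

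Your proposed step---that $\PGn\backslash\Delta$ is a single non-loop edge---is false for essentially every $n$. First, Theorem~\ref{rant}, which this paper deduces \emph{from} the amalgam decomposition together with the Main Theorem, says that for $n\neq 8,12,16,24$ the group $\PGn$ has \emph{infinite} index in $\PUTz(R_n)$ and hence in $\PUT(R_n)\cong\Gamma_0$; since $\Gamma_0\backslash\Delta$ is a finite graph, $\PGn\backslash\Delta$ must then be infinite, not a single edge. Second, even passing to a minimal $\PGn$-invariant subtree cannot rescue this: the Bass--Serre tree of $S_4\ast_{D_4}D_n$ has $D_n$-type vertices of degree $[D_n:D_4]=n/4$, while $\Delta$ is only $3$-regular, so for $n>12$ that tree does not embed $\PGn$-equivariantly into $\Delta$ at all. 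Concretely, if $D_n$ acted transitively on the three edges at $\bv_1$ as you claim, the edge stabilizer inside $D_n$ would have index $3$ and order $2n/3$, which is not $8=\# D_4$ and is not even an integer when $n=2^s$. The Bruhat--Tits tree governs $\PUT(R_n)$, not its infinite-index subgroup $\PGn$; the amalgam theorem needs a direct algebraic proof, and trying to derive it from the tree action would in addition make Section~\ref{bird} circular.
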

\noindent After proving the reductions, we easily deduce Theorem~\ref{rant}
from Main Theorem~\ref{main}.

\subsection{The first reduction}
\label{firstr}
The first reduction is stated  in \cite[Page $15^{\rm II}$]{S}:
\begin{reduction}
  \label{first}
  If $\PGn =\PUTz\big(\Z_{K_n}^{(2)}\big)$, then $\langle 2,-1\rangle = (\Z/d\Z)^{\times}$.
  \end{reduction}
\noindent The condition
$\langle 2,-1\rangle = (\Z/d\Z)^{\times}$ is equivalent to the statement
that there is one prime $\p$ above $2$ in $F_n$.

We will need a version of the Normal Subgroup Theorem of
Kazhdan and Margulis; see \cite[IV, Thm.~4.9]{KM}:
\begin{theorem}[Kazhdan and Margulis]
  \label{KM}
  Suppose $\Gamma$ is an irreducible lattice in a connected simple Lie
  group $G$ with finite center and no compact factors with $\rank(G)\geq 2$.
  Then $\Gamma/[\Gamma,\Gamma]$ is finite.
  \end{theorem}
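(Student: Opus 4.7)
The plan is to prove this by establishing that $\Gamma$ has Kazhdan's property (T), from which finiteness of the abelianization follows as a standard consequence. The strategy breaks into three stages: property (T) for $G$, inheritance of property (T) by $\Gamma$, and deduction of finite abelianization.

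First, I would show that $G$ itself has property (T). For a connected simple Lie group with finite center and real rank at least $2$, this is Kazhdan's original theorem. The proof analyzes the unitary dual of $G$ and produces a Kazhdan pair, i.e.\ a compact set $Q \subset G$ together with $\epsilon > 0$ such that every unitary representation of $G$ admitting $(Q,\epsilon)$-almost invariant unit vectors must have a genuine invariant vector. The higher rank hypothesis enters crucially here: one reduces to the relative property (T) for a semidirect product of the form $\mathrm{SL}_2(\mathbb{R}) \ltimes \mathbb{R}^2$ (or its variants) embedded inside $G$, where Mautner-type arguments and the Howe--Moore theorem on decay of matrix coefficients close the loop.

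Second, I would transfer property (T) from $G$ to the lattice $\Gamma$. Given a unitary representation $(\pi,\mathcal{H})$ of $\Gamma$ with almost invariant vectors, form the induced representation $\mathrm{Ind}_\Gamma^G \pi$ on $L^2$-sections of the associated Hilbert bundle over $\Gamma \backslash G$. Because $\Gamma$ has finite covolume in $G$, almost invariant vectors for $\pi$ lift to almost invariant vectors for $\mathrm{Ind}_\Gamma^G \pi$. Property (T) of $G$ produces an invariant vector, and the fact that $\Gamma \backslash G$ has finite volume lets one extract from it a nonzero $\Gamma$-invariant vector in $\mathcal{H}$.

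Finally, property (T) of the finitely generated discrete group $\Gamma$ forces $\Gamma/[\Gamma,\Gamma]$ to be finite. By the Delorme--Guichardet theorem, property (T) is equivalent to the vanishing of $H^1(\Gamma,\pi)$ for every unitary representation $\pi$; applied to the trivial one-dimensional real representation, this gives $\mathrm{Hom}(\Gamma,\mathbb{R}) = 0$, so the abelianization has rank zero. Combined with the finite generation of $\Gamma$ (itself a classical consequence of reduction theory for lattices in higher-rank semisimple Lie groups, originally due to Kazhdan--Margulis), we conclude that $\Gamma/[\Gamma,\Gamma]$ is a finitely generated torsion abelian group, hence finite.

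The main obstacle is the first stage: establishing property (T) for $G$ requires nontrivial harmonic analysis on $G$ and careful control of its unitary dual via parabolic restriction. By contrast, induction of representations from $\Gamma$ to $G$, and the cohomological translation of property (T) into finite abelianization, are essentially formal once the Delorme--Guichardet framework is in place.
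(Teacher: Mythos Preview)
The paper does not prove this theorem at all; it simply quotes it as a known result from the literature, citing \cite[IV, Theorem 4.9]{KM}, and then applies it as a black box in the proof of Theorem~\ref{lattice}. So there is no ``paper's own proof'' to compare against.

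Your sketch via property~(T) is the standard route to this statement and is correct in outline. A couple of minor remarks: the attribution ``Normal Subgroup Theorem of Kazhdan and Margulis'' in the paper is slightly loose---what is actually being used is the finite-abelianization consequence, for which Kazhdan's property~(T) argument (exactly what you wrote) is the natural proof; Margulis's Normal Subgroup Theorem is a much stronger statement. Also, finite generation of $\Gamma$ already falls out of property~(T) itself (a group with property~(T) is compactly generated), so you need not invoke reduction theory separately.
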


\begin{proof}[Proof of Reduction \textup{\ref{first}}]
  We begin by showing:
  \begin{theorem}
    \label{lattice}
    The projective Clifford-cyclotomic group $\PGn$ is not an irreducible lattice
    in a Lie group $G$ as in Theorem \textup{\ref{KM}} with $\rank (G)\geq 2$.
    \end{theorem}
    \begin{proof}
      By Theorem~\ref{KM}, it suffices to exhibit a subgroup $\Gamma\subseteq\PGn$ of
      finite index with $\#\Gamma/[\Gamma,\Gamma]=\infty$.  Consider the natural action
      of $\PGn$ on a tree induced by its amalgamated product decomposition.  By basic
      properties of actions on trees each element of finite order is conjugate to an element
      in one of the factors.  Thus in particular there are only finitely many conjugacy classes
      of elements of finite order.

      Now pick a prime
      $\q\subseteq \Z_{K_n}^{(2)}$ sufficiently large that none
      of those classes is trivial modulo $\q$.  Let
      $\Gamma$ be the kernel of the map
      $
      \PGn\rightarrow \PUT(\Z_{K_n}^{(2)}/\q) .
      $
      Then $\Gamma$ has no elements of
      finite order, which since $\Gamma$ acts on a tree means it is free and thus
      $\#\Gamma/[\Gamma,\Gamma]=\infty$.
    \end{proof}
    Now suppose the prime $2$ factors in $F\colonequals F_n=K^+$ 
with $K\colonequals K_n$
as $(2)=\p_1\cdots \p_r$
    and fix $n$, dropping the subscript from our usual notation.
    Let $\UU/\Z_F$ be the unitary group over $\Z_F$
    preserving the standard Hermitian
    form
\[
h(\vec{x}, \vec{y})= x_1\overline{y}_1 +x_2\overline{y}_2
\quad\mbox{for}\quad \vec{x}=(x_1,x_2),\,\, \vec{y}=(y_1,y_2).
\]
 Then $\UU(\Z_F)=\UT(\Z_K)$  and
    $\UU(\Z_F^{(2)})=\UT(\Z_K^{(2)})$.  For $\frak{q}$ a prime in $F$
    we have
    the local group $\UU_\frak{q}:=\UU\times_{F}F_\frak{q}$.  If $\frak{q}$
    is split in $K$, then $\UU_{\frak{q}}$ is  isomorphic to $\GLT/F_{\frak{q}}$.
    If the prime $\fq$ of $F$ lies below a unique prime
    $\tilde{\fq}$ of $K$, then $\UU_\fq$ is a genuine unitary
    group over $F_{\fq}$ with corresponding field extension $K_{\tilde{\fq}}$.
    We correspondingly have the projective groups $\PU_{\p_i}$ over $\Z_{F_{\p_i}}$
    with $\PU_{\p_i}(\Z_{F_{\p_i}})=\PUT(\Z_{K_{\tilde{\p}_i}})$.
    Put $\PU_{\p_i}\colonequals \PUT(K_{\p_{i}})$
      for $1\leq i\leq r$.
    Then the diagonal embedding 
    \begin{equation*}
      \PUTz\big(\Z_K^{(2)}\big)\hookrightarrow \PU_{\p_1}\times \cdots \times \PU_{\p_r}
    \end{equation*}
    realizes $\PUTz\big(\Z_{K}^{(2)}\big)
\colonequals \PUTz\big(\Z_{K_n}^{(2)}\big)$ as a
    cocompact lattice in a group of rank $r$,
    cf. \cite[page $15^{\rm II}$]{S}. The local groups
    $\PU_{\p_i}$ are all isomorphic for $1\leq i\leq r$.  They are
    noncompact if $4|n$ and $n\geq 8$ since in this case $\H_n$
    is unramified at all primes above $2$ in $F_n$.
    Hence by Theorem~\ref{lattice} we see that
    $\PGn\neq \PUTz\big(\Z_{K_n}^{(2)}\big)$ 
if $r>1$, proving Reduction~\ref{first}.
\end{proof}
\subsection{The second reduction}
\label{secondr}
The second reduction is stated in \cite[pp.{} $15^{\rm III}$, $15^{\rm IV}$]{S}:
\begin{reduction}
  \label{secondd}
  Suppose $\langle 2,-1\rangle=(\Z/d\Z)^\times$, so that there is
  one prime $\p$ of $F_n=\Q(\zeta_n)^+$ above $2$.  If
  $\PGn=\PUTz\big(\Z_{K_n}^{(2)}\big)$, then the degree $f(\p/2)=1$.
\end{reduction}
We will show the following result:
\begin{theorem}
  \label{fixed}
  Suppose $G\colonequals \PGn\simeq S_4\ast_{D_4}D_n$ acts on a tree $X$
  without terminal vertices,
  possibly with inversions, and with finite stabilizers, so that the
  quotient graph of groups $\overline{X}:=X/G$ is finite.
  Then there exists a vertex fixed by $S_4$ with valence $3$.
\end{theorem}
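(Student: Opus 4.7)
The plan is to analyze the quotient graph of groups $\overline{X} = X/G$ using Bass--Serre theory together with the identification $\pi_1(\overline{X}) \simeq G = S_4 \ast_{D_4} D_n$, and then read the valence off of the $S_4$-leaf of $\overline{X}$.

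First I would show that the underlying graph of $\overline{X}$ is a tree. Since $S_4^{\mathrm{ab}}$, $D_n^{\mathrm{ab}}$, and $D_4^{\mathrm{ab}}$ are all finite, the abelianization $G^{\mathrm{ab}}$ is finite, so the free rank of $G^{\mathrm{ab}}$ is zero. But that free rank equals the first Betti number of $\overline{X}$, so $\overline{X}$ has no loops. Next, any leaf $[v] \in \overline{X}$ whose unique adjacent edge $e'$ satisfies $G_{e'} = G_{[v]}$ would lift to a terminal vertex of $X$ of valence $[G_{[v]} : G_{e'}] = 1$, contradicting the no-terminal hypothesis. Hence at every leaf of $\overline{X}$ the edge group is a proper subgroup of the vertex group. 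A parallel argument rules out half-edges at such a leaf, since a half-edge would also force the corresponding lift in $X$ to have valence $1$.

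The next step is the reduction. By Bass--Serre theory, a finite graph of finite groups admits a unique reduced form, obtained by iteratively collapsing edges whose edge group coincides with the group of one endpoint; reduction preserves $\pi_1$ and preserves non-collapsible leaves. For $G = S_4 \ast_{D_4} D_n$ the reduced form is the segment with vertex groups $S_4, D_n$ and edge group $D_4$. Consequently $\overline{X}$ has exactly two leaves, with vertex groups conjugate to $S_4$ and $D_n$. A short case analysis of the possible intermediate subdivisions then shows that the edge of $\overline{X}$ adjacent to the $S_4$-leaf must carry the group $D_4$: the only subgroups of $S_4$ containing $D_4$ are $D_4$ and $S_4$, the choice $S_4$ would make the edge collapsible from the leaf side and hence terminal in $X$, and any subgroup of $S_4$ not containing $D_4$ is incompatible with recovering $\pi_1 = S_4 \ast_{D_4} D_n$ after collapsing the subdivisions away.

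Therefore the $S_4$-leaf $[v]$ has its unique adjacent edge carrying group $D_4$, and any lift $v \in X$ has stabilizer conjugate to $S_4$ and valence $[S_4:D_4] = 3$. Replacing $v$ by a suitable translate under $G$, we may arrange $G_v = S_4$ exactly, so that $v$ is fixed by $S_4$ and has valence $3$, as required. The main obstacle is the reduction step: verifying that no additional branches, subdivisions, or half-edges can survive in $\overline{X}$ beyond what is already present in the reduced segment. This is precisely what forces the $S_4$-leaf's edge group to be $D_4$ and thereby pins down the valence at $3$.
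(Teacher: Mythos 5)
Your approach is structurally different from the paper's: you argue abstractly about the reduction of the quotient graph of groups $\overline{X}$, whereas the paper constructs the relevant vertex directly from fixed points of $S_4$, $D_n$, and $D_4$, and rules out extra edges by a disconnection argument in the quotient tree. Both arguments start from the same observation (finite abelianization, equivalently corank zero, forces $\overline{X}$ to be a tree), but after that they diverge, and yours has a genuine gap.

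The gap is in the assertion that $\overline{X}$ has exactly two leaves, which you derive from ``reduction preserves non-collapsible leaves.'' That principle is false as stated. If $\overline{X}$ were a tripod with center $c$ and leaves $a$, $b$, $d$ in which the edge from $c$ to $d$ has edge group equal to $G_c$ but properly contained in $G_d$, then $d$ is a non-collapsible leaf, yet collapsing that edge (from the $c$-side) merges $c$ into $d$, after which $d$ is the middle vertex of a path $a$--$d$--$b$ and no longer a leaf. So reduction can decrease the number of non-collapsible leaves, and ``the reduced form is a two-vertex segment'' does not by itself preclude a third leaf in $\overline{X}$. Closing this would require further input (for example, that every leaf vertex group must be a maximal finite subgroup of $G$, hence conjugate to $S_4$ or $D_n$, followed by a counting argument); you do not supply it. The ``short case analysis'' identifying the $S_4$-leaf's edge group as $D_4$ is likewise asserted (``incompatible with recovering $\pi_1$'') rather than carried out. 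The paper obtains both facts for free: $D_4$ fixes the $S_4$-fixed vertex $v$ and the $D_n$-fixed vertex $w$, hence the whole geodesic $\tau$ between them, so $D_4 \subseteq \Stab(e) \subsetneq S_4$ for the first edge $e$ of $\tau$ not fixed by $S_4$, and maximality of $D_4$ in $S_4$ forces $\Stab(e) = D_4$; a fourth edge at $v$ is then excluded by a connectivity contradiction. You should also pass explicitly to the barycentric subdivision, as the paper does, before invoking graph-of-groups machinery, since the theorem permits inversions and your remark about half-edges at a leaf does not handle this in general.
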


\begin{lemma}\label{lem:subgroups-conjugate}
  Every $G$-conjugate of $S_4$ containing $D_4$ is $D_n$-conjugate to $S_4$.
\end{lemma}

\begin{proof} Let $g \in G$ and write $g = a_1b_1 \dots a_nb_n$
  with all $a_i \in D_n$ and all
  $b_j \in S_4$, and all $a_i, b_j \notin D_4$ except possibly $a_1, b_n$.

  If $n>1$, then choose $b \in S_4$ such that $b_n b b_n^{-1} \notin D_4$.
  Then 
\[
gbg^{-1} = a_1b_1 \dots a_n(b_nbb_n^{-1})a_n^{-1}\dots b_1^{-1}a_1^{-1}
\]
is
  an expression that cannot be shortened, except possibly by absorbing
  $a_1, a_1^{-1}$ into $b_1, b_1^{-1}$, so it is not in $D_4$.  So for
  all such $b$ we have $gbg^{-1} \notin D_4$.  Now let $b \in S_4$ be such
  that $b_n b b_n^{-1}$ is not in the intersection of the $S_4$-conjugates
  of $D_4$.  Then $c = a_nb_nbb_n^{-1}a_n^{-1}$ has the same property,
  but since $b_{n-1} \notin D_4$ it must be
  that $b_{n-1}cb_{n-1}^{-1} \notin D_4$, and again the expression $gbg^{-1}$
  cannot be shortened further, so it is not in $D_4$.  We conclude that
  if $n>1$ then at most $4 = \#\cap_{p \in S_4} pD_4p^{-1}$
  elements of $gS_4g^{-1}$ belong to $D_4$ and
  hence that $D_4 \not \subset gS_4g^{-1}$.  

  Thus if $D_4 \subset gS_4g^{-1}$, then $n=1$; this means that $g = ab$
  with $a \in D_n$ and
  $b \in S_4$.  But then $gS_4g^{-1} = aS_4a^{-1}$ and the lemma is proved.
\end{proof}

We now return to prove the theorem.
\begin{proof}
  It follows by combining \cite[Cor.~5.4.1]{S2} and \cite[Ex.~6.3.4]{S2}
  that if $G$ is a group generated by torsion elements acting on a tree $X$
  without inversions, then $G\backslash X$ is a tree.  If we replace $X$
  by its barycentric subdivision, we do not affect the truth of the
  conclusion, since we only introduce vertices of valence $2$ and do not
  change the existing ones.  Thus we do so, ensuring that $G$ acts without
  inversions and that $G\backslash X$ is a tree.

  Fix for all time an isomorphism $G\cong S_4\ast_{D_4}D_n$.
  Now choose $v, w\in X$ to be a pair of vertices stabilized by a
  $D_n$-conjugate of $S_4$ and by $D_n$ respectively
  such that $d(v,w)$ is minimal among such pairs.
  Such vertices exist since every finite group
  acting on a tree without inversions has a fixed point;
  see \cite[Example 6.3.4]{S2}.  By applying the same element of $D_n$ to
  $v, w$ we may assume that $\Stab(v) \supseteq S_4$.
  In fact $\Stab(v)=S_4$ since $S_4$  is a
  maximal finite subgroup of $G$.  Notice that $D_4$ fixes both $v$ and $w$;
  hence it fixes the path $\tau = (v = v_0, \dots, v_n = w)$ between them.
  By hypothesis $v_k$ is not stabilized by any $D_n$-conjugate of $S_4$
  for any $k>0$; it follows from the lemma, together with the observation
  just made, that it is not stabilized by any $G$-conjugate of $S_4$.

  Let $e$ be the first edge of $\tau$.  It is not stabilized by $S_4$,
  for if it were then $v_1$ would be $S_4$-stable.
  Notice that we must have $S_4\supsetneq\Stab(e)\supseteq D_4$; hence
  $\Stab(e)=D_4$.  There are precisely $3=[S_4:D_4]$ edges with initial
  vertex $v$ that are in the $S_4$-orbit of $e$.  If these are the only
  edges with initial vertex $v$, we are done.  So suppose there is another
  edge $e'$ with initial vertex $v$ and denote its terminal vertex by $v'$.
  If $e'$ were in the $G$-orbit of $e$, then for some $g$ we would either have
  $g(v,v_1) = (v,v')$ or $g(v,v_1) = (v',v)$.  The first is impossible by our
  assumption that $e'$ is not in the $S_4 = \Stab(v)$-orbit of $e$,
  and the second
  because $\Stab(v), \Stab(v_1)$ are not $G$-conjugate.

  Let $\overline{e}'$ be the image of $e'$ in $\overline{X}=X/G$ with initial
  vertex $o(\overline{e}')=\overline{v}$.  Since $\overline{X}$ is a tree,
  we can take the subgraph of $\overline{X}$ consisting of everything on the
  $\overline{v}$-side of $\overline{e}'$.  Call this subgraph $\overline{X}'$.
  Notice that the image of the entire path $\tau$ is contained in
  $\overline{X}'$. If this were not the case, then there would be some $g\in G$
  such that the edge $ge'$ is in the path $\tau$.  It cannot be that $gv = v$,
  because that would imply that $g \in S_4$ and hence that $e' \in Ge$.
  Nor can it be that $gv \ne v$, because $\Stab(gv) = g\Stab(v) g^{-1}$
  would be a subgroup of $G$ conjugate to $S_4$ and containing $D_4$,
  while $d(gv,w) < d(v,w)$, which is ruled out by our choice of $v$.
  
  Let $X'$ be the preimage of $\overline{X}'$; it is stable under $G$.
  Let $P$ be a path in $\overline{X}$ starting $\overline{v}, \overline{v}'$
  and ending at a terminal vertex of $\overline{X}$ (such vertices exist
  because $\overline{X}$ is a finite tree).  Let $P'$ be a lift of $P$ to $X$
  starting at $v$.  Since $X$ has no terminal vertices,
  at least two edges of $X$ map to the last edge of $P$.  Thus we may
  lift $P^{-1}$ to a path ${P^{-1}}''$
  whose first vertex is the same as the last vertex
  of $P'$ but whose second vertex is not the second last vertex of $P'$.
  Let $gv$ be the last vertex of ${P^{-1}}''$; this notation is justified by
  the fact that ${P^{-1}}''P'$ projects to $P^{-1}P$ in $\overline{X}$.

  The path from $v$ to $gv$ begins with $e'$, so $v, gv$ are in different
  components of $X'$.  However, all generators of $G$ fix $v$ or $w$ or both,
  so they fix the connected component of $X'$ that contains $v$.
  This contradiction shows that no such $e'$ can exist
  and that $v$ has valence $3$.
\end{proof}

\begin{remark} If $G = H \ast_K L$
  where $K$ is a maximal subgroup of $H$ and is not normal and $X$ is a tree
  without terminal vertices such that $G$ acts with finite stabilizers and
  $G \backslash X$ is finite, then $X$ has a vertex fixed by $H$ of valence
  $[H:K]$.  (Note that these hypotheses imply that $L \ne K$.)  The proof
  is essentially the same as above, replacing $S_4, D_4, D_n, 3$ by
  $H, K, L, [H:K]$ respectively.
\end{remark}

\begin{proof}[Proof of Reduction \textup{\ref{secondd}}]
  Suppose there is one prime $\p$ of $F\colonequals F_n$
  above $2$. Then $\H\colonequals \H_n$ is unramified at $\p$.
  The group $\PSUT\big(\Z_{K_n}^{(2)}\big)$ is identified with
  $\Pro\!\widetilde{\M}_{n,1}^\times$ by \eqref{iso} and hence
  is a discrete cocompact subgroup of
  $\Pro\!\H_\p^\times\cong \PGLT(F_{\fp})$; see,
  for example, \cite[Sect.~4]{K}.  This identification
on $\PSUT\big(\Z_{K_n}^{(2)}\big)$
  extends
  to an identification of $\PUT\big(\Z_{K_n}^{(2)}\big)$ with a subgroup of
  $\PGLT(\F_{\fp})$ by \eqref{var}.  Since $\PSUT\big(\Z_{K_n}^{(2)}\big)$ 
is of finite
  index in $\PUT\big(\Z_{K_n}^{(2)}\big)$, it follows that 
$\PUT\big(\Z_{K_n}^{(2)}\big)$ and
  $\Gamma:=\PUTz\big(\Z_{K_n}^{(2)}\big)$ are discrete
  and cocompact in $\PGLT(F_\p)$.
  Let $\Delta_\p$ be the
  tree for $\SL_{2}(K_\p)$ -- it is a regular tree with valence
  $2^f+1$ with $f\colonequals f(\p/2)$ and $\Delta_{\p}/\Gamma$ is finite. Hence
  Theorem~\ref{fixed} implies that if $\PGn=\PUTz\big(\Z_{K_n}^{(2)}\big)$, 
then we must have
  $f=1$.
  \end{proof}

\begin{proof}[Proof of Theorem \textup{\ref{rant}}]

We suppose $n=2^sd$ with $s\geq 2$, $d$ odd, and $n\geq 8$.
Assume $\PGn=\PUTz\big(\Z_{K_n}^{(2)}\big)$.  
By Reductions~\ref{first} and~\ref{secondd}, we have that
the prime $2$ is totally ramified in $K_n$, which means
that $d=1$ or $d=3$.  Hence we are reduced to the cases
$n=2^s$ and $n=3\cdot 2^s$, which are covered by
our Main Theorem~\ref{main}.
\end{proof}

\bibliography{CAdam3f2a1a}{}
\bibliographystyle{plain}

\end{document}